\documentclass[a4paper]{scrartcl}
\usepackage[english]{babel}
\usepackage{amsfonts,amsmath}
\usepackage{amsthm}
\usepackage{txfonts}
\usepackage{tikz}
\usepackage{fancyhdr}

\usetikzlibrary{decorations.pathreplacing}
%
%


\newenvironment{remark}[1][Remark]{\begin{trivlist}
\item[\hskip \labelsep {\bfseries #1}]}{\end{trivlist}}
\newenvironment{assumptions}[1][Assumptions]{\begin{trivlist}
\item[\hskip \labelsep {\bfseries #1}]}{\end{trivlist}}
\newenvironment{notations}[1][Notations]{\begin{trivlist}
\item[\hskip \labelsep {\bfseries #1}]}{\end{trivlist}}

\newtheorem{theorem}{Theorem}[section]
\newtheorem{lemma}[theorem]{Lemma}
\newtheorem{proposition}[theorem]{Proposition}
\newtheorem{corollary}[theorem]{Corollary}
\newtheorem{definition}[theorem]{Definition}

\newcommand{\Rzwei}{\mathbb{R}^2}
\newcommand{\Rnplus}{\mathbb R^{n+1}}
\newcommand{\N}{\mathbb{N}}

\newcommand{\R}{\mathbb{R}}

\newcommand{\defi}{\coloneqq}
\newcommand{\de}{\text{\,d}}

\newcommand{\heatkernel}{\rho_{x_0,T}}
\newcommand{\dist}{\text{dist}}
\newcommand{\spt}{\mbox{spt}}
\newcommand{\Div}{\mbox{div}}
\newcommand{\nix}{\hspace*{0.2em}\hspace*{-0.2em}}
\newcommand{\ftilde}{\nix^\Sigma\tilde f}
\newcommand{\fsig}{\nix^\Sigma f}
\newcommand{\Sig}{\nix^\Sigma}
\newcommand{\winkel}{\angle_{\Rzwei}}
\newcommand{\ind}{ \mbox{ind}}

\newcommand{\pp}{\partial_p}   
    
\newcommand{\pt}{\partial_t}  
\newcommand{\ps}{\partial_s}       
\newcommand{\ct}{c_t}   
  
\newcommand{\gat}{\gamma_t} 
\newcommand{\intab}{\int_a^b} 
\newcommand{\diam}{\text{diam}}

\newcommand{\nut}{[0,T)}
\newcommand{\tildega}{\tilde\gamma}
\newcommand{\tildegat}{\tilde\gamma_t}

\newcommand{\ab}{[a,b]}
\newcommand{\tn}{t_0}
\newcommand{\tildec}{\tilde c_j}
\newcommand{\gainfty}{\tilde\gamma_\infty}
\newcommand{\psij}{\psi_j}

\newcommand{\Tdel}{\frac{T}{\delta}}

\newcommand{\gaj}{\gamma_j}
\newcommand{\Ij}{I_j}

\newcommand{\phir}{\varphi_{(x_0,T),\lambda}}
\newcommand{\tildephi}{ \tilde\varphi_{(Q_j\lambda)}}

\newcommand{\Minfty}{\tilde M^{\infty}_{\tau}}

\newcommand{\sigmax}{\Sig\kappa_{\text{max}}}
\newcommand{\dt}{\frac{\text{d}}{\text{d}t}}
\newcommand{\diff}{(\kappa-\bar\kappa)}
\newcommand{\tj}{t_j}

\newcommand{\ctn}{c_{t_0}}
\newcommand{\ctnt}{c_{\tilde t_0}}

\title{The area preserving curve shortening flow with Neumann free boundary conditions}

\author{Elena M\"ader-Baumdicker}

\date{}

\renewcommand*\theenumi{\textit{\roman{enumi}}}
\renewcommand*\labelenumi{\theenumi)}

\begin{document}

\maketitle

\begin{abstract}
 We study the area preserving curve shortening flow with Neumann free boundary conditions outside of a convex domain in the Euclidean plane. Under certain conditions on the initial curve the flow does not develop any singularity, and it sub\-con\-ver\-ges smoothly to an arc of a circle sitting outside of the given fixed domain and enclosing the same area as the initial curve.
\end{abstract}

\section{Introduction}

For a family of simple closed plane curves $\gamma=\gamma:\mathbb S^1\times[0,T)\to\Rzwei$, Gage introduced in \cite{Gage} the \emph{area preserving curve shortening flow} as
\begin{align}\label{E1}
 \frac{\partial \gamma }{\partial t} &= \left(\kappa-\frac{\int\kappa\de s}{L}\right) \nu,
\end{align}
where $\nu(\cdot,t)$ is the normal of the curve $\gamma(\cdot,t)$ and $\kappa (\cdot,t)$ is its curvature with respect to $\nu (\cdot,t)$. The length of $\gamma(\cdot,t)$ is denoted by $L$ and $\de s$ denotes integration by arclength. The term $\frac{\int\kappa\de s}{L} \eqqcolon \bar\kappa$ is the average of the curvature. For simple closed curves $\bar\kappa =\frac{2\pi}{L}$ holds.
Gage pointed out that this evolution equation arises as the ``$L^2$-gradient flow'' of the length functional under the constant enclosed area constraint. The term $\bar\kappa$ is the suitable Lagrange parameter for this variational problem. 
Gage proved that a strictly convex simple closed curve which evolves according to (\ref{E1}) remains strictly convex and converges to a circle in the $C^\infty$ metric as $t\to\infty$, see \cite[Theorem~4.1]{Gage}.
Note that there is a connection to another problem in geometric analysis, namely the isoperimetric problem in $\Rzwei$, i.e.~finding the shortest curve under all closed curves that enclose a certain fixed area. The area preserving curve shortening flow deforms a given convex, simple closed curve with enclosed area $A_0$ to the solution of the isoperimetric problem in $\Rzwei$: the circle with enclosed area $A_0$.
The same result was proven by Huisken in \cite{Huisken87} for $n\geq 2$. In this case, the evolution equation is as follows: Let $M^n$ be a compact $n$-dimensional manifold without boundary and $F_0:M^n\to\R^{n+1}$ a smooth immersion. 
A family $F:M^n\times[0,T)\to\Rnplus$ with $T>0$ is a solution to the \emph{volume preserving mean curvature flow} if it is  a smooth family of smooth immersions satisfying
\begin{alignat*}{2}
 \dt F (p,t) &=-( H(p,t)-h(t))\nu(p,t) \hspace*{1cm}&& \text{for } (p,t)\in M^n\times [0,T), \\
 F(p,t) & = F_0 (p)  && \text{for }  p\in M^n,
\end{alignat*}
where $H$ denotes the mean curvature, $\nu$ the outer unit normal and $ h(t)\defi\frac{\int_{M^n} H(p,t)\de \mu_t}{\int_{M^n}\de\mu_t}$
 the average of the mean curvature. Note that $h$ is a global term in the equation. Thus, the analysis of this problem is not only a matter of local estimates but also of global considerations.

We are interested in considering the flow equation (\ref{E1}) for curves with boundary.
In \cite{StahlDiss} (see also \cite{StahlPaper1, StahlPaper2}), Stahl considered the mean curvature flow with Neumann free boundary conditions on an arbitrary support surface $\Sigma$: Let $\Sigma$ be a smooth, embedded hypersurface in $\Rnplus$ without boundary, called support surface. 
Consider $M^n$, a smooth $n$-dimensional compact, orientable manifold with smooth boundary, and $F_0:M^n\to \Rnplus$, a smooth immersion, such that with the notation $M_0\defi F_0(M^n)$, 
\begin{align*}
 \partial M_0  \defi F_0(\partial M^n) & = M_0\cap \Sigma\\
\langle \nu_0(p),\, ^\Sigma\vec\nu(F_0(p))\rangle &=0  \ \ \ \ \forall p\in\partial M^n, 
\end{align*}
where $\Sig\vec\nu:\Sigma\subset\R^{n+1}\to\mathbb S^n$ is a unit normal to $\Sigma$. Then $M_0$ is called initial surface. Now let $F:M^n\times [0,T)\to\Rnplus$ be a smooth family of smooth immersions which are evolving under the \emph{mean curvature flow with Neumann free boundary conditions}, i.e.
\begin{alignat}{2}
 \frac{\partial F}{\partial t} (p,t)  &=-H(p,t)\nu(p,t)\hspace*{1cm}  & \forall (p,t)&\in M^n\times(0,T),\label{k1}\\
 F(p,0)&=F_0(p) & \forall p&\in M^n,\label{k2}\\
F(p,t)&\in\Sigma & \forall (p,t)&\in\partial M^n\times [0,T),\label{k3}\\
\langle \nu(p,t),& ^\Sigma\vec\nu(F(p,t))\rangle=0 & \forall (p,t)&\in \partial M^n\times [0,T).\label{k4}
\end{alignat}
Under the assumption that $\Sigma$ is mean convex, i.e.~$\Sig H\geq 0$, with respect to the outward\footnote{In the work of Stahl, ``outward'' denotes ``outward relative to a solution $M_t$''. This means: if $\Sigma$ is the boundary of a smooth bounded domain $G$ and $M_t$ touches $\Sigma$ from inside, then the outward unit normal $\mu$ of $\Sigma$ is the ``usual'' outer unit normal pointing in the outer domain with respect to the domain~$G$.} unit normal, Stahl showed that an embedded initial surfaces stays embedded under the flow. If $\Sigma$ is umbilic and the surfaces touch $\Sigma$ from inside (if $\Sigma$ is not a hyperplane but a sphere), then also convexity and a pinching condition is preserved. In the end, an embedded strictly convex hypersurface shrinks to a single point on $\Sigma$ ($t\to T<\infty$), where the singularity at time $T$ is of type I. After rescaling, a sequence of hypersurfaces converges in the $C^\infty$-topology to a hemisphere with boundary in a hyperplane, see \cite[Theorem 1.3]{StahlPaper1}. Regarding curves, he proved: If $\Sigma=\partial G$, where $G\subset\Rzwei$ is a convex domain, consider a convex embedded initial curve $M_0\subset \bar G$ evolving according to (\ref{k1}) to (\ref{k4}), then the curves converge to a single point on $\Sigma$ as $t\to T<\infty$ (see \cite[Proposition~1.4]{StahlPaper1}).

%
%
As in the context without boundary, it is interesting to consider the situation of the \emph{volume preserving mean curvature flow}, but now \emph{with Neumann free boundary conditions}. Athanassenas studied in \cite{Athanassenas1997} and \cite{Athanassenas2003} the situation of rotationally symmetric surfaces between two parallel hyperplanes and with boundary in this hyperplane intersecting these hyperplanes orthogonally at the boundary. No convexity condition is assumed.
In the first paper, Athanassenas considered initial surfaces satisfying $|M_0|\leq \frac{V}{d}$, where $V$ is the enclosed volume of $M_0$ and $d$ is the distance between the hypersurfaces. She showed longtime existence up to infinity and convergence to a cylinder which is orthogonal to the two hyperplanes and which encloses the volume $V$. 
In \cite{Athanassenas2003}, Athanassenas showed that in general the flow develops singularities. She proved this in the same setting as above but without any condition concerning the area of $M_0$. But the singular set is finite and discrete, and it is located along the axis of rotation. If $M_0$ is mean convex then all singularities are of type I.
In \cite{Athanassenas2012}, Athanassenas and Kandanaarachchi considered rotationally symmetric surfaces with boundary in one hyperplane and intersecting the hyperplane orthogonally at the boundary. Under a certain lower height bound they proved that the flow does not develop singularities and it converges to a half-sphere. 
The results of Athanassenas were generalized by Cabezas-Rivas and Miquel to surfaces of revolution in a rotationally symmetric ambient space, see \cite{CabezasMiquel2009, CabezasMiquel2012}.

In this article, we consider the volume preserving equation (\ref{E1})
and the boundary condition as in the papers of Stahl, namely (\ref{k2}) to (\ref{k4}), but for curves. 
We focus on the case of a convex initial curve meeting the convex support curve from the outside. In the classical mean curvature flow (volume preserving or not) for closed surfaces, many proofs for surfaces cannot be done for the case of curves. 
Some results of this article can be transfered to the case of surfaces, but some not. 

In Section~\ref{S1}, we introduce some notation and explain the setting in detail. Afterwards we show basic properties like preserving of convexity and preserving of the oriented enclosed area. Most of this section can be found in the Diplom thesis of Achim Roth\footnote{Formerly known as Achim Windel.}, see \cite{AchimDipl}.
%
%
We present in Section~\ref{S2} some more results of \cite{AchimDipl}, 
in particular an $L^\infty$-bound on the average of the curvature $\bar\kappa$ under the following conditions: The initial curve is strictly convex, has no self-intersection, it is completely contained in the outer domain created by the support curve and it satisfies $L(c_0)<\min\{|x-y|:x,y\in\Sigma,\Sig\vec\tau(x)= - \Sig\vec\tau(y)\}$. 
In Section~\ref{S3}, we prove that there are no type I singularities in finite time under the conditions that the initial curve is convex and the flow satisfies  $|\bar\kappa|\leq c_2<\infty$ and $L(\ct)\geq c_1>0$ uniformly in time.
Section~\ref{geometricproperties} contains the proof that an initial curve with the properties of Section~\ref{S2} stays embedded if the flow satisfies $\int\kappa\de s<\pi + \frac{\pi}{2}$. Furthermore, if the initial curve is short enough then the curve $c(\cdot,t)$ and the line segment from $c(a,t)$ to $c(b,t)$ trace out a convex domain.
Singularities of type II are studied in Section~\ref{S5}. We describe conditions on the initial curve which have the effect that the corresponding flow does not develop any singularity. The initial curve is assumed to satisfy the conditions of Section~\ref{geometricproperties}, but additionally, it has to be shorter than a constant which depends on the ''isoperimetric quotient`` $\frac{A_0}{L(c_0)^2}$ of the initial curve ($A_0$ denotes the enclosed area at time $t=0$).
Finally, in Section~\ref{S6} we first show integral estimates and then our main result:

\newpage

\begin{theorem} \label{mainthm}
  Let $c:[a,b]\times[0,T)\to \Rzwei $ be a solution of the area preserving curve shortening problem with Neumann free boundary conditions outside of  a convex support curve $\Sigma$. Let the initial curve $c_0$ satisfy the following four conditions: 
 \begin{enumerate}
  \item The curvature $\kappa_0$ is positive on $[a,b]$,
  \item the curve $c_0$ has no self-intersection,
  \item it is contained in the outer domain created by $\Sigma$
  \item and it satisfies $L_0< C$, where $C\defi\frac{4}{5 \sigmax} \arcsin\left(\frac{A_0}{L_0^2}\right)$.
 \end{enumerate}
    Then the flow does not develop a singularity, consequently $T=\infty$. Furthermore, for every sequence $t_j\to\infty$ the curves $c(\cdot,t_j)$ subconverge (after reparametrization) smoothly to an embedded arc of a circle $\gamma_\infty$ with $\int\kappa_\infty\de s_{\gamma_\infty} =\varphi$, $\varphi\in [\pi,2\pi)$. The two boundary points of this arc lie on $\Sigma$, the arc is perpendicular to $\Sigma$ at these points, and the curve is outside of the convex domain created by $\Sigma$. The curve $\gamma_\infty$ encloses (with a part of $\Sigma$) the area $A_0$.
\end{theorem}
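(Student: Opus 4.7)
The plan is to assemble the main theorem by chaining together, in the right order, the sharp results already established in Sections~\ref{S1}--\ref{S5}, and then to derive the subsequential smooth convergence from the integral estimates of Section~\ref{S6}. First I would verify that hypotheses \textit{i})--\textit{iii}) together with the smallness condition \textit{iv}) are strong enough to satisfy the standing assumptions of Section~\ref{S2}; in particular, since $A_0\le\frac{1}{2\pi}L_0^2$ for any closed-ish curve in the plane, $\arcsin(A_0/L_0^2)$ is a well defined number in $[0,\pi/2]$, so \textit{iv}) implies the length bound $L_0<\min\{|x-y|:x,y\in\Sigma,\Sig\vec\tau(x)=-\Sig\vec\tau(y)\}$ required there. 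This yields the uniform bound $|\bar\kappa|\le c_2$. Convexity and preservation of the enclosed area $A_0$ come from Section~\ref{S1}; together with the isoperimetric inequality applied to the region bounded by $c(\cdot,t)$ and the arc of $\Sigma$ joining the two endpoints, $A_0$ being preserved forces a uniform lower bound $L(\ct)\ge c_1>0$. Both hypotheses of Section~\ref{S3} are then active and type~I singularities are excluded.

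Next I would feed \textit{iv}) into Sections~\ref{geometricproperties} and \ref{S5}. The length smallness, combined with the bound on $\bar\kappa$, should keep the total curvature $\int\kappa\de s$ strictly below $\pi+\frac{\pi}{2}$ for all $t\in\nut$, so that embeddedness is preserved and the region swept out by $c(\cdot,t)$ and the chord between $c(a,t)$ and $c(b,t)$ stays convex. The specific constant $C=\frac{4}{5\sigmax}\arcsin(A_0/L_0^2)$ appearing in \textit{iv}) is exactly what the argument of Section~\ref{S5} requires to prohibit a type~II blow-up: a rescaled-limit analysis near a would-be type~II singularity produces a convex eternal curve meeting a line or a circle orthogonally, and the length bound rules this scenario out. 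Together with Section~\ref{S3}, this gives $T=\infty$.

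For the long-time behaviour I would use the integral identity $\dt L=-\int\diff^2\de s$ of Section~\ref{S6}, which since $L$ is decreasing and bounded below gives $\int_0^\infty\!\!\int\diff^2\de s\,\de t<\infty$. Combined with the uniform $L^\infty$ control on $\kappa$ and $\bar\kappa$, standard parabolic bootstrapping yields uniform $C^k$ bounds on $\kappa$ for every $k$. Hence along any sequence $\tj\to\infty$ the curves $c(\cdot,\tj)$ subconverge smoothly (after reparametrization) to a limit $\gamma_\infty$; the integral estimate forces $\kappa_\infty\equiv\bar\kappa_\infty$, so $\gamma_\infty$ is a circular arc. The Neumann condition passes to the limit, area preservation identifies the enclosed area as $A_0$, and convexity plus embeddedness together with the outer-domain constraint pin down $\int\kappa_\infty\de s_{\gamma_\infty}=\varphi\in[\pi,2\pi)$. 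The main obstacle I anticipate is not the analytic convergence but the \emph{bookkeeping} of the smallness: one must carefully propagate the bound $\int\kappa\de s<\frac{3\pi}{2}$ forward in time, since both the embeddedness argument of Section~\ref{geometricproperties} and the type~II exclusion of Section~\ref{S5} deteriorate sharply at that threshold, so the constant $\frac{4}{5}$ in \textit{iv}) must be seen to leave just enough room for the bound to remain strict on $\nut$.
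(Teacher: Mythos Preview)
Your architecture matches the paper's: chain Sections~\ref{S1}--\ref{S5} to force $T=\infty$, then invoke Section~\ref{S6} for subconvergence. The verification that condition \textit{iv}) implies $L_0<\tfrac{1}{2\sigmax}<\Sig d$, so that Theorems~\ref{Linftyabschaetzung}, \ref{noself3}, \ref{convexDom} and \ref{typeIIrandneu} all apply, is correct. (One minor inaccuracy: the lower length bound $L(\ct)\ge c_1$ in the paper comes directly from the oriented-area formula, Proposition~\ref{Laengenachuntenabsch}, not from an isoperimetric inequality; your route would additionally need a bound on the length of the $\Sigma$-arc between the moving endpoints.)

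The genuine gap is your sentence ``Combined with the uniform $L^\infty$ control on $\kappa$ \ldots\ standard parabolic bootstrapping yields uniform $C^k$ bounds.'' No such $L^\infty$ bound is available at this point: $T=\infty$ only excludes finite-time blow-up, and the paper explicitly warns that a ``singularity at infinity'' $\max|\kappa|(\cdot,t)\to\infty$ is not automatically ruled out. The single estimate $\int_0^\infty\!\int\diff^2\de s\,\de t<\infty$ you extract is far too weak for a pointwise curvature bound. What Section~\ref{S6} actually does is the analytic core you have skipped: from that integral and a differential inequality one first proves $\int\diff^2\de s\to 0$ (Corollary~\ref{intto0}); then one differentiates the weighted energy $\int\diff^2+\mu(\ps\kappa)^2+\tfrac{\mu^2}{2}(\ps^2\kappa)^2\,\de s$, uses the boundary identities for $\ps\kappa$ and $\ps^3\kappa$ (Lemmas~\ref{lemmastahl5}, \ref{rand2}) together with Gagliardo--Nirenberg interpolation to absorb all boundary and nonlinear terms, and only then obtains $\|\kappa\|_\infty+\|\ps\kappa\|_\infty\le C$ (Corollary~\ref{kappaabsch}). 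A smaller omission is the final geometric step: that $\gamma_\infty$ lies entirely outside the domain bounded by $\Sigma$ is not automatic from ``outer-domain constraint'' but is argued by noting that the tangent lines $T_{\gamma_\infty(0)}\Sigma$ and $T_{\gamma_\infty(1)}\Sigma$ both pass through the centre of the limit circle, so convexity traps $\Sigma$ in the wedge between them.
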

We collect some useful calculations in the appendix. All the results presented here are part of the authors thesis \cite{MeineDiss} which was written under the supervision of Prof. Kuwert in Freiburg.

%
%
%

\begin{center} 
 {\large Acknowledgement}
\end{center}
I thank my advisor Prof. Kuwert for his excellent supervision during my time as a doctoral student in Freiburg. I was partially supported by the DFG Collaborative Research Center SFB/Trans\-re\-gio 71, and I would like to express my thanks to the DFG for that.

\section{Notations and basic properties} \label{S1}

\begin{notations}
In this article we deal with regular, planar curves $f:[a,b]\to\Rzwei$. The two properties ``regular'' and ``planar'' will not be mentioned anymore. The curves are at least piecewise $C^{2,\alpha}$. Therefore, the curvature  is $ \kappa (p) = \langle \partial^2_s f(p),\nu(p)\rangle =  \frac{\langle \partial^2_p f(p),\nu(p)\rangle}{|\partial_p f(p)|^2}$ where the curve is smooth. Here, $\partial_s = \frac{1}{|\partial_p f|}\partial_p$ denotes the derivative with respect to  arclength, and $\nu=J\tau$ is the normal of the curve, where $\tau=\ps f$ denotes the tangent and $J$ the rotation by $+\frac{\pi}{2}$.
 \end{notations}

\begin{definition}
  We call a smooth, convex, simple and smoothly closed curve $\nix^\Sigma f:[\nix^\Sigma a,\nix^\Sigma b]\to\Rzwei$ that is parametrized by arclength a \emph{support curve}. 
  We orientate the curve positively such that $\nix^\Sigma\kappa\geq 0$ follows. We use the notation $\Sigma\defi \nix^\Sigma f\left([\nix^\Sigma a,\nix^\Sigma b]\right). $ The bounded, ``inner'' domain that is created by $\Sigma$ is denoted by $G_\Sigma$.
\end{definition}

\begin{definition}
 A smooth curve $c_0:[a,b]\to\Rzwei$ is called \emph{initial curve} if it satisfies the conditions
 \begin{align*}
 c_0(a),c_0(b)\in\Sigma, \ \ \
\angle_{\Rzwei} \left(\tau_0(a),\nix^\Sigma \vec\tau(c_0(a))\right) = \frac{\pi}{2}, \ \ \
\angle_{\Rzwei} \left(\tau_0(b),\nix^\Sigma \vec\tau(c_0(b))\right) = -\frac{\pi}{2},
\end{align*}
  where $\tau_0:[a,b]\to\mathbb S^1\subset\Rzwei$ is the tangent of $c_0$ and $\nix^\Sigma\vec \tau$ is defined through $$\Sig\vec\tau:\Sigma\to\mathbb S^1\subset\Rzwei,\ \ \Sig\tau=\Sig\vec\tau\circ\fsig,$$  and $\Sig\tau$ is the tangent of $\Sigma$. 
  The symbol $\angle_{\Rzwei} \left(v,w\right)$ denotes the oriented angle between two vectors $v$ and $w$ in $\Rzwei$.
\end{definition}

\begin{definition}
 Let $c_0:[a,b]\to\Rzwei$ be an initial curve. A smooth family of smooth, regular curves $c:[a,b]\times[0,T)\to \Rzwei $ that satisfies  
 \begin{align}\begin{split} \label{flow}
   \frac{\partial c}{\partial t} (p,t) = ( \kappa(p,t)& - \bar \kappa(t))\nu(p,t) \qquad\forall (p,t)\in [a,b]\times [0,T),\\
   c(p,0)& =c_0(p) \qquad \hspace*{1.28cm}\forall p\in [a,b],\\[0.2cm]
      c(a,t),c(b,t)&\in\Sigma \qquad\hspace*{2.02cm}\forall t \in [0,T),\\[0.1cm]
      \angle_{\Rzwei} \left(\tau(a,t),\nix^\Sigma\vec \tau(c(a,t))\right) &= \frac{\pi}{2}\qquad \hspace*{1.92cm}\forall t \in[0,T),\\
      \angle_{\Rzwei} \left(\tau(b,t),\nix^\Sigma \vec\tau(c(b,t))\right)& = -\frac{\pi}{2} \qquad \hspace*{1.68cm}\forall t \in[0,T) \end{split}
 \end{align}
%
is called a \emph{solution of the area preserving curve shortening problem with Neumann free boundary conditions}, shortly called \emph{a solution of} (\ref{flow}). Here, $\bar\kappa$ denotes the average of the curvature, $\bar\kappa(t)\defi \frac{\int_a^b \kappa(p,t)\de s}{\int_a^b\de s}$,
where $\de s \defi |\partial_p c(\cdot,t)|\de p$. 
\end{definition}
\begin{remark}
 Since the angles at the end points are oriented we defined the ``outer'' case of the area preserving curve shortening problem with Neumann free boundary conditions, that is, the curves $c(\cdot,t)$ start into $\Rzwei\setminus G_\Sigma$ and meet $\Sigma$ again from the outside. Away from the end points the curves are allowed to intersect $\Sigma$ and can therefore be inside of $G_\Sigma$. 
\end{remark}

\begin{proposition} \label{exis}
The area preserving curve shortening problem with Neumann free boundary conditions has a unique solution on a maximal time interval $[0,T)$. It has regularity 
\begin{align*}
 c \in C^{2+\alpha, 1 +\frac{\alpha}{2}}\left([a,b]\times [0,T),\mathbb R^2\right)\cap C^\infty\left([a,b]\times (0,T),\mathbb R^2\right)
\end{align*}
for any $\alpha \in(0,1)$\footnote{$C^{2+\alpha, 1 +\frac{\alpha}{2}}$ denotes the usual parabolic H\"older spaces.}. 
 If $T<\infty$, we have $\  \max_{[a,b]}|\kappa|(\cdot,t) \to\infty \  \ (t\to T).$
\end{proposition}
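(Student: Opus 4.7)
The plan is to follow the standard strategy for geometric parabolic problems with a nonlocal forcing term and a nonlinear free boundary condition, adapting the approach used by Stahl \cite{StahlDiss} for the pure mean curvature flow with Neumann boundary. First I would reduce \eqref{flow} to a scalar quasilinear parabolic equation for a height function. Writing $c(p,t)=c_0(p)+u(p,t)\nu_0(p)+\varphi(p,t)\tau_0(p)$, where the tangential gauge $\varphi$ is chosen so that $c(a,t),c(b,t)\in\Sigma$ for all $t$ and so that the resulting PDE is strictly parabolic (a DeTurck-type trick adapted to the boundary), one obtains a quasilinear equation of the form $\partial_t u = a(u,\partial_p u)\,\partial_p^2 u + b(u,\partial_p u)-\bar\kappa[u]\,N(u,\partial_p u)$ with a nonlocal term $\bar\kappa[u]=(\int\kappa\,\de s)/L$, together with oblique (Neumann-type) boundary conditions $B_\pm(u(a,\cdot),\partial_p u(a,\cdot))=0$ at the endpoints coming from the angle conditions of \eqref{flow}. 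The hypothesis that $c_0$ is an initial curve gives precisely the first-order compatibility at $t=0$.

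Short-time existence then follows by a fixed-point construction that decouples the nonlocal term. For $h\in C^{\alpha/2}([0,\tau])$ with $h(0)=\bar\kappa(c_0)$, the purely local oblique problem obtained by replacing $\bar\kappa[u]$ with $h$ is a quasilinear parabolic equation in one space variable with smooth oblique boundary data; classical parabolic theory (Ladyzhenskaya--Solonnikov--Ural'tseva, or analytic-semigroup theory on parabolic H\"older spaces) yields a unique $u_h\in C^{2+\alpha,1+\alpha/2}([a,b]\times[0,\tau])$. Setting $\Phi(h)(t):=\bar\kappa[u_h](t)$ and invoking continuous-dependence estimates for quasilinear parabolic equations, one checks that for $\tau$ sufficiently small $\Phi$ is a contraction on a closed ball of $C^{\alpha/2}([0,\tau])$ centered at $\bar\kappa(c_0)$. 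Its unique fixed point is the desired short-time solution of \eqref{flow}, and the same contraction argument yields uniqueness. Smoothness on $[a,b]\times(0,T)$ follows by a standard parabolic bootstrap: once $c\in C^{2+\alpha,1+\alpha/2}$ the coefficients of the linearization are H\"older continuous, Schauder estimates for the oblique problem promote the regularity by two derivatives in space and one in time, and iteration yields $C^\infty$.

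For the continuation criterion, assume by contradiction that $T<\infty$ while $M:=\sup_{[0,T)}\max_{[a,b]}|\kappa|<\infty$. The oriented enclosed area is preserved (Section~\ref{S1}) and equals $A_0>0$, while $L(\ct)$ is nonincreasing along the flow (it is the gradient flow of length under the area constraint). An isoperimetric-type estimate for the region bounded by $c(\cdot,t)$ and the arc of $\Sigma$ between $c(a,t)$ and $c(b,t)$ therefore yields a uniform lower bound $L(\ct)\geq L_{\min}>0$, and together with $|\int\kappa\,\de s|\leq M\cdot L$ this gives $|\bar\kappa|\leq M$. Using the evolution equation $\partial_t\kappa=\partial_s^2\kappa+\kappa^2(\kappa-\bar\kappa)$ and, inductively, the evolution equations of $\partial_s^k\kappa$ (leading term $\partial_s^{k+2}\kappa$ plus polynomial lower-order terms in the $\partial_s^j\kappa$, $j\leq k+1$, and $\bar\kappa$), together with parabolic interpolation adapted to the oblique boundary data, one obtains uniform bounds $\|\partial_s^k\kappa\|_\infty\leq C(k,M,L_{\min},c_0)$ on $[0,T)$ for every $k$. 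This gives uniform $C^\infty$-control of $c$ up to $t=T$, so $c(\cdot,T)$ is a smooth admissible initial curve, and the short-time existence result restarts the flow on a larger interval, contradicting the maximality of $[0,T)$.

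The main obstacle is the coupling of the \emph{nonlocal} term $\bar\kappa[u]$ to the \emph{nonlinear free} boundary condition. On the existence side, the fixed-point map must be shown to respect the oblique data with compatibility up to the order demanded by the $C^{2+\alpha,1+\alpha/2}$-theory, which forces a careful construction of the tangential gauge $\varphi$ so that the boundary conditions are preserved exactly by the approximate problems. On the continuation side, the boundary contributions produced by the integration by parts in the interpolation inequalities have to be absorbed using the orthogonality condition \eqref{flow} together with bounds on $\Sig\kappa$, so that the geometry of the support curve enters the constants in a controlled, explicit way.
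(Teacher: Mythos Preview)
Your outline for short-time existence matches the paper's: both follow Stahl \cite{StahlDiss} by writing the flow as a quasilinear parabolic Neumann problem for a height function over $c_0$ and treating $\bar\kappa$ as a bounded lower-order perturbation; your explicit fixed-point decoupling of the nonlocal term is a clean way to carry this out. For the continuation step the paper again refers to Stahl's gradient estimates on the local graph representation, whereas you propose to bound the $\|\ps^k\kappa\|_\infty$ directly via their evolution equations and interpolation---this is essentially the route of Section~\ref{S6} and works equally well once the constants are under control.

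There is, however, a genuine gap in your derivation of the length lower bound. You invoke an ``isoperimetric-type estimate'' together with $A_0>0$ to obtain $L(\ct)\geq L_{\min}>0$. But Proposition~\ref{exis} carries no hypothesis on $c_0$ beyond being an admissible initial curve: the oriented area $A_0$ may be zero or negative, and even when $A_0\neq 0$ the relevant isoperimetric inequality requires embeddedness and containment in the outer domain (compare Proposition~\ref{Laengenachuntenabsch}, where precisely these extra assumptions are imposed). So this step does not go through in the stated generality, and without it the constants in your interpolation inequalities and the bound on $\tfrac{\de}{\de t}\bar\kappa$ are not controlled.

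The repair is elementary and avoids the area altogether. From $|\kappa|\leq M$ you already get $|\bar\kappa|\leq M$, hence $(\kappa-\bar\kappa)^2\leq 4M^2$, and the evolution of length gives
\[
\frac{\de}{\de t}L(\ct)=-\int_a^b(\kappa-\bar\kappa)^2\,\de s\geq -4M^2 L(\ct),
\]
so $L(\ct)\geq L_0\, e^{-4M^2 T}>0$ by Gronwall whenever $T<\infty$. With this in hand $\bar\kappa$ is bounded and Lipschitz in $t$ uniformly on $[0,T)$, and your bootstrap (or Stahl's graph estimates, as the paper uses) proceeds as described.
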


\begin{proof}
 The proof works as in \cite[Theorem~7.24]{StahlDiss},
 where the case of curves moving by (\ref{flow}) but without the $\bar\kappa-$term is treated. Stahl first shows in \cite[Chapter~2]{StahlDiss} 
 short time existence by writing the curves as a graph over the initial curve for a short time, where the scalar function in the graph representation satisfies a certain parabolic Neumann equation. We only have to add the corresponding term of $\bar\kappa\nu$ into this equation. 
 Stahl then proved in \cite[Chapter~7]{StahlDiss} gradient estimates on the local graph representation under the condition that the curvature is uniformly bounded. Under this uniform bound, the term $\bar \kappa$ is bounded and it is a uniform Lipschitz term and therefore harmless in these considerations. By standard parabolic theory, we hence get for every $k\in\N$ (away from $t_0=0$) $C^{k+\alpha,\frac{k+\alpha}{2}}$-estimates for the local graph representation which is need in the proof of  $\max_{[a,b]}|\kappa|(\cdot,t) \to\infty, $ $t\to T<\infty$. In \cite[Chapter~5]{MeineDiss}
 we study in detail, how the estimates of Stahl are reproven for our situation. The rest of the proof is done as in \cite[Theorem~7.24]{StahlDiss}.
 \end{proof}

\begin{remark}
 All the results from the following lemma untill Theorem~\ref{Linftyabschaetzung} are based on the Diplom thesis \cite{AchimDipl} of Achim Roth. We sometimes changed his proofs slightly and we rearranged his results for our purpose.
\end{remark}

    \begin{lemma}[Evolution equations] \label{evolution}
     Let $c:[a,b]\times[0,T)\to \Rzwei $ be a solution of (\ref{flow}).
     Then it follows that
      \begin{alignat}{2}
       \partial_t (\de s) &= - \kappa (\kappa-\bar\kappa)\de s & \qquad\qquad\mbox{for }  &t\geq 0, \label{evolution1}\\
        \partial_t \partial_s - \partial_s\partial_t &=  \kappa (\kappa-\bar\kappa)\, \partial_s & \mbox{for } &t\geq 0,\label{evolution2}\\
	\partial_t \tau &= \partial_s \kappa\, \nu & \qquad\qquad\mbox{for }  &t> 0,\label{evolution3}\\
	 \partial_t \nu &= -\partial_s\kappa\,\tau & \qquad\qquad\mbox{for }  &t> 0,\label{evolution4}\\
	  \partial_t \kappa &= \partial^2_s\kappa + \kappa^2(\kappa -\bar\kappa) & \qquad\qquad\mbox{for }  &t> 0.\label{evolution5}
      \end{alignat}
    \end{lemma}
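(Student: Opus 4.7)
The strategy is the standard one for curves evolving by a normal velocity: derive (\ref{evolution1}) from the variation of $|\partial_p c|$, get the commutator (\ref{evolution2}) as a direct consequence, then read off (\ref{evolution3})--(\ref{evolution5}) by differentiating $\tau=\partial_s c$, $\nu = J\tau$ and $\kappa=\langle\partial_s\tau,\nu\rangle$. The $\bar\kappa$ term enters only algebraically; since it depends on $t$ alone, it commutes with spatial derivatives and makes no conceptual difference compared with the ordinary curve shortening calculation. The Neumann boundary conditions play no role here, as the statement is pointwise in the interior.

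Concretely, I would first compute $\partial_t |\partial_p c|^2 = 2\langle \partial_p\partial_t c,\partial_p c\rangle$ using $\partial_t c=(\kappa-\bar\kappa)\nu$ and the Frenet-type identity $\partial_p\nu=-\kappa|\partial_p c|\tau$. The tangential part of $\partial_p\bigl((\kappa-\bar\kappa)\nu\bigr)$ is $-(\kappa-\bar\kappa)\kappa|\partial_p c|\tau$, and the normal part $\partial_p(\kappa-\bar\kappa)\,\nu$ is orthogonal to $\partial_p c$, so one finds $\partial_t|\partial_p c| = -\kappa(\kappa-\bar\kappa)|\partial_p c|$, which is (\ref{evolution1}). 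Then (\ref{evolution2}) follows from $\partial_s = |\partial_p c|^{-1}\partial_p$ and the fact that $\partial_t$ and $\partial_p$ commute: the extra factor comes from $\partial_t\bigl(|\partial_p c|^{-1}\bigr)=\kappa(\kappa-\bar\kappa)|\partial_p c|^{-1}$.

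For (\ref{evolution3}), apply the commutator to $\tau=\partial_s c$:
\begin{align*}
\partial_t\tau = \partial_s\partial_t c + \kappa(\kappa-\bar\kappa)\partial_s c = \partial_s\kappa\,\nu - (\kappa-\bar\kappa)\kappa\tau + \kappa(\kappa-\bar\kappa)\tau = \partial_s\kappa\,\nu,
\end{align*}
using $\partial_s\nu = -\kappa\tau$. Then (\ref{evolution4}) is immediate from $\nu = J\tau$ and $J\nu = -\tau$. Finally, for (\ref{evolution5}), write $\kappa = \langle\partial_s\tau,\nu\rangle$, differentiate in $t$, use (\ref{evolution2}) once more to swap $\partial_t$ and $\partial_s$ in $\partial_t\partial_s\tau$, and plug in (\ref{evolution3}) and (\ref{evolution4}); the orthogonality $\langle\partial_s\tau,\partial_t\nu\rangle = \kappa\langle\nu,-\partial_s\kappa\,\tau\rangle = 0$ kills the cross term, leaving exactly $\partial_s^2\kappa + \kappa^2(\kappa-\bar\kappa)$.

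There is no serious obstacle; the only thing to be careful about is keeping track of where the $\bar\kappa$-contribution appears (in the length element and the commutator) versus where it disappears (in $\partial_t\tau$, because $\bar\kappa$ is $s$-independent so $\partial_s\bar\kappa = 0$). Once (\ref{evolution1}) and (\ref{evolution2}) are in place, the remaining identities are algebraic consequences of Frenet's equations in the plane.
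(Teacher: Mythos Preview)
Your proof is correct and is exactly the standard calculation the paper has in mind; the paper itself does not spell out any details and simply cites \cite[Lemma~2.1]{DziukKuwertSch}. If anything, you have been more careful than the paper, and your remark that $\bar\kappa$ is $s$-independent (so $\partial_s\bar\kappa=0$) is precisely the reason the formulas look identical to the curve-shortening case except for the $\bar\kappa$ in the length element and the commutator.
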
 

    \begin{proof}
    This is an easy calculation, see for example \cite[Lemma~2.1]{DziukKuwertSch}.
\end{proof}

    \begin{lemma} \label{shortening}
    The area preserving curve shortening flow shortens the length of the curves, we have $ \frac{\de}{\de t} L(c_t)\leq 0 \quad \forall t\in[0,T).$
    \end{lemma}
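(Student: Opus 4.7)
The plan is a short computation combining the evolution equation for $\de s$ with the Cauchy--Schwarz inequality, with no additional boundary contribution to worry about because the length formula $L(c_t)=\int_a^b \de s$ differentiates directly via (\ref{evolution1}).

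First, I would compute
\begin{align*}
\dt L(c_t) \;=\; \int_a^b \partial_t(\de s) \;=\; -\int_a^b \kappa(\kappa-\bar\kappa)\,\de s \;=\; -\int_a^b \kappa^2\,\de s + \bar\kappa \int_a^b \kappa\,\de s,
\end{align*}
using (\ref{evolution1}). By the very definition of $\bar\kappa$, one has $\int_a^b \kappa\,\de s = \bar\kappa\, L(c_t)$, so the right-hand side equals $-\int_a^b \kappa^2\,\de s + \bar\kappa^{\,2}\, L(c_t)$.

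Second, I would invoke the Cauchy--Schwarz inequality on $[a,b]$ to estimate
\begin{align*}
\bar\kappa^{\,2}\, L(c_t) \;=\; \frac{\bigl(\int_a^b \kappa\,\de s\bigr)^2}{L(c_t)} \;\le\; \int_a^b \kappa^2\,\de s,
\end{align*}
which immediately yields $\dt L(c_t)\le 0$ and gives the claim.

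I do not anticipate any real obstacle here: the only small point to notice is that although the boundary points $c(a,t),c(b,t)$ are allowed to move along $\Sigma$, this motion is accounted for automatically by differentiating $\int_a^b|\partial_p c|\,\de p$ under the integral sign in the fixed parameter $p$, so no extra boundary term appears. The Neumann condition is not used in this particular estimate, only the equation of motion.
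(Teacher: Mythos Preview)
Your proof is correct and essentially the same as the paper's. The paper organizes the computation slightly differently: instead of expanding and applying Cauchy--Schwarz, it uses $\int_a^b(\kappa-\bar\kappa)\,\de s=0$ to rewrite $-\int_a^b\kappa(\kappa-\bar\kappa)\,\de s=-\int_a^b(\kappa-\bar\kappa)^2\,\de s\le 0$ directly; this is of course equivalent to your Cauchy--Schwarz step, and has the minor bonus of yielding the exact identity $\dt L(c_t)=-\int_a^b(\kappa-\bar\kappa)^2\,\de s$, which is used again later in the paper.
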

      \begin{proof}
	  We use (\ref{evolution1}) and $\int_a^b (\kappa-\bar\kappa) \de s  \equiv 0$ to get 
	  \begin{align*}
	   \frac{\de}{\de t} L(c_t) =  - \int_a^b  \kappa (\kappa-\bar\kappa) \de s = - \int_a^b  (\kappa-\bar\kappa)^2 \de s \leq 0 \quad \forall t\in[0,T).
	  \end{align*}
      \end{proof}


    \begin{definition}
     Let $\Sigma$ be a support curve and let $f:[a,b]\to\Rzwei$ be a curve with $f(a),f(b)\in\Sigma$. Then we call a curve $\gamma:[\tilde a,\tilde b]\to\Sigma\subset\Rzwei$ with $\gamma(\tilde a)=f(a)$ and $\gamma(\tilde b)=f(b)$ a \emph{boundary curve on $\Sigma$ with respect to $f$}.
    \end{definition}

 \begin{lemma} \label{lift}
 Let $\ftilde:\R \to\Sigma$ be the periodic extension of $\fsig$.  
 Define
\begin{align*}
 a_0&\defi \fsig^{-1}(c(a,0)), \\
b_0&\defi \left\{
\begin{array}{ll} 
\fsig^{-1}(c(b,0)) &\mbox{ if } \fsig^{-1}(c(b,0))> a_0 \\ \fsig^{-1}(c(b,0)) + (\Sig b - \Sig a) &\mbox{ if } \fsig^{-1}(c(b,0))\leq a_0.
\end{array}
\right. 
\end{align*}
Then there exist unique maps
\begin{align*}
 a:[0,T)\to \R \mbox{ with } a(0)=a_0 \mbox{ and } \ftilde(a(t)) = c(a,t) \ \forall t\in[0,T),\\
b:[0,T)\to \R \mbox{ with } b(0)=b_0 \mbox{ and } \ftilde(b(t)) = c(b,t)\ \forall t\in[0,T).
\end{align*}
The maps $a$ and $b$ are continuously differentiable on $[0,T)$ and smooth on $(0,T)$.
\end{lemma}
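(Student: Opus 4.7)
The plan is to view $\ftilde:\R\to\Sigma$ as a smooth covering map and to lift the boundary traces of the flow through it. Since $\Sigma$ is smoothly closed and $\fsig$ is arclength-parametrized, $\ftilde$ is a smooth periodic immersion with unit speed; around each $t^*\in\R$ there is an open neighborhood on which $\ftilde$ is a diffeomorphism onto an open arc of $\Sigma$, and the preimage of any $p\in\Sigma$ under $\ftilde$ is a discrete translate of $(\Sig b-\Sig a)\mathbb Z$.

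First, I would note that by Proposition \ref{exis} the trace maps $t\mapsto c(a,t)$ and $t\mapsto c(b,t)$ are continuous on $[0,T)$ and smooth on $(0,T)$. The classical path-lifting lemma for covering maps then yields a unique continuous $a:[0,T)\to\R$ with $a(0)=a_0$ and $\ftilde(a(t))=c(a,t)$ for all $t$, and likewise a unique continuous $b:[0,T)\to\R$ with $b(0)=b_0$ and $\ftilde(b(t))=c(b,t)$. The case distinction in the definition of $b_0$ simply selects the sheet of $\ftilde^{-1}(c(b,0))$ for which $b_0\geq a_0$, which pins down the correct initial value for the lift.

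For the regularity, I would argue locally. Fix $t_0\in[0,T)$ and pick an open neighborhood $U$ of $a(t_0)$ on which $\ftilde|_U$ is a diffeomorphism onto an arc of $\Sigma$. By continuity of the lift, $a(t)\in U$ on some interval $I$ around $t_0$, so on $I$ the identity $a(t)=(\ftilde|_U)^{-1}(c(a,t))$ holds. Since $(\ftilde|_U)^{-1}$ is smooth and $t\mapsto c(a,t)$ is $C^{2+\alpha,1+\alpha/2}$ on $[0,T)$ and smooth on $(0,T)$, the lift $a$ inherits continuous differentiability on $[0,T)$ and smoothness on $(0,T)$. The same argument applies verbatim to $b$.

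There is no substantial obstacle here; the content is entirely standard covering-space lifting, with the case distinction in the definition of $b_0$ being the only bookkeeping subtlety (needed so that the parameter interval $[a_0,b_0]$ is well-oriented and can later parametrize a boundary curve on $\Sigma$ joining $c(a,0)$ to $c(b,0)$).
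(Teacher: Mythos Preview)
Your argument is correct and follows essentially the same route as the paper: both use the path-lifting property of the covering map $\ftilde:\R\to\Sigma$ to obtain unique continuous lifts with the prescribed initial values, and both deduce the regularity from the fact that $\ftilde$ is locally a smooth diffeomorphism. The only cosmetic difference is that the paper first lifts on compact subintervals $[0,T-\epsilon]$ (since the cited reference states path-lifting for compact intervals) and then glues by uniqueness, whereas you apply the lifting directly on $[0,T)$; this is not a substantive distinction.
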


\begin{proof}
 Choose any $\epsilon \in (0,T)$. We consider the paths $c(a,\cdot)|_{[0,T-\epsilon]}$ and $c(b,\cdot)|_{[0,T-\epsilon]}$ in $\Sigma$. We have $\ftilde(a_0)= c(a,0) \in\Sigma$ and $\ftilde(b_0)= c(b,0) \in\Sigma$. By \cite[Chapter~5.6, Proposition~2]{DoCarmo}, we get unique lifts
\begin{align*}
 a_\epsilon: [0,T-\epsilon] \to\R \mbox{ with } a_\epsilon(0)= a_0 \mbox{ and } \ftilde\circ a_\epsilon = c(a,\cdot)|_{[0,T-\epsilon]},\\
b_\epsilon: [0,T-\epsilon] \to\R \mbox{ with } b_\epsilon(0)= b_0 \mbox{ and } \ftilde\circ b_\epsilon = c(b,\cdot)|_{[0,T-\epsilon]}.
\end{align*} 
The map $ a(t) \defi a_\epsilon(t) \ \ \forall t\in [0,T-\epsilon] \ \ \forall \epsilon \in(0,T)$
is well-defined because of the uniqueness of the lift.
The same can be done with $b$. The differentiability follows because $c(a,\cdot)$ and $c(b,\cdot)$ are continuously differentiable in zero and smooth on $(0,T-\epsilon]$.
\end{proof}

\begin{lemma}[Existence of boundary curves] \label{defiboundarycurve}
  The family of curves $\tilde\gamma:[0,1]\times[0,T)\to\Sigma$, defined as
      \begin{align*}
       \tilde\gamma(p,t)\defi\ftilde\left((b(t)-a(t))p + a(t)\right) \ \ \forall (p,t) \in [0,1]\times[0,T),
      \end{align*}
	is a family of smooth boundary curves $\tilde\gamma_t\defi\tilde\gamma(\cdot,t)$ on $\Sigma$ with respect to $c_t$. With respect to $t$, they are continuously differentiable in $t=0$ and smooth for $t>0$. 
	They start at the points $c(a,t)$, follow the support curve $\Sigma$  and reach the points $c(b,t)$. If $a(t)<b(t)$ then $\tildega$ has the same positive orientation as $\ftilde$. If $a(t)=b(t)$, then $\tildega (p,t)= c(a,t)=c(b,t)$ $\forall p\in[0,1]$.
\end{lemma}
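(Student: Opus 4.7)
The proof should be essentially a direct verification: everything is built to work by construction, and all the regularity and geometric content is supplied by the previous lemma on the lifts $a(t)$ and $b(t)$. My plan is therefore to unpack the definition of $\tildega$ and check in turn (i) that each $\tildegat$ is a boundary curve on $\Sigma$ with respect to $\ct$, (ii) the claimed regularity in $p$ and $t$, (iii) the orientation statement, and (iv) the degenerate case $a(t)=b(t)$.

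For (i), the endpoint values follow immediately: $\tildega(0,t)=\ftilde(a(t))=c(a,t)$ and $\tildega(1,t)=\ftilde(b(t))=c(b,t)$ by the defining property of the lifts from Lemma~\ref{lift}, and $\tildega(p,t)\in\Sigma$ for every $p$ because $\ftilde$ takes values in $\Sigma$. Together with smoothness in $p$ (see (ii)), this is exactly the requirement of the definition of a boundary curve on $\Sigma$ with respect to $\ct$.

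For (ii), the affine map $\Phi(p,t)\defi (b(t)-a(t))p+a(t)$ is smooth (indeed linear) in $p$ and inherits the temporal regularity of $a$ and $b$ from Lemma~\ref{lift}, namely $C^1$ on $\nut$ and $C^\infty$ on $(0,T)$. Since $\ftilde:\R\to\Sigma$ is smooth as the periodic extension of the smooth closed curve $\fsig$, the composition $\tildega=\ftilde\circ\Phi$ has the stated joint regularity. In particular $\tildegat$ is smooth in $p$ for each fixed $t$, and $t\mapsto\tildega(p,t)$ is $C^1$ at $t=0$ and $C^\infty$ on $(0,T)$.

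For (iii) and (iv), if $a(t)<b(t)$ then $p\mapsto\Phi(p,t)$ is strictly increasing on $[0,1]$, so $\tildegat$ traces $\Sigma$ from $c(a,t)$ to $c(b,t)$ in the same positive orientation as $\ftilde$ (which in turn is the positive orientation of $\fsig$ by construction). If $a(t)=b(t)$, then $\Phi(p,t)\equiv a(t)$, hence $\tildega(p,t)=\ftilde(a(t))=c(a,t)=c(b,t)$ for all $p$, exactly as claimed. There is no genuine obstacle here: the only point that requires some care is making sure the normalization of $b_0$ in Lemma~\ref{lift} (shifting by the period $\Sig b - \Sig a$ when necessary) is consistent with calling the resulting orientation ``positive'', but this is already built into the choice $b_0\geq a_0$ made in that lemma, so at $t=0$ (and hence, by continuity and the uniqueness of lifts, on a neighborhood of $0$) the orientation assertion is automatic.
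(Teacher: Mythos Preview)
Your proof is correct and follows essentially the same approach as the paper: a direct verification from the definition, using the regularity of the lifts $a(t),b(t)$ from Lemma~\ref{lift} and the smoothness of $\ftilde$. In fact, your argument is more carefully spelled out than the paper's, which merely notes that ``the smoothness comes from the smoothness of $\ftilde$ and $c$'' and briefly handles the degenerate case $a(t)=b(t)$.
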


\begin{proof}
	The smoothness comes from the smoothness of $\ftilde$ and $c$.
	In the case $a(t)=b(t)$, the curve $\tildegat\equiv  \ftilde \circ a(t)= \ftilde\circ b(t)=c(b,t)= c(a,t)$ is clearly not regular. But in this case we have $c(a,t)=c(b,t)$, what means that $\ct$ is closed. 
\end{proof}

    \begin{definition} \label{orientedVolume} 
      Let $c:\ab\times[0,T)\to\Rzwei$ be a solution of (\ref{flow}).
      Consider  a $C^1$-family of smooth curves $\gamma:[\tilde a,\tilde b]\times [0,T)\to\Sigma$ with $\gamma(\tilde a,t)=c(a,t)$ and $\gamma(\tilde b,t)=c(b,t)$ for all $t\in[0,T)$, i.e.~$\gamma_t\defi\gamma(\cdot,t)$ is a boundary curve on $\Sigma$ with respect to $c_t\defi c(\cdot,t)$. 
      Then for each $ t\in[0,T)$, we call the following expression the \emph{oriented area enclosed by $c_t$ and $\Sigma$:}
      \begin{align} \label{enclosedVolume}
       A(c_t,\gamma_t)\defi \frac{1}{2} \int_{c_t} p^1 \de p^2 - p^2 \de p^1 - \frac{1}{2} \int_{\gamma_t} p^1 \de p^2 - p^2 \de p^1.
      \end{align}
    \end{definition}

    \begin{remark} 
    Let $c_t-\gat$ be the assembled curve of $c_t$ and $-\gat$ (the orientation of $\gat$ is inverted). Then $c_t-\gat$ is immersed closed and the formula indicates the oriented area traced out by $c_t-\gamma_t$. It takes account of the orientation and how often the  pieces of areas are circulated.
      \end{remark}

 \begin{lemma} \label{blob}
      The area preserving curve shortening flow preserves the oriented area enclosed by $c_t$ and $\Sigma$. More precisely,
	\begin{align*}
	 \frac{\de}{\de t} A(c_t,\gamma_t) = - \int_a^b \langle \partial_t c_t,\nu\rangle \de s \equiv 0 \quad \forall t\in[0,T)
	\end{align*}
	and therefore $A(c_t,\gamma_t) \equiv A(c_0,\gamma_0) \eqqcolon A_0$ for all $t\in[0,T)$.
    \end{lemma}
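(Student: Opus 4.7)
The plan is to differentiate $A(c_t,\gamma_t)$ directly under the integral and show that the resulting boundary terms cancel, leaving a bulk term that vanishes identically thanks to the definition of $\bar\kappa$.

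First I would rewrite the two line integrals in the definition of $A(c_t,\gamma_t)$ in parametric form, using the skew-symmetric form $\omega(v,w)\defi v^1 w^2 - v^2 w^1$, so that
\begin{align*}
 A(c_t,\gamma_t) = \tfrac{1}{2}\int_a^b \omega(c_t,\partial_p c_t)\, \de p - \tfrac{1}{2}\int_{\tilde a}^{\tilde b} \omega(\gamma_t,\partial_p \gamma_t)\, \de p.
\end{align*}
Differentiating in $t$ (permissible by the regularity of $c$ from Proposition~\ref{exis} and the $C^1$-assumption on $\gamma$) and performing one integration by parts on each of the two ``$\omega(\cdot,\partial_p \partial_t \cdot)$'' terms, and using $\omega(v,w)=-\omega(w,v)$, produces
\begin{align*}
 \frac{\de}{\de t}A(c_t,\gamma_t) &= \int_a^b \omega(\partial_t c_t,\partial_p c_t)\,\de p + \tfrac{1}{2}\bigl[\omega(c_t,\partial_t c_t)\bigr]_a^b \\
 &\quad - \int_{\tilde a}^{\tilde b}\omega(\partial_t \gamma_t,\partial_p\gamma_t)\,\de p - \tfrac{1}{2}\bigl[\omega(\gamma_t,\partial_t \gamma_t)\bigr]_{\tilde a}^{\tilde b}.
\end{align*}

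The key observation is that the matching conditions $\gamma_t(\tilde a)=c(a,t)$, $\gamma_t(\tilde b)=c(b,t)$ imply, by differentiation in $t$, that $\partial_t \gamma_t(\tilde a)=\partial_t c(a,t)$ and analogously at $\tilde b$; hence the two boundary terms cancel. Moreover, since $\gamma_t$ takes values in $\Sigma$ for every $t$, both $\partial_p \gamma_t$ and $\partial_t \gamma_t$ at any fixed point are tangent vectors to the one-dimensional curve $\Sigma$ at the same point and are therefore parallel, so $\omega(\partial_t\gamma_t,\partial_p\gamma_t)\equiv 0$ and the entire $\gamma$-integral drops out.

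This leaves $\frac{\de}{\de t}A(c_t,\gamma_t)=\int_a^b\omega(\partial_t c_t,\partial_p c_t)\,\de p$. A direct calculation (using $\nu=J\tau$ with $J$ the rotation by $+\pi/2$) gives $\omega(\partial_t c,\partial_p c) = -\langle\partial_t c,\nu\rangle\,|\partial_p c|$, whence
\begin{align*}
 \frac{\de}{\de t}A(c_t,\gamma_t) = -\int_a^b \langle \partial_t c_t,\nu\rangle\,\de s.
\end{align*}
Substituting the flow equation $\partial_t c=(\kappa-\bar\kappa)\nu$ and using the very definition $\bar\kappa = \frac{1}{L}\int_a^b\kappa\,\de s$ yields $\int_a^b(\kappa-\bar\kappa)\,\de s=0$, so the derivative vanishes identically and $A(c_t,\gamma_t)=A(c_0,\gamma_0)=:A_0$ on $[0,T)$. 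No real obstacle arises; the only point requiring care is to verify the cancellation of the two boundary contributions coming from the integrations by parts, which is why the matching of endpoint derivatives between $\gamma_t$ and $c_t$ has to be exploited.
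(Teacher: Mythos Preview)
Your proof is correct and follows essentially the same approach as the paper: both rewrite the area using the skew form (the paper uses $\langle\cdot,J\cdot\rangle$, you use $\omega$), differentiate under the integral, integrate by parts, observe that the boundary terms cancel because the endpoints of $c_t$ and $\gamma_t$ agree (the paper phrases this as ``$c_t-\gamma_t$ is closed''), and that the $\gamma$-bulk term vanishes since $\partial_t\gamma_t$ and $\partial_p\gamma_t$ are both tangent to $\Sigma$. The remaining bulk term is then identified with $-\int\langle\partial_t c_t,\nu\rangle\,\de s$ and shown to vanish by the definition of $\bar\kappa$.
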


    \begin{proof}
	 An easy calculation shows 
	 \begin{align} \label{bla1}
       A(c_t,\gamma_t)= - \frac{1}{2} \int_a^b \langle c_t,\nu\rangle \de s + \frac{1}{2}\int_{\tilde a}^{\tilde b}\langle \gamma_t,\nu_{\gamma_t}\rangle\de s_{\gamma_t},
      \end{align}
      where $\nu_{\gamma_t}$ denotes the normal of $\gamma_t$ and $\de s_{\gamma_t}\defi |\partial_p \gamma(p,t)|\de p$.
	 We use this to get
	 \begin{align*}
	 \frac{\de}{\de t} A(c_t,\gamma_t) = -\frac{1}{2} \int_a^b \langle \partial_t c_t,J\partial_p c_t\rangle + \langle c_t, J \partial_p\partial_t c_t\rangle \de p + \frac{1}{2} \int_{\tilde a}^{\tilde b} \langle \partial_t\gamma_t,J\partial_p\gamma_t\rangle + \langle \gamma_t,J\partial_p\partial_t\gamma_t\rangle \de p.
	\end{align*}
	Some calculations show
	\begin{align*}
	  \frac{\de}{\de t} A(c_t,\gamma_t) = &- \int_a^b \langle \partial_t c_t,J\partial_p c_t\rangle - \frac{1}{2} \int_a^b \partial_p\langle c_t,J\partial_t c_t\rangle \de p \\
	  &+ \int_{\tilde a}^{\tilde b} \langle \partial_t \gamma_t,J\partial_p \gamma_t\rangle + \frac{1}{2} \int_{\tilde a}^{\tilde b} \partial_p\langle \gamma_t,J\partial_t \gamma_t\rangle \de p. 
	\end{align*}
	  The boundary terms 
	  vanish because the curve $c_t-\gat$ is closed.
	  Moreover, we have $\gamma(p,t)\in\Sigma$ for all $(p,t)$ in $[\tilde a,\tilde b]\times [0,T)$. 
	  Hence $\partial_t\gamma(p,t), \partial_p\gamma (p,t) \in T_{\gamma(p,t)}\Sigma$ for all $(p,t)$ in $[\tilde a,\tilde b]\times [0,T)$. 
	  It follows that $\langle \partial_p \gamma_t,J\partial_t \gamma_t\rangle \equiv 0$ on $[\tilde a,\tilde b]\times [0,T)$, which implies the result.
    \end{proof}

\begin{lemma}\label{lemmastahl5}
 Let $c:[a,b]\times[0,T)\to \Rzwei $ be a solution of (\ref{flow}).
 Then for all $t\in (0,T)$, we have
\begin{align*}
 \partial_s \kappa (a,t) &= \left(\kappa(a,t) - \bar\kappa(t)\right) \nix^\Sigma\kappa\left(\nix^\Sigma f^{-1}(c(a,t))\right)\quad \mbox{ and }\\
 \partial_s \kappa (b,t) &= -\left(\kappa(b,t) - \bar\kappa(t)\right) \nix^\Sigma\kappa\left(\nix^\Sigma f^{-1}(c(b,t))\right).
\end{align*}
\end{lemma}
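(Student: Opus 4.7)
My plan is to differentiate the Neumann angle condition in time at the endpoint and use the evolution equation (\ref{evolution3}) for $\tau$, after first extracting the velocity along $\Sigma$ of the boundary traces $c(a,t)$ and $c(b,t)$.

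\textbf{Step 1 (identify $\nu$ with $\pm\,{}^\Sigma\vec\tau$ at the endpoints).} The oriented angle conditions in (\ref{flow}) say $\angle_{\Rzwei}(\tau(a,t),\,{}^\Sigma\vec\tau(c(a,t)))=\pi/2$ and $\angle_{\Rzwei}(\tau(b,t),\,{}^\Sigma\vec\tau(c(b,t)))=-\pi/2$. Since $\nu=J\tau$ with $J$ rotation by $+\pi/2$, this gives
\begin{align*}
\nu(a,t) &= \,{}^\Sigma\vec\tau(c(a,t)), & \nu(b,t) &= -\,{}^\Sigma\vec\tau(c(b,t)).
\end{align*}
In particular $\,{}^\Sigma\vec\nu(c(a,t))=J\nu(a,t)=-\tau(a,t)$ and $\,{}^\Sigma\vec\nu(c(b,t))=\tau(b,t)$.

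\textbf{Step 2 (compute $a'(t)$ and $b'(t)$).} Using the lifts from Lemma~\ref{lift}, $c(a,t)=\tilde f(a(t))$ and $c(b,t)=\tilde f(b(t))$. Differentiating in $t$,
\begin{align*}
\partial_t c(a,t) &= a'(t)\,{}^\Sigma\vec\tau(c(a,t)) = a'(t)\,\nu(a,t),
\end{align*}
and comparing with the flow equation $\partial_t c=(\kappa-\bar\kappa)\nu$ evaluated at $p=a$ yields $a'(t)=\kappa(a,t)-\bar\kappa(t)$. Analogously at $p=b$, $\partial_t c(b,t)=b'(t)\,{}^\Sigma\vec\tau(c(b,t))=-b'(t)\nu(b,t)$, giving $b'(t)=-(\kappa(b,t)-\bar\kappa(t))$.

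\textbf{Step 3 (differentiate the angle condition).} The orthogonality $\langle \tau(a,t),\,{}^\Sigma\vec\tau(c(a,t))\rangle=0$ holds for all $t\in[0,T)$. For $t\in(0,T)$, differentiate in $t$:
\begin{align*}
0 = \bigl\langle \partial_t\tau(a,t),\,{}^\Sigma\vec\tau(c(a,t))\bigr\rangle + \bigl\langle \tau(a,t),\,\tfrac{d}{dt}\bigl[{}^\Sigma\vec\tau(c(a,t))\bigr]\bigr\rangle.
\end{align*}
By (\ref{evolution3}) and Step~1, $\partial_t\tau(a,t)=\partial_s\kappa(a,t)\,\nu(a,t)$ and $\langle\nu(a,t),\,{}^\Sigma\vec\tau(c(a,t))\rangle=1$, so the first term equals $\partial_s\kappa(a,t)$. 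For the second, since $\,{}^\Sigma f$ is parametrized by arclength, the Frenet formula gives $\partial_s({}^\Sigma\vec\tau)=\,{}^\Sigma\kappa\,{}^\Sigma\vec\nu$, hence
\begin{align*}
\tfrac{d}{dt}\bigl[{}^\Sigma\vec\tau(c(a,t))\bigr] = a'(t)\,{}^\Sigma\kappa\,{}^\Sigma\vec\nu(c(a,t)) = -a'(t)\,{}^\Sigma\kappa\,\tau(a,t),
\end{align*}
using Step~1. Thus the second term is $-a'(t)\,{}^\Sigma\kappa$, and substituting $a'(t)$ from Step~2 produces the claimed formula at $p=a$. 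The computation at $p=b$ is identical up to signs: the first term becomes $-\partial_s\kappa(b,t)$ (because $\langle\nu(b,t),\,{}^\Sigma\vec\tau(c(b,t))\rangle=-1$), the second becomes $+b'(t)\,{}^\Sigma\kappa$, and $b'(t)=-(\kappa(b,t)-\bar\kappa(t))$ gives the result.

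\textbf{Main obstacle.} The only real issue is bookkeeping the signs introduced by the oriented angles at $a$ versus $b$ and by the rotation $J$; everything else is a direct time-derivative computation. One needs to be careful that ${}^\Sigma\vec\tau$ is only defined on $\Sigma$, so one must differentiate it along the curve $t\mapsto c(a,t)\in\Sigma$ using the arclength parametrization of ${}^\Sigma f$, which is precisely where ${}^\Sigma\kappa$ enters.
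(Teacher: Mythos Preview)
Your proof is correct and follows essentially the same strategy as the paper: differentiate the orthogonality condition at the boundary in time and read off $\partial_s\kappa$ from the evolution of the unit frame. The only cosmetic differences are that the paper differentiates $\langle\nu,\,{}^\Sigma\vec\nu\circ c\rangle=0$ (using (\ref{evolution4}) and a local ambient extension of ${}^\Sigma\vec\nu$) rather than $\langle\tau,\,{}^\Sigma\vec\tau\circ c\rangle=0$, and it does not pass through the explicit lifts $a(t),b(t)$; since $\nu=J\tau$ and ${}^\Sigma\vec\nu=J\,{}^\Sigma\vec\tau$, the two computations are equivalent.
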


\begin{proof}
    We consider the vector fields
    \begin{align*}
     \nix^\Sigma \vec\tau \coloneqq \nix^\Sigma\tau\circ \nix^\Sigma f^{-1}:\Sigma\to\Rzwei \mbox{ and }   \nix^\Sigma\vec\nu \coloneqq \nix^\Sigma\nu\circ \nix^\Sigma f^{-1}:\Sigma\to\Rzwei.
    \end{align*}
      They are smooth on the smooth curve $\Sigma$. Therefore, we extend them locally smoothly on an open subset of $\Rzwei$ so that we can differentiate them. 
      We use the boundary condition 
      \begin{align*}
       \langle \nu (p,t),\nix^\Sigma \vec \nu(c(p,t))\rangle= 0 \text{ for } t\in [0,T), p=a \text{ or } p=b,
      \end{align*}
 the evolution equation (\ref{evolution4}) and the equation $\partial_t c = (\kappa-\bar\kappa)\nu$ to get for $p=a$ or $p=b$
      \begin{align} \label{randgl1}
       0=\partial_t \langle \nu, \nix^\Sigma \vec \nu \circ c\rangle= -\partial_s \kappa \langle\tau,\nix^\Sigma \vec\nu\circ c\rangle + (\kappa - \bar\kappa) \langle \nu,(D\nix^\Sigma\vec\nu)\circ c \cdot \nu\rangle, \quad t\in (0,T).
      \end{align}
      The boundary conditions of our problem can be described as  $\tau(a,t)= -\nix^\Sigma \vec\nu(c(a,t))$ and $\tau(b,t)= \nix^\Sigma \vec\nu(c(b,t))$. We use this in (\ref{randgl1}) and get for example in $p=a$
      \begin{align*}
       0= \partial_s \kappa + (\kappa-\bar\kappa) \langle  \nix^\Sigma \vec\tau \circ c,(D\nix^\Sigma\vec\nu)\circ c \cdot \nix^\Sigma \vec\tau \circ c\rangle. 
      \end{align*}
      The expression $\langle  \nix^\Sigma \vec\tau \circ c,(D\nix^\Sigma\vec\nu)\circ c \cdot \nix^\Sigma \vec\tau \circ c\rangle$ at $(a,t)$ is actually $-\nix^\Sigma \kappa$ at $\nix^\Sigma f^{-1} (c(a,t))$ because we have
      \begin{align*}
       \nix^\Sigma \kappa = - \langle \nix^\Sigma \tau,\partial_s \nix^\Sigma\nu\rangle = - \langle \nix^\Sigma \tau, (D \nix^\Sigma \vec\nu)\circ \nix^\Sigma f \cdot \nix^\Sigma \tau\rangle.
      \end{align*}
      The result for $p=b$ follows analogously.
\end{proof}

\begin{proposition}\label{kappageq0}
  Let $c:[a,b]\times[0,T)\to \Rzwei $ be a solution of (\ref{flow}),
  where the initial curve satisfies $\kappa_0\geq 0$ on $[a,b]$. Then the curves $c$ satisfy $\kappa \geq 0$ on $[a,b]\times [0,T)$.
\end{proposition}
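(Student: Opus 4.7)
The plan is to apply the parabolic maximum principle to the evolution equation (\ref{evolution5}), $\partial_t \kappa = \partial_s^2 \kappa + \kappa^2(\kappa - \bar\kappa)$. Two features of the problem make this work: the zero-order nonlinearity $\kappa^2(\kappa - \bar\kappa)$ vanishes at $\kappa = 0$, and the boundary identities from Lemma~\ref{lemmastahl5}, namely $\partial_s \kappa(a,t) = (\kappa(a,t) - \bar\kappa(t))\, \Sig\kappa$ and $\partial_s \kappa(b,t) = -(\kappa(b,t) - \bar\kappa(t))\, \Sig\kappa$, are of Robin type with a sign-definite coefficient $\pm\Sig\kappa \geq 0$ coming from the convexity of $\Sigma$.

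Fix $T' < T$ arbitrary. By Proposition~\ref{exis} the quantities $\kappa$, $\bar\kappa$, and $\Sig\kappa$ are bounded on $[a,b]\times [0, T']$, and I set $K := 1 + \sup_{[0,T']}|\bar\kappa|$. For each small $\epsilon > 0$ I introduce
$$\kappa_\epsilon(p,t) := \kappa(p,t) + \epsilon e^{Kt},$$
which satisfies $\kappa_\epsilon(\cdot, 0) \geq \epsilon > 0$ and has the same spatial derivatives as $\kappa$. It suffices to show $\kappa_\epsilon > 0$ on $[a,b]\times[0,T']$ for $\epsilon$ small, since then letting $\epsilon \to 0$ and $T' \uparrow T$ yields $\kappa \geq 0$. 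I argue by contradiction: suppose $(p_0, t_0)$ is a first point at which $\kappa_\epsilon = 0$, so $t_0 > 0$ and $\kappa(p_0, t_0) = -\epsilon e^{Kt_0}$.

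In the interior case $p_0 \in (a, b)$, the standard space–time minimum conditions give $\partial_s \kappa_\epsilon(p_0, t_0) = 0$, $\partial_s^2 \kappa_\epsilon(p_0, t_0) \geq 0$, and $\partial_t \kappa_\epsilon(p_0, t_0) \leq 0$. Substituting these into (\ref{evolution5}) produces
$$\epsilon K e^{K t_0} \;\leq\; -\kappa^2(\kappa - \bar\kappa) \;=\; \epsilon^2 e^{2K t_0}\bigl(\bar\kappa(t_0) + \epsilon e^{K t_0}\bigr),$$
whose right-hand side is $O(\epsilon^2)$ while the left-hand side is of order $\epsilon$; for $\epsilon$ small this fails, contradiction.

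The boundary case $p_0 = a$ (the case $p_0 = b$ being symmetric) is the main obstacle. Since $\kappa_\epsilon > 0$ on $[a,b]\times[0, t_0)$, we have $\kappa > -\epsilon e^{Kt}$ there, hence by continuity $\bar\kappa(t_0) \geq -\epsilon e^{K t_0}$ and therefore $\kappa(a, t_0) - \bar\kappa(t_0) \leq 0$. Lemma~\ref{lemmastahl5} combined with $\Sig\kappa \geq 0$ then forces $\partial_s \kappa(a, t_0) \leq 0$, whereas the one-sided minimum at $a$ yields $\partial_s \kappa_\epsilon(a, t_0) = \partial_s \kappa(a, t_0) \geq 0$. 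Hence equality $\partial_s \kappa_\epsilon(a, t_0) = 0$ holds, and a second-order Taylor expansion on $[a, a + \delta]$ then forces $\partial_s^2 \kappa_\epsilon(a, t_0) \geq 0$. With these sign conditions and $\partial_t \kappa_\epsilon(a, t_0) \leq 0$, the same inequality chain as in the interior case reappears and yields the same contradiction. The decisive ingredient in this boundary case is the convexity hypothesis $\Sig\kappa \geq 0$, which is precisely what pins down the sign of the Robin coefficient so that a prospective boundary minimum with $\kappa < 0$ is incompatible with the boundary identity of Lemma~\ref{lemmastahl5}.
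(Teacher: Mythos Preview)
Your proof is correct and follows essentially the same barrier-function strategy as the paper: perturb $\kappa$ by $\epsilon e^{Ct}$, argue by contradiction at the first zero, and split into interior and boundary cases. The technical execution differs slightly: in the interior the paper linearizes the zero-order term by choosing $\alpha \geq \max|\kappa(\kappa-\bar\kappa)|$ so that the associated operator $L$ satisfies $Lf\leq 0$ and the strong maximum principle applies directly, whereas you keep the nonlinearity and exploit that $\kappa^2(\kappa-\bar\kappa)=O(\epsilon^2)$ at the minimum; at the boundary the paper invokes the parabolic Hopf lemma to get the strict inequality $\partial_s f(a,t_0)>0$, while you combine the one-sided minimum inequality with Lemma~\ref{lemmastahl5} to force $\partial_s\kappa=0$ and then reduce to the interior case via the second-order condition. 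Your route is more elementary (no external maximum-principle or Hopf references), the paper's is a bit slicker; both rest on the same two structural ingredients you identified.
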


\begin{proof}
      We choose an arbitrary $T' \in (0,T)$ and define $D\defi (a,b)\times (0,T')$. For $\delta > 0$ consider the function
      \begin{align*}
       f(p,t)\defi \kappa(p,t) + \delta e^{\alpha t}, \ \ \ (p,t)\in [a,b]\times [0,T),
      \end{align*}
      where $\alpha \geq 0$ is chosen such that $\alpha \geq \max_{[a,b]\times[0,T']}|\kappa(\kappa-\bar\kappa)|$. Clearly $f(\cdot,0)>0$. We show that $f>0$ is true in all of $[a,b]\times[0,T')$. 
      Else, there must be a first time $t_0\in(0,T')$ where $f$ reaches zero. That means, there must be a point $p_0 \in [a,b]$ such that $f(p_0,t_0)=0$ and $f(p,t)>0 $ for $(p,t)\in [a,b]\times [0,t_0)$. 
      Now, we consider the differential operator defined as
      \begin{align*}
      Lu &\defi \partial_s^2 u + \kappa(\kappa-\bar\kappa)u - \partial_t u\\
      & \ = \frac{1}{|\partial_p c|^2} \partial_p^2 u + \left(\frac{1}{|\partial_p c|} \partial_p \frac{1}{|\partial_p c|}\right)\partial_p u + \kappa(\kappa-\bar\kappa) u -\partial_t u
      \end{align*}
      for $u\in C^{2,1}(D)$. This operator is parabolic and it has bounded coefficients. \\
      There are two cases. The first case is $p_0\in (a,b)$. By the evolution equation of the curvature (\ref{evolution5}), we get in $D$
      \begin{align*}
       Lf & =L\kappa + \kappa(\kappa - \bar\kappa)\delta e^{\alpha t} - \alpha \delta e^{\alpha t}\\
	& = \delta e^{\alpha t} \left( \kappa(\kappa - \bar\kappa) - \alpha\right)\leq 0,
      \end{align*}
	where we used the definition of $\alpha$ in the last step. For this situation,
      \begin{align*}
       f(p_0,t_0)&=0, (p_0,t_0) \in D,\\
      Lf &\leq 0 \mbox{ in } (a,b)\times (0,t_0],\\
      f &\geq 0 \mbox{ in } (a,b)\times (0,t_0],
      \end{align*}
      we can use the strong maximum principle to get $f \equiv 0$ in $(a,b)\times (0,t_0]$, see \cite[Chapter~2, Thm.~5]{Friedman}. It follows that $\kappa \equiv -\delta e^{\alpha t} < 0$ on $[a,b]\times[0,t_0]$, a contradiction to $\kappa_0 \geq 0$. 
      Therefore, we have to be in the second case: $p_0 \in \{a,b\}$. We set a ball $B$ with small radius tangentially at $(a,t_0)$ such that $\bar B \subset \bar D$.
      For the situation
      \begin{align*}
       f(a,t_0)& = 0,\\
      Lf &\leq 0  \mbox{ in } D,\\
      f &>0 \mbox{ in } (a,b)\times (0,t_0],    
      \end{align*}
      we use the parabolic Hopf lemma, see \cite[Chapter~3, Theorem~7]{ProtterWeinberger} and get      
      \begin{align*}
       \frac{\partial f}{\partial v_B}(a,t_0) < 0,
      \end{align*}
      where $v_B=(-1,0)$ is the outer unit normal to $\partial B$ in $(a,t_0)$. It follows that $\partial_s f(a,t_0) > 0$. But by Lemma~\ref{lemmastahl5} we have
      \begin{align*}
       \partial_s f (a,t_0)= \partial_s\kappa(a,t_0) = \left(\kappa(a,t_0) - \bar\kappa(t_0)\right) \nix^\Sigma\kappa\left(\nix^\Sigma f^{-1}(c(a,t_0))\right) \leq 0,
      \end{align*}
      where we used $\kappa (a,t_0) = -\delta e^{\alpha t_0}$ and $\kappa (p,t_0) \geq -\delta e^{\alpha t_0}$ on $(a,b)$. This is a contradiction. 
      The argumention for $b$ follows analogously. Hence we have shown $f = \kappa + \delta e^{\alpha t}>0$ in $[a,b]\times[0,T')$. Since $\delta>0$ and $T' \in (0,T)$ were arbitrary, the result follows.
\end{proof}

\begin{corollary} \label{kappagroesser0}
 Let $c:[a,b]\times[0,T)\to \Rzwei $ be a solution of (\ref{flow}),
 where the initial curve satisfies $\kappa_0\geq 0$ on $[a,b]$. Then the curves $c$ satisfy $\kappa > 0$ on $[a,b]\times (0,T)$.
\end{corollary}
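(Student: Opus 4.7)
The plan is to apply the parabolic strong minimum principle and the Hopf lemma directly to $\kappa$, using the evolution equation (\ref{evolution5}) together with the one-sided bound $\kappa\geq 0$ furnished by Proposition~\ref{kappageq0}. Concretely, I would reuse the linearized operator from the proof of Proposition~\ref{kappageq0},
\[
Lu\defi \partial_s^2 u+\kappa(\kappa-\bar\kappa)\,u-\partial_t u,
\]
which is parabolic with bounded coefficients on $D\defi (a,b)\times (0,T')$ for any $T'\in(0,T)$. A direct substitution in (\ref{evolution5}) gives $L\kappa\equiv 0$ on $D$, so $\kappa$ is itself a nonnegative solution of a linear parabolic equation.

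I would then argue by contradiction: suppose $\kappa(p_0,t_0)=0$ at some $(p_0,t_0)\in[a,b]\times(0,T)$, and split into the same two cases as in the proof of Proposition~\ref{kappageq0}. For the \emph{interior case} $p_0\in(a,b)$, the strong minimum principle (\cite[Chapter~2, Thm.~5]{Friedman}) applied to the nonnegative solution $\kappa$ of $L\kappa=0$ forces $\kappa\equiv 0$ on $(a,b)\times(0,t_0]$, and hence by continuity $\kappa_0\equiv 0$ on $[a,b]$. Then $c_0$ would be a straight segment whose tangent equals $\Sig\vec\nu(c_0(a))$ at $p=a$ and $-\Sig\vec\nu(c_0(b))$ at $p=b$; since $\Sigma$ is convex, a ray leaving $c_0(a)$ in the outward normal direction $\Sig\vec\nu(c_0(a))$ never meets $\Sigma$ again, so this case is ruled out by the outer Neumann geometry.

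For the \emph{boundary case} $p_0\in\{a,b\}$, the previous step lets me assume $\kappa>0$ on $(a,b)\times(0,t_0]$. At $p_0=a$ I would place a small closed disk $\bar B\subset \bar D\cap\{t\leq t_0\}$ tangent to the line $\{p=a\}$ at $(a,t_0)$ with outer unit normal $v_B=(-1,0)$; the parabolic Hopf lemma (\cite[Chapter~3, Theorem~7]{ProtterWeinberger}) applied to $u=\kappa$ yields $\partial_{v_B}\kappa(a,t_0)<0$, i.e.\ $\partial_s\kappa(a,t_0)>0$. On the other hand, Lemma~\ref{lemmastahl5} combined with $\kappa(a,t_0)=0$, $\bar\kappa(t_0)\geq 0$ (since $\kappa\geq 0$) and $\Sig\kappa\geq 0$ gives
\[
\partial_s\kappa(a,t_0)=-\bar\kappa(t_0)\,\Sig\kappa\bigl(\Sig f^{-1}(c(a,t_0))\bigr)\leq 0,
\]
a contradiction. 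The case $p_0=b$ is symmetric: the opposite sign in Lemma~\ref{lemmastahl5} is compensated by the opposite direction $v_B=(+1,0)$ of the outward Hopf normal.

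The main, and essentially only, delicate point is the exclusion of the stationary solution $\kappa_0\equiv 0$ in the interior case; this is where the outer Neumann angles $\pm\pi/2$ together with the convexity of $\Sigma$ are crucially used. Once that is pinned down, the rest is a straightforward rerun of the parabolic machinery already invoked in Proposition~\ref{kappageq0}, now applied to $\kappa$ itself (i.e.\ with the perturbation parameter $\delta$ set to zero).
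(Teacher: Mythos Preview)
Your proposal is correct and follows essentially the same route as the paper: apply the strong maximum principle to $\kappa$ (a nonnegative solution of $L\kappa=0$) for the interior case, deduce $\kappa_0\equiv 0$ and rule this out by the outer Neumann geometry at a convex $\Sigma$, then use the parabolic Hopf lemma together with Lemma~\ref{lemmastahl5} at the boundary. One minor point: in the paper's conventions $\tau(a,t)=-\Sig\vec\nu(c(a,t))$ (the \emph{outer} normal is $-\Sig\vec\nu$), so your sign in the straight-segment description is flipped, but the geometric conclusion---that a ray leaving $c_0(a)$ in the outward normal direction to the convex set never meets $\Sigma$ again---is unaffected.
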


\begin{proof}
      We slightly vary the proof of Proposition~\ref{kappageq0}. The notation is the same as in that proof. We assume that there is a time $t_0 \in (0,T')$ where $\kappa$ vanishes, i.e.~$\kappa (p_0,t_0)=0$ for a $p_0 \in [a,b]$ (notice that we have used Proposition~\ref{kappageq0}). \\
      We first consider the case $p_0\in (a,b)$. The evolution equation (\ref{evolution5}) yields $ L \kappa \equiv 0$ in $D$. 
      By the strong parabolic maximum principle \cite[Chapter~2, Thm.~5]{Friedman}, we get $\kappa \equiv 0$ in $(a,b)\times (0,t_0]$. Continuity implies $\kappa_0 \equiv 0$ on $[a,b]$. 
      But this yields a contradiction because of the boundary conditions of the initial curve and the convexity of the domain traced out by $\Sigma$.
      Hence there are no ``inner points`` in $D$ with $\kappa=0$.\\
      The case $p_0 \in\{a,b\}$ is again treated with the parabolic Hopf lemma, see \cite[Chapter~3, Theorem~7]{ProtterWeinberger}. Set a small ball in $\bar D$ tangentially at $(a,t_0)$ and get
      \begin{align*}
       \frac{\partial \kappa}{\partial v}(a,t_0) < 0,
      \end{align*}
      where $v=(-1,0)$ is the outer unit normal to $\partial B$ in $(a,t_0)$. This yields $\partial_s \kappa (a,t_0) > 0$, which is a contradiction to   
        \begin{align*} 
       \partial_s\kappa(a,t_0)= \left(\kappa(a,t_0) - \bar\kappa(t_0)\right) \nix^\Sigma\kappa\left(\nix^\Sigma f^{-1}(c(a,t_0))\right) \leq 0
      \end{align*}
      cf. Lemma~\ref{lemmastahl5}. 
	The result in the case $p_0=b$ follows analogously.
\end{proof}

\section{A bound on the average of the curvature}  \label{S2}

\begin{proposition} \label{anfangstheorem} 
Let $c:[a,b]\times[0,T)\to \Rzwei $ be a solution of (\ref{flow}),
where the initial curve satisfies $\kappa_0\geq 0$ on $[a,b]$. Then we have  $\int_a^b \kappa \de s \geq \pi$ for all $t\in[0,T)$. If there is a time $t_0\in\nut$ with $c(a,t_0)=c(b,t_0)$, then $\int_a^b \kappa \de s|_{t=t_0} > \pi$.
\end{proposition}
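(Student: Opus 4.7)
The approach is to use $\kappa\geq 0$ (Proposition~\ref{kappageq0}) together with the Neumann conditions $\tau(a,t)=-\Sig\vec\nu(c(a,t))$, $\tau(b,t)=\Sig\vec\nu(c(b,t))$ and the convexity of $\Sigma$. Since $\kappa\geq 0$, the tangent angle is nondecreasing and $I\defi\int_a^b\kappa\de s$ equals the counterclockwise angular width of the arc $\mathcal A\subset\mathbb S^1$ traced by $\tau$ from $\tau(a,t)$ to $\tau(b,t)$. The whole argument rests on two elementary observations about arcs in $\mathbb S^1$: an arc of width $<\pi$ whose endpoints lie in a closed half-circle cannot leave that half-circle (escaping and returning would force it to cross the complementary open half-circle of width $\pi$), and the midpoint of an arc of width $I$ is within angular distance $I/2$ of every point of the arc.

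Assume first $c(a,t)\neq c(b,t)$ and set $w\defi c(b,t)-c(a,t)\neq 0$. Convexity of $\Sigma$ places $\bar G_\Sigma$ inside the inward half-plane of the tangent line to $\Sigma$ at each boundary point; applying this at both endpoints, and using $c(a,t),c(b,t)\in\Sigma\subset\bar G_\Sigma$, yields $\langle w,\tau(a,t)\rangle\leq 0$ and $\langle w,\tau(b,t)\rangle\leq 0$, so both endpoints of $\mathcal A$ lie in the closed half-circle $H\defi\{u\in\mathbb S^1:\langle u,w\rangle\leq 0\}$. Suppose for contradiction $I<\pi$. By the first observation, $\mathcal A\subset H$, so its midpoint $v$ satisfies $\langle v,w\rangle\leq 0$. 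By the second, $\langle\tau(p,t),v\rangle\geq\cos(I/2)>0$ throughout $[a,b]$, whence $\langle v,w\rangle=\int_a^b\langle\tau,v\rangle\de s>0$, a contradiction. Hence $I\geq\pi$.

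For the strict inequality when $c(a,t_0)=c(b,t_0)$, the boundary condition forces $\tau(b,t_0)=-\tau(a,t_0)$, so $I$ is an odd multiple of $\pi$. If $I=\pi$, pick $v$ at the midpoint of the traced semicircle; then $\langle\tau(p,t_0),v\rangle\geq 0$ on $[a,b]$ with strict inequality on a set of positive measure (the tangent angle is continuous and assumes every value between $\theta(a)$ and $\theta(a)+\pi$), giving $0<\int_a^b\langle\tau,v\rangle\de s=\langle c(b,t_0)-c(a,t_0),v\rangle=0$, a contradiction. Therefore $I>\pi$. The main point to carry out carefully is the chord-tangent inequality coming from the convexity of $\Sigma$ together with the precise sign of the Neumann angle condition; once that is in place, the remainder is a short angular argument on $\mathbb S^1$.
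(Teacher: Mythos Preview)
Your proof is correct and follows a somewhat different, cleaner line than the paper's. Both arguments start from the same geometric input: the Neumann conditions $\tau(a,t)=-\Sig\vec\nu(c(a,t))$, $\tau(b,t)=\Sig\vec\nu(c(b,t))$ together with convexity of $\Sigma$ force $\tau(a,t)$ and $\tau(b,t)$ into the closed half-circle $H=\{u\in\mathbb S^1:\langle u,w\rangle\le0\}$, where $w=c(b,t)-c(a,t)$. From there the paper works concretely: it normalizes coordinates so that $w\parallel -e_1$, locates points $p_1,p_2$ where the coordinate $c^1$ attains its infimum and supremum, and uses \emph{strict} positivity $\kappa>0$ (borrowed from Corollary~\ref{kappagroesser0}) in a second-derivative test to pin down $\tau(p_1)=-e_2$, $\tau(p_2)=e_2$; the tangent then visibly sweeps at least a semicircle. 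Your contradiction argument bypasses this extremal-point analysis by working intrinsically on $\mathbb S^1$: if $I<\pi$ the monotone tangent arc is trapped in $H$, its midpoint $v$ satisfies $\langle v,w\rangle\le0$, yet the identity $\langle v,w\rangle=\int_a^b\langle\tau,v\rangle\de s\ge L(c_t)\cos(I/2)>0$ gives the contradiction. This integral trick does the job that the extremal points do in the paper, and it only needs $\kappa\ge0$ rather than strict positivity. The case $c(a,t_0)=c(b,t_0)$ is handled by both proofs in the same spirit. Your route is shorter and more self-contained; the paper's has the minor advantage of producing explicit points on $c_t$ with $\tau=\pm e_2$, a device that is recycled later (for instance in Lemma~\ref{noself} and Proposition~\ref{noself2}).
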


\begin{proof}
      We prove the statements under the condition $\kappa_0>0$. Choose $t\in[0,T)$. 
      By Corollary~\ref{kappagroesser0} the curvature is positive: $\kappa>0$ on $[a,b]\times[0,T)$. W.l.o.g., we have
      \begin{align*}
       c(a,t) = 0 \in \Sigma\ \  \mbox{ and } \ \ \frac{c(b,t)-c(a,t)}{|c(b,t)-c(a,t)|} = - e_1.
      \end{align*}
      We also assume $c(a,t)\neq c(b,t)$ and treat the other case afterwards. Since we only consider the curve at a fixed time, we leave out the time dependence in the notation in the following. Because of the convexity and the positive orientation of $\Sigma$, we have 
      \begin{align*}
       \langle \nix^\Sigma f(p)-\nix^\Sigma f(p_0),\nix^\Sigma \nu(p_0)\rangle \geq 0 \mbox{ for } p,p_0 \in [\nix^\Sigma a,\nix^\Sigma b].
      \end{align*}
      We use this to get $\langle c(b)-c(a),\nix^\Sigma \vec \nu (c(a))\rangle \geq0$ and thus 
      \begin{align*}
       \langle \nix^\Sigma \vec \tau(c(a)),e_2\rangle \geq 0 \quad \mbox{ and } \quad\langle \nix^\Sigma \vec \tau(c(b)),e_2\rangle \leq 0 \mbox{ respectively}. 
      \end{align*}
      The boundary conditions imply
      \begin{align*}
       \langle \tau(a),e_1\rangle \geq 0 \ \ \mbox{ and }\ \ \langle \tau(b),e_1\rangle \geq0.
      \end{align*}
      In addition with $\kappa(a)>0$ and $\kappa(b)>0$, this yields the following statements:
      \begin{align}\label{enum3}
      \begin{split}
      & \mbox{If } \tau(a) \neq e_2 \mbox{ then there is a } \delta>0  \mbox{ such that } c( (a,a+\delta))\subset \{x\in\Rzwei: x^1 > 0\},\\
      & \mbox{If } \tau(b) \neq - e_2 \mbox{ then there is a } \tilde\delta>0  \mbox{ such that } c( (b-\tilde\delta,b))\subset \{x\in\Rzwei: x^1 < c^1(b)\}.
      \end{split}
      \end{align}
%
      As the function $c^1$ is continuous and $[a,b]$ is compact we get two points $p_1,p_2 \in [a,b]$ such that
      \begin{align*}
       c^1(p_1)=\inf\{c^1(p): p\in[a,b]\} \mbox{ and } c^1(p_2)=\sup\{c^1(p): p\in[a,b]\}.
      \end{align*}
      Therefore, we have $c^1(p_1) \leq c^1(b)<c^1(a)\leq c^1(p_2)$. It follows that $p_1 \neq a$, $p_2\neq b$ and $p_1\neq p_2$. From (\ref{enum3}) we obtain 
      \begin{align*}
	\mbox{if } p_1=b \mbox{ then } \tau(b)= -e_2 \ \ \mbox{ and if } p_2=a \mbox{ then } \tau(a)= e_2.
      \end{align*}
      In general, the situation is similar. We get
      \begin{align*}
       \partial_p c^1(p_1) = \lim\limits_{h\searrow 0} \frac{c^1(p_1 + h)-c^1(p_1)}{h} \geq 0\ \text{ and }\
      \partial_p c^1(p_1) = \lim\limits_{h\searrow 0} \frac{c^1(p_1)-c^1(p_1-h)}{h} \leq 0
      \end{align*}
      and therefore $\tau^1(p_1)=0$. Thus, we have $\tau(p_1)\parallel e_2$ (and $\tau(p_2)\parallel e_2$ respectively). 
      We specify this to 
      \begin{align} \label{tangenten}
       \tau(p_1)=-e_2 \ \ \ \text{and} \ \ \ \tau(p_2)=e_2.
      \end{align}
     Assume $\nu(p_1) = -e_1$. We have $ 0<\kappa(p_1) |\pp c(p_1)|^2 = \langle \partial_p^2c(p_1),\nu(p_1)\rangle = - \partial_p^2 c^1(p_1).$
      It follows that $\partial_p^2 c^1(p_1) < 0$. We have seen $\partial_p c^1(p_1) = \tau^1 (p_1) |\pp c(p_1)|=0$, thus the function $c^1$ has a local maximum at $p_1$. 
      Near $p_1$, we then have $c^1< c^1(p_1)$, but this is a contradiction to the definition of $p_1$. We have shown $\nu(p_1)= e_1$ and hence $\tau(p_1) = -e_2$. For $p_2$, the statement follows analogously. \\
      
      The geometric situation helps us to compute $\int\kappa \de s$. The map $\tau:[a,b]\to\mathbb S^1, \ \tau(p)=\partial_s c(p)$ is continuously differentiable, and by the Frenet equation $\partial_s \tau = \kappa \nu$ we get
      \begin{align*}
       \partial_p\tau=\kappa|\partial_p\tau|\nu \Rightarrow |\partial_p\tau|=\kappa|\partial_p c|\Rightarrow \int|\partial_p\tau| \de p = \int\kappa|\partial_p c|\de p = \int\kappa\de s.
      \end{align*}
      Since $\tau$ is continuous and due to (\ref{tangenten}) the function $\tau$ must run through at least a half circle which has length $\pi$. We have two cases:\\
      Case $p_1<p_2$:
      \begin{align*}
       \pi\leq L\left(\tau|_{[p_1,p_2]}\right) = \int_{p_1}^{p_2 } |\partial_p \tau| \de p = \int_{p_1}^{p_2}\kappa \de s < \int_a^b \kappa \de s \quad(p_1\neq a,p_2\neq b, \kappa > 0).
      \end{align*}
      Case $p_2<p_1$:
      \begin{align*}
       \pi \leq L\left(\tau|_{[p_2,p_1]}\right) = \int_{p_2}^{p_1}\kappa \de s \leq \int_a^b \kappa\de s \quad (p_2\geq a, p_1\leq b, \kappa > 0).
      \end{align*}
      The case  $c(a,t_0) =c(b,t_0)$ is proven similarly. After rotation and translation we have at the time~$t_0$
      \begin{align*}
       c(a)=c(b)=0, \quad \nix^\Sigma\vec\tau(0) = -e_1.
      \end{align*}
       By the boundary conditions we immediately get $\tau(a)= e_2$, $\tau(b)= -e_2$. We repeat the rest of the proof above and get  $\intab\kappa\de s|_{t= t_0}>\pi$ by carefully considering the situation.
\end{proof}

\begin{definition}
 Let $f:[a,b]\to\Rzwei$ be a smooth, regular and closed curve. The number
\begin{align*}
\ind(f)\defi n(\partial_pf,0) \in \mathbb Z
\end{align*}
is called the \emph{index} (or \emph{turning number}) of $f$. Here, $n(\partial_p f,0)$ denotes the winding number of the curve $\partial_p f:[a,b]\to\Rzwei$ with respect to $0\in\Rzwei$.
\end{definition}

\begin{theorem} \label{indexformel}
 Let $f$ be a piecewise smooth, regular and closed curve, defined on intervals $[a_j,b_j]$, $j=1,\dots,k$, and with exterior angles $\alpha_j$, $j=1,\dots,k$. Then
\begin{align*}
 \ind(f)=\frac{1}{2\pi}\sum_{j=0}^k\int_{a_j}^{b_j}\kappa_f\de s_f + \frac{1}{2\pi} \sum_{j=0}^k\alpha_j  \ \ \in\mathbb Z.
\end{align*}
\end{theorem}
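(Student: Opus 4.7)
The plan is the classical lift-the-tangent-angle argument for the Umlaufsatz, adapted to the piecewise smooth setting. First, on each smooth piece $f|_{[a_j,b_j]}$ the unit tangent $\tau_j(p)=\ps f(p)$ is a smooth map into $\mathbb S^1\subset\Rzwei$, so by the standard covering-space argument it admits a smooth lift $\theta_j:[a_j,b_j]\to\R$ with $\tau_j=(\cos\theta_j,\sin\theta_j)$. From the Frenet relation $\ps\tau_j=\kappa_f\nu_f$ together with $\nu_f=J\tau_j$, differentiating the lift representation gives $\partial_p\theta_j(p)=\kappa_f(p)\,|\pp f(p)|$ on each piece, so that
\[
\theta_j(b_j)-\theta_j(a_j)=\int_{a_j}^{b_j}\kappa_f\de s_f.
\]

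Next I would concatenate the lifts across the corners. By definition, the exterior angle $\alpha_j$ at the junction $f(b_j)=f(a_{j+1})$ is the oriented angle in $(-\pi,\pi)$ from $\tau_j(b_j)$ to $\tau_{j+1}(a_{j+1})$ (angle $\pm\pi$ is excluded because the curve is regular and does not reverse direction at a corner). Since two continuous lifts of the same unit vector differ by a multiple of $2\pi$, I can rigidly shift $\theta_{j+1}$ so that $\theta_{j+1}(a_{j+1})=\theta_j(b_j)+\alpha_j$. Telescoping over one full traversal of the closed curve yields
\[
\theta_{\mathrm{end}}-\theta_{\mathrm{start}}=\sum_{j}\int_{a_j}^{b_j}\kappa_f\de s_f+\sum_{j}\alpha_j.
\]

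Finally I would identify this total increment with $2\pi\ind(f)$. Since $\pp f(p)=|\pp f(p)|\,\tau(p)$ is a positive scalar multiple of $\tau$ at every point, the winding numbers around $0$ of the loops $\pp f$ and $\tau$ coincide. On the other hand, the winding number of any continuous loop in $\mathbb S^1\subset\Rzwei\setminus\{0\}$ equals $\tfrac{1}{2\pi}$ times the total increment of any continuous angular lift, which is exactly the quantity just computed. Dividing by $2\pi$ gives the claimed identity, and integrality of $\ind(f)$ is automatic from the fact that the concatenated lift returns to its starting value modulo~$2\pi$ (because $f$ is closed, so $\tau$ returns to its initial value after the final corner correction $\alpha_k$).

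The only delicate point I anticipate is the consistent handling of the corners: one must verify that the definition of the exterior angle $\alpha_j$ really encodes the jump of the continuous tangential angle \emph{with the correct sign and branch}, and that the last corner $\alpha_k$ is included so the lift closes up. This requires only the regularity assumption on each smooth piece (so that one-sided tangent unit vectors exist at each corner) and the convention $\alpha_j\in(-\pi,\pi)$, after which everything reduces to the telescoping computation above. No global geometric input is needed beyond what is already used in the smooth Umlaufsatz.
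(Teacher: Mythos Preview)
Your argument is the standard tangent-angle-lift proof and is correct; the paper itself gives no argument at all but simply cites Klingenberg's textbook (Theorem~2.1.6), which contains precisely this computation. So you have supplied the details the paper outsources.

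One small caveat: you assume $\alpha_j\in(-\pi,\pi)$, which is the usual convention but is not stated in the theorem as formulated. If a corner had exterior angle exactly $\pm\pi$ (a cusp), the branch choice for the jump would be ambiguous and the formula could fail by an integer. In all applications in the paper the exterior angles are $\pm\frac{\pi}{2}$, so this is harmless, but strictly speaking the hypothesis should be made explicit.
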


\begin{proof}
 See \cite[Theorem 2.1.6]{Klingenberg}. 
\end{proof}

\begin{theorem} \label{umlaufsatz}
 Let $f$ be a piecewise smooth, regular and simple closed curve with exterior angles in $(-\pi,\pi)$. Then the index of $f$ is $\ind(f)=\pm 1$, where the sign depends on the orientation of $f$.
\end{theorem}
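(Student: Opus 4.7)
The plan is to reduce to the smooth case and then apply Hopf's secant-map proof of the \emph{Umlaufsatz}. For the reduction, each exterior angle $\alpha_j\in(-\pi,\pi)$ means that the two adjacent unit tangents at the vertex $p_j$ are not antiparallel; hence one can replace $f$ on an arbitrarily small neighbourhood of each vertex by a smooth regular arc whose tangent sweeps through the angle $\alpha_j$ with the prescribed sign and whose image lies in any given neighbourhood of $p_j$. Shrinking these neighbourhoods sufficiently keeps the modified curve $\tilde f$ simple, closed and regular. By Theorem~\ref{indexformel} applied to both $f$ and $\tilde f$ the two indices agree, so it suffices to prove $\ind(\tilde f)=\pm 1$ when $f$ is smooth, simple, regular and closed.

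Assume now $f:[a,b]\to\Rzwei$ is such, parametrised by arclength. After a rigid motion one may place $f$ in the closed upper half-plane with $f(a)=f(b)=0$ and $\tau(a)=\tau(b)=e_1$ (take $f(a)$ to be a lowest point of the image, which forces the tangent there to be horizontal). Define the \emph{secant map} on the closed triangle $T\defi\{(s,t)\in[a,b]^2:s\leq t\}$ by
\begin{align*}
 \Psi(s,t)\defi\frac{f(t)-f(s)}{|f(t)-f(s)|}\ \text{for }s<t,\,(s,t)\neq(a,b),\quad \Psi(s,s)\defi\tau(s),\quad \Psi(a,b)\defi-\tau(a).
\end{align*}
Well-definedness on the interior of $T$ uses the simplicity of $f$; continuity on $T$ follows from a first-order Taylor expansion of $f$ along the diagonal and, at the corner $(a,b)$, from the identities $f(a)=f(b)$ and $\tau(a)=\tau(b)$, which yield $(f(t)-f(s))/|f(t)-f(s)|\to-\tau(a)$ as $(s,t)\to(a,b)$ with $s<t$.

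Since $T$ is contractible, the continuous map $\Psi|_{\partial T}:\partial T\to\mathbb S^1$ lifts to $\R$ and therefore has winding number zero. Traversing $\partial T$ counterclockwise decomposes into three pieces. On the left edge $\{a\}\times[a,b]$ the map $t\mapsto f(t)/|f(t)|$ starts at $e_1$, ends at $-e_1$, and stays in the closed upper half-circle because $f(t)$ lies in the closed upper half-plane and is nonzero for $t\in(a,b)$ by simplicity; this contributes $+\pi$. On the top edge $[a,b]\times\{b\}$ the map $s\mapsto -f(s)/|f(s)|$ runs from $-e_1$ to $e_1$ through the closed lower half-circle, contributing another $+\pi$. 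On the diagonal, traversed from $(b,b)$ to $(a,a)$, $\Psi$ equals $\tau$ in reverse and contributes $-2\pi\,\ind(f)$. The three contributions must sum to zero, giving $\ind(f)=+1$; reversing the parametrisation of $f$ flips the sign.

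The main obstacle is verifying continuity of $\Psi$ at the corner $(a,b)$ of $T$, where both the simplicity of $f$ (so that $\Psi$ is well-defined away from that corner and the diagonal) and the closedness relations $f(a)=f(b)$, $\tau(a)=\tau(b)$ (so that the prescribed value $-\tau(a)$ is the correct limit) are essential. Once this continuity is established, everything else reduces to the winding-number computation on $\partial T$ and the integer-valuedness of $\ind(f)$.
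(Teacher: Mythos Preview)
Your argument is correct and is precisely Hopf's classical secant-map proof of the Umlaufsatz, together with the standard corner-rounding reduction for piecewise smooth curves. The paper itself does not prove this statement but simply cites \cite[Theorem~2.2.1]{Klingenberg} and \cite[Chapter~5.7, Theorem~2]{DoCarmo}; the proofs in those references are exactly the argument you have written out, so your approach and the paper's (implicit) approach coincide.
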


\begin{proof}
 See \cite[Theorem 2.2.1]{Klingenberg} and \cite[Chapter 5.7, Theorem 2]{DoCarmo}. 
\end{proof}

\begin{corollary} \label{indexc0} Let $\tildega_t\defi\tildega(\cdot,t)$ be the boundary curves from Lemma~\ref{defiboundarycurve} and $\ftilde$ the periodic extension of $\fsig$.
 For all $t\in[0,T)$ with $a(t)<b(t)$, we have the formula
\begin{align*}
 \ind(c_t-\tildega_t)=\frac{1}{2\pi}\int_a^b\kappa\de s - \frac{1}{2\pi} \int_{a(t)}^{b(t)}\Sig\tilde\kappa \de s_{\ftilde} + \frac{1}{2} = \frac{1}{2\pi}\int_a^b\kappa\de s - \frac{1}{2\pi} \int_{0}^{1}\kappa_{\tildega_t} \de s_{\tildega_t} + \frac{1}{2}.
\end{align*}
\end{corollary}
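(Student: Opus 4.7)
The plan is to apply the index formula (Theorem~\ref{indexformel}) directly to the piecewise smooth, regular, closed curve $c_t-\tildega_t$. This curve is the concatenation of $c_t:[a,b]\to\Rzwei$ (from $c(a,t)$ to $c(b,t)$) with the reversed boundary curve $-\tildega_t:[0,1]\to\Sigma$ (from $c(b,t)$ back to $c(a,t)$), and it has exactly two corners, one at each endpoint. Since $a(t)<b(t)$, the boundary curve $\tildega_t$ is regular, so the concatenation is indeed piecewise smooth and regular with no other corners. Therefore
\[
\ind(c_t-\tildega_t)=\frac{1}{2\pi}\Bigl(\int_a^b \kappa\,\de s - \int_0^1 \kappa_{\tildega_t}\,\de s_{\tildega_t}\Bigr) + \frac{1}{2\pi}(\alpha_a+\alpha_b),
\]
where I have used that reversing the orientation of a planar curve flips the sign of its signed curvature so that the total curvature contribution of $-\tildega_t$ is $-\int_0^1\kappa_{\tildega_t}\,\de s_{\tildega_t}$.

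The computation of the exterior angles $\alpha_a,\alpha_b$ is where the boundary conditions from (\ref{flow}) enter. Using $\angle_{\Rzwei}(\tau(a,t),\Sig\vec\tau(c(a,t)))=\pi/2$ one gets $\Sig\vec\tau(c(a,t))=J\tau(a,t)=\nu(a,t)$, and similarly $\Sig\vec\tau(c(b,t))=-\nu(b,t)$. At the corner $c(b,t)$ the incoming tangent (end of $c_t$) is $\tau(b,t)$ and the outgoing tangent (start of $-\tildega_t$) is $-\Sig\vec\tau(c(b,t))=\nu(b,t)=J\tau(b,t)$, so $\alpha_b=\pi/2$. At the corner $c(a,t)$ the incoming tangent (end of $-\tildega_t$) is $-\Sig\vec\tau(c(a,t))=-\nu(a,t)$ and the outgoing tangent (start of $c_t$) is $\tau(a,t)$; applying $J$ to $-\nu(a,t)=-J\tau(a,t)$ gives $\tau(a,t)$, so $\alpha_a=\pi/2$ as well. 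Hence $\alpha_a+\alpha_b=\pi$, contributing exactly the $\frac{1}{2}$ in the claimed formula.

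Finally, the identity
\[
\int_0^1 \kappa_{\tildega_t}\,\de s_{\tildega_t} \;=\; \int_{a(t)}^{b(t)} \Sig\tilde\kappa\,\de s_{\ftilde}
\]
is simply reparameterization invariance: by Lemma~\ref{defiboundarycurve}, $\tildega_t$ is the affine reparameterization of $\ftilde|_{[a(t),b(t)]}$, which preserves orientation when $a(t)<b(t)$, so both the signed curvature and the arclength measure transform consistently, giving equal integrals.

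The main obstacle I anticipate is purely a bookkeeping one: getting the signs of the exterior angles right. Concretely, one must check that both corners produce a left turn of exactly $\pi/2$ relative to the outer domain — which here follows because the Neumann conditions fix the angle between $\tau$ and $\Sig\vec\tau$ on the right sign, and because the initial curve (hence $c_t$) lies in the outer domain so that the reversed arc of $\Sigma$ turns back into the enclosed region with a convex corner. Once the sign convention is fixed with the explicit formulas $\Sig\vec\tau(c(a,t))=\nu(a,t)$ and $\Sig\vec\tau(c(b,t))=-\nu(b,t)$, the conclusion drops out of Theorem~\ref{indexformel} with no further work.
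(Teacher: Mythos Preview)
Your proof is correct and follows exactly the same approach as the paper: apply Theorem~\ref{indexformel} to the piecewise smooth closed curve $c_t-\tildega_t$ and verify that the two exterior angles are each $\pi/2$. The paper simply asserts ``it is easy to show'' that $\alpha_1=\alpha_2=\pi/2$, whereas you carry out this verification explicitly via the identities $\Sig\vec\tau(c(a,t))=\nu(a,t)$ and $\Sig\vec\tau(c(b,t))=-\nu(b,t)$ coming from the Neumann angle conditions; note that the outer-domain remark in your last paragraph is not actually needed here, since the sign of the exterior angles is already pinned down by those two identities alone.
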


\begin{proof}
      For $t\in[0,T)$ with $a(t)<b(t)$, the boundary curves $\tildega$ are regular. It is easy to show that the assembled curve $c_t-\tildega_t$ is a piecewise smooth, regular, closed curve with exterior angles $\alpha_1,\alpha_2=\frac{\pi}{2}$. We use Theorem~\ref{indexformel} for these curves and get the result.
\end{proof}

\begin{theorem} \label{satz12}
  Let $c:[a,b]\times[0,T)\to \Rzwei $ be a solution of (\ref{flow}),
  where the initial curve satisfies the following three conditions: 
  The curvature $\kappa_0$ is positive on $[a,b]$, the curve $c_0$ has no self-intersection and it is contained in the outer domain created by the convex support curve $\Sigma$. Then it follows that
\begin{enumerate}
 \item $a(t)<b(t)$ $\forall t\in[0,T)$,\label{aleqb}
 \item $\ind(c_t-\tildega_t)=1$ $\forall t\in[0,T)$.\label{gl2} 
%
\end{enumerate}
\end{theorem}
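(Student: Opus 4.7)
The plan is to prove both items simultaneously by a continuity-plus-contradiction argument, combining the index formula in Corollary~\ref{indexc0} with the strict inequality in the second part of Proposition~\ref{anfangstheorem}. At $t=0$, Lemma~\ref{lift} gives $a(0)<b(0)$, so $\tildega_0$ is a regular arc on $\Sigma$. Because $c_0$ is embedded and contained in the outer domain created by $\Sigma$, it meets $\Sigma$ only at its two endpoints, so the assembled curve $c_0-\tildega_0$ is a simple, piecewise $C^2$, regular closed curve with two exterior angles in $\{\pm\pi/2\}\subset(-\pi,\pi)$. By the Umlaufsatz (Theorem~\ref{umlaufsatz}) its index is $\pm 1$, and with the positive orientation of $c_0$ (where $\kappa_0>0$ bends the curve into the enclosed region) together with the fact that $-\tildega_0$ reverses the positively oriented arc of $\Sigma$, the index works out to $+1$.

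Now define $t^{*}\defi\inf\{t\in\nut:a(t)=b(t)\}$, with the convention $\inf\emptyset=T$. Continuity of $a,b$ (Lemma~\ref{lift}) together with $a(0)<b(0)$ yields $t^{*}>0$, $a(t)<b(t)$ on $[0,t^{*})$, and $a(t^{*})=b(t^{*})$ in case $t^{*}<T$. On $[0,t^{*})$ the curves $\tildegat$ are regular and $t\mapsto c_t-\tildegat$ is a $C^{1}$-family of piecewise regular closed curves whose exterior angles are constantly equal to $\pi/2$; hence by Corollary~\ref{indexc0} the quantity
\[
\ind(c_t-\tildegat)=\tfrac{1}{2\pi}\int_a^b\kappa\de s-\tfrac{1}{2\pi}\int_0^1\kappa_{\tildegat}\de s_{\tildegat}+\tfrac{1}{2}
\]
depends continuously on $t$. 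Being integer-valued, it is constant and equal to its initial value $1$. This is item (ii) on $[0,t^{*})$, and reduces item (i) to the assertion $t^{*}=T$.

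Suppose, for contradiction, that $t^{*}<T$. Then $c(a,t^{*})=c(b,t^{*})$ and $b(t)-a(t)\to 0$ as $t\nearrow t^{*}$. Since $\ftilde$ is parametrized by arclength, the length of $\tildegat$ equals $b(t)-a(t)$ and therefore tends to $0$, while $\sigkap$ is bounded on the compact curve $\Sigma$; consequently $\int_0^1\kappa_{\tildegat}\de s_{\tildegat}\to 0$. Passing to the limit in the identity of the previous paragraph gives $\int_a^b\kappa\de s\big|_{t=t^{*}}=\pi$, which contradicts the strict inequality $\int_a^b\kappa\de s\big|_{t=t^{*}}>\pi$ supplied by the second statement of Proposition~\ref{anfangstheorem} (applicable precisely because $c(a,t^{*})=c(b,t^{*})$). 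Hence $t^{*}=T$, which proves (i), and the argument of the second paragraph then delivers (ii) on all of $\nut$. The subtlest step is the orientation bookkeeping that pins the initial index to $+1$ rather than $-1$; everything else is a clean continuity-and-contradiction application of the tools already established in the excerpt.
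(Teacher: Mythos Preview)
Your overall strategy is exactly the paper's: pin down $\ind(c_0-\tildega_0)=1$, then use continuity of the integer-valued index formula on $[0,t^{*})$ and the second part of Proposition~\ref{anfangstheorem} to rule out a first time with $a(t^{*})=b(t^{*})$.

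The one place your argument is weaker than the paper's is precisely the step you flag yourself. Your determination of the initial index as $+1$ rests on an informal ``the orientation works out'' remark; this is not a proof, only a plausibility claim. The paper settles this cleanly and algebraically: if the index were $-1$, then Corollary~\ref{indexc0} gives
\[
\int_a^b\kappa_0\de s=-3\pi+\int_{a(0)}^{b(0)}\Sig\tilde\kappa\de s_{\ftilde}\leq -3\pi+2\pi=-\pi,
\]
since the $\Sigma$-integral over an arc of length at most $L(\Sigma)$ is at most $2\pi$. This contradicts $\kappa_0>0$. Replacing your orientation hand-wave by this two-line computation makes the proof complete and matches the paper's argument.
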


\begin{proof}
	The conditions on the initial curve ensure that $c_0-\tildega_0$ is simple closed, piecewise smooth, regular and has exterior angles in $(-\pi,\pi)$. By Theorem~\ref{umlaufsatz}, the index of $\tilde c_0$ must be $1$ or $-1$. If it was $-1$, then by Corollary \ref{indexc0}
	\begin{align*}
	 \int_a^b\kappa\de s\big|_{t=0}=-3\pi +  \int_{a(0)}^{b(0)}\Sig\tilde\kappa \de s_{\ftilde}.
	\end{align*}
	 We estimate the last term in the following way:
	\begin{align*}
	  \int_{a(0)}^{b(0)}\Sig\tilde\kappa& \de s_{\ftilde} \leq  \int_{a(0)}^{a(0) + (\Sig b - \Sig a)}\Sig\tilde\kappa \de s_{\ftilde} 
		= 2\pi,
	\end{align*}
	where we used the definition of $a(0),b(0)$, Theorem~\ref{umlaufsatz} and the positive orientation of $\fsig$.
	We therefore have $\int_a^b\kappa\de s\big|_{t=0}\leq-\pi$, which contradicts $\kappa_0>0$. It follows that we are in the case $\ind(c_0-\tildega_0) = 1$.  We know $a(0)<b(0)$ and assume that there is a time $t_0\in(0,T)$ such that $a(t_0)=b(t_0)$ and $a(t)<b(t)$ for $t\in[0,t_0)$. By Corollary~\ref{indexc0}, we have
	\begin{align*}
	 \ind( c_t-\tildega_t)=\frac{1}{2\pi}\int_a^b\kappa\de s - \frac{1}{2\pi} \int_{a(t)}^{b(t)}\Sig\tilde\kappa \de s_{\ftilde} + \frac{1}{2}\qquad \forall t\in[0,t_0).
	\end{align*}
	The left hand side is a number in $\mathbb Z$ and the right hand side is continuous in $t$. It follows that $\ind(c_t-\tildega_t)\equiv 1$ for all $t$ in $[0,t_0)$. This is equivalent to
	\begin{align*}
	 \int_a^b\kappa\de s = \pi + \int_{a(t)}^{b(t)}\Sig\tilde\kappa \de s_{\ftilde}  \ \ \forall t\in[0,t_0).
	\end{align*}
	We use  continuity to get $ \int_a^b\kappa\de s\big|_{t= t_0} = \pi.$
	This is a contradiction to Proposition~\ref{anfangstheorem}, because $a(t_0)=b(t_0)$ implies $c(a,t_0)=c(b,t_0)$. The statement of (\ref{gl2}) follows from (\ref{aleqb}), Corollary~\ref{indexc0} and the continuity like above.
\end{proof}

\begin{theorem} \label{mitdSig}
    Let $c:[a,b]\times[0,T)\to \Rzwei $ be a solution of (\ref{flow}), where
    the initial curve satisfies the following four conditions: 
    The curvature $\kappa_0$ is positive on $[a,b]$, the curve $c_0$ has no self-intersection, it is contained in the outer domain created by the convex support curve $\Sigma$ and it satisfies $L(c_0)<\Sig d$, where $\Sig d$ is defined as $$\Sig d\defi\min\{|x-y|: x,y \in\Sigma,\Sig\vec\tau(x)=-\Sig\vec\tau(y)\}.$$ 
    Then it follows that $\int_a^b\kappa\de s< 2 \pi$ $\ \forall t\in[0,T)$.
\end{theorem}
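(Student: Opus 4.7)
The plan is to use Theorem~\ref{satz12} to reduce the problem to a tangent-rotation bound and then run a continuity argument. By Theorem~\ref{satz12} (giving $a(t)<b(t)$ and $\ind(c_t-\tildegat)=1$ for all $t$) together with Corollary~\ref{indexc0}, the identity
\[
\int_a^b \kappa \de s \;=\; \pi + \theta(t), \qquad \theta(t) \;:=\; \int_{a(t)}^{b(t)} \Sig\tilde\kappa \de s_{\ftilde},
\]
holds for every $t\in[0,T)$. Since $a(t),b(t)$ are continuous by Lemma~\ref{lift}, the map $\theta:[0,T)\to\mathbb{R}$ is continuous, and the theorem is equivalent to $\theta(t)<\pi$ for all $t$.

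The key non-crossing step is to show $\theta(t)\neq\pi$ for every $t$. Suppose for contradiction that $\theta(t_0)=\pi$. Then by definition of $\theta$ the tangents $\Sig\vec\tau(c(a,t_0))$ and $\Sig\vec\tau(c(b,t_0))$ are antiparallel, and by the definition of $\Sig d$ one obtains $|c(a,t_0)-c(b,t_0)|\geq \Sig d$. On the other hand, $c_{t_0}$ connects these two points, so $|c(a,t_0)-c(b,t_0)|\leq L(c_{t_0})\leq L(c_0)<\Sig d$ by Lemma~\ref{shortening} together with the hypothesis, contradicting the previous inequality.

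It remains to verify the initial condition $\theta(0)<\pi$, which uses the full geometric hypothesis at $t=0$. The $1$-Lipschitz nearest-point projection $P:\overline{\Rzwei\setminus G_\Sigma}\to\Sigma$ onto the convex boundary maps $c_0$ to a continuous path on $\Sigma$ of arclength at most $L(c_0)<\Sig d$, starting at $c(a,0)$ and ending at $c(b,0)$. Using the boundary condition $\tau_0(a)=-\Sig\vec\nu(c(a,0))$ and $\kappa_0(a)>0$, a short expansion shows that this path initially moves in the positive $\Sigma$-direction. Letting $P^*$ denote the first point on the positive arc from $c(a,0)$ at which $\Sig\vec\tau$ has rotated by $\pi$, the definition of $\Sig d$ together with the chord-leq-arc inequality gives that the positive arclength from $c(a,0)$ to $P^*$ is at least $|c(a,0)-P^*|\geq \Sig d$; since the projected path has strictly smaller arclength, $c(b,0)$ is reached before $P^*$, whence $\theta(0)<\pi$.

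Connectedness of $[0,T)$, combined with the continuity of $\theta$, the non-crossing $\theta\neq\pi$, and $\theta(0)<\pi$, then forces $\theta(t)<\pi$ throughout, so $\int_a^b\kappa\de s=\pi+\theta(t)<2\pi$ for every $t\in[0,T)$. The main obstacle is the initial-time estimate $\theta(0)<\pi$: embeddedness, convexity, and outer placement alone do not exclude ``coiled'' initial configurations with $\int_a^b\kappa_0\de s>2\pi$, and it is precisely the short-length hypothesis $L_0<\Sig d$ that makes the projection-plus-arclength argument work and pins down the correct case.
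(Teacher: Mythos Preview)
Your overall architecture is the same as the paper's: use Theorem~\ref{satz12} and Corollary~\ref{indexc0} to get $\int_a^b\kappa\de s=\pi+\theta(t)$ with $\theta(t)=\int_{a(t)}^{b(t)}\Sig\tilde\kappa\de s_{\ftilde}$, show $\theta(t)\neq\pi$ for all $t$ via $|c(a,t)-c(b,t)|\leq L(c_t)\leq L_0<\Sig d$, and then run continuity from $\theta(0)<\pi$. The non-crossing step and the continuity step are fine and match the paper.

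The gap is in your argument for $\theta(0)<\pi$. The nearest-point projection $P$ is indeed $1$-Lipschitz, so $P\circ c_0$ is a path on $\Sigma$ from $c(a,0)$ to $c(b,0)$ of total arclength $<\Sig d$. But $P\circ c_0$ is \emph{not} the boundary curve $\tilde\gamma_0$: the latter is by definition the \emph{positive} arc from $c(a,0)$ to $c(b,0)$, of length $b_0-a_0$, while the projected path may wander back and forth. Knowing only that the total variation of the lifted path $s(p)$ is $<\Sig d$ gives $s(b)\in(-\Sig d,\Sig d)$, and both signs are compatible with your information: if $s(b)>0$ then $b_0-a_0=s(b)<\Sig d$ and $\theta(0)<\pi$, but if $s(b)<0$ then $b_0-a_0=L(\Sigma)+s(b)$ and $\theta(0)>\pi$. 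Your claim that the projected path ``initially moves in the positive $\Sigma$-direction'' does not settle this, since a path with small total variation can start positive and still end with $s(b)<0$. So as written the argument does not rule out $\theta(0)>\pi$.

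The paper handles this point differently: it proves directly that $L(\tilde\gamma_0)\leq L(c_0)$, i.e.\ $b_0-a_0\leq L_0<\Sig d$, by applying the divergence theorem on the simply connected region $U$ bounded by $c_0-\tilde\gamma_0$ in the outer domain, with the vector field $X(x)=-\Sig\nu(p_x)$ (the outward unit normal to $\Sigma$, extended along normal lines). One computes $\Div X\geq 0$ in the outer region, so $0\leq\int_U\Div X=\int_{c_0}\langle X,\nu_U\rangle\de s-\int_{\tilde\gamma_0}\de s$, which yields $L(\tilde\gamma_0)\leq L(c_0)$. From this the paper gets $\theta(0)<\pi$ exactly as you then argue.

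Your projection idea can be salvaged, but it needs one more ingredient: since $c_0-\tilde\gamma_0$ is simple closed and bounds the disk $U$ in the closed outer region, and $P$ is a retraction of that region onto $\Sigma$, the loop $P\circ c_0-\tilde\gamma_0$ is null-homotopic in $\Sigma$. Hence the lifts of $P\circ c_0$ and $\tilde\gamma_0$ (started at the same point) share the same endpoint, i.e.\ $s(b)=b_0-a_0>0$, and then $b_0-a_0\leq L(P\circ c_0)\leq L(c_0)<\Sig d$ follows. Without this homotopy step (or the paper's divergence-theorem step), your ``reached before $P^*$'' sentence does not go through.
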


\begin{proof}
    We first show
    \begin{align} \label{Lleq}
    L(\tilde \gamma_0)\leq L(c_0),
    \end{align}
    where $\tilde \gamma_0\defi \tildega(\cdot,0)$ is the boundary curve from Lemma~\ref{defiboundarycurve}. Let $G\subset\Rzwei$ be the outer domain created by $\Sigma$. 
    Since $\Sigma$ is convex and $\fsig$ is injective we have: For every $x\in G\cup \Sigma$ there is a unique $p_x \in [\Sig a,\Sig b)$ such that $|x-\fsig(p_x)|= \dist (x,\Sigma)$. We consider the continuously differentiable vector field 
    \begin{align*}
     X: G\cup \Sigma \to \Rzwei, \ \ \ \ X(x)\defi -\Sig\nu(p_x).
    \end{align*}
    For each $x\in G$, we choose $\{\Sig\tau(p_x),\Sig\nu(p_x)\}$ as a basis for $\Rzwei$ and set
    \begin{align*}
     X= X^1 \Sig\tau(p_x) + X^2 \Sig\nu(p_x)
    \end{align*}
    on a neighbourhood of $x$. Thus, we obtain
    \begin{align*}
     \Div X(x)=\frac{\partial X^1}{\partial \Sig\tau(p_x)}(x) + \frac{\partial X^2}{\partial \Sig\nu(p_x)}(x).
    \end{align*}
    These terms can be computed in the following way: We set $\lambda_x\defi \frac{1}{1+ \text{dist}(p,\Sigma)\Sig\kappa(p_x)}$ and consider the smooth curves $\alpha_1,\alpha_2:[0,\epsilon)\to\Rzwei$ for a small $\epsilon>0$,
    \begin{align*}
     \alpha_1(t)&\defi \fsig\left(p_x + \tfrac{1}{\lambda_x}t\right) - \dist(x,\Sigma) \Sig \nu \left(p_x + \tfrac{1}{\lambda_x}t\right)\\
      \alpha_2(t) &\defi x + t\Sig\nu(p_x).
    \end{align*}
    We have
    \begin{align*}
     \alpha_1(0) &=x = \alpha_2(0),\\
      \frac{\de}{\de t} \alpha_1(t)\big|_{t=0} &= \tfrac{1}{\lambda_x} \Sig\tau(p_x) +  \tfrac{1}{\lambda_x}\dist(x,\Sigma) \Sig\kappa(p_x)\Sig\tau(p_x)  = \Sig\tau(p_x),\\
      \frac{\de}{\de t} \alpha_2(t)\big|_{t=0} &= \Sig\nu(p_x),\\
      X(\alpha_1(t))&=-\Sig\nu\left(p_x + \tfrac{1}{\lambda_x}t\right),\\
      X(\alpha_2(t))&=-\Sig\nu\left(p_x\right)
    \end{align*}
     and calculate
    \begin{align*}
     \frac{\partial X^1}{\partial \Sig\tau(p_x)}(x) &=  \frac{\de}{\de t} X^1(\alpha_1(t))\big|_{t=0} 
       = \langle \frac{\de}{\de t} X(\alpha_1(t))\big|_{t=0},\Sig\tau(p_x)\rangle \\ 
      &= \langle \tfrac{1}{\lambda_x} \Sig\kappa(p_x) \Sig\tau(p_x),\Sig\tau(p_x)\rangle = \tfrac{1}{\lambda_x}\Sig\kappa(p_x)\geq 0,\\
     \frac{\partial X^2}{\partial \Sig\nu(p_x)}(x) &= \frac{\de}{\de t} X^2(\alpha_2(t))\big|_{t=0}  = \langle \frac{\de}{\de t} X(\alpha_2(t))\big|_{t=0},\Sig\nu(p_x)\rangle =0.
    \end{align*}
    This shows $\Div X \geq 0$ in $G$, what we use for the divergence theorem. By assumptions, the curve $c_0-\tildega_0$ is simple closed and traces out a domain $U\subset\Rzwei$. By $a_0<b_0$, we know that $c_0-\tildega_0$ bounds $U$ positively. Since  $-\tildega_0$ has the opposite orientation compared to $\fsig$, we have $U\subset G$. Let $\vec \nu_U:\partial U\setminus\{c_0(a),c_0(b)\}\to\Rzwei$ be the outer normal to $U$. We then have
    \begin{align*}
     0 &\leq \int_U \Div X \de \mathcal L = \int_{\partial U} \langle X, \vec \nu_U \rangle \de \mathcal H^1                      \\
    &= \int_a^{b}\langle X\circ c_0, \vec \nu_U \circ c_0\rangle \de s_{c_0} + \int_0^{1} \langle X\circ \tildega_0,\vec \nu_U \circ  \tildega_0\rangle \de s_{ \tildega_0}.
    \end{align*}
    By construction, we have the identity $X\circ  \tildega_0= - \vec \nu_U\circ  \tildega_0$ on $[0,1]$. It follows that
    \begin{align*}
     L(\tilde \gamma_0) = \int_0^{1}\de s_{ \tildega_0} \leq \int_a^b \langle X\circ c_0, \vec \nu_U \circ c_0\rangle \de s_{c_0} \leq \int_a^b \de s_{c_0} =L(c_0).
    \end{align*}
    We use (\ref{Lleq}) to get 
    \begin{align*}
     |c(a,0)-\tilde\gamma_0(p)|\leq L\left(\tilde\gamma_0\big|_{[0,p]}\right)\leq L(\tilde\gamma_0)<\Sig d \ \ \ \forall p\in[0,1].
    \end{align*}
    By definition of $\Sig d$, it follows that $\Sig\vec\tau(c(a,0))\not = -\Sig\vec\tau(\tilde\gamma_0(p))\ \forall p\in[0,1]$. By continuity we get
    \begin{align*} 
     \winkel\left(\Sig\vec\tau(c(a,0)),\Sig\vec\tau(\tilde\gamma_0(p)) \right) \in[0,\pi)\ \ \forall p\in[0,1], 
    \end{align*}
      in particular 
          \begin{align}\label{winkel0}
     \winkel\left(\Sig\vec\tau(c(a,0)),\Sig\vec\tau(c(b,0)) \right) \in[0,\pi). 
    \end{align}
    For $t\in (0,T)$, we have
\begin{align*}
 |c(b,t)-c(a,t)|\leq L(\ct)\leq L_0 <\Sig d.
\end{align*}
It follows that
\begin{align*}
 \Sig\vec\tau(c(a,t))\not = -\Sig\vec\tau(c(b,t)),
\end{align*}
  which implies by continuity and (\ref{winkel0})
\begin{align} \label{wink}
 \winkel\left(\Sig\vec\tau(c(a,t)),\Sig\vec\tau(c(b,t)) \right) \in [0,\pi).
\end{align}
Theorem~\ref{satz12} (\ref{aleqb}) implies that the curves $\tildega_t$ are regular and positively oriented. This yields together with the Frenet equation
\begin{align*}
  \winkel\left(\Sig\vec\tau(c(a,t)),\Sig\vec\tau(\tilde\gamma_t(p)) \right) = L\left(\Sig\vec\tau(\tilde\gamma_t)\big|_{[0,p]}\right) = \int_0^p \kappa_{\tildega_t}\de s_{\tildega_t}
\end{align*}
for $p\in[0,1]$, $t\in(0,T)$.
By Theorem~\ref{satz12} (\ref{gl2}), Corollary~\ref{indexc0} and (\ref{wink}) we have
\begin{align*}
 \int_a^b\kappa\de s = \pi + \int_0^{1} \kappa_{\tildegat}\de s_{\tildegat} < 2\pi.
\end{align*}
\end{proof}

\begin{lemma} \label{cinD}
 Consider a solution $c:[a,b]\times[0,T)\to \Rzwei $ of (\ref{flow}).
 Then we have
\begin{align*}
 c(p,t)\in D\defi\{x\in\Rzwei: \dist(x,\Sigma)\leq \tfrac{L_0}{2}\}\quad \forall (p,t)\in[a,b]\times[0,T),
\end{align*}
where $L_0$ denotes the length of the initial curve.
\end{lemma}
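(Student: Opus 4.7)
The plan is to use the length monotonicity from Lemma~\ref{shortening} together with the fact that both endpoints of $c_t$ lie on $\Sigma$. Since any point on a curve is at Euclidean distance at most the intrinsic arclength from either endpoint, one of the two endpoints will always be close enough in $\Sigma$ to witness the required distance bound.

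More concretely, first I would invoke Lemma~\ref{shortening} to obtain $L(c_t) \leq L_0$ for every $t \in [0,T)$. Then, for a fixed $(p,t) \in [a,b]\times[0,T)$, I would split the curve at $p$ and denote
\[
s_1(p,t) \defi \int_a^p |\partial_q c(q,t)|\,\de q, \qquad s_2(p,t) \defi \int_p^b |\partial_q c(q,t)|\,\de q,
\]
so that $s_1(p,t) + s_2(p,t) = L(c_t) \leq L_0$. In particular, $\min\{s_1(p,t), s_2(p,t)\} \leq L_0/2$.

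Next, using that the Euclidean distance between two points of a regular curve is bounded by the arclength between them, I obtain $|c(p,t) - c(a,t)| \leq s_1(p,t)$ and $|c(p,t) - c(b,t)| \leq s_2(p,t)$. Since both $c(a,t)$ and $c(b,t)$ belong to $\Sigma$ by the boundary condition in (\ref{flow}), it follows that
\[
\dist(c(p,t),\Sigma) \leq \min\bigl\{|c(p,t) - c(a,t)|,\; |c(p,t) - c(b,t)|\bigr\} \leq \min\{s_1,s_2\} \leq \tfrac{L_0}{2}.
\]
Hence $c(p,t) \in D$, which is exactly the claim.

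There is no real obstacle here: the statement is essentially a triangle-inequality-plus-monotonicity observation, and the only ingredient beyond the definitions is Lemma~\ref{shortening}. One minor point to keep in mind is that the estimate $|c(p,t)-c(a,t)| \leq s_1(p,t)$ requires only regularity of $c_t$, which holds by the smoothness assertion in Proposition~\ref{exis}.
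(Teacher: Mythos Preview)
Your proof is correct and is essentially the same argument as the paper's, just written directly rather than by contradiction: both use that the endpoints lie on $\Sigma$, that Euclidean distance is dominated by arclength, and that $L(c_t)\leq L_0$ from Lemma~\ref{shortening}. The paper assumes $\dist(c(p,t),\Sigma)>L_0/2$ and derives $L_0<L(c_t)\leq L_0$, whereas you go straight to $\dist(c(p,t),\Sigma)\leq\min\{s_1,s_2\}\leq L_0/2$.
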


\begin{proof}
    By the boundary conditions $c(a,t)\in D$ and $c(b,t)\in D$ is trivial. If there is a point $(p,t)\in(a,b)\times\nut$ such that $\dist(c(p,t),\Sigma)> \frac{L_0}{2}$ then
    \begin{align*}
     |c(p,t)-c(a,t)|>\frac{L_0}{2} \text{ and } |c(p,t)-c(b,t)|>\frac{L_0}{2}.
    \end{align*}
    This gives a contradiction via
      \begin{align*}
       L_0 < |c(p,t)-c(a,t)| + |c(p,t)-c(b,t)|\leq L\left(\ct\big|_{[a,p]}\right) + L\left(\ct\big|_{[p,b]}\right) \leq L(\ct)\leq L_0.
      \end{align*}
\end{proof}

\begin{proposition}\label{Laengenachuntenabsch}
    Let $c:[a,b]\times[0,T)\to \Rzwei $ be a solution of (\ref{flow}),
    where the initial curve $c_0$ satisfies the following three conditions: 
    The curvature $\kappa_0$ is positive on $[a,b]$, the curve $c_0$ has no self-intersection and it is contained in the outer domain created by the convex support curve~$\Sigma$. Then it follows that
    \begin{align*}
     \frac{4 A_0}{L_0 + 2\diam\Sigma}\leq L(\ct) \ \ \ \forall t\in[0,T),
    \end{align*}
    where $L_0= L(c_0)$ and $A_0= A(c_0,\tilde\gamma_0)$.
\end{proposition}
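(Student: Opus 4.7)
The plan is to combine the integral representation~\eqref{bla1} from the proof of Lemma~\ref{blob} with convexity of $\Sigma$ and the tubular bound in Lemma~\ref{cinD}. Since Theorem~\ref{satz12}\,(\ref{aleqb}) gives $a(t)<b(t)$, the curve $\tildegat$ is positively oriented along $\Sigma$ and hence $\nu_{\tildegat}=\Sig\vec\nu$, so
\begin{align*}
A_0 = -\tfrac{1}{2}\int_a^b\langle c_t,\nu\rangle\,\de s + \tfrac{1}{2}\int_0^1\langle \tildegat,\Sig\vec\nu\rangle\,\de s_{\tildegat}.
\end{align*}
Because $c_t$ and $\tildegat$ share their endpoints, both $\int_a^b\nu\,\de s$ and $\int_0^1\Sig\vec\nu\,\de s_{\tildegat}$ equal $J(c(b,t)-c(a,t))$, so translating the origin to $c(a,t)$ leaves $A_0$ unchanged:
\begin{align*}
A_0 = -\tfrac{1}{2}\int_a^b\langle c_t-c(a,t),\nu\rangle\,\de s + \tfrac{1}{2}\int_0^1\langle \tildegat-c(a,t),\Sig\vec\nu\rangle\,\de s_{\tildegat}.
\end{align*}

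The key observation is that the second integral is non-positive. Since $\Sig\vec\nu$ is the inner unit normal of the convex curve $\Sigma$, the tangent line at each $x\in\Sigma$ is a supporting line for $\Sigma$, so $\Sigma\subset\{y:\langle y-x,\Sig\vec\nu(x)\rangle\geq 0\}$; applying this with $x=\tildegat(p)$ and $y=c(a,t)\in\Sigma$ yields $\langle\tildegat(p)-c(a,t),\Sig\vec\nu(\tildegat(p))\rangle\leq 0$ for every $p\in[0,1]$. Dropping this term and using $|\langle c_t-c(a,t),\nu\rangle|\leq|c(p,t)-c(a,t)|$ produces
\begin{align*}
2A_0 \leq \int_a^b |c(p,t)-c(a,t)|\,\de s.
\end{align*}

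For the pointwise bound on the integrand, Lemma~\ref{cinD} supplies, for each $p$, a point $x\in\Sigma$ with $|c(p,t)-x|\leq L_0/2$; since both $x$ and $c(a,t)$ lie on $\Sigma$ we also have $|x-c(a,t)|\leq\diam\Sigma$, so by the triangle inequality $|c(p,t)-c(a,t)|\leq L_0/2+\diam\Sigma$. Substituting and using $\int_a^b\de s = L(\ct)$ gives
\begin{align*}
2A_0 \leq L(\ct)\left(\tfrac{L_0}{2}+\diam\Sigma\right) = \tfrac{1}{2}L(\ct)(L_0+2\diam\Sigma),
\end{align*}
which rearranges to the claim.

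The only non-routine step is recognising that convexity of $\Sigma$ makes the $\tildegat$-contribution vanish with the favourable sign once the origin has been placed at a boundary point $c(a,t)\in\Sigma$; the rest is just two applications of the triangle inequality together with Lemma~\ref{cinD}.
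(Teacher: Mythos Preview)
Your proof is correct and follows essentially the same route as the paper's: use the area representation~\eqref{bla1}, translate the origin so that convexity of $\Sigma$ makes the $\tildegat$-integral non-positive, then bound $|c_t-\text{origin}|$ via Lemma~\ref{cinD} and the triangle inequality. The only cosmetic difference is the choice of translation point: the paper places the origin at an interior point of $G_\Sigma$ (so that $\langle\tildegat,\nu_{\tildegat}\rangle\leq 0$ follows directly from $\nu_{\tildegat}$ being the inner normal), whereas you translate to the boundary point $c(a,t)\in\Sigma$ and invoke the supporting-line characterisation of convexity instead.
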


\begin{proof}
 Since the area is preserved and by the formula for the oriented area (\ref{bla1}), we get
    \begin{align*}
      A_0= A(c_t,\tilde\gamma_t)= - \frac{1}{2} \int_a^b \langle c_t,J\pp\ct\rangle \de p + \frac{1}{2}\int_{0}^{1}\langle \tilde\gamma_t,J\pp\tildegat\rangle\de p.
    \end{align*}
    By translation invariance of the oriented area we can assume that the origin $0\in\Rzwei$ is in $G_\Sigma$. As $\tildega_t$ is positively oriented we have that $\nu_{\tildega_t}$ is the inner normal of $G_\Sigma$. From $0\in G_\Sigma$ we get $\langle\tildega_t,\nu_{\tildega_t}\rangle\leq 0$ on $[0,1]$ for all $t\in\nut$. This implies
    \begin{align*}
     A_0\leq - \frac{1}{2} \int_a^b \langle c_t,J\pp\ct\rangle \de p \leq\frac{1}{2} \int_a^b | c_t||J\pp\ct| \de p =\frac{1}{2} \int_a^b | c_t||\pp\ct| \de p.
    \end{align*}
   For any $(p,t) \in[a,b]\times\nut$ there is an $x\in\Sigma$ such that $\dist(c(p,t),\Sigma)= |c(p,t) - x|$. We estimate with the previous lemma
    \begin{align*}
     |c(p,t)|\leq|c(p,t)-x| + |x|\leq \frac{L_0}{2} + \diam\Sigma,
    \end{align*}		
    which yields $A_0 \leq \frac{1}{2}\left(\frac{L_0}{2} + \diam\Sigma\right) \intab|\pp\ct|\de p = \frac{L_0 + 2 \diam\Sigma}{4} L(c_t).$
\end{proof}

\begin{theorem} \label{Linftyabschaetzung}
   Let $c:[a,b]\times[0,T)\to \Rzwei $ be a solution of the area preserving curve shortening problem with Neumann free boundary conditions, where
   the initial curve $c_0$ satisfies the following four conditions: 
    The curvature $\kappa_0$ is positive on $[a,b]$, the curve $c_0$ has no self-intersection, it is contained in the outer domain created by the convex support curve $\Sigma$ and $L_0<\Sig d$ with $\Sig d\defi \min\{|x-y|: x,y\in\Sigma,\Sig\vec\tau(x)=-\Sig\vec\tau(y)\}$. We then have
    \begin{align*}
       \frac{\pi}{L_0}\leq \bar\kappa(t) \leq\frac{(L_0 + 2\diam\Sigma)\pi}{2 A_0} \ \ \ \forall t\in[0,T),
      \end{align*}
     where again the notation $L_0= L(c_0)$, $A_0=A(c_0,\tilde\gamma_0)$ is used.
    
\end{theorem}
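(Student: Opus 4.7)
The statement is a clean consequence of the earlier results, so the plan is essentially to assemble the right ingredients for the numerator $\int_a^b \kappa\,ds$ and denominator $L(c_t)$ of $\bar\kappa(t)=\frac{\int_a^b\kappa\,ds}{L(c_t)}$.

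\textbf{Lower bound.} First I would observe that the assumptions of the theorem include $\kappa_0>0$, so by Corollary~\ref{kappagroesser0} the curvature stays positive and Proposition~\ref{anfangstheorem} applies, giving $\int_a^b \kappa\,ds \geq \pi$ for every $t\in[0,T)$. The length decreases along the flow by Lemma~\ref{shortening}, so $L(c_t)\leq L_0$ for all $t$. Dividing yields
\[
\bar\kappa(t)=\frac{\int_a^b \kappa\,ds}{L(c_t)}\geq \frac{\pi}{L_0}.
\]

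\textbf{Upper bound.} For the numerator I would invoke Theorem~\ref{mitdSig}: all four hypotheses of that theorem are exactly the hypotheses being made here, so $\int_a^b\kappa\,ds<2\pi$ for every $t\in[0,T)$. For the denominator the three conditions required by Proposition~\ref{Laengenachuntenabsch} (positive initial curvature, embeddedness, and lying in the outer domain created by $\Sigma$) are also available, so
\[
L(c_t)\geq \frac{4A_0}{L_0+2\diam\Sigma}.
\]
Combining these two bounds,
\[
\bar\kappa(t)=\frac{\int_a^b \kappa\,ds}{L(c_t)}< \frac{2\pi}{\tfrac{4A_0}{L_0+2\diam\Sigma}}=\frac{(L_0+2\diam\Sigma)\pi}{2A_0}.
\]

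\textbf{Obstacles.} There really are no obstacles here: the theorem is purely a packaging result that glues the turning-number inequality ($\pi \leq \int\kappa\,ds < 2\pi$) to the monotone upper bound on $L$ from length shortening and to the lower bound on $L$ coming from area preservation and the containment $c(p,t)\in D$. The genuinely delicate arguments were carried out in Theorem~\ref{mitdSig} (ruling out $\int\kappa\,ds\geq 2\pi$ via the index computation and the divergence-theorem comparison $L(\tilde\gamma_0)\leq L(c_0)$) and in Proposition~\ref{Laengenachuntenabsch} (relating $A_0$ to $L(c_t)$ through the diameter of $\Sigma$); once those are in hand, the present inequalities follow without further analytic input.
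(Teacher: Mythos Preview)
Your proof is correct and follows exactly the same route as the paper: the lower bound comes from Proposition~\ref{anfangstheorem} together with the length-shortening property (Lemma~\ref{shortening}), and the upper bound from combining Theorem~\ref{mitdSig} with Proposition~\ref{Laengenachuntenabsch}. The invocation of Corollary~\ref{kappagroesser0} is harmless but unnecessary, since Proposition~\ref{anfangstheorem} already applies under the hypothesis $\kappa_0\geq 0$.
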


\begin{proof}
The estimate from below is the curve shortening property and Proposition~\ref{anfangstheorem}. 
  Combine Theorem~\ref{mitdSig} and Proposition~\ref{Laengenachuntenabsch} for the other inequality.
\end{proof}

\section{Finite type I singularities} \label{S3}

\begin{proposition} \label{Amax}
 Let $c_0$ be an initial curve with $\kappa_0\geq 0$ on $[a,b]$. Consider the solution $c:[a,b]\times[0,T)\to \Rzwei $ of (\ref{flow}),
 where $T<\infty$ and where the curvature is not bounded as $t \nearrow T$. Then we have
\begin{align*}
 \max_{[a,b]} \kappa^2(\cdot,t)\geq \frac{1}{2(T-t)} \ \ \ \forall t\in (0,T).
\end{align*}
\end{proposition}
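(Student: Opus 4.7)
The plan is to derive an ODE inequality $f'(t)\leq 2f(t)^2$ for $f(t)\defi\max_{[a,b]}\kappa^2(\cdot,t)$ and then integrate it backwards from $T$. By Proposition~\ref{kappageq0}, the assumption $\kappa_0\geq 0$ yields $\kappa\geq 0$ throughout $[a,b]\times\nut$ and hence $\bar\kappa\geq 0$. Since $\kappa$ is smooth in space and continuous in $(p,t)$, $f$ is locally Lipschitz on $(0,T)$; by the standard envelope argument (``Hamilton's trick''), at almost every $t$ one has $f'(t)=\partial_t\kappa^2(p_t,t)$, where $p_t\in[a,b]$ is any point realizing the maximum. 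Using the evolution equation (\ref{evolution5}) and $\bar\kappa\kappa^3\geq 0$,
$$\partial_t\kappa^2=\partial_s^2\kappa^2-2(\partial_s\kappa)^2+2\kappa^4-2\bar\kappa\kappa^3\leq\partial_s^2\kappa^2+2\kappa^4.$$

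I would then split cases by the location of $p_t$. If $p_t\in(a,b)$, the usual interior first- and second-order conditions give $\partial_s^2\kappa^2(p_t,t)\leq 0$, so $f'(t)\leq 2\kappa^4(p_t,t)=2f(t)^2$. The delicate case is $p_t\in\{a,b\}$; assume $p_t=a$, the argument at $b$ being analogous. Lemma~\ref{lemmastahl5} gives $\partial_s\kappa(a,t)=(\kappa(a,t)-\bar\kappa)\Sig\kappa$. On the one hand, the boundary maximum condition forces $\partial_s\kappa(a,t)\leq 0$. On the other hand, $\kappa(a,t)=\max\kappa\geq\bar\kappa\geq 0$ together with convexity of $\Sigma$ (i.e.\ $\Sig\kappa\geq 0$) forces $\partial_s\kappa(a,t)\geq 0$. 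Hence $\partial_s\kappa(a,t)=0$, and either $\Sig\kappa$ vanishes at the contact point $c(a,t)$ or $\kappa(a,t)=\bar\kappa$. In the second subcase, $\max\kappa=\bar\kappa$ forces $\kappa\equiv\bar\kappa$ along $c_t$, so $\partial_t c\equiv 0$ at time $t$ and the flow is momentarily stationary, contradicting the unboundedness of $\kappa$ as $t\nearrow T$. In the first subcase, $\partial_s\kappa(a,t)=0$ combined with the one-sided second-order condition at a boundary maximum of $\kappa$ yields $\partial_s^2\kappa(a,t)\leq 0$, so $\partial_s^2\kappa^2(a,t)=2\kappa\,\partial_s^2\kappa+2(\partial_s\kappa)^2\leq 0$, and again $f'(t)\leq 2f(t)^2$.

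Finally, $f'(t)\leq 2f(t)^2$ rewrites as $-\frac{d}{dt}\bigl(1/f\bigr)\leq 2$; integrating over $[t,s]\subset(0,T)$ yields $1/f(t)-1/f(s)\leq 2(s-t)$. Since $f$ is unbounded as $t\nearrow T$ by hypothesis, letting $s\nearrow T$ gives $1/f(s)\to 0$ and therefore $f(t)\geq 1/(2(T-t))$, which is the claim. I expect the main obstacle to be the boundary case, specifically ruling out the degenerate subcase $\kappa(a,t)=\bar\kappa$ via the blow-up hypothesis and justifying the one-sided second-order condition for $\partial_s^2\kappa$ at a boundary maximum; both steps are elementary but deserve care.
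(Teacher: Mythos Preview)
Your approach is essentially the paper's: derive $\partial_t\kappa^2\leq\partial_s^2\kappa^2+2\kappa^4$, use Hamilton's trick, show $\partial_s^2\kappa^2\leq 0$ at the maximum (interior or boundary), and integrate the resulting ODE inequality. The treatment of the boundary case differs slightly, and yours is more complicated than necessary.

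Once you have established $\partial_s\kappa(a,t)=0$ from the sandwich $0\leq(\kappa(a,t)-\bar\kappa)\,{}^\Sigma\kappa=\partial_s\kappa(a,t)\leq 0$, there is no need to split into the subcases ``${}^\Sigma\kappa=0$'' versus ``$\kappa(a,t)=\bar\kappa$''. The argument you give in your first subcase uses only $\partial_s\kappa(a,t)=0$ (not ${}^\Sigma\kappa=0$) and therefore covers both: with vanishing first derivative, $\partial_s^2\kappa^2(a,t)>0$ would make $(a,t)$ a strict local minimum of $\kappa^2$, contradicting that it is a maximum. This is exactly how the paper proceeds.

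Your second subcase, besides being superfluous, has a small gap: ``momentarily stationary at time $t$'' does not by itself contradict blow-up. You would need to invoke uniqueness (Proposition~\ref{exis}) to conclude that the circular arc persists for all future times, whence $T=\infty$ and $\kappa$ stays bounded. Alternatively, note that in this subcase $\kappa\equiv\bar\kappa$ gives $\partial_t\kappa=\partial_s^2\kappa+\kappa^2(\kappa-\bar\kappa)=0$, so $f'(t)=0\leq 2f(t)^2$ holds trivially anyway.

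Finally, in the last step you should pass to a \emph{sequence} $s=t_i\nearrow T$ along which $f(t_i)\to\infty$ (which exists by hypothesis), rather than asserting $1/f(s)\to 0$ for all $s\nearrow T$; the paper does exactly this.
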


\begin{proof}
      The proof is similar to the proof of $\max |A|^2(\cdot,t)\geq \frac{1}{2(T-t)}$ for the mean curvature flow of compact hypersurfaces in $\R^{n+1}$, see \cite[Lemma 1.2]{Huisken90}, but it has to be adapted to the situation with boundary.
      The evolution equation (\ref{evolution5}) implies for $t>0$
      \begin{align*}
       \pt \kappa^2 &= 2\kappa\pt\kappa = 2\kappa(\ps^2\kappa + \kappa^2(\kappa-\bar\kappa))\\
	&=\ps^2\kappa^2-2(\ps\kappa)^2 + 2\kappa^4 - 2\kappa^3\bar\kappa\\
	&\leq \ps^2\kappa^2 + 2\kappa^4,
      \end{align*}
      using that $\kappa \geq 0$ in $[a,b]\times [0,T)$, see Proposition~\ref{kappageq0}. The function $\kappa^2$ is in $C^1\left([a,b]\times(0,T)\right)$, therefore $\kappa^2_{\text{max}}$ is locally Lipschitz in $(0,T)$. At a differentiable time $t$ we have $\frac{\de}{\de t}\kappa^2_{\text{max}} (t) = \frac{\partial \kappa^2 (p,t)}{\partial t},$
      where $p\in[a,b]$ is a point at which the maximum is attained, see \cite{HamTrick}.
      It follows that      
      \begin{align}\begin{split}\label{re24}
       \frac{\de}{\de t}\kappa_{\text{max}}^2 (t) &= \frac{\partial \kappa^2(p,t)}{\partial t} \\
	&\leq \ps^2\kappa^2(p,t) + 2(\kappa_{\text{max}}^2(t))^2,\end{split}
      \end{align}
      where $p\in[a,b]$ is a point at which the maximum is attained. We prove $\ps^2\kappa^2 (p,t)\leq 0$. For that, we have to consider two cases.
      If $p\in (a,b)$, then $\ps^2\kappa^2 (p,t)\leq 0$ is immediate since we have a maximum in the inner part of $[a,b]$. But if $p\in\{a,b\}$, we only can conclude the following inequality for the first derivative: if $p=a$ we have
      \begin{align}\label{kroko3}
       \ps \kappa^2(a,t)\leq0,
      \end{align}
      and if $p=b$ we have $\ps \kappa^2(b,t)\geq0$. We treat the situation $p=a$. As $\kappa\geq 0$, since the maximum is attained in $a$ and by the geometric situation we have $\kappa(a,t)>0$ (otherwise we would have $\kappa\equiv 0$ in all of $[a,b]$ - and this is not possible). From (\ref{kroko3}) we then have $\ps\kappa(a,t)\leq 0$, and by Lemma~\ref{lemmastahl5} we get 
      \begin{align*}
       0\geq\ps \kappa (a,t)=(\kappa(a,t)-\bar\kappa(t))\nix^\Sigma\kappa\left(\nix^\Sigma f^{-1}(c(a,t))\right)\geq 0,
      \end{align*}
      because $\nix^\Sigma\kappa\geq0$ and $\bar\kappa(t)\leq\max_{[a,b]}\kappa(\cdot,t)=\kappa(a,t)$. It follows that $\ps\kappa(a,t)=0$ and, of course, $\ps\kappa^2(a,t)=0$. 
      Now $\ps^2\kappa^2(a,t)>0$  would imply the existence of a strict local minimum in $(a,t)$. But since there is a maximum in $(a,t)$, we obtain $\ps^2\kappa^2(a,t)\leq0$.
      In the case $p=b$ the same argument can be done because in the formula for $\ps\kappa$ at the boundary, there is a minus for $p=b$, see Lemma~\ref{lemmastahl5}. We have proven $ \ps^2\kappa^2(p,t)\leq0.$ 
      Together with (\ref{re24}), this implies
      \begin{align*}
       \frac{\de}{\de t}\kappa_{\text{max}}^2 (t) \leq 2(\kappa_{\text{max}}^2(t))^2.
      \end{align*}
      We compute $ -\frac{\de}{\de t}\left(\frac{1}{\kappa_{\text{max}}^2(t)}\right) \leq 2$
      and get for $0<t_1<t_2<T$
      \begin{align*}
       \frac{1}{\kappa_{\text{max}}^2(t_1)} \leq 2(t_2-t_1) + \frac{1}{\kappa_{\text{max}}^2(t_2)}.
      \end{align*}
      Since $\kappa$ is not bounded as $t\nearrow T$, one can find a sequence $t_i\nearrow T$ such that $\kappa_{\text{max}}^2(t_i)\to\infty$ as $i\to\infty$. For $t\in (0,T)$ we now have $ \frac{1}{\kappa_{\text{max}}^2(t)} \leq 2(T-t),$
      which yields the result.
  \end{proof}

\begin{definition}
 Let $c:[a,b]\times[0,T)\to \Rzwei $ be a solution of (\ref{flow}).
 We say that $c$ develops a \emph{singularity at $T\leq \infty$} if $\max_{p\in [a,b]}|\kappa|(p,t) \to \infty$ for $t\nearrow T$.
\end{definition}

\begin{definition}
 Let $c:[a,b]\times[0,T)\to \Rzwei $ be a solution of (\ref{flow})
 with singularity at $T< \infty$. Then we differentiate between two types of singularities. The singularity is of \emph{type I} if there is a uniform constant $\bar c_0>0$ such that
\begin{align*}
 \sup_{p\in [a,b]}\kappa^2(p,t) \leq \frac{\bar c_0}{2(T-t)} \quad \forall t\in [0,T).
\end{align*}
If there is no such constant then we call the singularity a \emph{type II singularity}.
\end{definition}

\begin{definition}\label{Parabolicblowuppoint}
Let $c:[a,b]\times[0,T)\to \Rzwei $ be a solution of (\ref{flow})
with singularity at $T\leq \infty$.
 A point $x_0\in\Rzwei$ is called \emph{blowup point} of $c$, if there is a sequence of points $(p_j,t_j) \in[a,b]\times[0,T)$ such that 
\begin{align*}
 &t_j\nearrow T \ (j\to\infty),\\
&p_j\to p_0\in[a,b]\ (j\to\infty),\\
&Q_j\defi|\kappa(p_j,t_j)|=\max_{p\in[a,b]}|\kappa(p,t_j)|\to\infty\ (j\to\infty),\\
&c(p_j,t_j)\to x_0\ (j\to\infty).
\end{align*}
\end{definition}

\begin{lemma}\label{existenceblowup}
 Let $c:[a,b]\times[0,T)\to \Rzwei $ be a solution of (\ref{flow})
 with singularity at $T\leq \infty$. Then there exists a blowup point $x_0\in D\defi\{x\in\Rzwei: \dist(x,\Sigma)\leq \frac{L_0}{2}\}$.
\end{lemma}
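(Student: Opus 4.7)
The proof will be a direct compactness argument; the only substantive ingredient is the a priori containment from Lemma~\ref{cinD} that keeps the curve inside the compact set $D$ for the entire life of the flow. My plan is to construct a sequence $(p_j,t_j)$ as in Definition~\ref{Parabolicblowuppoint} by successive subsequence extractions.

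First I would choose a sequence $t_j \nearrow T$ along which the singularity assumption is witnessed, i.e. $\max_{p\in[a,b]}|\kappa|(\cdot,t_j) \to \infty$ as $j\to\infty$. For each fixed $t_j$, the map $p\mapsto |\kappa(p,t_j)|$ is continuous on the compact interval $[a,b]$, so it attains its maximum at some $p_j\in[a,b]$. Setting $Q_j \defi |\kappa(p_j,t_j)| = \max_{p\in[a,b]}|\kappa(p,t_j)|$ then automatically gives $Q_j\to\infty$.

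Next I would apply compactness twice. Since $[a,b]$ is compact, we pass to a subsequence (not relabeled) with $p_j\to p_0 \in [a,b]$. Lemma~\ref{cinD} tells us that the entire trajectory is trapped in $D \defi\{x\in\Rzwei:\dist(x,\Sigma)\leq L_0/2\}$; as $\Sigma$ is the image under $\fsig$ of a compact interval it is a compact subset of $\Rzwei$, and $\dist(\cdot,\Sigma)$ is continuous, so $D$ is closed and bounded, hence compact. Consequently $\{c(p_j,t_j)\}\subset D$ admits a further subsequence converging to some $x_0\in D$. The resulting $(p_j,t_j)$ and $x_0$ fulfil all four conditions of Definition~\ref{Parabolicblowuppoint}.

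There is no real obstacle here: the content of the statement is that the blowup cannot escape to infinity, and this is entirely taken care of by the length-preservation-based containment result in Lemma~\ref{cinD}; the rest is Bolzano--Weierstrass on compact sets. I would just make sure to note that Lemma~\ref{cinD} was stated for any solution of~(\ref{flow}), so no extra hypothesis on the initial curve is needed to invoke it in this generality.
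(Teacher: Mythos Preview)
Your proposal is correct and follows essentially the same approach as the paper: the paper's proof is a two-line remark that existence is clear from compactness of $[a,b]$ and that $x_0\in D$ follows from Lemma~\ref{cinD}, which is exactly what you spell out in detail. One small terminological slip: the containment in Lemma~\ref{cinD} comes from the \emph{length-shortening} property $L(c_t)\leq L_0$, not from any preservation statement.
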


\begin{proof}
The existence is clear by the definition of a singularity and since $\ab$ is compact. The property $x_0\in D$ follows from Lemma~\ref{cinD}.
\end{proof}

\begin{definition}\label{Parabolicrescaling}
Let $x_0 \in \Rzwei$ be a blowup point of $c$ with type~I singularity at $T<\infty$. The following procedure of rescaling of the curves $c$ is called \emph{parabolic rescaling}:
\begin{align*}
\tilde c_j (p,\tau)  \defi Q_j \left( c(p,\tfrac{\tau}{Q_j^2} + T) - x_0\right), \ \ \text{ for } (p,\tau)\in [a,b]\times  [-Q_j^2 T,0).
\end{align*}
The rescaled curves are a family of solutions of the area preserving curve shortening flow with Neumann free boundary conditions. 
\end{definition}

\begin{proposition}\label{ParabolicGrenzuebergangII} 
  Let $c:[a,b]\times[0,T)\to \Rzwei $ be a solution of (\ref{flow})
  with type~I singularity at $T<\infty$. Let $x_0\in\Rzwei$ be a blowup point.
 Assume further $L(\ct)\geq c_1>0 $ and $|\bar\kappa(t)|\leq c_2<\infty$,
  where $c_1$ and $c_2$ are constants independent of~$t$.
  Consider the parabolic rescaling as in Definition~\ref{Parabolicrescaling}. Then there exist reparametrizations $\psi_j:I_j\to[a,b]$ with $|I_j|\to\infty$ $(j\to\infty)$ such that a subsequence of the curves $$\tilde c_j(\psi_j,\cdot): I_j\times [-Q_j^2T,0)\to\Rzwei $$ 
  converges smoothly on compact subsets of $ I\times (-\infty,0)$ (where $I$ is an unbounded interval containing $0$) to a solution of the curve shortening flow $\tilde\gamma_\infty: I\times(-\infty,0)\to\Rzwei$ with the following properties:
\begin{enumerate}
 \item The length of $\tilde\gamma_\infty(\cdot,\tau)$ is not bounded for each $\tau\in (-\infty,0)$. \label{laenge}
 \item If $\tilde M^\infty_\tau\defi\tildega_\infty( I,\tau)$ has  boundary $\partial\tilde M^\infty_\tau$ then $\partial\tilde M^\infty_\tau\subset\, \nix^\infty\Sigma$, where $\nix^\infty\Sigma$ is a line through $0\in\Rzwei$, and $\langle \tilde\nu_\infty, ^{ ^\infty\Sigma}\nu\rangle = 0$ on $\partial\tilde M^\infty_\tau$. \label{rand}
\end{enumerate}
\end{proposition}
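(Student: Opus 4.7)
The plan is to run the standard parabolic blow-up argument for curve shortening with a Lagrange multiplier, and combine it with Stahl's boundary estimates. First, I verify the scaling: under $\tilde c_j(p,\tau) = Q_j(c(p,\tau/Q_j^2+T)-x_0)$ we have $\tilde\kappa_j = \kappa/Q_j$ and
\begin{align*}
\partial_\tau \tilde c_j = \Bigl(\tilde\kappa_j - \tfrac{\bar\kappa}{Q_j}\Bigr)\tilde\nu_j,
\end{align*}
so the hypothesis $|\bar\kappa|\le c_2$ yields $\bar\kappa/Q_j\to 0$, and any smooth limit will solve the plain curve shortening equation $\partial_\tau \tilde\gamma_\infty = \tilde\kappa_\infty \tilde\nu_\infty$. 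The type~I bound rescales to the $j$-independent estimate
\begin{align*}
\sup_{p\in [a,b]}|\tilde\kappa_j(p,\tau)|^2 \;\le\; \frac{\bar c_0}{-2\tau}, \qquad \tau\in[-Q_j^2 T,0),
\end{align*}
valid on every compact subset of $(-\infty,0)$. Proposition~\ref{Amax} together with the type~I definition show that the blow-up times $\tau_j \defi Q_j^2(t_j-T)$ satisfy $\tau_j\in[-\bar c_0/2,-1/2]$, so after passing to a subsequence $\tau_j\to\tau_\infty$; in particular $\max|\tilde\kappa_j|(\cdot,\tau_j)=1$ prevents the rescaled curves from degenerating to a point.

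Next I define the reparametrization $\psi_j:I_j\to[a,b]$ as the inverse of arclength along $\tilde c_j(\cdot,\tau_\infty)$ starting at $p_j$, so that $0\in I_j$ and
\begin{align*}
|I_j| = L\bigl(\tilde c_j(\cdot,\tau_\infty)\bigr) = Q_j\,L\bigl(c(\cdot,T+\tau_\infty/Q_j^2)\bigr) \;\ge\; Q_j c_1 \;\to\;\infty
\end{align*}
by the hypothesis $L(\ct)\ge c_1$. To apply Arzelà-Ascoli I need uniform $C^k$-bounds on the reparametrized rescaled curves on compact subsets of the parabolic cylinder. Away from the boundary these follow by differentiating the curvature evolution~(\ref{evolution5}) and applying standard parabolic interpolation to the $L^\infty$-bound on $\tilde\kappa_j$, exactly as in the closed-curve case. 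Near the boundary I invoke Stahl's higher-order Schauder estimates used in the proof of Proposition~\ref{exis}; the key observation is that the rescaled support curves $\nix^\Sigma_j\defi Q_j(\Sigma-x_0)$ have curvature bounded by $\sigmax/Q_j$ and $C^k$-seminorms that scale to zero like $Q_j^{1-k}$, so the constants appearing in the boundary estimates can be chosen independently of $j$.

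A diagonal Arzelà-Ascoli argument then extracts a subsequence converging smoothly on every compact subset of $I\times(-\infty,0)$ (where $I$ is the Hausdorff limit of the intervals $I_j$) to a solution $\tilde\gamma_\infty$ of curve shortening. Property~(\ref{laenge}) is immediate from the length lower bound: for every $\tau\in(-\infty,0)$,
\begin{align*}
L\bigl(\tilde\gamma_\infty(\cdot,\tau)\bigr) = \lim_{j\to\infty} L\bigl(\tilde c_j(\cdot,\tau)\bigr) \;\ge\;\lim_{j\to\infty} Q_j c_1 \;=\;\infty.
\end{align*}
For property~(\ref{rand}), distinguish two cases. If $\dist(x_0,\Sigma)>0$, then the rescaled endpoints satisfy $|\tilde c_j(a,\tau)|\ge Q_j\dist(x_0,\Sigma)\to\infty$, so under the arclength reparametrization the boundary points escape every compact subinterval of $I$ and $\Minfty$ has no boundary. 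If $x_0\in\Sigma$, then $\nix^\Sigma_j$ converges locally in $C^\infty$ to its tangent line $\nix^\infty\Sigma$ at $x_0$, which passes through the origin; since $\tilde c_j$ meets $\nix^\Sigma_j$ orthogonally at the endpoints for every $j$, passing to the limit in the two boundary conditions gives $\partial\Minfty\subset\nix^\infty\Sigma$ together with the required orthogonality. The step I expect to require the most care is the uniform-in-$j$ boundary regularity: Stahl's estimates must be re-examined so that their constants depend only on quantities that remain controlled under the rescaling, which is plausible precisely because the curvature and all higher derivatives of $\nix^\Sigma_j$ tend to zero.
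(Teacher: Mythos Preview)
Your overall strategy matches the paper's, but the argument for property~(\ref{laenge}) has a genuine gap. The displayed line
\[
L\bigl(\tilde\gamma_\infty(\cdot,\tau)\bigr) \;=\; \lim_{j\to\infty} L\bigl(\tilde c_j(\cdot,\tau)\bigr)
\]
is not justified: you only have smooth convergence on compact subsets of $I\times(-\infty,0)$, and lower semicontinuity of length under local convergence gives an inequality in the wrong direction. Concretely, local convergence controls $\int_K|\gamma_\infty'|$ for compact $K\subset I$, not $\int_I|\gamma_\infty'|$; nothing you have written rules out the speed $|\tilde\gamma_\infty'(\cdot,\tau)|$ decaying to zero at infinity for $\tau\neq\tau_\infty$, in which case the total length could be finite even though $|I|=\infty$. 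The paper fixes this in two steps. At the reference time $\tau_2$ the limit inherits unit speed, so $L(\tilde\gamma_\infty(\cdot,\tau_2))=|I|=\infty$ directly. For general $\tau_0$ one uses that $\tilde\gamma_\infty$ solves curve shortening, so $g\defi|\tilde\gamma_\infty'|$ satisfies $\partial_\tau\ln g=-\tilde\kappa_\infty^2$; integrating from $\tau_2$ to $\tau_0$ and inserting the rescaled type~I bound $|\tilde\kappa_\infty|^2\le \bar c_0/(-2\tau)$ gives a uniform positive lower bound on $g(\cdot,\tau_0)$, whence $L(\tilde\gamma_\infty(\cdot,\tau_0))=\int_I g\,dp=\infty$.

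A minor point: your appeal to Proposition~\ref{Amax} to pin $\tau_j\le -\tfrac12$ uses $\kappa_0\ge0$, which is not among the hypotheses here (that assumption only enters in the subsequent Corollary). This does not affect the compactness argument, since you may simply fix an arbitrary reference time $\tau_2\in(-\infty,0)$ for the arclength reparametrization, as the paper does; but the location of $\tau_\infty$ plays no role in the proof of the proposition itself.
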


\begin{proof}
    For the convergence, we first follow Ecker, see \cite[Remark~4.22~(2)]{EckerBuch}. Fix $\delta\in(0,\frac{1}{2})$. By rescaling and by the type I assumption we get $|\tilde \kappa_j|^2(p,\tau) \leq \frac{\bar c_0}{2\delta}$ for $j\geq j_0(\delta)$, $\tau \in [-\frac{T}{\delta},-\delta]$. With the type I property, it is easy to show that there is a radius $R_0=R_0(\bar c_0,\delta)$ such that $\tilde c_j(\cdot,\tau)$ intersects $\overline{B_{R_0}(0)}$. Choose a fixed $\tau_2\in(-\infty,0)$. We get reparametrizations $\psi_j: I_j\to \ab$ such that $\tildec(\psi_j,\tau_2)$ is parametrized by arclength. We use the notation $\psi_j^{-1}\eqqcolon \varphi_j$. Because of $L\left(c_t\right)\geq c_1>0$ we know that $|I_j|\to\infty\ (j\to\infty)$. We have two possibilities (after choosing a subsequence): Either $I_j \to(-\infty,\infty)$ or we are in the boundary case $I_j \to (-\infty,\tilde b]$ for a $\tilde b \geq 0$ (or $I_j\to [a,\infty)$ for an $\tilde a\leq 0$). We define $I\defi \lim_{j\to\infty} I_j$. 
    Consider now
    \begin{align*}
      \gamma_j:I_j\times [-Q_j^2 T,0)\to\Rzwei,\ \ \gamma_j(p,\tau)\defi \tildec(\psij(p), \tau).
    \end{align*}
    Then $\gaj(\cdot, \tau_2)$ is parametrized by arclength, $\gaj$ satisfies 
    \begin{align*} 
     |\kappa_j|^2\leq \frac{\bar c_0}{2\delta}\ \text{ on }\ \Ij\times[-\Tdel,-\delta]\ \  \forall j\geq j_0(\delta)
    \end{align*}
    and $\gaj$ are solutions of the area preserving curve shortening problem with Neumann free boundary conditions. Using $L(\ct)\geq c_1>0$ we have that $L(\gaj(\cdot,\tau))\geq Q_j c_1>0$ $\forall \tau\in[-Q_j^2 T,0)$, $\forall j\in\N$.
    Under these conditions, we reprove the gradient estimates from \cite[Chapter~7]{StahlDiss} for the local graph representation (see also the proof of Proposition~\ref{exis}). This yields at first $C^{k + \alpha,\frac{k+\alpha}{2}}-$estimates on the graph representation (away from the initial time) for every $k\in\N$. (We need the lower bound on $L(\gaj(\cdot,\tau))$ at this point to get estimates on the time derivatives of $\bar\kappa_j$.) Since the terms $\ps^m\kappa_j$ ($m\in\N_0$) are independent of the parametrization and by the flow equation, we then get
    \begin{align*}
     |\pt^k\ps^m\kappa_j|\leq c(k,m,\bar c_0,\delta, T, \Sigma,c_1)\ \text{ on } \ \Ij\times[-\Tdel,-\delta],\ \ \forall j\geq j_0(\delta).
    \end{align*}
      As in  \cite[Proof of Theorem 3.1]{DziukKuwertSch} we decompose a derivative $\pp^m\gaj$ into its normal and tangential part. We get for $m\geq 2$ with $h_j(p,\tau)\defi|\pp \gaj(p,\tau)|$
    \begin{align*}
     \langle \pp^m \gaj,\tau_j\rangle &= P^{N(m)}(h_j,\dots,\ps^{m-1} h_j,\kappa_j,\dots,\ps^{m-3} \kappa_j),\\
     \langle \pp^m \gaj,\nu_j\rangle &= P^{N(m)}(h_j,\dots,\ps^{m-2} h_j,\kappa_j,\dots,\ps^{m-2} \kappa_j),
    \end{align*}
    where $N(m) \in\N$ and  $\ps^{l}\kappa\defi \kappa$ for $l\leq 0$. The symbol $P^{N}\left(\{f_i\}_{i\leq n_0}\right)$ denotes a linear combination of products with at most $N$ factors which are elements of the set $\{f_i\}_{i\leq n_0}$. Since $h_j$ satisfies the evolution equation $\pt h_j = -\kappa_j(\kappa_j-\bar\kappa_j)h_j$ we get estimates on $|\ps^m h_j|$ independent of $j$ by an induction argument. Using again the evolution equation, we have
      \begin{align*}
       |\pt^k\pp^m\gaj|\leq c(k,m,\bar c_0,\delta, T, \Sigma,c_1)\ \text{ on } \ \Ij\times[-\Tdel,-\delta], \ \ \forall j\geq j_0(\delta).
      \end{align*}
      We use this estimate and the inequality $|\gaj(0,\tau)|\leq R_0$ $\forall j\geq j_0(\delta)$ to get for every compact subset $K$ of $I$ a subsequence  of the curves $\gaj$ converging by the Arzela-Ascoli theorem smoothly on $K\times[-\Tdel,-\delta]$ to a (non-empty) limit flow. By the usual diagonal sequence argument and by rearranging the sequences in every step, we get a subsequence of $\gaj$ such that this subsequence converges on every compact subset of $I\times (-\infty,0)$ to a well-defined limit flow $\gainfty: I \times (-\infty,0)\to \Rzwei$. 
      The limit flow is a solution of the curve shortening flow because we have $ |\bar{\tilde\kappa}_j(\tau)|  \to 0 \ \ (j\to\infty)$ due to $|\bar\kappa|\leq c_2$,
    and this yields $\pt \gainfty \leftarrow \pt\gamma_{j_l} = (\kappa_{j_l}-\bar\kappa_{j_l})\nu_{j_l} \to \tilde\kappa_\infty \tilde\nu_\infty$ with $l\to\infty$ and therefore $\pt\gainfty = \tilde\kappa_\infty\tilde\nu_\infty$.\\
  
   \noindent For (\ref{laenge}), we fix any $\tau_0\in (-\infty,0)$. If $\tau_0=\tau_2$ then $\gainfty(\cdot,\tau_0):I\to\Rzwei$ is parametrized by arclength (because $|\gamma_{j_l}'(\cdot,\tau_2)|=1$ for every $j_l$) and $|I|=\infty$, therefore $L(\gainfty(\cdot,\tau_2))=\infty$ is immediate. 
   For $\tau_0\not=\tau_2$ we use the fact that $\gainfty$ is a solution of the curve shortening flow. We know by the type I assumption that
    \begin{align*} 
     |\kappa_{\gamma_{j_l}}(\cdot,\tau)|^2\leq \frac{\bar c_0}{2 (-\tau)}\leq
	\begin{cases}
              \frac{\bar c_0}{2 (-\tau_0)} & \text{on  } I_{j_l}\times [\tau_2,\tau_0] \text{ if } \tau_2<\tau_0,\\
	 \frac{\bar c_0}{2 (-\tau_2)} & \text{on  } I_{j_l}\times [\tau_0,\tau_2] \text{ if } \tau_0<\tau_2,\\
            \end{cases}
    \end{align*}
    where $j_l$ is big enough such that $\gamma_{j_l}$ is defined on the time interval. This implies for all $p\in I$ 
    \begin{align} \label{kappainfty}
     |\tilde \kappa_{\infty}(p,\tau)|^2\leq 
	\begin{cases}
              \frac{\bar c_0}{2 (-\tau_0)} & \text{on  } [\tau_2,\tau_0] \text{ if } \tau_2<\tau_0,\\
	 \frac{\bar c_0}{2 (-\tau_2)} & \text{on  }  [\tau_0,\tau_2] \text{ if } \tau_0<\tau_2.\\
            \end{cases}
    \end{align}
    We define $g(p,\tau)\defi |\gainfty'(p,\tau)|$ and compute 
    \begin{align*}
     \partial_\tau g = \frac{\langle\left(\partial_\tau \gainfty\right)',\gainfty'\rangle}{|\gainfty'|} = -\frac{1}{g}\langle \partial_\tau \gainfty,\gainfty''\rangle = -\frac{1}{g} \tilde\kappa_\infty \langle\tilde\nu_\infty,\gainfty''\rangle = -\tilde\kappa_\infty^2 g.
    \end{align*}
    With $g(\cdot,\tau_2) = 1$, this implies
    \begin{align*}
     \partial_\tau\left(\ln g\right) = -\tilde\kappa_\infty^2 \ \ \ \text{ and } \ \ \ g(p,\tau_0) = \exp\left(-\int_{\tau_2}^{\tau_0}\tilde\kappa_\infty^2(p,\sigma)\de \sigma\right).
    \end{align*}
    We use (\ref{kappainfty}) and get
    \begin{align*}
     g(p,\tau_0) \geq \exp\left(-(\tau_0-\tau_2)\frac{\bar c_0}{2(-\tau_0)}\right)>0 \ \ \text{ if } \tau_2<\tau_0
    \end{align*}
      and
     \begin{align*}
     g(p,\tau_0) = \exp\left(\int_{\tau_0}^{\tau_2}\tilde\kappa_\infty^2\right)\geq 1 \ \ \text{ if } \tau_0<\tau_2.
    \end{align*}
    These estimates are independent of the point $p$. Together with $|I|=\infty$, this yields
    \begin{align*}
     L\left(\gainfty(\cdot,\tau_0)\right) = \int_{I}|\gainfty'(p,\tau_0)|\de p =  \int_{I}g(p,\tau_0)\de p=\infty.
   \end{align*}
 \ \\

      \noindent For (\ref{rand}), note that if $x_0\not\in\Sigma$ then for every $y\in \tilde\Sigma^{x_0}_j\defi Q_j\left(\Sigma-x_0\right)$ there is an $x\in\Sigma$ such that 
      \begin{align*}
       |y|=Q_j|x-x_0| \geq Q_j \ \dist(x_0,\Sigma) \to\infty  \ \ \ \ (j\to\infty),
      \end{align*}
      which implies that $\Sigma$ is drifting off to infinity. Therefore, if $\tilde M_\tau^\infty$ has a boundary then $x_0\in\Sigma$. In this situation, either $\varphi_j(a)$ converge to an $\tilde a\leq0$ or $\varphi_j(b)$ converge to a $\tilde b\geq0$. 
      We treat the case $I=[\tilde a,\infty)$. It is immediate that $\tilde\Sigma^{x_0}_j$ converges to a line $^\infty\Sigma$ through $0\in\Rzwei$ (note $0\in \tilde\Sigma^{x_0}_j$ $\forall j$ and $|\kappa^{\tilde\Sigma^{x_0}_j}| = \frac{1}{Q_j}|\kappa^\Sigma|\to 0$). We have
      \begin{align}\label{blub}
       \gaj(\varphi_j(a),\tau)= Q_j(c(a,\tfrac{\tau}{Q_j^2} + T) - x_0)\in \tilde\Sigma^{x_0}_j
      \end{align}
      This implies $\gainfty(\tilde a,\tau)\in \,^\infty\Sigma$. We know that $\langle\nu^{\tilde\Sigma^{x_0}_j}(x),\vec\nu_{\gaj}(x,\tau)\rangle =0$ for $x=\gaj(\varphi_j(a),\tau)$ $\forall \tau$ because of (\ref{blub}) and the boundary conditions of $\gaj$. This implies $\langle\nu^{^\infty\Sigma}(\gainfty(\tilde a,\tau)),\nu_{\gainfty}(\tilde a,\tau)\rangle =0$.
\end{proof}

\begin{corollary} \label{nichttriv}
 Consider the situation from Proposition~\ref{ParabolicGrenzuebergangII}, but with the additional assumption $\kappa_0\geq 0$ for the initial curve $c_0$. Then we have for the limit flow $\gainfty$: There is a time $\tau\in[-\frac{\bar c_0}{2},-\frac{1}{2}]$ such that $\tilde\kappa_\infty(0,\tau)=1$.
\end{corollary}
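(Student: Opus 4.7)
The idea is to extract the value $1$ directly from the defining property $Q_j=\kappa(p_j,t_j)$ of the blowup sequence, locating the corresponding rescaled time via the type~I assumption.

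First, I would pin down the rescaled blowup time. Since $\kappa_0\geq 0$, Proposition~\ref{kappageq0} gives $\kappa\geq 0$ throughout, so $Q_j=\kappa(p_j,t_j)=\max_{[a,b]}\kappa(\cdot,t_j)$. Combining Proposition~\ref{Amax} with the type~I bound at $(p_j,t_j)$ yields
\begin{align*}
\frac{1}{2(T-t_j)}\leq Q_j^2\leq\frac{\bar c_0}{2(T-t_j)},
\end{align*}
so $\tau_j\defi Q_j^2(t_j-T)\in[-\bar c_0/2,-1/2]$. After extracting a subsequence, $\tau_j\to\tau_\infty\in[-\bar c_0/2,-1/2]$.

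Second, I would coordinate the reparametrizations $\psi_j:I_j\to[a,b]$ from the proof of Proposition~\ref{ParabolicGrenzuebergangII} so that $\psi_j(0)=p_j$ (arclength on $\tilde c_j(\cdot,\tau_2)$ measured from the blowup parameter itself). With this choice, $\gamma_j(0,\tau)=\tilde c_j(p_j,\tau)$ for every admissible $\tau$, and since curvature is invariant under reparametrization,
\begin{align*}
\kappa_{\gamma_j}(0,\tau_j)=\tilde\kappa_j(p_j,\tau_j)=\frac{1}{Q_j}\kappa(p_j,t_j)=1.
\end{align*}
Moreover $0\in I_j$ for every $j$, hence $0\in I$.

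Third, I would pass to the limit along the convergent subsequence $\gamma_{j_l}\to\tilde\gamma_\infty$ provided by Proposition~\ref{ParabolicGrenzuebergangII}. Since the convergence is smooth on compact subsets of $I\times(-\infty,0)$ and $(0,\tau_\infty)$ lies in this set, continuity of curvature under $C^2$-convergence gives
\begin{align*}
\tilde\kappa_\infty(0,\tau_\infty)=\lim_{l\to\infty}\kappa_{\gamma_{j_l}}(0,\tau_{j_l})=1.
\end{align*}

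The main technical point is justifying that the origin of the arclength parametrization in $\psi_j$ can be placed at $p_j$ while preserving the smooth subconvergence of Proposition~\ref{ParabolicGrenzuebergangII}. This requires checking that $\tilde c_j(p_j,\tau)=Q_j\bigl(c(p_j,\tau/Q_j^2+T)-x_0\bigr)$ stays bounded on compact $\tau$-intervals; one arranges, by a further subsequence, that $Q_j(c(p_j,t_j)-x_0)$ converges in $\Rzwei$, and then uses the type~I velocity bound $|\kappa-\bar\kappa|\leq\sqrt{\bar c_0/(2(T-t))}+c_2$ to transport the boundedness to nearby times. Everything else is direct bookkeeping with the definition of the parabolic rescaling.
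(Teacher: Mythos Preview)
Your proposal is correct and follows essentially the same route as the paper: define $\tau_j=Q_j^2(t_j-T)$, bracket it in $[-\bar c_0/2,-1/2]$ via Proposition~\ref{Amax} and the type~I bound, use $\psi_j(0)=p_j$ to get $\kappa_{\gamma_j}(0,\tau_j)=1$, and pass to the limit along a convergent subsequence. Your final paragraph makes explicit a technical point the paper treats implicitly, namely that anchoring the arclength origin at $p_j$ is compatible with the compactness argument; this is a welcome clarification but not a different method.
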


\begin{proof}
We define $\tau_j\defi-Q_j^2(T-t_j)$ and compute (where $t_j\nearrow T$ and $p_j\to p_0$ comes from the blowup sequence) 
    \begin{align}\label{gleich1a}
     \kappa_{\gamma_j}(0,\tau_j)= \kappa_{\tildec}(p_j,\tau_j)=\frac{1}{Q_j}\kappa\left(p_j,\tfrac{-Q_j^2(T-t_j)}{Q_j^2} + T\right)=\frac{1}{Q_j}\kappa(p_j,t_j) = 1 \ \ \ \forall j\in\mathbb N.
    \end{align}
      By the type I assumption, we get
      \begin{align*}
       \tau_j = -\kappa^2(p_j,t_j)(T-t_j)\geq- \frac{\bar c_0}{2(T-t_j)}(T-t_j) = - \frac{\bar c_0}{2}.
      \end{align*}
      The other inequality follows from Proposition~\ref{Amax} (where we need $\kappa\geq 0$):
      \begin{align*}
       \tau_j = -\kappa^2(p_j,t_j)(T-t_j)\leq- \frac{1}{2(T-t_j)}(T-t_j) = - \frac{1}{2}.
      \end{align*}
      Therefore, there is a $\tau\in [- \frac{\bar c_0}{2},- \frac{1}{2}]$ such that after passing to a subsequence $\tau_{j}\to\tau$ ($j\to\infty$).
      Together with (\ref{gleich1a}) and the local smooth convergence of $\gamma_{j}$ to $\gainfty$, this implies $ \tilde\kappa_\infty(0,\tau)=1.$
\end{proof}

\subsection{The blowup point on the support curve}

\begin{proposition}[The monotonicity formula for curves] \label{monocurve}
 Let $c:[a,b]\times[0,T)\to \Rzwei $ be a solution of the area preserving curve shortening problem with Neumann free boundary conditions meeting the support curve $\Sigma$ from the outside. Consider
\begin{align*}
 \rho_{x_0,T}(x,t) &\coloneqq \frac{1}{(4\pi (T-t))^{\frac{1}{2}}} \exp{\left(-\frac{|x-x_0|^2}{4(T-t)}\right)}\ \ \mbox{ for } x,x_0 \in \Rzwei, t\in\nut,\\
f(t) &\defi \exp{\left( -\frac{1}{2}\int_0^t \bar\kappa^2(\sigma)\mbox{\,d}\sigma\right)}\ \ \mbox{ for } t\in\nut.
\end{align*} 
 If $x_0\in\Sigma$ and $\Sigma$ is convex then we have for all $t\in[0,T)$
\begin{align*}
   \frac{\de}{\de t}\left( f\int_{c_t}  \rho_{x_0,T} \de s_t\right) \leq -\frac{1}{2} f\int_{c_t}\left(\big| \kappa + \frac{\langle x-x_0,\nu\rangle}{2(T-t)}\big|^2 + \big|(\kappa-\bar\kappa) + \frac{\langle x-x_0,\nu\rangle}{2(T-t)}\big|^2 \right)\rho_{x_0,T} \de s_t.
\end{align*}
\end{proposition}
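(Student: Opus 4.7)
The plan is to obtain the inequality by a direct computation of $\frac{\de}{\de t}\int_{\ct}\heatkernel\,\de s$ via the evolution equations of Lemma~\ref{evolution}, then exploit the standard identities for the backwards heat kernel in $\Rzwei$ to produce a bulk term which, after completing the square, is exactly the right-hand side of the proposition plus a residue $\frac12\bar\kappa^2\heatkernel$, and finally to show (i) that the residue is absorbed by the factor $f$ and (ii) that the boundary terms arising from integration by parts are nonpositive thanks to the convexity of $\Sigma$ and the Neumann condition. The argument splits cleanly into a bulk computation, a completion-of-squares step, and a boundary computation.

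For the bulk part I would start from
\begin{align*}
\frac{\de}{\de t}\intab\heatkernel\,\de s = \intab\bigl(\pt\heatkernel + \langle\nabla\heatkernel,\nu\rangle(\kappa-\bar\kappa) - \kappa(\kappa-\bar\kappa)\heatkernel\bigr)\de s,
\end{align*}
using $\pt c = (\kappa-\bar\kappa)\nu$ and \eqref{evolution1}. The explicit formulas $\nabla\heatkernel = -\tfrac{x-x_0}{2(T-t)}\heatkernel$ and $\operatorname{Hess}(\heatkernel)(\nu,\nu) = -\tfrac{\heatkernel}{2(T-t)} + \tfrac{\langle x-x_0,\nu\rangle^2}{4(T-t)^2}\heatkernel$, together with the backwards heat equation $\pt\heatkernel + \Delta_x\heatkernel = -\tfrac{\heatkernel}{2(T-t)}$ and the intrinsic/extrinsic decomposition $\ps^2\heatkernel = \operatorname{Hess}(\heatkernel)(\tau,\tau) + \kappa\langle\nabla\heatkernel,\nu\rangle$, combine to yield
\begin{align*}
\pt\heatkernel = -\ps^2\heatkernel - \frac{\kappa\langle x-x_0,\nu\rangle}{2(T-t)}\heatkernel - \frac{\langle x-x_0,\nu\rangle^2}{4(T-t)^2}\heatkernel.
\end{align*}
Substituting, integrating by parts the $\ps^2\heatkernel$ term to pick up $-[\ps\heatkernel]_a^b$, and abbreviating $A \defi \tfrac{\langle x-x_0,\nu\rangle}{2(T-t)}$, the bulk integrand reduces to $\bigl(-A^2 - (2\kappa - \bar\kappa)A - \kappa(\kappa - \bar\kappa)\bigr)\heatkernel$. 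A short algebraic completion of squares identifies this with $\bigl(-\tfrac12(\kappa+A)^2 - \tfrac12((\kappa-\bar\kappa)+A)^2 + \tfrac12\bar\kappa^2\bigr)\heatkernel$. Multiplying by $f$ and using $f'(t) = -\tfrac12\bar\kappa^2(t)f(t)$ cancels the residue $\tfrac12\bar\kappa^2\heatkernel$ exactly -- which is precisely the point of the factor $f$ -- leaving the right-hand side of the proposition plus the boundary contribution $-f(t)[\ps\heatkernel]_a^b$.

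It remains to verify that $-[\ps\heatkernel]_a^b \leq 0$. From $\ps\heatkernel = -\tfrac{\langle x-x_0,\tau\rangle}{2(T-t)}\heatkernel$ and the Neumann conditions rewritten (as in the proof of Lemma~\ref{lemmastahl5}) as $\tau(a,t) = -\Sig\vec\nu(c(a,t))$, $\tau(b,t) = \Sig\vec\nu(c(b,t))$, one reads off $\ps\heatkernel(a,t) = \tfrac{\langle c(a,t)-x_0,\Sig\vec\nu(c(a,t))\rangle}{2(T-t)}\heatkernel$ and $\ps\heatkernel(b,t) = -\tfrac{\langle c(b,t)-x_0,\Sig\vec\nu(c(b,t))\rangle}{2(T-t)}\heatkernel$. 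Because $G_\Sigma$ is convex with inner normal $\Sig\vec\nu$ and $x_0 \in \Sigma \subset \partial G_\Sigma$, the supporting-hyperplane characterisation of convexity at the boundary point $c(a,t) \in \Sigma$ gives $\langle x_0 - c(a,t),\Sig\vec\nu(c(a,t))\rangle \geq 0$, whence $\ps\heatkernel(a,t) \leq 0$; the analogous argument at $b$ yields $\ps\heatkernel(b,t) \geq 0$, so $-[\ps\heatkernel]_a^b \leq 0$ as required. The main obstacle is really just orientation bookkeeping between the curve, the support curve and the heat kernel; the only conceptual input is the identification of $f$ with $f' = -\tfrac12\bar\kappa^2 f$ as the correct integrating factor that absorbs the extra $\bar\kappa^2$-residue produced by the volume-preserving term.
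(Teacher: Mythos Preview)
Your proof is correct and follows essentially the same route as the paper. Both arguments differentiate $f\int_{c_t}\heatkernel\,\de s$ using the evolution equations, invoke the backward heat kernel identity (the paper writes it as $\pt\rho + \langle\ps(D\rho),\tau\rangle + |D^\perp\rho|^2/\rho = 0$, you phrase it via $\pt\rho + \Delta_x\rho = -\tfrac{\rho}{2(T-t)}$ together with the tangential/normal splitting of the Hessian), perform the same completion of squares producing the residue $\tfrac12\bar\kappa^2$, absorb it with $f'=-\tfrac12\bar\kappa^2 f$, and then bound the identical boundary term $\tfrac{1}{2(T-t)}[\rho\langle c-x_0,\tau\rangle]_a^b$ using convexity of $\Sigma$ and the Neumann conditions.
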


\begin{proof}
 We use the notation $\rho\defi\heatkernel$ and compute with the evolution equation (\ref{evolution1}):
    \begin{align}\begin{split}\label{helps}
     \frac{\de}{\de t}&\left( f\int_{c_t}  \rho\de s_t\right) = \frac{\de}{\de t}\left( f\intab  \rho\circ c_t \de s\right)\\
      & = \left(\frac{\de}{\de t} f\right) \intab \rho\circ c_t \de s + f\intab \frac{\de}{\de t} \left( \rho\circ c_t \right) \de s + f\intab  \rho\circ c_t \pt\left(\de s\right)\\
      &= -\frac{1}{2}  \bar\kappa^2f \intab \rho \circ \ct\de s +  f\intab D\rho\circ \ct \;\pt\ct + \pt \rho\circ \ct \de s - f \intab \kappa(\kappa-\bar\kappa)\rho\circ \ct\de s\\	
      &= -\frac{1}{2}  f \intab \rho \circ \ct\left(\bar\kappa^2 + 2\kappa(\kappa-\bar\kappa)\right)\de s +  f\intab D\rho\circ \ct \;\pt\ct + \pt \rho\circ \ct \de s. \end{split}
    \end{align}
    The derivatives of the heatkernel are
    \begin{align*}
     D\rho (x,t) = -\frac{1}{2(T-t)}\rho  (x,t) (x-x_0),\\
      \pt \rho (x,t) = \left(\frac{1}{2(T-t)}-\frac{|x-x_0|^2}{4(T-t)^2}\right)\rho(x,t).
    \end{align*}
    We note that
    \begin{align}\label{Q(g)}
     \pt \rho \circ\ct + \langle \ps\left(D\rho\circ\ct\right),\tau\rangle + \frac{|D^\perp\rho\circ\ct|^2}{\rho\circ\ct}=0.
    \end{align}
    This follows because we have
    \begin{align*}
     \langle \ps\left(D\rho\circ\ct\right),\tau\rangle &= -\frac{1}{2(T-t)}\frac{1}{|\pp \ct|}\langle \pp\left(\rho\circ\ct (\ct-x_0)\right),\tau\rangle\\
      &= -\frac{1}{2(T-t)}\rho\circ\ct \langle \tau,\tau\rangle - \frac{1}{2(T-t)}\frac{1}{|\pp \ct|}\pp\left(\rho\circ\ct\right)\langle \ct-x_0,\tau\rangle\\
      &=-\frac{1}{2(T-t)}\rho\circ\ct + \frac{1}{4(T-t)^2}\rho\circ\ct\langle\ct-x_0,\tau\rangle^2\\
      &= -\pt \rho \circ\ct - \frac{|D^\perp\rho\circ\ct|^2}{\rho\circ\ct}.
    \end{align*}
    By (\ref{Q(g)}) and the Frenet equation $\ps \tau=\kappa\nu$ we compute  
    \begin{align}\begin{split}\label{Q(g)tilde}
     \pt \rho \circ\ct & = - \langle \ps\left(D\rho\circ\ct\right),\tau\rangle -\frac{|(\ct-x_0)^\perp|^2}{4(T-t)^2} \rho\circ\ct\\
    &= - \langle \ps\left(D^\top\rho\circ\ct\right),\tau\rangle - \langle \ps\left(D^\perp\rho\circ\ct\right),\tau\rangle -\frac{|(\ct-x_0)^\perp|^2}{4(T-t)^2} \rho\circ\ct\\
    &= - \ps \langle D\rho\circ\ct,\tau\rangle + \langle D\rho\circ\ct,\kappa\nu\rangle -\frac{|(\ct-x_0)^\perp|^2}{4(T-t)^2} \rho\circ\ct\\
     &= \frac{1}{2(T-t)} \ps\left(\rho\circ\ct\langle\ct-x_0,\tau\rangle\right) - \kappa\frac{\langle \ct-x_0,\nu\rangle}{2(T-t)}\rho\circ\ct -\frac{|(\ct-x_0)^\perp|^2}{4(T-t)^2} \rho\circ\ct.\end{split}
    \end{align}
    Thus, we get with the flow equation $\pt \ct = (\kappa-\bar\kappa)\nu$, (\ref{Q(g)tilde}) and (\ref{helps})
    \begin{align*}
      \frac{\de}{\de t}&\left( f\int_{c_t}  \rho\de s_t\right) =  -\frac{1}{2}  f \intab \rho \circ \ct\left(\bar\kappa^2 + 2\kappa(\kappa-\bar\kappa)\right)\de s -   f\intab \frac{\langle\ct-x_0,\pt\ct\rangle}{2(T-t)}\rho\circ \ct \de s \\
      &\ \ \ \ +f\intab \pt \rho\circ \ct \de s\\ 
      &=  -\frac{1}{2}  f \intab \rho \circ \ct\left(\bar\kappa^2 + 2\kappa(\kappa-\bar\kappa) + 2 (\kappa-\bar\kappa) \frac{\langle\ct-x_0,\nu\rangle}{2(T-t)}\right)\de s + f\intab \pt \rho\circ \ct \de s\\  \displaybreak[0]
      &= -\frac{1}{2}  f \intab \rho \circ \ct\left(\bar\kappa^2 + 2\kappa(\kappa-\bar\kappa) + 2 (\kappa-\bar\kappa) \frac{\langle\ct-x_0,\nu\rangle}{2(T-t)}\right)\de s\\  
      &\ \ \ \ + \frac{1}{2(T-t)} f \intab \ps\left(\rho\circ\ct\langle\ct-x_0,\tau\rangle\right)\de s-f\intab\left( \kappa \frac{\langle\ct-x_0,\nu\rangle}{2(T-t)} + \frac{|(\ct-x_0)^\perp|^2}{4(T-t)^2} \right) \rho\circ \ct \de s\\
      &= -\frac{1}{2}  f \intab \rho \circ \ct\left(\bar\kappa^2 + 2\kappa(\kappa-\bar\kappa) + 2 (2\kappa-\bar\kappa) \frac{\langle\ct-x_0,\nu\rangle}{2(T-t)} +2\frac{|(\ct-x_0)^\perp|^2}{4(T-t)^2} \right)\de s\\
      &\ \ \  \ + \frac{1}{2(T-t)} f\; \Big[\rho(c(b,t),t)\langle c(b,t)-x_0,\tau(b,t)\rangle-\rho(c(a,t),t)\langle c(a,t)-x_0,\tau(a,t)\rangle\Big]\\
      &=  -\frac{1}{2}  f \intab \rho \circ \ct\left[\left( \kappa + \frac{\langle \ct-x_0,\nu\rangle}{2(T-t)}\right)^2 + \left((\kappa-\bar\kappa) + \frac{\langle x-x_0,\nu\rangle}{2(T-t)}\right)^2 \right]\de s\\
      &\ \ \ \ + \frac{1}{2(T-t)} f\; \Big[\rho(c(b,t),t)\langle c(b,t)-x_0,\tau(b,t)\rangle-\rho(c(a,t),t)\langle c(a,t)-x_0,\tau(a,t)\rangle\Big]\\
      &\leq -\frac{1}{2}  f \intab \rho \circ \ct\left[\left( \kappa + \frac{\langle \ct-x_0,\nu\rangle}{2(T-t)}\right)^2 + \left((\kappa-\bar\kappa) + \frac{\langle \ct-x_0,\nu\rangle}{2(T-t)}\right)^2 \right]\de s,
    \end{align*}
    where we used in the last step $\langle c(b,t)-x_0,\tau(b,t)\rangle\leq0$ and $\langle c(a,t)-x_0,\tau(a,t)\rangle\geq0$. This is true because $\Sigma$ is convex, $x_0\in\Sigma$ and the curves $c_t$ meet $\Sigma$ from the outside.
\end{proof}

\begin{remark}
 By Buckland's expansion formula \cite[Proposition~2.3]{Buckland} modified for general flows as in the proof of Lemma~\ref{Parabolicstone}, we have the analogous monotonicity formula as above for arbitrary dimension (volume preserving mean curvature flow with Neumann free boundary conditions), see \cite[Chapter~1]{MeineDiss}.
\end{remark}

\begin{proposition} \label{proper}
Consider the situation of Proposition~\ref{ParabolicGrenzuebergangII} with the additional condition $x_0\in\Sigma$. Then for each $\tau\in (-\infty,0)$, we have
\begin{align*}
 L\left(\gainfty^\tau \cap B_R(0)\right) \leq c(\tau,T, L(c_0),c_2,R),
\end{align*}
where $L\left(\gainfty^\tau\cap B_R(0)\right)$ denotes the length of the curve $\gainfty^\tau$ inside of the ball $B_R(0)$. 
It follows that each of the curves $\tildega_\infty(\cdot,\tau)= \gainfty^\tau$ is proper. 
\end{proposition}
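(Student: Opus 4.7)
The plan is to apply the monotonicity formula of Proposition~\ref{monocurve} at the blowup point $x_0\in\Sigma$. Integrating its differential inequality from $0$ to $t$ gives
\begin{align*}
f(t)\int_{c_t}\heatkernel(\cdot,t)\de s_t \leq f(0)\int_{c_0}\heatkernel(\cdot,0)\de s \leq \frac{L(c_0)}{\sqrt{4\pi T}},
\end{align*}
and the hypothesis $|\bar\kappa|\leq c_2$ implies $f(t)\geq e^{-c_2^2 T/2}>0$, so that $\int_{c_t}\heatkernel(\cdot,t)\de s_t$ is bounded uniformly in $t\in\nut$ by some constant $C_0=C_0(T,L(c_0),c_2)$.

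Next I would transfer this estimate to the rescaled picture. For $\tau\in(-\infty,0)$ and $t=\tfrac{\tau}{Q_j^2}+T$, the substitutions $|c-x_0|^2=|\tilde c_j|^2/Q_j^2$, $T-t=-\tau/Q_j^2$ and $\de s_t=Q_j^{-1}\de s_{\tilde c_j}$ yield the scale-invariant identity
\begin{align*}
\int_{c_t}\heatkernel(\cdot,t)\de s_t = \int_{\tilde c_j(\cdot,\tau)}\varrho_\tau\de s_{\tilde c_j}, \qquad \varrho_\tau(y)\defi\frac{1}{\sqrt{-4\pi\tau}}\exp\left(-\frac{|y|^2}{-4\tau}\right).
\end{align*}
On $B_R(0)$ the density $\varrho_\tau$ is bounded below by the positive constant $c(\tau,R)\defi\frac{1}{\sqrt{-4\pi\tau}}\exp\left(-\frac{R^2}{-4\tau}\right)$, and I thus obtain
\begin{align*}
L\bigl(\tilde c_j(\cdot,\tau)\cap B_R(0)\bigr)\leq \frac{C_0}{c(\tau,R)}
\end{align*}
uniformly in $j$. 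The smooth subconvergence of $\gaj$ to $\gainfty$ on compacta of $I\times(-\infty,0)$ from Proposition~\ref{ParabolicGrenzuebergangII} then transfers this bound to $\gainfty(\cdot,\tau)$, giving the claimed estimate with constant $c(\tau,T,L(c_0),c_2,R)\defi C_0/c(\tau,R)$.

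For properness I would reparametrise $\gainfty(\cdot,\tau)$ by arclength. If properness failed, there would exist $R>0$ and an unbounded sequence $\{p_n\}\subset I$ with $\gainfty(p_n,\tau)\in B_R(0)$. After passing to a subsequence with $|p_{n+1}-p_n|\geq\delta>0$, the arclength parametrisation forces $\gainfty([p_n-\delta/3,\,p_n+\delta/3],\tau)\subset B_{R+\delta/3}(0)$, so the disjoint arc pieces contribute infinite length inside $B_{R+1}(0)$, contradicting the length bound above.

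The delicate point I foresee is that the monotonicity bound of Proposition~\ref{monocurve} relies crucially on the signs $\langle c(a,t)-x_0,\tau(a,t)\rangle\geq 0$ and $\langle c(b,t)-x_0,\tau(b,t)\rangle\leq 0$, which require $x_0\in\Sigma$ together with convexity of $\Sigma$ and the outside-contact condition; this is precisely why the hypothesis $x_0\in\Sigma$ is indispensable here. Once this is in place, the scaling identity, the heatkernel lower bound on balls, and the arclength properness argument are all routine.
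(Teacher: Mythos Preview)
Your argument is correct and follows essentially the same route as the paper: apply the monotonicity formula at $x_0\in\Sigma$, observe its scale invariance to get a uniform Gaussian-weighted length bound for the rescaled curves, and then convert this into a length bound inside balls via the lower bound on the kernel. The paper passes to the limit with Fatou's lemma applied to $\int \varrho_\tau(\gamma_j)|\gamma_j'|\,dp$ rather than arguing via the length in $B_R(0)$ directly; this sidesteps the minor nuisance that $\chi_{B_R(0)}$ is discontinuous, which your ``smooth subconvergence transfers the bound'' step would otherwise have to address (e.g.\ by first passing to the limit in the weighted integral over compact $K\subset I$ and only then using the kernel lower bound).
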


\begin{proof}
  We use the monotonicity formula for $\tildec^\tau \defi\tilde c_j\left(\cdot,\tau\right) $, see Proposition~\ref{monocurve}. 
  By rescaling we get  
    \begin{align*}
     \frac{\de}{\de \tau }\left(\tilde f_j \int_{\tildec^\tau}\rho_{0,0}\de\tilde s_\tau\right)\leq -\frac{1}{2}\tilde f_j\int_{\tildec^\tau}\left(\big|\tilde \kappa_j + \frac{\langle \tilde x,\tilde \nu\rangle}{-2\tau}\big|^2 + \big|(\tilde\kappa_j-\bar{\tilde\kappa}_j)+\frac{\langle\tilde x,\tilde\nu\rangle}{-2\tau}\big|^2\right)\rho_{0,0}\de\tilde s_\tau
    \end{align*}
    for $\tau\in [-Q_j^2T,0)$, where 
    \begin{align*}
     \rho_{0,0}(x,\tau)&\defi \frac{1}{\sqrt{-4\pi \tau}} e^{-\frac{|x|^2}{-4 \tau}},\\
      \tilde f_j(\tau)&\defi\exp\left(-\frac{1}{2}\int_{-Q_j^2 T}^\tau\bar{\tilde \kappa}_j^2(\sigma)\de\sigma\right).
    \end{align*}
    This implies 
    \begin{align*}
     \tilde f_j(\tau) \int_{\tildec^\tau}  \rho_{0,0} \de\tilde s_\tau\leq \tilde f_j(-Q_j^2 T) \int_{\tildec^{-Q_j^2 T}}\rho_{0,0}(\cdot,-Q_j^2 T)\de \tilde s_{-Q_j^2 T} \leq \frac{L(c_0)}{(4\pi T)^{\frac{1}{2}}}
    \end{align*} 
    for $\tau \in[-Q_j^2 T, 0)$.
    Since $T<\infty$ and $|\bar\kappa|<c_2$ there is a constant $c_3=c_3(T,c_2)>0$ such that $c_3\leq f(t)\leq 1 \ \forall t\in [0,T)$. With $\tilde f_j (\tau) = f(\frac{\tau}{Q_j^2} + T)$ we conclude
    \begin{align*}
     \int_{\tildec^\tau}  \rho_{0,0} \de\tilde\mu_\tau \leq c = c(T, L(c_0), c_2).
    \end{align*}
      We use Fatou's lemma to get
      \begin{align*}
       \int_{I} \rho_{0,0}\circ \gainfty(\cdot,\tau)|\gainfty'(\cdot,\tau)|\de\mathcal L^1\leq c = c(T, L(c_0), c_2)
      \end{align*}
    and thus
    \begin{align*}
     c(T,L(c_0),c_2) &\geq  \int_{\gainfty^\tau\cap B_R(0)}  \rho_{0,0} \de\tilde\mu^\tau_\infty 
      = \frac{1}{\sqrt{- 4\pi \tau}} \int_{\gainfty^\tau\cap B_R(0)} \exp \left(-\frac{|\tilde x|^2}{-4 \tau}\right) \de\tilde\mu^\tau_\infty\\
      &\geq \frac{1}{\sqrt{- 4\pi \tau}} \exp\left(-\frac{R^2}{-4\tau }\right) L(\gainfty^\tau \cap B_R(0)) \quad \text{ for each }  \tau \in (-\infty,0).
    \end{align*}
    This yields $L(\gainfty^\tau\cap B_R(0)) \leq c(\tau,T, L(c_0),c_2,R).$ 
    This property implies that $\gainfty(\cdot,\tau)$ is proper. 
\end{proof}

\begin{proposition} \label{Parabolicselfsimilar}
  Let $c:[a,b]\times[0,T)\to \Rzwei $ be a solution of (\ref{flow})
  with type~I singularity at $T<\infty$. Assume that $c$ satisfies $ L(\ct)\geq c_1>0 $ and $|\bar\kappa(t)|\leq c_2<\infty$.
 If the blowup point is on the support curve $\Sigma$ then every limit flow $\gainfty$ from Proposition~\ref{ParabolicGrenzuebergangII} satisfies $\tilde \kappa^\infty_\tau =  \frac{\langle \tilde x^\infty,\tilde \nu^\infty\rangle}{2\tau}$.
%
\end{proposition}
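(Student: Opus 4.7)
The plan is to invoke the monotonicity formula of Proposition~\ref{monocurve} on the rescaled flow and extract the self-similar identity by an $L^1$-in-time argument as $j\to\infty$. Because $x_0\in\Sigma$ and $\Sigma$ is convex with the curves meeting it from outside, Proposition~\ref{monocurve} applies: the map $t\mapsto f(t)\int_{c_t}\rho_{x_0,T}\de s_t$ is monotone decreasing and nonnegative, hence has a finite limit as $t\nearrow T$. Integrating the differential inequality therefore gives
\begin{equation*}
\int_{t_1}^{t_2}f(t)\int_{c_t}\Big[\big|\kappa+\tfrac{\langle x-x_0,\nu\rangle}{2(T-t)}\big|^2+\big|(\kappa-\bar\kappa)+\tfrac{\langle x-x_0,\nu\rangle}{2(T-t)}\big|^2\Big]\rho_{x_0,T}\de s\,\de t\longrightarrow 0
\end{equation*}
as $t_1,t_2\nearrow T$.

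Next I would transfer this bound to the rescaled picture. A routine scaling check using $\tilde x=Q_j(x-x_0)$, $\tau=Q_j^2(t-T)$ shows that $\rho_{0,0}\de\tilde s_j\,\de\tau$ together with the bracketed integrand (now written in $\tilde\kappa_j$, $\bar{\tilde\kappa}_j$ and $-\tau$) is scale-invariant; only the prefactor becomes $\tilde f_j(\tau)=f(\tau/Q_j^2+T)$, and $|\bar\kappa|\le c_2$ gives $e^{-c_2^2T/2}\le\tilde f_j\le 1$. Consequently, for fixed $\tau_1<\tau_2<0$ and $j$ large enough,
\begin{equation*}
\int_{\tau_1}^{\tau_2}\tilde f_j(\tau)\int_{\tilde c_j(\cdot,\tau)}\Big[\big|\tilde\kappa_j+\tfrac{\langle\tilde x,\tilde\nu\rangle}{-2\tau}\big|^2+\big|(\tilde\kappa_j-\bar{\tilde\kappa}_j)+\tfrac{\langle\tilde x,\tilde\nu\rangle}{-2\tau}\big|^2\Big]\rho_{0,0}\de\tilde s_j\,\de\tau
\end{equation*}
equals (up to bounded factors) the original integral over the shrinking interval $[\tau_1/Q_j^2+T,\tau_2/Q_j^2+T]$, which tends to $0$. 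The cleanest way to arrange this step, and to ensure that the favorable boundary-term signs survive, is to apply Proposition~\ref{monocurve} directly to $\tilde c_j$ with support curve $\tilde\Sigma_j^{x_0}=Q_j(\Sigma-x_0)$ and base point $0\in\tilde\Sigma_j^{x_0}$, since the Neumann perpendicularity and the ``from outside'' condition both descend under rescaling.

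Finally, on any compact $K\subset I\times(\tau_1,\tau_2)$ the local $C^\infty$-convergence $\gamma_j\to\tilde\gamma_\infty$ from Proposition~\ref{ParabolicGrenzuebergangII} is uniform, and $\bar{\tilde\kappa}_j=\bar\kappa/Q_j\to 0$, so both squared terms collapse to $|\tilde\kappa_\infty+\langle\tilde x_\infty,\tilde\nu_\infty\rangle/(-2\tau)|^2$. Dominated convergence on $K$ then forces
\begin{equation*}
\int_K\big|\tilde\kappa_\infty+\tfrac{\langle\tilde x_\infty,\tilde\nu_\infty\rangle}{-2\tau}\big|^2\rho_{0,0}\,|\tilde\gamma_\infty'|\,\de p\,\de\tau=0,
\end{equation*}
and since $\rho_{0,0}|\tilde\gamma_\infty'|>0$ the bracket vanishes on $K$, hence on all of $I\times(-\infty,0)$, yielding $\tilde\kappa_\infty=\langle\tilde x_\infty,\tilde\nu_\infty\rangle/(2\tau)$. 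The main obstacle I anticipate is the rescaling bookkeeping of the second step: one must carefully match the extra $\bar\kappa$-contributions in the two squared terms so they really converge to the same limit, and verify that passing between parametrized and geometric integrals commutes with the limit on compact parameter sets.
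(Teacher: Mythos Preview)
Your argument is correct and takes a genuinely different route from the paper. The paper proceeds by first showing that the Gaussian density $\int_{\tilde\gamma_\infty^\tau}\rho_{0,0}\,\de\tilde s_\infty^\tau$ is constant in $\tau$: this requires upgrading the local convergence of $\gamma_{j_l}\to\tilde\gamma_\infty$ to convergence of the full weighted-length integral, which is done via a modified Stone lemma (Lemma~\ref{Parabolicstone}) controlling the Gaussian tails. Then, if the limit has boundary, the paper reflects $\tilde M_\tau^\infty$ across the line $^\infty\Sigma$ and applies Huisken's classical monotonicity formula to the resulting boundaryless curve-shortening flow, forcing the right-hand side to vanish.

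Your approach sidesteps both the Stone lemma and the reflection step: because the defect integrand in Proposition~\ref{monocurve} is nonnegative, restricting the rescaled double integral to a compact parameter set $K\subset I\times(\tau_1,\tau_2)$ can only decrease it, and on $K$ the local $C^\infty$-convergence (together with $\bar{\tilde\kappa}_j\to 0$) is enough to pass to the limit directly. This is a cleaner path to the bare self-similar identity. The paper's method, on the other hand, also yields the constancy of the Gaussian density on the limit flow, a statement of independent interest in singularity analysis, and it makes explicit that the boundary case can be handled by reflection---information that is used again in Proposition~\ref{ausschlussinSigma}. Your scaling bookkeeping is fine: the factor $Q_j^2$ from the squared bracket cancels the $Q_j^{-2}$ from $\rho_{0,0}\,\de\tilde s\,\de\tau$, and the two squared terms indeed collapse to the same expression since $\bar{\tilde\kappa}_j\to 0$ uniformly.
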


\begin{proof}
  Because of the monotonicity formula, see Proposition~\ref{monocurve}, we have
    \begin{align*}
      \frac{\de}{\de t}\left( f(t)\int_{c_t}  \rho_{x_0,T}\de s_t\right) \leq 0
    \end{align*}
    and therefore the limit $ \lim_{t\nearrow T} \left( f(t)\int_{c_t}  \rho_{x_0,T}\de s_t\right)$
    exists. By $0<c_3(T, c_2)\leq f(t)\leq 1 \ \forall t\in [0,T)$, the limit $ \lim_{t\nearrow T} \int_{c_t}  \rho_{x_0,T}\de s_t$
    exists. By rescaling we have 
    \begin{align}\label{Parabolicconv}
      \lim_{t \nearrow T}\int_{c_t}  \rho_{x_0,T}\de s_t = \lim_{j_l\to\infty}\int_{\tilde c_{j_l}^{\tau}} \rho_{0,0} \de\tilde s_\tau =  \int_{\gainfty^\tau}  \rho_{0,0} \de \tilde s_\infty^\tau.
    \end{align}
    We note that for the last step local convergence is not enough. To get (\ref{Parabolicconv}) we use a lemma of Andrew Stone modified for our case, see \cite[Lemma 2.9]{Stone} and Lemma~\ref{Parabolicstone}. Equality (\ref{Parabolicconv}) means that the integral $\int_{\gainfty^\tau}  \rho_{0,0} \de \tilde s_\infty$ does not depend on $\tau$. 
    If we are in the case that $\tilde M^\infty_\tau$ has a boundary, we reflect the curves at the line $\Sigma^\infty$ to get a family of curves $\left(\gainfty^\tau\right)_{\tau<0}$ without boundary which evolve under the curve shortening flow. Now we can use Huisken's monotonicity formula (see \cite{Huisken90}), 
    \begin{align*} 
     \frac{\de}{\de \tau} \int_{ \gainfty^\tau}  \rho_{0,0} \de \tilde s_\infty^\tau = - \int_{ \gainfty^\tau}  \big|\kappa^\infty - \frac{\langle x^\infty, \nu^\infty\rangle}{2\tau} \big| \de\tilde s_\infty^\tau.
    \end{align*}	
    Since the integral on the left hand side is independent of $\tau$, the term on the right hand side vanishes and we get the result $ \kappa^\infty_\tau =  \frac{\langle x^\infty,\nu^\infty\rangle}{2\tau}$. If $\tilde M^\infty_\tau$ does not have a boundary then we can apply the monotonicity formula of Huisken directly to $\gainfty^\tau$ and get the same result.
\end{proof} 

\begin{proposition} \label{ausschlussinSigma}
Let $c_0:[a,b]\to\Rzwei$ be a convex initial curve, $\kappa_0\geq 0$. Consider a solution $c:[a,b]\times[0,T)\to \Rzwei $ of the area preserving curve shortening problem with Neumann free boundary conditions with singularity at $T<\infty$,
  where the  blowup point lies on the support curve $x_0\in\Sigma$. Assume further 
  $L(\ct)\geq c_1>0$ and  $|\bar\kappa(t)|\leq c_2<\infty$.
  Then the singularity cannot be of type I.
\end{proposition}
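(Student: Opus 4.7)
The plan is proof by contradiction based on parabolic rescaling. I suppose the singularity is of type~I and extract a limit flow to which the rigidity of mean-convex self-shrinkers applies.

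With the hypotheses $L(\ct)\geq c_1>0$ and $|\bar\kappa|\leq c_2<\infty$ in hand, Proposition~\ref{ParabolicGrenzuebergangII} produces a limit flow $\gainfty:I\times(-\infty,0)\to\Rzwei$ satisfying the curve shortening flow, with $L(\gainfty(\cdot,\tau))=\infty$ for every $\tau$, and (if the limit has boundary) with boundary on a line $\nix^\infty\Sigma$ through $0$ meeting the curve perpendicularly. The convexity assumption $\kappa_0\geq 0$ propagates via Proposition~\ref{kappageq0} to $\kappa\geq 0$ along the entire original flow, so the rescaled curvatures $\tilde\kappa_j=\kappa/Q_j$, and hence the pointwise limit $\tilde\kappa_\infty$, are non-negative. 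Corollary~\ref{nichttriv} rules out the trivial flow by producing $\tau_\ast\in[-\bar c_0/2,-1/2]$ with $\tilde\kappa_\infty(0,\tau_\ast)=1$.

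Because the blowup point lies on $\Sigma$, Proposition~\ref{proper} gives properness of each slice $\gainfty(\cdot,\tau)$, and Proposition~\ref{Parabolicselfsimilar} identifies the limit as a homothetically shrinking self-similar solution, $\tilde\kappa_\infty=\langle\tilde x_\infty,\tilde\nu_\infty\rangle/(2\tau)$. In the no-boundary case, $\gainfty(\cdot,\tau)$ is already a complete, properly embedded, mean-convex self-shrinking curve in $\Rzwei$. In the boundary case, the perpendicular Neumann contact allows me to reflect $\gainfty$ smoothly across $\nix^\infty\Sigma$; since $\nix^\infty\Sigma$ passes through the origin, the reflection preserves both the self-similar equation and the inequality $\tilde\kappa\geq 0$, yielding a complete, proper, mean-convex self-shrinker of doubly infinite length.

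By the classification of mean-convex self-shrinking curves in $\Rzwei$ (e.g.~via Huisken's rigidity theorem for self-shrinkers with $H\geq 0$ and polynomial area growth, which is supplied here by the Gaussian density bound established in the proof of Proposition~\ref{proper}), the only possibilities are the circle of radius $\sqrt{-2\tau}$ centred at $0$ and straight lines through $0$. The first has finite length, contradicting $L(\gainfty(\cdot,\tau))=\infty$; the second has $\tilde\kappa\equiv 0$, contradicting $\tilde\kappa_\infty(0,\tau_\ast)=1$. Either way we reach a contradiction, so no type~I singularity is possible. The main obstacle I anticipate is the boundary case, namely verifying that the reflection across $\nix^\infty\Sigma$ produces a genuinely smooth self-shrinker with $\tilde\kappa\geq 0$ to which the classification applies: this uses both the perpendicularity from Proposition~\ref{ParabolicGrenzuebergangII}(ii) and the fact that the reflecting line passes through the origin, which is essential for preserving the self-similar equation.
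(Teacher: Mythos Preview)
Your overall architecture matches the paper's exactly: assume type~I, pass to the parabolic rescaling limit, use the self-similar equation from Proposition~\ref{Parabolicselfsimilar}, reflect across $\nix^\infty\Sigma$ if there is boundary, classify, and contradict via Corollary~\ref{nichttriv} and Proposition~\ref{proper}. The paper does precisely this.

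The difference is in the classification step, and here your argument has a gap. You invoke Huisken's rigidity for mean-convex self-shrinkers, but that theorem requires the self-shrinker to be \emph{embedded}, and embeddedness of the limit (or its reflection) is nowhere established at this point in the paper; preservation of embeddedness for the original flow only comes later (Section~\ref{geometricproperties}) under additional hypotheses on $c_0$ that are not assumed here. Without embeddedness the mean-convex classification is false for curves: the Abresch--Langer curves all have $\kappa>0$, and so do the curves dense in an annulus, so $\tilde\kappa_\infty\geq 0$ alone does not reduce you to the line and the circle.

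The paper circumvents this by using the full Abresch--Langer/Halldorsson classification of \emph{immersed} self-shrinking curves, which needs neither convexity nor embeddedness. The four cases are: a line through the origin, the circle $\mathbb S^1_{\sqrt{-2\tau}}$, a closed Abresch--Langer curve, or a curve whose image is dense in an annulus. The line is excluded by Corollary~\ref{nichttriv}; the circle and the Abresch--Langer curves are compact, contradicting properness together with the infinite length from Proposition~\ref{ParabolicGrenzuebergangII}~\ref{laenge}; the dense-in-annulus case violates the local length bound of Proposition~\ref{proper}. Your proof is easily repaired along these lines, and then it coincides with the paper's.
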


\begin{proof}
      By Proposition~\ref{Parabolicselfsimilar}, we obtain for the limit curve $\tilde \kappa^\infty_\tau = \frac{ \langle \tilde x^\infty,\tilde \nu^\infty\rangle}{2\tau}$. If $\tilde M^\infty_\tau$ has boundary $\partial\tilde M^\infty_\tau$ then $\partial\tilde M^\infty_\tau\subset \nix^\infty\Sigma =  \{0\} \times \R$ (after rotation), and it is perpendicular to that line. 
      Like in the proof on Proposition~\ref{Parabolicselfsimilar}, we reflect $\tilde M^\infty_\tau$ at  $ \nix^\infty\Sigma$ to get a smooth curve $ M^\infty_\tau$ without boundary which satisfies $\kappa^\infty_\tau = \frac{ \langle x^\infty, \nu^\infty\rangle}{2\tau}$. 
      This equation implies that $M^\infty_\tau = \sqrt{-\tau}M_{-1}^\infty$, i.e.~the curves are self-similarly shrinking. These curves are classified, see \cite[Section~5]{Halldorsson}, and also \cite{AbreschLanger}. 
      We use this classification to obtain that $ M^\infty_\tau$ is one of the following:
    \begin{enumerate}
      \item the line $\R\times \{0\}$,\label{Parabolictrivial}
    \item the shrinking sphere $\mathbb S^1_{\sqrt{-2\tau}}$,\label{Parabolicspaehre}
    \item one of the closed, homothetically shrinking Abresch-Langer curves $\Gamma$,\label{Parabolicabresch-langer}
    \item a curve whose image is dense in an annulus of $\Rzwei$. \label{dense}
    \end{enumerate}
    The first case, (\ref{Parabolictrivial}), clearly contradicts $\tilde\kappa_\infty(0,\tau)=1$ for a $\tau\in[-\frac{\bar c_0}{2}, -\frac{1}{2}]$, see Corollary~\ref{nichttriv}. 
    The cases (\ref{Parabolicspaehre}) and (\ref{Parabolicabresch-langer}) are excluded because this would be a contradiction to the properness (Proposition~\ref{proper}) and the unbounded length of $ M^\infty_\tau$. Also (\ref{dense}) is not a possible shape of the limiting curves because then you could find a ball where the length of the curves is not bounded. This contradicts Proposition~\ref{proper}.
\end{proof}

\subsection{The blowup point not on the support curve}

\noindent We modify the approach of Ecker in \cite[Remark 4.8, Proposition 4.17]{EckerBuch} for our flow.

\begin{proposition} \label{monocurveII}
 Let $c:[a,b]\times[0,T)\to \Rzwei $ be a solution of (\ref{flow})
 and $\varphi\in C^{2,1}(\Rzwei\times (0,T),\R)$. As in Proposition~\ref{monocurve}, consider the ``backward heat kernel'' $\heatkernel$ and $$f(t) = \exp{\left( -\frac{1}{2}\int_0^t \bar\kappa^2(\sigma)\mbox{\,d}\sigma\right)}.$$
  Then we have for $t\in(0,T)$
\begin{align*}
   \frac{\de}{\de t}\left( f\intab \varphi\, \rho_{x_0,T}\de s_t\right) =&  -\frac{1}{2} f \hspace*{-0.1cm}\intab \hspace*{-0.1cm} \varphi\left(\big| \kappa + \frac{\langle \ct-x_0,\nu\rangle}{2(T-t)}\big|^2 \hspace*{-0.1cm} + \big|(\kappa-\bar\kappa) + \frac{\langle \ct-x_0,\nu\rangle}{2(T-t)}\big|^2 \right)\rho_{x_0,T} \de s_t\\
       & + f \intab\heatkernel\left(\frac{\de}{\de t}- \ps^2\right)\varphi \de s_t\\
    &+ f \left[\heatkernel\,\varphi\frac{\langle x-x_0,\tau\rangle}{2(T-t)} + \heatkernel \langle D\varphi,\tau\rangle\right]_{c(a,t)}^{c(b,t)}.
\end{align*}
Here, $\dt$ denotes the total derivative with respect to $t$, i.e.~$\dt\psi$ is $$\dt (\psi\circ c)=\pt\psi \circ c + \langle D\psi\circ c, \pt c\rangle= \pt\psi\circ c + (\kappa-\bar\kappa)\langle D\psi\circ c, \nu\rangle.$$Similarly, $\ps^2\varphi$ has to be understood as $\ps^2 (\varphi\circ c)$.
\end{proposition}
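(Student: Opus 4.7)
My plan is to adapt the proof of Proposition~\ref{monocurve} by carrying $\varphi$ through every step and retaining the boundary contributions that were previously discarded by convexity. Writing $\rho\defi\rho_{x_0,T}$ and $\dot\varphi\defi\pt\varphi+(\kappa-\bar\kappa)\langle D\varphi,\nu\rangle$, I would differentiate $f\intab\varphi\,\rho\,\de s_t$ by the product rule and insert $f'(t)=-\tfrac12\bar\kappa^2 f$ together with the evolution equation $\pt(\de s)=-\kappa(\kappa-\bar\kappa)\de s$ from~(\ref{evolution1}). This yields exactly the same structure as in~(\ref{helps}) but with the additional interior term $f\intab\rho\,\dot\varphi\,\de s$.

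Identity~(\ref{Q(g)tilde}) then rewrites $\varphi\,\pt\rho$ as the divergence piece $\tfrac{\varphi}{2(T-t)}\ps\bigl(\rho\langle\ct-x_0,\tau\rangle\bigr)$ plus the two pointwise terms $-\kappa V\rho\varphi$ and $-V^2\rho\varphi$, where $V\defi\tfrac{\langle\ct-x_0,\nu\rangle}{2(T-t)}$. The first boundary term in the statement comes from integrating the divergence piece by parts: this produces $\bigl[\tfrac{\rho\varphi\langle\ct-x_0,\tau\rangle}{2(T-t)}\bigr]_a^b$ and an interior remainder $-\tfrac{1}{2(T-t)}\intab\ps\varphi\cdot\rho\langle\ct-x_0,\tau\rangle\,\de s$. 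Using $\ps\rho=\langle D\rho,\tau\rangle=-\tfrac{\rho\langle\ct-x_0,\tau\rangle}{2(T-t)}$ this remainder equals $\intab\ps\varphi\,\ps\rho\,\de s$, and a second integration by parts gives the second boundary term $[\rho\langle D\varphi,\tau\rangle]_a^b$ together with the interior contribution $-\intab\rho\,\ps^2\varphi\,\de s$. Combined with $\intab\rho\,\dot\varphi\,\de s$ from the first paragraph, this assembles into the heat-type integral $f\intab\rho\,(\tfrac{\de}{\de t}-\ps^2)\varphi\,\de s$ appearing in the statement.

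It remains to collect the quadratic and cross terms. These are $-\tfrac12\bar\kappa^2\varphi\rho$ from $f'$, $-\kappa(\kappa-\bar\kappa)\varphi\rho$ from $\pt(\de s)$, and $-\kappa V\varphi\rho-V^2\varphi\rho-(\kappa-\bar\kappa)V\varphi\rho$, the last obtained from (\ref{Q(g)tilde}) together with the contribution $\varphi(\kappa-\bar\kappa)\langle D\rho,\nu\rangle=-(\kappa-\bar\kappa)V\varphi\rho$ hidden in $\dot\rho$. The algebraic identity $\bar\kappa^2+2\kappa(\kappa-\bar\kappa)+2(2\kappa-\bar\kappa)V+2V^2=(\kappa+V)^2+((\kappa-\bar\kappa)+V)^2$ then rewrites them as $-\tfrac12\varphi\rho\bigl[(\kappa+V)^2+((\kappa-\bar\kappa)+V)^2\bigr]$, matching the statement.

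The computation is essentially identical to the one in Proposition~\ref{monocurve}; the only structural difference is that $\varphi$ and $x_0$ are arbitrary, so the sign conditions $\langle\ct(b)-x_0,\tau(b)\rangle\leq 0$ and $\langle\ct(a)-x_0,\tau(a)\rangle\geq 0$ available for $x_0\in\Sigma$ with $\Sigma$ convex cannot be used, and both boundary contributions survive. No new analytic difficulty appears; the main care required is bookkeeping across the two successive integrations by parts that together produce the $(\tfrac{\de}{\de t}-\ps^2)\varphi$ term.
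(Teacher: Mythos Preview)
Your argument is correct and follows essentially the same route as the paper. The only organizational difference is that the paper bundles both boundary contributions into a single $\partial_s$-exact expression (obtained by manipulating $\varphi\,\partial_t\rho$ directly from identity~(\ref{Q(g)})), whereas you multiply (\ref{Q(g)tilde}) by $\varphi$ and perform two successive integrations by parts; the content is the same.
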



\begin{proof}
The beginning of the proof is done like in Proposition~\ref{monocurve}. We only have to add the term where $\varphi$ is differentiated. It follows that
\begin{align}\begin{split}\label{re23}
  \frac{\de}{\de t}\left(  f\int_{c_t} \varphi\, \rho\de s_t\right) =  &-\frac{1}{2}  f \intab \varphi\circ\ct\,\rho \circ \ct\left(\bar\kappa^2 + 2\kappa(\kappa-\bar\kappa) + 2 (\kappa-\bar\kappa) \frac{\langle\ct-x_0,\nu\rangle}{2(T-t)}\right)\de s  \\
      &+f\intab \varphi\circ\ct\,\pt \rho\circ \ct \de s
      + f\intab \rho\circ \ct\frac{\de}{\de t}(\varphi\circ \ct)\de s. \end{split}
\end{align}
By (\ref{Q(g)}), we have
\begin{align*}
\begin{split}
     \varphi\circ\ct&\,\pt \rho \circ\ct  = - \langle \ps\left(D\rho\circ\ct\right),\tau\rangle\,\varphi\circ\ct -\frac{|(\ct-x_0)^\perp|^2}{4(T-t)^2} \rho\circ\ct\,\varphi\circ\ct\\
    &= - \langle \ps\left(D^\top\rho\circ\ct\right),\tau\rangle\,\varphi\circ\ct - \langle \ps\left(D^\perp\rho\circ\ct\right),\tau\rangle\,\varphi\circ\ct -\frac{|(\ct-x_0)^\perp|^2}{4(T-t)^2} \rho\circ\ct\,\varphi\circ\ct\\
    &= - \ps (\langle D\rho\circ\ct,\tau\rangle)\,\varphi\circ\ct + \langle D\rho\circ\ct,\kappa\nu\rangle\,\varphi\circ\ct -\frac{|(\ct-x_0)^\perp|^2}{4(T-t)^2} \rho\circ\ct\,\varphi\circ\ct\\
    &= - \ps(\langle D\rho\circ\ct,\tau\rangle\,\varphi\circ\ct) + \langle D\rho\circ\ct,\tau\rangle \ps(\varphi\circ\ct) + \kappa\langle D\rho\circ\ct,\nu\rangle\,\varphi\circ\ct\\
    &\qquad\qquad\qquad\qquad\qquad\qquad\qquad\qquad\qquad\qquad-\frac{|(\ct-x_0)^\perp|^2}{4(T-t)^2} \rho\circ\ct\,\varphi\circ\ct\\
    &= \ps\left(\rho\circ\ct\,\frac{\langle \ct-x_0,\tau\rangle}{2(T-t)}\,\varphi\circ\ct\right) + \ps(\rho\circ\ct) \ps(\varphi\circ\ct) - \kappa \frac{\langle\ct-x_0,\nu\rangle}{2(T-t)}\, \rho\circ\ct\,\varphi\circ\ct\\
    &\qquad\qquad\qquad\qquad\qquad\qquad\qquad\qquad\qquad\qquad-\frac{|(\ct-x_0)^\perp|^2}{4(T-t)^2} \rho\circ\ct\,\varphi\circ\ct\\
    &= \ps\left(\rho\circ\ct\,\frac{\langle \ct-x_0,\tau\rangle}{2(T-t)}\,\varphi\circ\ct + \rho\circ\ct \ps(\varphi\circ\ct)\right)  - \rho\circ\ct\ps^2(\varphi\circ\ct)\\
    &\qquad\qquad\qquad\qquad- \kappa \frac{\langle\ct-x_0,\nu\rangle}{2(T-t)}\, \rho\circ\ct\,\varphi\circ\ct-\frac{|(\ct-x_0)^\perp|^2}{4(T-t)^2} \rho\circ\ct\,\varphi\circ\ct\\
    &= \ps\left(\rho\circ\ct\,\frac{\langle \ct-x_0,\tau\rangle}{2(T-t)}\,\varphi\circ\ct + \rho\circ\ct \langle D\varphi\circ\ct,\tau\rangle\right)  - \rho\circ\ct\ps^2(\varphi\circ\ct)\\
    &\qquad\qquad\qquad\qquad\qquad + \rho\circ\ct\,\varphi\circ\ct\left(- \kappa \frac{\langle\ct-x_0,\nu\rangle}{2(T-t)}-\frac{|(\ct-x_0)^\perp|^2}{4(T-t)^2} \right).
\end{split}
\end{align*}
We use this equality in (\ref{re23}), collect the terms as in the proof of Proposition~\ref{monocurve} an get the desired result.
\end{proof}

\begin{definition} \label{localize}
 For $x_0\in\Rzwei\setminus\Sigma$, $0<T<\infty$ and $\lambda>0$ we define the $C^2$-function
\begin{align*}
 \varphi_{(x_0,T),\lambda}(x,t)\defi \left(1-\frac{|x-x_0|^2 + 2(1 + c_5)(t-T)}{\lambda^2}\right)^3_+, \ \ \ \ x\in\Rzwei, t\in[0,T),
\end{align*}
where $$c_5\defi c_2 (\diam\Sigma + L_0),$$ $c_2$ is a constant (later we use $c_2$ such that $|\bar\kappa|\leq c_2$).
Furthermore, we choose $\lambda_0\in(0,\sqrt{T})$ such that
\begin{align*}
 \overline{B_{\lambda_0\sqrt{1 + 2(1+c_5)}}(x_0)} \times [T-\lambda_0^2,T)\subset \Rzwei\setminus\Sigma\times (0,T)
\end{align*}
and remark for all $\lambda\in(0,\lambda_0]$ and $t\in [T-\lambda^2,T)$ that
\begin{align*}
\spt \varphi_{(x_0,T),\lambda}(\cdot,t)\subset \Rzwei\setminus\Sigma.
\end{align*}
\end{definition}
\begin{remark}
 The definition of the function $ \varphi_{(x_0,T),\lambda}$ comes from \cite[Chapter 3]{Brakke}.
\end{remark}

\begin{lemma}\label{dsleq0}
  Let $c:[a,b]\times[0,T)\to \Rzwei $ be a solution of (\ref{flow})
  with singularity at $T<\infty$, where the blowup point $x_0$ satisfies $x_0\in\Rzwei\setminus\Sigma$. Assume that we have a uniform bound 
 $|\bar\kappa|\leq c_2<\infty.$ 
  Then, for $\lambda>0$, we have
\begin{align*}
 \left(\frac{\de}{\de t} - \ps^2\right)(\phir\circ c)\leq 0
\end{align*}
\end{lemma}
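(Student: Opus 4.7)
My plan is to compute both sides explicitly in the region $\{\varphi > 0\}$ and check that the choice of the constant $c_5$ was made precisely to absorb the $\bar\kappa$-term. Off the support of $\varphi$ there is nothing to prove since $\varphi \equiv 0$ and, thanks to the exponent $3$ in the cutoff, $\varphi$ is $C^2$ with $\varphi$, $D\varphi$, $D^2\varphi$ all vanishing on $\{u\le 0\}$, where $u(x,t) := 1-\lambda^{-2}(|x-x_0|^2 + 2(1+c_5)(t-T))$.

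First I unfold the definitions. Writing $y := c(p,t) - x_0$ and working where $u>0$, I get
\begin{align*}
\partial_t \varphi &= -\tfrac{6 u^2 (1+c_5)}{\lambda^2}, \qquad D\varphi = -\tfrac{6 u^2 y}{\lambda^2},\\
D^2\varphi &= \tfrac{24 u}{\lambda^4}\, y\otimes y - \tfrac{6 u^2}{\lambda^2}\, I.
\end{align*}
Since $\partial_s^2(\varphi\circ c) = \langle D^2\varphi\cdot\tau,\tau\rangle + \kappa\langle D\varphi,\nu\rangle$ and $\frac{d}{dt}(\varphi\circ c) = \partial_t\varphi + (\kappa-\bar\kappa)\langle D\varphi,\nu\rangle$, the curvature terms combine to leave only a $-\bar\kappa\langle D\varphi,\nu\rangle$ piece, and a direct substitution gives
\begin{align*}
\left(\tfrac{d}{dt}-\partial_s^2\right)(\varphi\circ c)
   = \tfrac{6 u^2}{\lambda^2}\bigl(-c_5 + \bar\kappa\,\langle y,\nu\rangle\bigr) - \tfrac{24 u}{\lambda^4}\,\langle y,\tau\rangle^2.
\end{align*}

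The second summand is $\le 0$ on $\{u\ge 0\}$. For the first summand I need $|\bar\kappa\,\langle y,\nu\rangle|\le c_5$. By hypothesis $|\bar\kappa|\le c_2$, and both $c(p,t)$ and the blowup point $x_0$ lie in the strip $D=\{x:\dist(x,\Sigma)\le L_0/2\}$: for $c(p,t)$ this is Lemma~\ref{cinD}, and for $x_0$ it is Lemma~\ref{existenceblowup}. The triangle inequality applied via two nearest-point projections onto $\Sigma$ therefore yields
\begin{align*}
 |y| = |c(p,t) - x_0| \le \tfrac{L_0}{2} + \diam\Sigma + \tfrac{L_0}{2} = L_0 + \diam\Sigma,
\end{align*}
so $|\bar\kappa\,\langle y,\nu\rangle|\le c_2(L_0+\diam\Sigma)=c_5$ by the very definition of $c_5$. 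Consequently $-c_5 + \bar\kappa\langle y,\nu\rangle \le 0$, and the whole right-hand side is nonpositive.

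The only subtle point is the $C^2$-regularity of $\varphi$ across $\{u=0\}$, which is exactly why the cutoff is raised to the third power, and the observation that all derivatives of $\varphi$ kill the jump so that the computation above extends continuously (as zero) to $\{u\le 0\}$. I expect no real obstacle; the main bookkeeping is just keeping track of the sign in the $\bar\kappa$-term and recognizing that $c_5$ was chosen specifically to dominate $c_2|y|$ uniformly in $(p,t)\in[a,b]\times[0,T)$.
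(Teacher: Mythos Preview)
Your proof is correct and follows essentially the same approach as the paper. The paper organizes the computation slightly differently, writing $\varphi = \eta\circ r$ with $\eta(r)=(1-r/\lambda^2)^3_+$ and $r(x,t)=|x-x_0|^2+2(1+c_5)(t-T)$, and then uses $\eta'\le 0$, $\eta''\ge 0$ together with $(\tfrac{d}{dt}-\partial_s^2)(r\circ c)=2(c_5-\bar\kappa\langle c-x_0,\nu\rangle)$; your direct expansion of $D\varphi$, $D^2\varphi$, $\partial_t\varphi$ yields the identical inequality, and your bound $|c-x_0|\le L_0+\diam\Sigma$ via Lemmas~\ref{cinD} and~\ref{existenceblowup} matches the paper's.
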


\begin{proof}
 As in \cite[Remark 4.8]{EckerBuch} we define 
\begin{align*}
 \eta(r)\defi\left(1-\frac{r}{\lambda^2}\right)^3_+, \ \ \ \ \ r\in\R
\end{align*}
and note that $\eta'(r)\leq 0$, $\eta''(r)\geq 0$. With $r(x,t) = |x-x_0|^2 + 2(1 + c_5)(t-T)$
we have that $\phir  = \eta\circ r$ and
\begin{align*}
 Dr(x,t) &= 2(x-x_0)\\
D^2r(x,t) &= 2 \text{Id}\\
\partial_t r(x,t) &= 2(1 + c_5).
\end{align*}
Now we compute
\begin{align*}
  \left(\frac{\de}{\de t} - \ps^2\right)(\phir\circ c) &= \left(\frac{\de}{\de t} - \ps^2\right)(\eta\circ r\circ c)\\
      &=  \eta'\circ r\circ c_t \left(\frac{\de}{\de t} - \ps^2\right) (r\circ c) - \eta''\circ r\circ\ct \left(\ps(r\circ \ct)\right)^2\\
      &\leq \eta'\circ r\circ c_t \left(\frac{\de}{\de t} - \ps^2\right) (r\circ c).
\end{align*}
 Therefore, it is sufficient to show $\left(\frac{\de}{\de t} - \ps^2\right) (r\circ \ct)\geq 0$. This is
\begin{align*}
 \left(\frac{\de}{\de t} - \ps^2\right) (r\circ c) &=\langle Dr\circ c,\pt c\rangle + \pt r\circ c - \ps\langle Dr\circ c, \tau\rangle\\
    &= \langle Dr\circ c,\pt c\rangle + \pt r\circ c - \langle D^2 r\circ c \,\tau,\tau\rangle - \langle Dr\circ c,\ps \tau\rangle\\
    &= 2(\kappa-\bar\kappa)\langle c-x_0,\nu\rangle + 2(1+c_5) - 2\langle\tau,\tau\rangle - 2\kappa\langle c-x_0,\nu\rangle\\
    &= 2(c_5 - \bar\kappa \langle c-x_0,\nu\rangle).
\end{align*}
By Lemma~\ref{cinD}, we get $|c-x_0|\leq \diam\Sigma + L_0$ on $\ab\times\nut$ and thus
\begin{align*}
 \bar\kappa\langle c-x_0,\nu\rangle\leq c_2|c-x_0|\leq c_5.
\end{align*}
\end{proof}

\begin{theorem} \label{nottypeI}
Let $c_0:[a,b]\to\Rzwei$ be a convex initial curve, $\kappa_0\geq 0$. Consider a solution $c:[a,b]\times[0,T)\to \Rzwei $  of the area preserving curve shortening problem with Neumann free boundary conditions with singularity at $T<\infty$. Assume further that $ L(\ct)\geq c_1>0$ and $|\bar\kappa(t)|\leq c_2<\infty$,
  where $c_1$ and $c_2$ are constants independent of $t$. Then the singularity cannot be of type I.
\end{theorem}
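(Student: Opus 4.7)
The plan is to mimic the argument of Proposition~\ref{ausschlussinSigma} but replace the global monotonicity formula of Proposition~\ref{monocurve} (which required $x_0\in\Sigma$ to drop boundary terms) by the localized version of Proposition~\ref{monocurveII} together with the cutoff $\phir$ of Definition~\ref{localize} and the super-solution property of Lemma~\ref{dsleq0}. Since $x_0\in\Rzwei\setminus\Sigma$, the rescaled support curves $\tildeSigma=Q_j(\Sigma-x_0)$ drift off to infinity, so the limit flow $\gainfty$ produced by Proposition~\ref{ParabolicGrenzuebergangII} is a genuine \emph{boundary-free} curve shortening flow on $I=\R$.

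First, fix $\lambda\in(0,\lambda_0]$ so that $\spt \phir(\cdot,t)\subset\Rzwei\setminus\Sigma$ for $t\in[T-\lambda^2,T)$. Applying Proposition~\ref{monocurveII} with $\varphi=\phir$, the boundary bracket vanishes (the endpoints $c(a,t),c(b,t)$ lie on $\Sigma$ outside $\spt\phir(\cdot,t)$), while Lemma~\ref{dsleq0} makes the term $f\int \heatkernel(\dt-\ps^2)\phir\,\de s_t$ non-positive. Together with the standard square terms, this yields
\begin{align*}
 \frac{\de}{\de t}\left(f(t)\intab \phir\,\heatkernel\,\de s_t\right)\leq 0 \qquad\text{on }[T-\lambda^2,T).
\end{align*}
Hence the monotone limit $\lim_{t\nearrow T} f(t)\intab \phir\,\heatkernel\,\de s_t$ exists, and since $f$ is bounded between positive constants (because $T<\infty$ and $|\bar\kappa|\leq c_2$), so does $\lim_{t\nearrow T}\intab \phir\,\heatkernel\,\de s_t$.

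Second, parabolically rescale as in Definition~\ref{Parabolicrescaling}. Under rescaling, $\heatkernel\,\de s_t$ becomes $\rho_{0,0}\,\de\tilde s_\tau$ on the rescaled curves $\tildec$, and $\phir$ pulls back to $\tildephi(\tilde x,\tau)\defi \phir(x_0+\tilde x/Q_j, T+\tau/Q_j^2)$. Because $Q_j\to\infty$, the functions $\tildephi$ converge to $1$ locally uniformly on $\Rzwei\times(-\infty,0)$. Invoking the Stone-type lemma used in the proof of Proposition~\ref{Parabolicselfsimilar} (cf.~Lemma~\ref{Parabolicstone}) to upgrade the local smooth convergence $\gamma_{j_l}\to\gainfty$ to convergence of the weighted integrals, one obtains
\begin{align*}
 \int_{\gainfty^\tau}\rho_{0,0}\,\de\tilde s_\infty^\tau = \lim_{j_l\to\infty}\int_{\tilde c_{j_l}^\tau}\tildephi\,\rho_{0,0}\,\de \tilde s_\tau = \lim_{t\nearrow T}\intab \phir\,\heatkernel\,\de s_t,
\end{align*}
which is independent of $\tau$. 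Since $\gainfty$ is boundary-free, Huisken's monotonicity formula applies directly and the vanishing of the left-hand side of
\begin{align*}
 \frac{\de}{\de\tau}\int_{\gainfty^\tau}\rho_{0,0}\,\de\tilde s_\infty^\tau = -\int_{\gainfty^\tau}\Big|\tilde\kappa_\infty-\frac{\langle \tilde x^\infty,\tilde\nu^\infty\rangle}{2\tau}\Big|^2\rho_{0,0}\,\de\tilde s_\infty^\tau
\end{align*}
forces $\tilde\kappa_\infty=\frac{\langle \tilde x^\infty,\tilde\nu^\infty\rangle}{2\tau}$, i.e.~$\gainfty^\tau$ is a homothetically shrinking solution. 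Moreover, the same computation as in Proposition~\ref{proper}, but applied to the \emph{localized} finite limit $\int_{\gainfty^\tau}\rho_{0,0}\,\de\tilde s_\infty^\tau<\infty$, still yields $L(\gainfty^\tau\cap B_R(0))\leq c(\tau,T,L(c_0),c_2,R)$, so each curve $\gainfty^\tau$ is proper.

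Finally, the Halld\'orsson/Abresch--Langer classification leaves the same four possibilities as in Proposition~\ref{ausschlussinSigma}: a line, a shrinking circle, an Abresch--Langer curve, or a curve dense in an annulus. The line is excluded by Corollary~\ref{nichttriv} ($\tilde\kappa_\infty(0,\tau)=1$ at some $\tau\in[-\bar c_0/2,-1/2]$). The two closed cases contradict the unbounded length of $\gainfty^\tau$ from Proposition~\ref{ParabolicGrenzuebergangII}(\ref{laenge}). The dense-in-annulus case contradicts the local length bound just established. This contradiction rules out a type~I singularity. The step I expect to be the main obstacle is the measure-theoretic passage to the limit in the weighted integral (the Stone-type argument), because local smooth convergence does not by itself control the Gaussian-weighted integrals at infinity, and the localization by $\phir$ must be reconciled with the rescaled cutoff $\tildephi\to 1$.
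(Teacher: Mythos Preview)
Your proposal is correct and follows essentially the same route as the paper's proof: reduce to $x_0\notin\Sigma$ via Proposition~\ref{ausschlussinSigma}, use the localized monotonicity formula (Proposition~\ref{monocurveII}) with the cutoff $\phir$ and Lemma~\ref{dsleq0} to kill the boundary bracket, pass the weighted integral through the rescaling via the Stone-type lemma, and conclude by Huisken's monotonicity plus the Abresch--Langer/Halld\'orsson classification. The only cosmetic point is that excluding the circle and Abresch--Langer curves really uses properness (a proper immersion from an unbounded interval cannot have compact image) rather than unbounded length alone, but you have already established properness, so the argument goes through.
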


\begin{proof}                                                                                                                                                   
 Due to Proposition~\ref{ausschlussinSigma} it is sufficient to consider the case $x_0\in\Rzwei\setminus\Sigma$. The proof is almost the same as in the case $x_0\in\Sigma$. We choose $\lambda_0$ as in Definition~\ref{localize}. Then we use the localized monotonicity formula from Proposition~\ref{monocurveII} together with Lemma~\ref{dsleq0} to get 
\begin{align*}
 \frac{\de}{\de t}\left(f\hspace*{-0.1cm}\int_{\ct}\phir\heatkernel\de s_t\right)\leq -\frac{1}{2} f\hspace*{-0.1cm}\int_{c_t}\phir\left(\big| \kappa + \tfrac{\langle x-x_0,\nu\rangle}{2(T-t)}\big|^2 + \big|(\kappa-\bar\kappa) + \tfrac{\langle x-x_0,\nu\rangle}{2(T-t)}\big|^2 \right)\rho_{x_0,T} \de s_t
\end{align*}
for $\lambda\in(0,\lambda_0]$ and $t\in [T-\lambda^2,T)$. Since $f(T)= \exp(-\frac{1}{2}\int_0^T\bar\kappa^2(\sigma)\de \sigma)$ and $\bar\kappa$ is bounded, the limit $\lim_{t\nearrow T}\int_{\ct}\phir\heatkernel\de s_t$
exists. By rescaling, we get
\begin{align} \label{Konvergenz}
  \lim_{t\nearrow T}\int_{\ct}\phir\heatkernel\de s_t= \lim_{j\to\infty }\int_{\tildec^\tau} \tildephi \rho_{0,0}\de \tilde s_\tau = \int_{\gainfty^\tau}\rho_{0,0}\de\tilde s^\tau_\infty,
\end{align}
where we again modify the Lemma of Stone as in Lemma~\ref{Parabolicstone} for the global convergence in the last step. Equation (\ref{Konvergenz}) implies that $\int_{\gainfty^\tau}\rho_{0,0}\de\tilde s^\tau_\infty$ is independent of $\tau$. \\
In the proof of Proposition~\ref{ParabolicGrenzuebergangII} (\ref{rand}) we saw that in the case $x_0\not\in\Sigma$ the limit curve $\Minfty$ does not have a boundary. For the smooth family of curves $(\gainfty)_{\tau<0}$ without boundary which is evolving under the curve shortening flow we use Huisken's monotonicity formula \cite{Huisken90} and get
\begin{align*}
     0=\frac{\de}{\de \tau} \int_{\gainfty^\tau}  \rho_{0,0} \de \tilde s_\infty^\tau = - \int_{ \gainfty^\tau}  \big|\kappa^\infty - \frac{\langle x^\infty, \nu^\infty\rangle}{2\tau} \big| \de\tilde s_\infty^\tau.
\end{align*}
Therefore, $\Minfty$ is a homothetically shrinking solution of the curve shortening flow. We rescale the localized monotonicity formula as in Proposition~\ref{proper} and use $\phir\leq \left(1 + \tfrac{2(1+c_5)T}{\lambda^2}\right)^3$ to see that each limit curve $\tildega_\infty(\cdot,\tau)$ is proper.
With the results of Abresch-Langer and Halldorsson, \cite{AbreschLanger} and \cite[Section 5]{Halldorsson}, we conclude that the only properly immersed, homothetically shrinking curve of unbounded length is a line through the origin. But this is not possible because there is a time $\tau\in[-\frac{\bar c_0}{2}, -\frac{1}{2}]$ such that $\tilde\kappa_\infty(0,\tau)=1$ due to Corollary~\ref{nichttriv}.
\end{proof}

\section{Preserving of a geometric property} \label{geometricproperties}

\renewcommand*\theenumi{\textit{\Alph{enumi}}}
\renewcommand*\labelenumi{(\theenumi)}

In this section, we assume that the curves satisfy the following conditions (which are the conditions of Theorem~\ref{mitdSig}):
\begin{assumptions}
 Let $c:[a,b]\times[0,T)\to \Rzwei $ be a solution of (\ref{flow}),
 where the initial curve  satisfies the following four conditions: The curve $c_0$
\begin{enumerate}
 \item has positive curvature, $\kappa_0>0$ on $[a,b]$, \label{A}
\item  has no self-intersection, \label{B}
\item  is contained in the outer domain created by the convex support curve $\Sigma$,\label{C}
\item satisfies $L(c_0)<\Sig d = \min\{|x-y|: x,y \in\Sigma,\Sig\vec\tau(x)=-\Sig\vec\tau(y)\}$.\label{D}
\end{enumerate}
\end{assumptions}
\renewcommand*\theenumi{\textit{\roman{enumi}}}
\renewcommand*\labelenumi{\theenumi)}

\begin{lemma} \label{transv}
  Let $c:[a,b]\times[0,T)\to \Rzwei $ be a solution of (\ref{flow}),
  where the initial curve satisfies the conditions (\ref{A}), (\ref{B}), (\ref{C}) and (\ref{D}). 
  We assume furthermore that there is a time $\tilde t_0\in (0,T)$ such that the curve $c_{\tilde t_0}$ is not embedded. Then the self-intersection must be transversal, i.e.~the tangents at the intersection points are not parallel.
\end{lemma}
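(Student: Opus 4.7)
The plan is a direct proof by contradiction exploiting two ingredients established earlier in the excerpt: the strict positivity $\kappa>0$ on $[a,b]\times(0,T)$ from Corollary~\ref{kappagroesser0}, and the sharp total-curvature bound $\int_a^b \kappa\,\de s < 2\pi$ from Theorem~\ref{mitdSig} (which uses assumption~(D)). Together these leave essentially no room for a non-transversal self-intersection.

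I will assume by contradiction that there are parameters $p_1\neq p_2$ in $[a,b]$ with $c(p_1,\tilde t_0)=c(p_2,\tilde t_0)$ and $\tau(p_1,\tilde t_0)\parallel\tau(p_2,\tilde t_0)$; after relabeling I may take $p_1<p_2$. Let $\theta\colon[a,b]\to\R$ be a continuous lift of the tangent angle of $c(\cdot,\tilde t_0)$. Since $\partial_s\theta=\kappa>0$, the function $\theta$ is strictly increasing, and $\int_{p_1}^{p_2}\kappa\,\de s=\theta(p_2)-\theta(p_1)$. If $\tau(p_1)=\tau(p_2)$ (Case~1), then $\theta(p_2)-\theta(p_1)\in 2\pi\mathbb Z$, and strict monotonicity forces $\theta(p_2)-\theta(p_1)\geq 2\pi$, contradicting Theorem~\ref{mitdSig}. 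If $\tau(p_1)=-\tau(p_2)$ (Case~2), the same bound forces $\theta(p_2)-\theta(p_1)=\pi$. After rotating so that $\tau(p_1)=e_1$, I reparametrize the arc $c(\cdot,\tilde t_0)|_{[p_1,p_2]}$ by $\phi=\theta(p)\in[0,\pi]$, which is possible because $\theta$ is a strictly increasing bijection on this subinterval, and compute
\[
c(p_2,\tilde t_0)-c(p_1,\tilde t_0)=\int_0^\pi\frac{(\cos\phi,\sin\phi)}{\kappa(\phi)}\,\de\phi,
\]
whose second component $\int_0^\pi\sin\phi/\kappa(\phi)\,\de\phi$ is strictly positive, contradicting $c(p_1,\tilde t_0)=c(p_2,\tilde t_0)$.

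The only remaining situation is the boundary subcase $p_1=a$, $p_2=b$. Case~1 is immediately excluded by the Neumann conditions $\tau(a,\tilde t_0)=-\Sig\vec\nu(c(a,\tilde t_0))$ and $\tau(b,\tilde t_0)=\Sig\vec\nu(c(b,\tilde t_0))$: if $c(a,\tilde t_0)=c(b,\tilde t_0)$ then $\tau(a,\tilde t_0)=-\tau(b,\tilde t_0)$, so the tangents cannot be equal. Case~2 is ruled out by combining Theorem~\ref{mitdSig} with Proposition~\ref{anfangstheorem}: the latter strengthens $\int_a^b\kappa\,\de s\geq\pi$ to the strict inequality $\int_a^b\kappa\,\de s>\pi$ precisely when $c(a,\tilde t_0)=c(b,\tilde t_0)$, so the integer $k$ in $\theta(b)-\theta(a)=\pi+2\pi k$ cannot equal $0$; but any $k\geq 1$ gives $\int_a^b\kappa\,\de s\geq 3\pi$, again contradicting Theorem~\ref{mitdSig}.

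There is no substantial technical obstacle in this approach. The key observation is that \emph{strict} positivity of $\kappa$, not merely $\kappa\geq 0$, makes the tangent angle strictly monotonic, and this combined with the narrow window $\pi\leq\int_a^b\kappa\,\de s<2\pi$ forces the hypothetical ``half-turn'' arc in Case~2 to be a literal convex half-turn, which cannot close up by the elementary integral computation above.
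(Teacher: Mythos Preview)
Your proof is correct, and Case~1 matches the paper's argument exactly. In Case~2, however, you take a genuinely different and more elementary route. The paper argues as follows: having reduced to $\tau(p_1)=-\tau(p_2)$, it invokes the proof of Proposition~\ref{anfangstheorem} on the subarc $[p_1,p_2]$ to obtain $\int_{p_1}^{p_2}\kappa\,\de s>\pi$, then applies the index formula (Theorem~\ref{indexformel}) to the closed curve $c_{\tilde t_0}|_{[p_1,p_2]}$ with exterior angle $\pi$ to force $\int_{p_1}^{p_2}\kappa\,\de s=(2k-1)\pi$ for some integer $k\geq 2$, hence $\int_a^b\kappa\,\de s\geq 3\pi$. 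Your approach bypasses the index machinery entirely: once $\theta(p_2)-\theta(p_1)=\pi$, the angle reparametrization gives the displacement as $\int_0^\pi\kappa^{-1}(\cos\phi,\sin\phi)\,\de\phi$, whose second component is manifestly positive. This is cleaner and fully self-contained; the paper's route, on the other hand, yields the stronger quantitative conclusion $\int\kappa\,\de s\geq 3\pi$, which is not needed here but mirrors arguments used elsewhere in Section~\ref{geometricproperties}.

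One small comment: your final paragraph treating the boundary subcase $p_1=a$, $p_2=b$ is redundant. Your integral argument in Case~2 never used that $p_1,p_2$ lie in the interior, and Corollary~\ref{kappagroesser0} gives $\kappa>0$ on all of $[a,b]$ at time $\tilde t_0>0$, so the angle reparametrization and the positivity of $\int_0^\pi \sin\phi/\kappa(\phi)\,\de\phi$ go through unchanged. The separate boundary discussion does no harm, but you can safely delete it.
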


\begin{proof}
    The reason for the transversality is convexity combined with the result from Theorem~\ref{mitdSig}: $\intab\kappa\de s<2\pi$. Let $p_1<p_2$ be points such that $\ctnt(p_1)=\ctnt(p_2)$. We rotate the situation such that $\tau(p_1,\tilde\tn) = e_2$. If the self-intersection is tangential we have $\tau(p_2,\tilde\tn)= \pm e_2$. But $\tau(p_2,\tilde\tn)= + e_2$ cannot hold since then the angle of the tangent from $p_1$ to $p_2$ would already be $2\pi$. So we have $$\tau(p_2,\tilde\tn) = - e_2.$$ 
    As in the proof of Proposition~\ref{anfangstheorem} we have $\int_{p_1}^{p_2}\kappa\de s> \pi$ (replace $[a,b]$ by $[p_1,p_2]$ and  the proof works just the same). The curve $\ctnt|_{[p_1,p_2]}$ is smooth, regular and closed with exterior angle $\pi$. The formula for the index, Theorem~\ref{indexformel},  yields
    \begin{align*}
      k\defi\ind(\ctnt)= \frac{1}{2\pi}\int_{p_1}^{p_2}\kappa\de s|_{t=\tilde\tn}    + \frac{1}{2}
      \end{align*}
      and $k\in\mathbb Z$. As $\pi<\int_{p_1}^{p_2}  \kappa \de s|_{t=\tilde \tn} = 2k\pi - \pi$
      it follows that $k>1$. Therefore $k\geq 2$ and we get
	\begin{align*}
	2\pi>\intab\kappa \de s|_{t=\tilde \tn} \geq  \int_{p_1}^{p_2}\kappa \de s|_{t=\tilde \tn}= 2k\pi - \pi \geq 3\pi.
	\end{align*}
\end{proof}

\begin{lemma}\label{randschnitt}
  Let $c:[a,b]\times[0,T)\to \Rzwei $ be a solution of (\ref{flow}),
  where the initial curve satisfies  the conditions (\ref{A}), (\ref{B}), (\ref{C}) and (\ref{D}). 
  Under the assumption that the curves do not stay embedded, there is a first time $t_0\in (0,T)$ such that $c_{t_0}$ is not embedded, i.e.~there are $p_1,p_2\in\ab$ with $p_1<p_2$ such that $c_{t_0}(p_1)=c_{t_0}(p_2)$, and $c_t$ is embedded for $t\in[0,t_0)$. Furthermore, we have
\begin{align*}
 p_1 = a \ \text{ or } \ p_2 = b.
\end{align*}
This means that the first time a self-intersection happens is at the boundary.
\end{lemma}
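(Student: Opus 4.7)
The plan is to establish the existence of a first non-embedded time $t_0 > 0$ and then apply the implicit function theorem to show that a purely interior transversal self-intersection at $t_0$ would propagate to times just before $t_0$, contradicting the minimality of $t_0$.

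First I would note that the set $\mathcal{E}\defi\{t \in [0,T) : c_t \text{ is embedded}\}$ is relatively open in $[0,T)$: if $c_t$ is embedded at some $t$, then, because $c_t:[a,b]\to\Rzwei$ is a smooth regular curve on a compact interval, sufficiently small $C^1$-perturbations remain embedded. Condition (\ref{B}) gives $0\in\mathcal{E}$, and by hypothesis $\mathcal{E}\neq[0,T)$, so $t_0\defi\inf\bigl([0,T)\setminus\mathcal{E}\bigr)>0$. Closedness of the complement then yields $p_1<p_2$ in $[a,b]$ with $c(p_1,t_0)=c(p_2,t_0)$, while $c_t$ remains embedded for every $t\in[0,t_0)$.

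Next I would invoke Lemma~\ref{transv} at the time $\tilde t_0=t_0$, which, through the combination of convexity ($\kappa>0$) with the total-curvature bound $\int\kappa\,\de s<2\pi$ from Theorem~\ref{mitdSig}, rules out parallel tangents at the intersection. Hence $\tau(p_1,t_0)$ and $\tau(p_2,t_0)$ are linearly independent.

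The core step is an implicit function theorem argument. Suppose for contradiction that $p_1,p_2\in(a,b)$. Consider the smooth map
\begin{align*}
F:(a,b)\times(a,b)\times (0,T)\to\Rzwei,\qquad F(p,q,t)\defi c(p,t)-c(q,t).
\end{align*}
Then $F(p_1,p_2,t_0)=0$, and the spatial differential
\begin{align*}
D_{(p,q)}F(p_1,p_2,t_0)=\bigl(\partial_p c(p_1,t_0),\ -\partial_p c(p_2,t_0)\bigr)
\end{align*}
is invertible by transversality. The implicit function theorem produces smooth functions $p(t),q(t)$ on some neighbourhood $(t_0-\epsilon,t_0+\epsilon)$ with $(p(t_0),q(t_0))=(p_1,p_2)$ and $c(p(t),t)=c(q(t),t)$ throughout. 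After shrinking $\epsilon$, $(p(t),q(t))$ stays inside $(a,b)\times(a,b)$ with $p(t)\neq q(t)$, so $c_t$ carries a genuine self-intersection for every $t\in(t_0-\epsilon,t_0)$, contradicting the choice of $t_0$. Therefore at least one of $p_1,p_2$ lies in $\{a,b\}$, and $p_1<p_2$ forces $p_1=a$ or $p_2=b$.

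The main obstacle---and precisely the reason the statement is nontrivial---is that this IFT argument breaks at the boundary: if $p_1=a$, the branch $(p(t),q(t))$ produced by the implicit function theorem may satisfy $p(t)<a$ for $t<t_0$, escaping the parameter domain, so a boundary self-intersection at $t_0$ need not yield one at nearby earlier times. Transversality from Lemma~\ref{transv} does all the work in the interior case, and it is precisely the combination of $\kappa_0>0$, $\int\kappa\,\de s<2\pi$, and the Neumann boundary conditions that makes Lemma~\ref{transv} available to begin with.
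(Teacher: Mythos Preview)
Your proposal is correct and follows essentially the same approach as the paper: establish a first non-embedded time $t_0$, invoke Lemma~\ref{transv} for transversality, and apply the implicit function theorem to $F(p,q,t)=c(p,t)-c(q,t)$ to derive a contradiction if both intersection parameters lie in the interior. Your treatment is in fact more careful than the paper's, which dispatches the existence of $t_0$ in a single sentence and omits your explanatory remark on why the argument fails at the boundary.
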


\begin{proof} The existence of $t_0$ follows from the fact that the curves are immersed.
We now assume that the self-intersection happens at first in the interior: $p_1,p_2\in (a,b)$. For $\epsilon,\rho,\delta>0$ small enough, we define the function
\begin{align*}
 f:&(t_0-\epsilon,\tn + \epsilon)\times (p_1-\rho,p_1 + \rho)\times (p_2-\delta,p_2 + \delta) \to\Rzwei,\\
&f(t,p,\tilde p) \defi c(p,t)-c(\tilde p,t).
\end{align*}
This function is $C^\infty$ and satisfies $f(\tn,p_1,p_2)=0$ and
\begin{align*}
 \left(\partial_p f,\partial_{\tilde p} f\right)|_{(\tn,p_1,p_2)}=\left(\pp c(p_1,\tn), - \pp c(p_2,\tn)\right).
\end{align*}
  Lemma~\ref{transv} yields the transversality of the self-intersection. In other words, the vectors $\pm \tau(p_1,\tn)$ and $\pm \tau(p_2,\tn)$ are linearly independent. Thus, the vectors $\pp c(p_1,\tn)$ and $-\pp c(p_2,\tn)$ are linearly independent as well. 
  But that means that  $\left(\partial_p f,\partial_{\tilde p} f\right)|_{(\tn,p_1,p_2)}$ is invertible. By the implicit function theorem one gets open neighbourhoods $U,V_1,V_2$ of $\tn,p_1,p_2$, respectively, and a function $g\in C^1(U,V_1\times V_2)$ such that
\begin{align*}
 \left\{(t,p,\tilde p)\in U\times V_1\times V_2: f(t,p,\tilde p)=0\right\}= \left\{ (t,g(t)):t\in U\right\}.
\end{align*}
Hence there was a time before $\tn$ where the curve had a self-intersection. This is a contradiction.
\end{proof}

\begin{lemma} \label{noself} 
  Let $c:[a,b]\times[0,T)\to \Rzwei $ be a solution of (\ref{flow}),
  where the initial curve  satisfies the conditions (\ref{A}), (\ref{B}), (\ref{C}) and (\ref{D}). 
  If for all $t\in\nut$, the vector $\tau(a,t)$\footnote{Here, the vector $\tau(a,t)$ is seen as a vector with origin the point $c(a,t)$.} points into the quadrant 
  \begin{align*}
   Q_1\defi \left\{x\in\Rzwei: \langle x - c(a,t), c(b,t)-c(a,t)\rangle \leq 0 , \langle x - c(a,t), J( c(b,t)-c(a,t))\rangle \leq 0\right\}
  \end{align*}
  and $\tau(b,t)$ 
  points into the quadrant 
  \begin{align*}
   Q_2\defi \left\{x\in\Rzwei: \langle x-c(b,t), c(b,t)-c(a,t)\rangle \leq 0 , \langle x-c(b,t), J( c(b,t)-c(a,t))\rangle \geq 0\right\}
  \end{align*}
  then the curves cannot have self-intersections. The symbol $J$ denotes rotation by $+\frac{\pi}{2}$. See Figure~\ref{Bild8} for an illustration of $Q_1$ and $Q_2$.
\end{lemma}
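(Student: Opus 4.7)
The plan is a proof by contradiction. Assume that $c_t$ is not embedded for some $t\in(0,T)$ and let $t_0$ denote the first such time. By Lemma~\ref{randschnitt}, under the standing assumptions (A)--(D), the first self-intersection must occur at the boundary: there exist $p_1<p_2$ in $[a,b]$ with $c(p_1,t_0)=c(p_2,t_0)$ and either $p_1=a$ or $p_2=b$. By the parametrization-reversing symmetry I would handle only $p_1=a$. The possibility $p_2=b$ is excluded, since $c(a,t_0)=c(b,t_0)$ together with $L(c_{t_0})\le L_0<{}^\Sigma d$ would force $a(t_0)=b(t_0)$ modulo the period of $\Sigma$, contradicting Theorem~\ref{satz12}\,(i). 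Hence $p_2\in(a,b)$, and the self-intersection occurs precisely at the boundary point $c(a,t_0)\in\Sigma$.

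Since $t_0$ is the first self-intersection time, the arc $c_{t_0}|_{[a,p_2]}$ is a simple closed curve with a single corner at $c(a,t_0)=c(p_2,t_0)$. By Lemma~\ref{transv} the intersection is transversal, so the exterior angle $\alpha$ at the corner (from $\tau(p_2,t_0)$ to $\tau(a,t_0)$, measured counterclockwise) lies in $(-\pi,\pi)\setminus\{0\}$. Theorems~\ref{umlaufsatz} and~\ref{indexformel} then give
\begin{align*}
 \pm 1 \;=\; \mathrm{ind}\bigl(c_{t_0}|_{[a,p_2]}\bigr) \;=\; \frac{1}{2\pi}\left(\int_a^{p_2}\kappa\,\mathrm{d}s + \alpha\right).
\end{align*}
Since $\kappa>0$ and Theorem~\ref{mitdSig} yields $\int_a^b\kappa\,\mathrm{d}s<2\pi$, one has $\int_a^{p_2}\kappa\,\mathrm{d}s\in(0,2\pi)$. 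The value $-1$ for the index would force $\alpha<-\pi$, which is impossible; so the index equals $+1$, the loop is positively oriented, and $\alpha=2\pi-\int_a^{p_2}\kappa\,\mathrm{d}s\in(0,\pi)$, which in particular means $\int_a^{p_2}\kappa\,\mathrm{d}s>\pi$.

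The last step brings in the quadrant hypotheses. Because $\kappa>0$, the tangent rotates monotonically counterclockwise along the curve, so $\tau(p_2,t_0)$ is $\tau(a,t_0)$ rotated clockwise by $\alpha$, while $\tau(b,t_0)$ is obtained from $\tau(p_2,t_0)$ by a further counterclockwise rotation of $\int_{p_2}^b\kappa\,\mathrm{d}s<2\pi-\int_a^{p_2}\kappa\,\mathrm{d}s=\alpha$. Together with $\tau(a,t_0)\in Q_1$ and the positive orientation of the loop, this fixes on which side of the chord $\overline{c(a,t_0)c(b,t_0)}$ the loop sits, and confines $\tau(b,t_0)$ to an angular sector emanating from $\tau(a,t_0)$ of total width $\alpha<\pi$. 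Matching this sector against the location of $Q_2$ at $c(b,t_0)$, which is prescribed by the chord direction on the opposite side, yields the required contradiction with $\tau(b,t_0)\in Q_2$.

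The main obstacle is precisely the last planar angle-chasing step: combining the quadrant conditions at the two distinct points $c(a,t_0)$ and $c(b,t_0)$, the index~$+1$ orientation of the loop and the monotone tangent rotation into a clean impossibility. This is intrinsically a two-dimensional picture, and is presumably what Figure~\ref{Bild8} referenced in the statement is designed to clarify.
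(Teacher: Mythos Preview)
Your setup matches the paper: argue by contradiction, invoke Lemma~\ref{randschnitt} to reduce to a first-time boundary self-intersection, treat $p_1=a$, and obtain $\int_a^{p_2}\kappa\,\mathrm d s>\pi$ (equivalently $\alpha\in(0,\pi)$) from the turning number of the loop. Two remarks.

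First, your exclusion of $p_2=b$ is not quite correct as written: $c(a,t_0)=c(b,t_0)$ only gives $a(t_0)\equiv b(t_0)$ modulo the period of $\Sigma$, which does \emph{not} contradict $a(t_0)<b(t_0)$ in Theorem~\ref{satz12}\,(\ref{aleqb}). The paper disposes of this case directly: if $p_2=b$ then by the boundary conditions $\tau(b,t_0)=-\tau(a,t_0)$, so the closed curve $c_{t_0}$ has exterior angle $\pi$ at the corner, and the index formula together with $\int_a^b\kappa\,\mathrm d s>\pi$ forces $\int_a^b\kappa\,\mathrm d s\ge 3\pi$, contradicting Theorem~\ref{mitdSig}. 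This is a minor repair.

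Second, and this is the real gap you already flag: the final step as you outline it does \emph{not} close. You propose to use only tangent directions, namely that $\tau(b,t_0)$ is $\tau(a,t_0)$ rotated clockwise by some angle in $(0,\alpha)\subset(0,\pi)$, together with $\tau(a,t_0)\in Q_1$ and $\tau(b,t_0)\in Q_2$. But in the normalized frame where the chord points along $-e_1$, $Q_1$ is the closed first quadrant and $Q_2$ the closed fourth quadrant, and a vector in the first quadrant rotated clockwise by an angle in $(0,\pi)$ can perfectly well land in the fourth quadrant. So there is no contradiction at the level of tangent directions and total curvature alone. What is missing is \emph{position} information: the fact that $c(p_2,t_0)=c(a,t_0)$ while $c(b,t_0)$ lies strictly to its left along $-e_1$. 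The paper uses exactly this, by rerunning the argument of Proposition~\ref{anfangstheorem} on the sub-arc $c_{t_0}|_{[p_2,b]}$: the extrema of $c^1$ (or of $c^2$, depending on which half-plane $\tau(p_2,t_0)$ points into) on $[p_2,b]$ force $\tau$ to pass through both $e_2$ and $-e_2$ (respectively both $\pm e_1$), hence $\int_{p_2}^b\kappa\,\mathrm d s\ge\pi$. Combined with $\int_a^{p_2}\kappa\,\mathrm d s>\pi$ this gives $\int_a^b\kappa\,\mathrm d s>2\pi$, contradicting Theorem~\ref{mitdSig}. Your sketch should be completed along these lines rather than by pure angle-chasing.
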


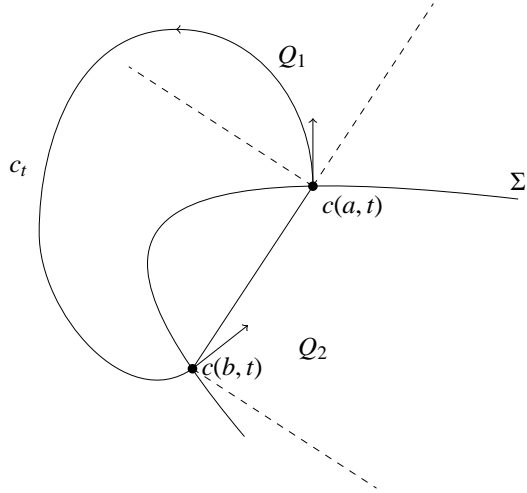
\begin{figure}
	  \begin{center}
	\scalebox{0.9}{ \begin{tikzpicture}
\draw (4,-0.5) .. controls (-2,0.2) and (-2.5,-1) .. (0,-4);
\coordinate[label=-55: ${c(a,t)}$] (a) at (1,-0.31);
\fill (a) circle (2pt);
\coordinate[label=right: ${c(b,t)}$] (b) at (-0.76,-3);
\fill (b) circle (2pt);
\draw (1,-0.31) to (-0.76,-3);
\draw[->] (1,-0.31) to (1,0.69);
\draw[->] (-0.76,-3) to (0.05, -2.36);
\draw[->,out=+90, in= 0] (1,-0.31) to (-1,2);
\draw[out=180, in= 90] (-1,2) to (-3,-1);
\draw[out=-90, in= 215] (-3,-1) to (-0.76,-3);
\draw[dashed] (1,-0.31) to (2.76,2.38);
\draw[dashed] (1,-0.31) to (-1.69,1.45);
\draw[dashed] (-0.76,-3) to (1.93,-4.76);
\coordinate[label = above: $\Sigma$] (C) at (4,-0.5);
\coordinate[label = above: $c_t$] (D) at (-3.3,-0.3);
\coordinate[label = above: $Q_1$] (E) at (0.7,1.3);
\coordinate[label = above: $Q_2$] (F) at (1,-3);
      \end{tikzpicture}}
	  \end{center}
	   \caption{Definition of $Q_1$ and $Q_2$ in Lemma~\ref{noself}.} \label{Bild8} 
\end{figure}

\begin{proof}
	We argue by contradiction. Due to Lemma~\ref{transv} and Lemma~\ref{randschnitt}, we can assume that there is a first time $\tn>0$ such that $\ctn$ has a transversal self-intersection at the boundary, $c(p_1,\tn)=c(p_2,\tn)$. 
	We only treat the case $p_1=a$, as the other case is proven analogously. If $p_2=b$ then $\intab\kappa\de s|_{t=t_0}\geq 3\pi$ since the index is an integer, a contradiction. Therefore, we have $p_2<b$. Since the self-intersection is transversal and $c_{\tn}$ is convex we have 
	  \begin{align}\label{groepi}
	  \winkel(\tau(a,\tn),\tau(p_2,\tn))>\pi.
	\end{align}
	We show that the tangent $\tau(\cdot,\tn)$ runs through at least a half-circle  from $p_2$ to $b$, which contradicts $\intab\kappa\de s< 2\pi$. We rotate and translate the situation such that 
      \begin{align*}
       c(a,\tn) = 0 \in \Sigma\ \  \mbox{ and } \frac{c(b,\tn)-c(a,\tn)}{|c(b,\tn)-c(a,\tn)|} = - e_1.
      \end{align*}
      By assumption, we then have 
      \begin{align*}
       & \langle \tau(a,\tn),e_1\rangle \geq 0,\ \ \langle \tau(a,\tn),e_2\rangle \geq 0, \\
	& \langle \tau(b,\tn),e_1\rangle \geq0, \ \ \langle \tau(b,\tn),e_2\rangle \leq0.
      \end{align*}
      By (\ref{groepi}) and since altogether we also have to stay smaller than $2\pi$, it follows that $\tau(p_2,\tn)$ points into
      \begin{align*}
       \left\{x\in\Rzwei: x^1> 0\right\} \cup  \left\{x\in\Rzwei: x^2 < 0\right\}.
      \end{align*}
      Now we distinguish two cases:\\
      \underline{Case 1}: $\langle \tau(p_2,\tn),e_1\rangle \geq 0$. In this case, the proof of Proposition~\ref{anfangstheorem} works, we only have to replace $a$ by $p_2$. It follows that there are points $u,v\in[p_2,b]$, $u\neq v$ such that $\tau(u,\tn)=-e_2$, $\tau(v,\tn)= e_2$, which implies $\pi\leq \int_{p_2}^b\kappa\de s|_{t=\tn}$.\\
      \underline{Case 2}:  $\langle \tau(p_2,\tn),e_1\rangle \leq 0$ and $\langle \tau(p_2,\tn),e_2\rangle < 0$. In this case, we modify the proof of Proposition~\ref{anfangstheorem}: we consider $c^2(\cdot,\tn)$ instead of $c^1(\cdot,\tn)$. We get $u,v\in[p_2,b]$ such that 
	\begin{align*}
       c^2(u,\tn)=\inf\{c^2(p,t): p\in[p_2,b]\} \mbox{ and } c^2(v,\tn)=\sup\{c^2(p,t): p\in[p_2,b]\}.
	\end{align*}
	 As in case 1 we prove that the angle between the tangents in $u$ and $v$ has to be at least $\pi$. It follows that $   \pi + \pi < \int_a^{p_2}\kappa\de s + \int_{p_2}^b\kappa \de s = \intab\kappa \de s<2 \pi,$
      which proves the lemma.

\end{proof}

\begin{proposition} \label{noself2}
 Let $c:[a,b]\times[0,T)\to \Rzwei $ be a solution of the area preserving curve shortening problem with Neumann free boundary conditions, where the initial curve  satisfies the conditions (\ref{A}), (\ref{B}), (\ref{C}) and (\ref{D}). Under the additional assumption 
\begin{align*}
 \intab\kappa\de s < \pi + \frac{\pi}{2} \  \ \ \ \ \ \forall t\in\nut
\end{align*}
the curves stay embedded.
\end{proposition}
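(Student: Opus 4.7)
The plan is to verify the hypothesis of Lemma~\ref{noself} at every time $t \in [0,T)$: namely, that $\tau(a,t) \in Q_1$ and $\tau(b,t) \in Q_2$. Once this is shown, Lemma~\ref{noself} rules out self-intersections.

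First I would combine Theorem~\ref{satz12} (which gives $a(t)<b(t)$ and $\ind(c_t-\tildegat)=1$ throughout the flow) with the formula of Corollary~\ref{indexc0} to obtain
\begin{align*}
 \int_a^b \kappa\de s = \pi + \int_0^1 \kappa_{\tildegat}\de s_{\tildegat} \qquad \forall t\in[0,T).
\end{align*}
The extra hypothesis $\int_a^b \kappa\de s < \pi + \pi/2$ then forces $\int_0^1 \kappa_{\tildegat}\de s_{\tildegat} < \pi/2$. The Frenet-equation identity used at the end of the proof of Theorem~\ref{mitdSig} identifies this quantity with $\winkel(\Sig\vec\tau(c(a,t)),\Sig\vec\tau(c(b,t)))$, which by Theorem~\ref{mitdSig} already lies in $[0,\pi)$. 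Setting
\begin{align*}
 \beta(t) \defi \winkel\bigl(\Sig\vec\tau(c(a,t)),\Sig\vec\tau(c(b,t))\bigr),
\end{align*}
we thus have $\beta(t) \in [0,\pi/2)$ for all $t$. In particular $c(a,t) \neq c(b,t)$: otherwise $\tildegat$ would wrap around $\Sigma$ a positive integer number of times, forcing $\int \kappa_{\tildegat}\de s_{\tildegat} \in 2\pi\N$, incompatible with the bound just obtained.

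Next I would fix $t$ and choose coordinates so that $c(a,t)=0$ and $c(b,t)-c(a,t)$ points in the direction $-e_1$, as in the proof of Lemma~\ref{noself}. Because $\Sigma$ is convex and positively oriented ($\Sig\kappa\geq 0$), the bounded domain $G_\Sigma$ lies on the left of $\Sig\vec\tau$, and consequently any chord $q-p$ between two points of $\Sigma$ lies in the closed left half-plane of $\Sig\vec\tau(p)$. Applying this with $p=c(a,t)$, $q=c(b,t)$ forces the angle $\theta_a$ of $\Sig\vec\tau(c(a,t))$ into $[0,\pi]$; applied with endpoints reversed, the angle $\theta_b$ of $\Sig\vec\tau(c(b,t))$ lies in $[\pi,2\pi]$. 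The identity $\theta_b-\theta_a=\beta(t)\in[0,\pi/2)$ sharpens these to $\theta_a\in[\pi-\beta(t),\pi]$ and $\theta_b\in[\pi,\pi+\beta(t)]$. The boundary conditions $\tau(a,t)=-\Sig\vec\nu(c(a,t))$ and $\tau(b,t)=\Sig\vec\nu(c(b,t))$ (cf.\ the proof of Lemma~\ref{lemmastahl5}) together with $\Sig\vec\nu=J\Sig\vec\tau$ then show that $\tau(a,t)$ has angle $\theta_a-\pi/2\in[0,\pi/2]$ and $\tau(b,t)$ has angle $\theta_b+\pi/2\in[3\pi/2,2\pi)$. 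In the chosen coordinates these are precisely the conditions $\tau(a,t)\in Q_1$ and $\tau(b,t)\in Q_2$, and Lemma~\ref{noself} completes the proof.

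The only delicate point is the angle bookkeeping of the second step: one must stay consistent about how the rotation $J$ acts, that the inner domain $G_\Sigma$ sits on the left of $\Sig\vec\tau$ under the positive orientation, and that convexity forces the chord $c(b,t)-c(a,t)$ onto that same side of the tangent lines at both endpoints. Once the ranges of $\theta_a,\theta_b$ are pinned down, the quadrant conditions required by Lemma~\ref{noself} fall out immediately.
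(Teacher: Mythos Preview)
Your argument is correct and reaches the same conclusion as the paper, but by a genuinely different route. Both proofs aim to verify the quadrant hypothesis of Lemma~\ref{noself}; they differ in how they get there.

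The paper argues by contradiction on the quadrant condition itself. After normalizing coordinates it first obtains $\langle\tau(a,t),e_1\rangle\geq 0$ and $\langle\tau(b,t),e_1\rangle\geq 0$ from convexity of $\Sigma$ (this is exactly your $\theta_a\in[0,\pi]$, $\theta_b\in[\pi,2\pi]$). It then assumes, say, $\langle\tau(a,t),e_2\rangle<0$, locates the extrema of $c^1(\cdot,t)$ on $[a,b]$, and shows that the tangent of $c_t$ must sweep through at least three quarters of the circle, forcing $\int\kappa\de s\geq \tfrac{3}{2}\pi$. So the paper uses the internal geometry of the \emph{moving} curve $c_t$ to derive the second half of the quadrant condition.

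You instead argue directly and use only the geometry of $\Sigma$: the index identity $\int\kappa\de s=\pi+\int\kappa_{\tildegat}\de s_{\tildegat}$ from Theorem~\ref{satz12} and Corollary~\ref{indexc0} converts the hypothesis into $\beta(t)<\pi/2$, and then convexity at \emph{both} endpoints together with the Neumann condition pins down $\tau(a,t)$ and $\tau(b,t)$ by explicit angle arithmetic. This makes transparent why $\tfrac{3}{2}\pi$ is the right threshold: it is precisely what forces the support-curve tangents at the two feet to differ by less than a quarter turn, which in turn rotates the boundary tangents of $c_t$ into $Q_1$ and $Q_2$. One small bookkeeping point: after separately placing $\theta_a\in[0,\pi]$ and $\theta_b\in[\pi,2\pi]$ you assert $\theta_b-\theta_a=\beta$, but in principle this identity could fail by $2\pi$ in the degenerate case $\theta_a=0$, $\theta_b=2\pi$, $\beta=0$; that case is easily ruled out, since it would force $\tildegat$ to be a straight segment in direction $+e_1$ while $c(b,t)-c(a,t)$ points along $-e_1$.
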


\begin{proof}
  We study the geometric situation for any $t\geq 0$. By Theorem~\ref{mitdSig}, we know that $c(a,t)\neq c(b,t)$. We again rotate and translate the situation such that $  c(a,t) = 0 \in \Sigma$ and $\frac{c(b,t)-c(a,t)}{|c(b,t)-c(a,t)|} = - e_1$. 
The proof of Proposition~\ref{anfangstheorem} shows that we have
      \begin{align}\label{croco6}
       \langle \tau(a,t),e_1\rangle \geq 0 \ \ \mbox{ and }\langle \tau(b,t),e_1\rangle \geq0
      \end{align}
      by the boundary conditions of the flow. We now show that $\intab\kappa\de s< \pi + \frac{\pi}{2}$ yields
\begin{align}\label{croco5}
 \langle \tau(a,t),e_2\rangle \geq 0 \ \ \mbox{ and }\ \ \langle \tau(b,t),e_2\rangle \leq0.
\end{align}
Once more, the proof of (\ref{croco5}) works as that of Proposition~\ref{anfangstheorem}: Else, we may assume $\langle \tau(a,t),e_2\rangle < 0$ (the case $\langle \tau(b,t),e_2\rangle > 0$ can be treated analogously). By convexity and $\tau(a,t)\neq  e_2$ it follows that there is a $\delta>0$ such that $c( (a,a+\delta),t)\subset \{x\in\Rzwei: x^1 > 0\}.$
Since $\ab$ is compact, there are $u,v\in\ab$ such that 
\begin{align*}
       c^1(u,t)=\inf\{c^1(p,t): p\in[a,b]\} \ \ \mbox{ and } \ \  c^1(v,t)=\sup\{c^1(p,t): p\in[a,b]\}.
 \end{align*}
      Therefore, we have $c^1(u,t) \leq c^1(b,t)<c^1(a,t)< c^1(v,t)$. 
%
%
      It follows that $u \in (a,b]$, $v\in(a,b)$ and $u\neq v$. 
      By the strict convexity and by the definition of $u$ and $v$ we have $ \tau(u,t)=-e_2$ and $\tau(v,t)=e_2$.
      Thus, we have shown that $\tau(\cdot,t)$ runs through at least $\frac{3}{4}$ of a circle (it starts at $a$ in the quadrant $\{x\in\Rzwei: x^1\geq0, x^2<0\}$, has to go up to $e_2$ and then down to $-e_2$) which means $\intab\kappa\de s\geq \tfrac{3}{2}\pi,$ a contradiction. The properties  (\ref{croco6}) and (\ref{croco5}) imply that $\tau(a,t)$ and $\tau(b,t)$ point into the ''good`` quadrants of $\Rzwei$. 
Hence, the assumptions of Lemma~\ref{noself} are satisfied and the proposition is proven.
\end{proof}

%

\begin{theorem} \label{noself3} 
 Let $c:[a,b]\times[0,T)\to \Rzwei $ be a solution of (\ref{flow}),
 where the initial curve satisfies the conditions (\ref{A}), (\ref{B}), (\ref{C}) and (\ref{D}). Then there is a constant $C=C(\Sigma)>0$ such that: If $c_0$ satisfies $L_0< C(\Sigma)$ then the curves stay embedded under the flow. It suffices to choose $C(\Sigma)=\frac{1}{2\sigmax}$ but this choice need not be optimal.
\end{theorem}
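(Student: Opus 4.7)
The plan is to apply Proposition~\ref{noself2}: it suffices to verify that $L_0 < \tfrac{1}{2\sigmax}$ forces $\intab \kappa \de s < \pi + \tfrac{\pi}{2}$ throughout the flow.

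The starting point is the identity
\begin{align*}
\intab \kappa \de s = \pi + \int_0^1 \kappa_{\tildegat}\de s_{\tildegat},
\end{align*}
valid for every $t \in \nut$ by Corollary~\ref{indexc0} together with Theorem~\ref{satz12} (\ref{gl2}). Since $\tildegat$ is contained in $\Sigma$ its curvature is bounded pointwise by $\sigmax$, so
\begin{align*}
\intab \kappa \de s \leq \pi + \sigmax L(\tildegat).
\end{align*}

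I would next establish the length comparison $L(\tildegat) \leq L(\ct)$ by reusing the divergence-theorem argument from the proof of Theorem~\ref{mitdSig}: the field $X = -\Sig\nu \circ p_x$ defined on the outer domain $G$ satisfies $\Div X \geq 0$, and applying the divergence theorem on the simple domain bounded by $c_0 - \tildega_0$ yields $L(\tildega_0) \leq L(c_0)$. The same argument produces $L(\tildegat) \leq L(\ct)$ at any $t$ for which $\ct - \tildegat$ bounds a simple domain inside $G$. Combined with Lemma~\ref{shortening} this gives $L(\tildegat) \leq L_0$, and hence
\begin{align*}
\intab \kappa \de s \leq \pi + \sigmax L_0 < \pi + \tfrac12 < \pi + \tfrac{\pi}{2}.
\end{align*}

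To make the argument global in $t$, I would proceed by first-time-of-failure. Suppose for contradiction that $\ct$ fails to stay embedded and, by Lemma~\ref{randschnitt}, let $t_0 \in (0,T)$ be the first time at which a boundary self-intersection appears. For $t \in [0, t_0)$ the curves are embedded and $\ct - \tildegat$ is simple closed with $\ind = 1$ by Theorem~\ref{satz12} (\ref{gl2}); by continuity from the initial condition (\ref{C}) together with Lemma~\ref{cinD}, the enclosed domain stays inside $G$. The divergence-theorem step then delivers the estimate $\intab \kappa \de s < \pi + \tfrac{\pi}{2}$ uniformly on $[0, t_0)$ and, by continuity, also at $t_0$. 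The proof of Proposition~\ref{noself2} now places $\tau(a,t_0)$ and $\tau(b,t_0)$ into the quadrants of Lemma~\ref{noself}, which forbids a boundary self-intersection at $t_0$ and contradicts the choice of $t_0$.

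The main technical obstacle is ensuring that the enclosed domain $U_t$ remains inside $G$ throughout the embedded phase, since a priori $\ct$ is allowed to dip into $G_\Sigma$ in its interior; Lemma~\ref{cinD}'s proximity bound $\dist(\ct,\Sigma) \leq L_0/2$, together with the absence of interior $\Sigma$-crossings of $\ct$ before $t_0$, should confine $U_t$ to $G$. Once this is handled, the remaining arithmetic (and the slack $\tfrac12 < \tfrac{\pi}{2}$ that absorbs the conservative choice of $C(\Sigma)$) is automatic.
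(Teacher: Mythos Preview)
Your overall strategy matches the paper's: reduce to Proposition~\ref{noself2} by bounding $\intab\kappa\de s = \pi + \int_0^1\kappa_{\tildegat}\de s_{\tildegat} \leq \pi + \sigmax\, L(\tildegat)$, and then bound $L(\tildegat)$. The divergence of the two proofs is in how that last bound is obtained, and your route has a genuine gap you yourself flag but do not close.

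Your bound $L(\tildegat)\leq L(\ct)$ relies on the divergence-theorem argument of Theorem~\ref{mitdSig}, which needs the closed curve $\ct-\tildegat$ to be simple and to bound a region contained in the outer domain $G$. For this you need $\ct([a,b])\subset G\cup\Sigma$ with no interior contact with $\Sigma$. You invoke Lemma~\ref{cinD} and ``the absence of interior $\Sigma$-crossings of $\ct$ before $t_0$'' to secure this, but neither does the job: Lemma~\ref{cinD} only says $\dist(\ct,\Sigma)\leq L_0/2$, which is perfectly compatible with $\ct$ dipping into $G_\Sigma$; and the absence of interior $\Sigma$-crossings is exactly the statement you would still have to prove --- nothing in the paper up to this point prevents a convex embedded $\ct$ from meeting $\Sigma$ at interior points. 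So your first-time-of-failure loop does not close: on $[0,t_0)$ the curves are embedded, but you have not shown they stay in $\bar G$, so the divergence-theorem inequality is unavailable and the bound $\intab\kappa\de s<\pi+\tfrac{\pi}{2}$ is not established on that interval.

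The paper avoids this obstacle entirely by bounding $L(\tildegat)=b(t)-a(t)$ without any reference to the interior of $\ct$. It places the arc of $\Sigma$ between $c(a,t)$ and $c(b,t)$ in a local graph representation over a tangent line to $\Sigma$ (legitimate because $|c(a,t)-c(b,t)|\leq L_0<\tfrac{1}{2\sigmax}$ forces this arc to be flat enough), obtains $\sqrt{1+w'^2}\leq \tfrac54$ there, and deduces
\[
b(t)-a(t)\;\leq\;\tfrac54\,|c(a,t)-c(b,t)|\;\leq\;\tfrac54\,L_0\;<\;\tfrac{5}{8\sigmax}\;<\;\tfrac{\pi}{2\sigmax}.
\]
This uses only the endpoints of $\ct$, so no simplicity of $\ct-\tildegat$ and no containment in $G$ is required; Proposition~\ref{noself2} then applies directly for every $t$, with no first-time argument needed.
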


\begin{proof}
 The strategy of the proof is the following: We use the formula 
 \begin{align*}
 \intab \kappa \de s = \pi + \int_{a(t)}^{b(t)}\Sig\tilde \kappa \de s_{\ftilde}\leq \pi + (b(t)-a(t)) \Sig\kappa_{\text{max}},
\end{align*}
 which can be found in Theorem~\ref{satz12}. The difference $b(t)-a(t)$ is (due to arc length parametrization) the length of the part of $\Sigma$ which connects $c(a,t)$ and $c(b,t)$. First, we force the curve $L_0$ to be so short that the standard graph representation at a certain point on $\Sigma$ is well defined. 
  We then estimate $b(t)-a(t)$ by a constant $c$ times $L_0$, using the graph representation. Now, if $c\, L_0 \Sig\kappa_{\text{max}}<\frac{\pi}{2}$, then there cannot be a self-intersection owing to Proposition~\ref{noself2}.\\
  Since $\winkel\left(\Sig\vec\tau(c(a,t)),\Sig\vec\tau(c(b,t)) \right) \in [0,\pi)$ (cf. Theorem~\ref{mitdSig}, (\ref{wink})), there is a ''side of $\Sigma$`` where we can use the graph representation. Consider the line $L_t$ that is parallel to the line segment from $c(a,t)$ to $c(b,t)$ and that meets $\Sigma$ tangentially at the ''side`` of $\Sigma$, where the tangent between $c(a,t)$ and $c(b,t)$ is less than $\pi$.
  By $\Sig q$ we denote the (non-unique) point in $[\Sig a,\Sig b]$ such that $\fsig (\Sig q) \in L_t$. The intersection must be tangential. 
  Now, we are in the situation that the part of $\Sigma$ which goes from $\ftilde(a(t))$ to $\ftilde (b(t))$ is in the graph representation of $\Sigma$ at $\fsig(\Sig q)$, but only under the condition that $|c(b,t)-c(a,t)|\leq L_0$ is small enough. As in \cite[Lemma 7.4]{StahlDiss} we show that 
  $L_0<\frac{1}{2 \Sig\kappa_{\text{max}}}$ is sufficient to be in the graph representation. We furthermore have $\sqrt{1 + w'(q)^2}\leq 1 + \frac{1}{4}$ in this case, where $w$ denotes the graph function of the graph representation. This yields
%
\begin{align*}
 b(t)-a(t) &= \int_{P(c(a,t))}^{P(c(b,t))}\sqrt{1 + w'(q)^2}\de q
     \leq \left|P(c(a,t))-P(c(b,t))\right|\frac{5}{4} 
     \leq L_0 \frac{5}{4} 
    < \frac{\pi}{2\Sig\kappa_{\text{max}}}, 
\end{align*}
 where $P(c(a,t))$ and $P(c(b,t))$ are the projections of $c(a,t)$ and $c(b,t)$ onto $L_t$. 
See also Figure~\ref{Bild9}.
\begin{figure}
	  \begin{center}
\scalebox{0.8}{\begin{tikzpicture}
\draw[->,out=-100, in= 100] (-2,2.5) to (-2,1.5);
\draw[out=280, in= 180] (-2,1.5) to (0,0);
\draw[out=0, in= 210] (0,0) to (5,1.5);
\draw[out=30, in= -140] (5,1.5) to (6.5,2.5);
\draw (5,1.5) to (-2,1.5);
\draw (-3,0) to (6,0);
\draw[dashed] (-2,1.5) to (-2,0);
\draw[dashed] (5,1.5) to (5,0);
 \coordinate[label = left: $\Sigma$] (A) at (-2,2.5);
\coordinate[label = left: ${c(a,t)}$] (B) at (-2,1.5);
  \coordinate[label = right: $\ \ {c(b,t)}$] (C) at (5,1.5);
\coordinate[label = below: ${^\Sigma f(^\Sigma q)}$] (D) at (0,0);
\coordinate[label = below: ${P(c(a,t))}$] (E) at (-2,0);
\coordinate[label = below: ${P(c(b,t))}$] (F) at (5,0);
 \fill (B) circle (2pt);
 \fill (C) circle (2pt);
 \fill (D) circle (2pt);
 \fill (E) circle (2pt);
 \fill (F) circle (2pt);
 \end{tikzpicture}}
	  \end{center}
	   \caption{The graph representation around $\Sig f(\Sig q)$ in the proof of Theorem~\ref{noself3}.} \label{Bild9} 
\end{figure}
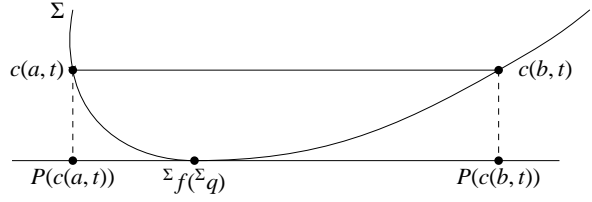
Combined with the remarks at the beginning of the proof we get
\begin{align*}
 \intab \kappa \de s \leq \pi + (b(t)-a(t)) \Sig\kappa_{\text{max}} < \pi + \frac{\pi}{2}
\end{align*}
under the condition $L_0<\frac{1}{2 \Sig\kappa_{\text{max}}} \eqqcolon C(\Sigma)$ and this shows the theorem due to Proposition~\ref{noself2}.
\end{proof}

\begin{theorem} \label{convexDom} 
  Let $c:[a,b]\times[0,T)\to \Rzwei $ be a solution of the area preserving curve shortening problem with Neumann free boundary conditions, where the initial curve satisfies the conditions (\ref{A}), (\ref{B}), (\ref{C}) and (\ref{D}). 
  Then there is a constant $C=C(\Sigma)>0$ such that: If $c_0$ satisfies $L_0< C(\Sigma)$, then, for each $t\in\nut$, the curve $\ct$ and the line segment from $c(b,t)$ to $c(a,t)$ trace out a convex domain in $\Rzwei$. 
\end{theorem}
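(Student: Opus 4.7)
The plan is to take $C(\Sigma) \leq \tfrac{1}{2\sigmax}$ so that Theorem~\ref{noself3} already gives embeddedness of $c_t$ for every $t \in [0,T)$; the argument used in its proof, together with Proposition~\ref{anfangstheorem}, moreover yields $\Theta(t) := \int_a^b \kappa(\cdot,t)\,ds \in [\pi, \tfrac{3\pi}{2})$ for every $t$. Fix $t \in [0,T)$ and, after rotation and translation as in Proposition~\ref{noself2}, arrange that $c(a,t) = 0$ and $c(b,t) = -L\,e_1$ with $L = |c(b,t)-c(a,t)| > 0$. The tangent analysis in the proof of Proposition~\ref{noself2} then gives $\tau(a,t) = (\cos\alpha, \sin\alpha)$ with $\alpha \in [0, \tfrac{\pi}{2}]$ and $\tau(b,t) = (\cos(\alpha+\Theta), \sin(\alpha+\Theta))$ with $\alpha+\Theta \in [\tfrac{3\pi}{2}, 2\pi)$; in particular $0 \leq \alpha < \pi < \alpha+\Theta < 2\pi$.

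The key step is to show that $c_t$ meets the chord $[c(b,t), c(a,t)] = [-L,0] \times \{0\}$ only at its endpoints. Since $\kappa > 0$ on $c_t$ by Corollary~\ref{kappagroesser0}, the tangent angle $\theta(s) := \alpha + \int_0^s \kappa(s')\,ds'$ is a smooth bijection from $[0, L(c_t)]$ onto $[\alpha, \alpha+\Theta]$, and writing $c_t$ in terms of $\theta$ gives
\[
c^2_t(\theta) = \int_\alpha^\theta \frac{\sin\theta'}{\kappa(\theta')}\,d\theta',
\]
so that $\tfrac{d c^2_t}{d\theta}$ has the sign of $\sin\theta$. Because $\pi$ lies strictly inside $(\alpha, \alpha+\Theta)$, $c^2_t$ is strictly increasing on $[\alpha, \pi]$ and strictly decreasing on $[\pi, \alpha+\Theta]$; combined with the boundary values $c^2_t(\alpha) = c^2_t(\alpha+\Theta) = 0$ dictated by the choice of coordinates, this forces $c^2_t > 0$ on the open interior. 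Hence $c_t \setminus \{c(a,t), c(b,t)\}$ lies strictly in the open upper half-plane, so the concatenation $\gamma_t := c_t \cup [c(b,t), c(a,t)]$ is a simple closed piecewise smooth curve.

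To conclude convexity of the enclosed domain, I would compute the two exterior angles of $\gamma_t$, which are $\alpha \in (0, \tfrac{\pi}{2}]$ at $c(a,t)$ and $2\pi - (\alpha+\Theta) \in (0, \tfrac{\pi}{2}]$ at $c(b,t)$. Their sum plus $\int \kappa\,ds = \Theta$ equals $2\pi$, so Theorem~\ref{indexformel} yields $\ind(\gamma_t) = 1$. Along $\gamma_t$ the tangent direction is then monotonically non-decreasing (strictly increasing on the smooth arc $c_t$, constant on the segment, and jumping by positive amounts at most $\tfrac{\pi}{2}$ at the two corners) with total increase $2\pi$; by the standard support-line characterization of convexity this forces the Jordan domain bounded by $\gamma_t$ to be convex.

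The main subtlety throughout is the tangent-angle reparameterization used in the second paragraph: it is precisely the strict inclusion $\pi \in (\alpha, \alpha+\Theta)$, which combines the lower bound from Proposition~\ref{anfangstheorem} with the upper bound from Theorem~\ref{noself3}/Proposition~\ref{noself2}, that converts the boundary condition $c^2_t(\alpha+\Theta)=0$ into strict positivity of $c^2_t$ on the interior. If one had only the weaker information $\Theta < 2\pi$ and no control on $\alpha+\Theta$, a loop of the curve could cross the chord and simplicity of $\gamma_t$ would fail; conversely, if $\alpha+\Theta$ were allowed to reach $2\pi$ (i.e.\ $\tau(b,t) = e_1$), one of the exterior angles would degenerate and the convexity argument would break at the corner.
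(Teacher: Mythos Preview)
Your proof is correct and follows the same overall structure as the paper's: choose $C(\Sigma)=\tfrac{1}{2\sigmax}$, use Theorem~\ref{noself3} to get embeddedness and $\int\kappa\,ds<\tfrac{3\pi}{2}$, invoke the tangent-direction analysis from Proposition~\ref{noself2} to locate $\tau(a,t)$ and $\tau(b,t)$ in the correct quadrants, then conclude convexity from the resulting exterior angles in $[0,\tfrac{\pi}{2}]$ once the assembled curve is known to be simple.

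The genuine difference is in the step showing that $c_t$ meets the chord only at its endpoints. The paper argues this by contradiction via the same angle-counting device used in Propositions~\ref{anfangstheorem}, \ref{noself} and \ref{noself2}: an interior intersection with the segment would force extra tangent rotation incompatible with $\int\kappa\,ds<\tfrac{3\pi}{2}$. You instead exploit strict convexity to reparametrize by the tangent angle $\theta\in[\alpha,\alpha+\Theta]$, obtain the explicit height formula $c_t^2(\theta)=\int_\alpha^\theta \tfrac{\sin\theta'}{\kappa(\theta')}\,d\theta'$, and read off directly from $0\le\alpha<\pi<\alpha+\Theta<2\pi$ that $c_t^2$ is unimodal with zeros only at the endpoints. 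This is a cleaner and more quantitative argument than the paper's; it buys you the strict positivity of the height in one line rather than a case analysis. The trade-off is that it relies on the global tangent-angle reparametrization (hence on $\kappa>0$ everywhere, which you have from Corollary~\ref{kappagroesser0} and assumption~(\ref{A})), whereas the paper's angle-counting could in principle be adapted to weaker convexity hypotheses. Your final remark about the case $\alpha+\Theta=2\pi$ is slightly overstated: a zero exterior angle would not break the convexity conclusion, it would simply mean the curve meets the chord tangentially---but since you have already established $\alpha+\Theta<2\pi$ strictly, this is moot.
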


\begin{proof}
 We use the constant $C(\Sigma)=\frac{1}{2 \Sig\kappa_{\text{max}}}$ from Theorem~\ref{noself3}. There, it is shown that the condition $L_0< C(\Sigma)$ implies embeddedness of $c_t$ and $\intab\kappa\de s<\pi + \frac{\pi}{2}$ for all $t\in\nut$. In the proof of Proposition~\ref{noself2}, we show that this implies that $\tau(a,t)$ points into $Q_1$ and $\tau(b,t)$ points into $Q_2$, where $Q_1$ and $Q_2$ were defined in Lemma~\ref{noself}, see also Figure~\ref{Bild8}. But this means that the exterior angle of the closed curve, which one gets by assembling the curve $c_t$ and the line segment from $c(b,t)$ to $c(a,t)$ are in $[0,\frac{\pi}{2}]$.\\
Any piecewise smooth, simple closed, positively oriented planar curve with $\kappa\geq 0$ and with exterior angles in $[0,\pi)$ traces out a convex domain.
We only have to prove that $c_t$ has no intersection with the line segment from $c(b,t)$ to $c(a,t)$. This works again by contradiction and ''counting angles`` as in the proof of Proposition~\ref{anfangstheorem}, \ref{noself} or \ref{noself2}. 
\end{proof}

\section{Finite type II singularities} \label{S5}
We consider the situation of a finite type II singularity, that means $T<\infty$ and
\begin{align*}
 &\sup_{p\in[a,b]}|\kappa|(p,t)\to\infty \quad (t\nearrow T)\quad\mbox{ and }\\
  &\sup_{p\in[a,b]}\left(|\kappa|^2(p,t)(T-t)\right) \mbox{ is unbounded.}
\end{align*}
We use the rescaling of Hamilton in his work on Ricci flow, see \cite{Hamilton2}: 
 For $j\in\mathbb N$ choose $t_j\in [0,T-\frac{1}{j}]$ and $p_j\in[a,b]$ such that
\begin{align*}
 |\kappa|^2(p_j,t_j) \left(T-\tfrac{1}{j} - t_j\right) = \max\left\{\left(|\kappa|^2(p,t) \left(T-\tfrac{1}{j} - t\right)\right) : t\in[0,T-\tfrac{1}{j}], p\in[a,b]\right\}.
\end{align*}
Then define $Q_j\defi |\kappa|(p_j,t_j)$ and 
\begin{align*}
\tilde\gamma_j(\cdot,\tau)\defi Q_j\left(c(\cdot,\tfrac{\tau}{Q_j^2} + t_j) - c(p_j,t_j)\right) \mbox{ for } \tau\in[-Q_j^2t_j,Q_j^2(T-t_j-\tfrac{1}{j})] \mbox{ on }[a,b].
\end{align*}
The following properties are known, see \cite[Lemma~4.3, Lemma~4.4]{HuiskenSinestrari}:

\begin{lemma} \label{lemmaconvZUS}
 Let $c:[a,b]\times[0,T)\to \Rzwei$ be a solution of (\ref{flow})
 developing a singularity of type II at $T<\infty$. Then we have with the notation $\tilde M^j_\tau \defi \tilde\gamma_j([a,b],\tau)$
\begin{enumerate}
 \item $Q_j^2(T-t_j-\tfrac{1}{j}) \to + \infty\ \ (j\to\infty),$ \label{conv1}
\item $ Q_j\to + \infty\ \ (j\to\infty)$,  \label{conv2}
\item $-Q_j^2t_j\to-\infty \ \ (j\to\infty),$\label{conv3}
\item $t_j\nearrow T\ \ (j\to\infty).$ \label{conv4}
\item $ 0\in\tilde M^j_0 \quad\forall j\in\mathbb N,$ \label{re17} 
\item $ |\tilde\kappa_j|(p_j,0)=1 \quad \forall j\in\mathbb N,$ \label{re18}
\item $ |\tilde\kappa_j|(\cdot,\tau)\leq 1 \quad \forall \tau\leq 0,$ \label{re19}
\item $ |\kappa|(p_j,t_j) = \max\{|\kappa|(p,t): p\in[a,b], t\in [0,t_j]\} \quad \forall j\in\mathbb N$, \label{re20}
\item $ \forall \epsilon>0 \ \forall \bar \tau>0 \ \exists j_0(\epsilon,\bar \tau)\in\mathbb N,\  \forall j\geq j_0: 
|\tilde\kappa_j|^2(p,\tau) \leq 1 + \epsilon \ \ \ \forall \tau \in [-Q_{j_0}^2t_{j_0},\bar \tau], \forall p\in[a,b]$. \label{re21}
\end{enumerate}
\end{lemma}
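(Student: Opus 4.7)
The plan is to verify the nine assertions as direct consequences of Hamilton's choice of $(p_j,t_j)$, relying on three ingredients: (a) the type~II hypothesis $\sup_{[a,b]\times[0,T)}|\kappa|^2(T-t)=\infty$, (b) the extremal property built into the definition of $(p_j,t_j)$, and (c) the short-time curvature bound away from $T$ provided by Proposition~\ref{exis}.

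I would begin with (\ref{conv1}), from which (\ref{conv2}) and (\ref{conv3}) drop out. Set $M_j\defi Q_j^2(T-\tfrac{1}{j}-t_j)$, the maximum of the continuous function $(p,t)\mapsto|\kappa|^2(p,t)(T-\tfrac{1}{j}-t)$ on the compact set $[a,b]\times[0,T-\tfrac{1}{j}]$. Given $N>0$, type~II furnishes $(p^*,t^*)$ with $|\kappa|^2(p^*,t^*)(T-t^*)>2N$; choosing $j$ so large that $\tfrac{1}{j}<\tfrac{T-t^*}{2}$ makes $t^*\leq T-\tfrac{1}{j}$ admissible, with $T-\tfrac{1}{j}-t^*>\tfrac{T-t^*}{2}$, so $M_j>N$. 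This proves (\ref{conv1}); dividing by $T-\tfrac{1}{j}-t_j\leq T$ gives (\ref{conv2}). For (\ref{conv4}) I would argue by contradiction: if $t_j\leq T-\epsilon$ along a subsequence, then Proposition~\ref{exis} keeps $\max|\kappa|$ uniformly bounded on $[a,b]\times[0,T-\epsilon]$, contradicting $Q_j\to\infty$. Then (\ref{conv3}) follows from $t_j\to T>0$ and $Q_j^2\to\infty$.

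The two identities at $\tau=0$ are formal: (\ref{re17}) is $\tilde\gamma_j(p_j,0)=0$ by definition, and (\ref{re18}) is the scaling identity $|\tilde\kappa_j|(p,\tau)=Q_j^{-1}|\kappa|(p,\tfrac{\tau}{Q_j^2}+t_j)$ evaluated at $(p_j,0)$. The quantitative claims (\ref{re19}), (\ref{re20}) and (\ref{re21}) all stem from the single extremality inequality
\begin{align*}
 |\kappa|^2(p,t)\,(T-\tfrac{1}{j}-t) \;\leq\; Q_j^2\,(T-\tfrac{1}{j}-t_j) \;=\; M_j,
\end{align*}
valid for every $(p,t)\in[a,b]\times[0,T-\tfrac{1}{j}]$. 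For $t\leq t_j$ the factor $T-\tfrac{1}{j}-t$ is at least $T-\tfrac{1}{j}-t_j>0$, so $|\kappa|^2(p,t)\leq Q_j^2$; this is (\ref{re20}), and the rescaling identity at $t=\tfrac{\tau}{Q_j^2}+t_j$ with $\tau\leq 0$ yields (\ref{re19}).

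For (\ref{re21}) I would substitute $t=\tfrac{\tau}{Q_j^2}+t_j$ for arbitrary $\tau$ in the domain of $\tilde\gamma_j$ and divide by $Q_j^2$, obtaining
\begin{align*}
 |\tilde\kappa_j|^2(p,\tau) \;\leq\; \frac{M_j}{M_j-\tau}.
\end{align*}
For any fixed $\bar\tau>0$, (\ref{conv1}) guarantees $M_j\geq\bar\tau$ for $j$ large, so the range $[-Q_{j_0}^2 t_{j_0},\bar\tau]$ lies in the domain of $\tilde\gamma_j$, and on it the bound is at most $M_j/(M_j-\bar\tau)\to 1$, hence below $1+\epsilon$ for $j$ large. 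The only non-formal ingredient is (\ref{conv4}), which is the mildly subtle step and rests on the short-time smoothness built into Proposition~\ref{exis}; all remaining assertions are bookkeeping around the extremality defining $(p_j,t_j)$.
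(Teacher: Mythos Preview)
Your argument is correct and is precisely the standard Hamilton--Huisken--Sinestrari derivation; the paper itself does not give a proof but simply cites \cite[Lemma~4.3, Lemma~4.4]{HuiskenSinestrari}, so there is nothing to compare against beyond noting that your write-up supplies the details that the reference contains. One small remark on (\ref{conv4}): the boundedness of $|\kappa|$ on $[a,b]\times[0,T-\epsilon]$ is really just continuity on a compact set (from the regularity stated in Proposition~\ref{exis}), not the blow-up criterion itself; and in (\ref{re21}) you should note that the inclusion $[-Q_{j_0}^2t_{j_0},\bar\tau]\subset[-Q_j^2t_j,M_j]$ is understood on the domain where $\tilde\gamma_j$ is defined, since $Q_j^2t_j\to\infty$ need not be monotone---this is harmless for the application in Proposition~\ref{limitflow}.
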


\begin{proposition}\label{limitflow} Consider a convex initial curve $c_0$.
  Let $c:[a,b]\times[0,T)\to \Rzwei $ be the solution of (\ref{flow})
  with a singularity at time  $T<\infty$, where the singularity is of type II and with $L(\ct)\geq c_1>0$ and $|\bar\kappa(t)|\leq c_2$,
  where $c_1$ and $c_2$ are constants independent of $t$. Consider the Hamilton-rescaled solution of the area preserving curve shortening flow with Neumann free boundary conditions 
\begin{align*}
 \tilde\gamma_j: [a,b]\times[-Q_j^2t_j,Q_j^2(T-t_j-\tfrac{1}{j})] \to\Rzwei.
\end{align*}
Then there exist reparametrizations $\psi_j: I_j \to [a,b]$ with $|I_j|\to\infty$ ($j\to\infty$) such that a subsequence of the rescaled curves
\begin{align*}
 \gamma_j\defi\tilde\gamma_j(\psi_j,\cdot): I_j\times [-Q_j^2t_j,Q_j^2(T-t_j-\tfrac{1}{j})]\to\Rzwei
\end{align*}
converges locally smoothly to a limit flow $\tilde\gamma_\infty: \tilde I \times (-\infty,\infty)\to \Rzwei$ (where $\tilde I$ is an unbounded interval containing $0$). The limit flow $\tilde\gamma_\infty$ is a smooth solution of the curve shortening flow and satisfies $0<\tilde\kappa_\infty\leq 1$ everywhere and $\tilde\kappa_\infty = 1$ at least at one point. If $\tilde M_\tau^\infty\defi\tilde\gamma_\infty (\tilde I,\tau)$ has a boundary, then $\partial\tilde M^\infty_\tau\subset \, \nix^\infty\Sigma$, where $ ^\infty\Sigma $ is a line through $ 0\in\Rzwei$, and $\langle \tilde\nu_\infty, ^{ ^\infty\Sigma}\nu\rangle = 0$ on $\partial\tilde M_\infty$. 
\end{proposition}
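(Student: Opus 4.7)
The plan is to adapt the proof of Proposition~\ref{ParabolicGrenzuebergangII}, replacing the parabolic rescaling by Hamilton's rescaling and using the sharper curvature bounds from Lemma~\ref{lemmaconvZUS} in place of the $(T-t)^{-1/2}$ bound. Two features will simplify the argument compared to the type~I case: the estimate $|\tilde\kappa_j|\leq 1$ holds up to $\tau=0$ and $|\tilde\kappa_j|^2\leq 1+\epsilon$ on any later bounded interval, so the limit flow will exist on all of $(-\infty,\infty)$; moreover, since the rescaling divides $\bar\kappa$ by $Q_j\to\infty$ and $|\bar\kappa|\leq c_2$, the rescaled average $\bar{\tilde\kappa}_j$ tends to zero uniformly, so the equation will degenerate in the limit to the plain curve shortening flow.

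First, I will choose reparametrizations $\psij:\Ij\to[a,b]$ by requiring that $\gaj(\cdot,0)\defi\tilde\gamma_j(\psij(\cdot),0)$ be parametrized by arclength; write $\varphi_j\defi\psij^{-1}$. The lower length bound $L(\ct)\geq c_1$ and Lemma~\ref{lemmaconvZUS}(\ref{conv2}) yield $|\Ij|=Q_j L(c_{t_j})\geq c_1 Q_j\to\infty$, so after passing to a subsequence $\Ij$ exhausts an unbounded interval $\tilde I\ni 0$ (either $\R$ or a half-line, depending on whether $\varphi_j(a)$ or $\varphi_j(b)$ stay bounded). Fixing any compact time window $[-A,B]\subset(-\infty,\infty)$, Lemma~\ref{lemmaconvZUS}(\ref{re19}) and (\ref{re21}) yield $|\tilde\kappa_j|^2\leq 1+\epsilon$ on $\Ij\times[-A,B]$ for $j\geq j_0(\epsilon,B)$. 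I will then import Stahl's gradient estimates from \cite[Chapter~7]{StahlDiss} for the local graph representation, exactly as in Proposition~\ref{exis}; the additional $\bar\kappa_j$-term is of size $O(Q_j^{-1})$ and harmless, while $L(\gaj(\cdot,\tau))\geq c_1 Q_j$ controls the time-derivatives of $\bar{\tilde\kappa}_j$. Standard parabolic theory produces $C^{k+\alpha,(k+\alpha)/2}$-bounds and hence $|\pt^l\ps^m\tilde\kappa_j|+|\pt^l\pp^m\gaj|\leq c(l,m,A,B,\Sigma,c_1,c_2)$ on $\Ij\times[-A,B]$, the second bound via the Frenet decomposition from \cite[Thm.~3.1]{DziukKuwertSch}. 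A diagonal Arzel\`a--Ascoli argument then produces a subsequence converging smoothly on compact subsets of $\tilde I\times(-\infty,\infty)$ to a smooth $\tilde\gamma_\infty$, and passing to the limit in $\pt\gaj=(\tilde\kappa_j-\bar{\tilde\kappa}_j)\tilde\nu_j$ together with $\bar{\tilde\kappa}_j\to 0$ delivers the curve shortening equation $\pt\tilde\gamma_\infty=\tilde\kappa_\infty\tilde\nu_\infty$.

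The curvature properties will follow by passing to the limit: $\tilde\kappa_\infty\leq 1$ from Lemma~\ref{lemmaconvZUS}(\ref{re19}) and (\ref{re21}) with $\epsilon\searrow 0$, and $\tilde\kappa_\infty=1$ somewhere (at $(0,0)$) directly from (\ref{re18}) together with $\psij^{-1}(p_j)\to 0$. For strict positivity I will use Corollary~\ref{kappagroesser0} (giving $\tilde\kappa_j>0$) together with the strong maximum principle and the parabolic Hopf lemma applied to the limit equation $\pt\tilde\kappa_\infty=\ps^2\tilde\kappa_\infty+\tilde\kappa_\infty^3$; note that the Lemma~\ref{lemmastahl5}-type Neumann condition for $\gaj$ degenerates in the limit to $\ps\tilde\kappa_\infty=0$ on $\partial\Minfty$, since $\tilde\Sigma_j\defi Q_j(\Sigma-c(p_j,t_j))$ has curvature $Q_j^{-1}\Sig\kappa\to 0$ and $\bar{\tilde\kappa}_j\to 0$. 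For the boundary behaviour, if $\tilde I$ has a finite endpoint then, arguing as in Proposition~\ref{ParabolicGrenzuebergangII}(\ref{rand}), the base points $c(p_j,t_j)$ must accumulate at some $x_0\in\Sigma$ (else $\tilde\Sigma_j$ drifts to infinity and the limit has no boundary on that side); the $\tilde\Sigma_j$ subconverge to a line $^\infty\Sigma$ through $0$, and the inclusion $\gaj(\varphi_j(a),\tau)\in\tilde\Sigma_j$ together with $\langle\tilde\nu_j,\nu^{\tilde\Sigma_j}\rangle=0$ pass to the limit to give $\tilde\gamma_\infty(\tilde a,\tau)\in\,^\infty\Sigma$ and the desired orthogonality. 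The main technical obstacle will be, as in Proposition~\ref{ParabolicGrenzuebergangII}, reproving Stahl's boundary-graph estimates uniformly in $j$ for the area-preserving variant, but this adaptation is already carried out in \cite[Chapter~5]{MeineDiss} and can be imported verbatim.
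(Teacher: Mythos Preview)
Your proposal is correct and follows essentially the same route as the paper: adapt the compactness argument of Proposition~\ref{ParabolicGrenzuebergangII}, replacing the type~I curvature bound by Lemma~\ref{lemmaconvZUS}~(\ref{re19}),(\ref{re21}), use $\bar{\tilde\kappa}_j=\bar\kappa/Q_j\to 0$ to pass to curve shortening flow in the limit, read off $\tilde\kappa_\infty(0,0)=1$ from (\ref{re18}), and obtain strict positivity by the strong maximum principle and Hopf lemma applied to $\pt\tilde\kappa_\infty=\ps^2\tilde\kappa_\infty+\tilde\kappa_\infty^3$ (you are in fact slightly more explicit than the paper in noting that the Neumann condition from Lemma~\ref{lemmastahl5} degenerates to $\ps\tilde\kappa_\infty=0$ on $\partial\Minfty$). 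One cosmetic point: rather than asserting $\psij^{-1}(p_j)\to 0$, simply choose the arclength reparametrization with $\psij(0)=p_j$; then $\gaj(0,0)=0$ and $\kappa_{\gaj}(0,0)=1$ hold exactly, which is what the paper uses and what you need for Arzel\`a--Ascoli and for locating the point where $\tilde\kappa_\infty=1$.
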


\begin{proof}
The convergence is proven like in Proposition~\ref{ParabolicGrenzuebergangII}. We only use the bounds from Lemma~\ref{lemmaconvZUS} (\ref{re21}) instead of the bounds on the curvature that come from the type I hypothesis. Furthermore, 
Lemma~\ref{lemmaconvZUS} (\ref{re17})  implies $\gaj(0,0) = \gaj(\varphi_j(p_j),0) = 0 \ \ \forall j\in\N,$
  which yields $$|\gaj(0,\tau)|\leq 0 +\big|\int_0^\tau \kappa_{\gaj}(\kappa_{\gaj}-\bar\kappa_{\gaj})\big|\leq 2\bar\tau(1+\epsilon)$$ on $[-\bar\tau,\bar\tau]$. We now choose $B_{R_k}\nearrow \tilde I\defi \lim_{j\to\infty}\Ij$ and $\bar\tau_k\nearrow \infty$ ($k\to\infty$). 
As in Proposition~\ref{ParabolicGrenzuebergangII} we get a subsequence $\gamma_{j_l}$ that converges locally smoothly to a limit flow $\gainfty:\tilde I \times \R\to\Rzwei$. This flow satisfies
\begin{align*}
 &0\leq\tilde\kappa_\infty\leq 1,\\
&\tilde \kappa_\infty (0,0) = 1  \qquad \ (\text{because }\kappa_{\gaj}(0,0) =1  \ \ \forall j\in\N)\\
&|\gainfty'(\cdot,0)|= 1\\
&\partial_\tau\gainfty = \tilde\kappa_\infty \tilde\nu_\infty  \qquad (\text{because }|\bar\kappa_{\gaj}| \leq \frac{1}{Q_j}c_2 \to 0).
\end{align*}
The statements about the boundary of  $\partial\tilde M^\infty_\tau$ are proven like the corresponding claims in Proposition~\ref{ParabolicGrenzuebergangII}. 
  The curvature $\tilde\kappa_\infty$ satisfies $\partial_t \tilde\kappa_\infty = \partial^2_s \tilde\kappa_\infty + \tilde\kappa_\infty^3$ (proven as in Lemma~\ref{evolution}). A maximum principle argument together with the Hopf lemma as in Corollary~\ref{kappagroesser0} yields $\tilde\kappa_\infty>0$.
\end{proof}

\begin{corollary}
  Let $c_0$ be a convex initial curve. Consider the solution $c:[a,b]\times[0,T)\to \Rzwei $ of (\ref{flow}).
  Assume that the flow satisfies $ L(\ct)\geq c_1>0$ and $ |\bar\kappa(t)|\leq c_2$
  where $c_1$ and $c_2$ are constants independent of $t$. If the flow develops a singularity of type~II at $T<\infty$, then the limit flow of Proposition~\ref{limitflow} is (after rotation and translation) the ``grim reaper'', which is the flow of curves given by $x=-\log\cos y + \tau$ for $y\in (-\frac{\pi}{2},\frac{\pi}{2})$.
\end{corollary}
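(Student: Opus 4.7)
The plan is to combine Hamilton's differential Harnack inequality for the curve shortening flow with the classification of its translating solitons to identify the limit flow with the grim reaper. By Proposition~\ref{limitflow}, the limit $\tilde\gamma_\infty$ is an eternal CSF solution satisfying $0<\tilde\kappa_\infty\leq 1$ with $\tilde\kappa_\infty=1$ attained at $(0,0)$, and any boundary lies perpendicularly on a line ${}^\infty\Sigma$ through the origin.

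First I would reduce to the boundary-free case by reflecting $\tilde\gamma_\infty$ across ${}^\infty\Sigma$ whenever a boundary is present. The orthogonal meeting condition forces all odd-order arclength derivatives of $\tilde\kappa_\infty$ to vanish on ${}^\infty\Sigma$, so the doubled curve is $C^\infty$; strict convexity, completeness and eternal existence survive the reflection since $\tilde\kappa_\infty$ is positive, symmetric across ${}^\infty\Sigma$, and uniformly bounded. The extended flow is thus a smooth, complete, strictly convex eternal solution of CSF with uniformly bounded curvature, still attaining $\kappa=1$ at an interior spacetime point.

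Next I would pass to the tangent-angle parametrization, in which CSF becomes $\partial_\tau\kappa=\kappa^2(\kappa_{\theta\theta}+\kappa)$. Hamilton's differential Harnack inequality applied to a strictly convex eternal CSF solution with bounded curvature gives $\partial_\tau\kappa\geq 0$, equivalently $u\defi\kappa_{\theta\theta}+\kappa\geq 0$. At the spacetime maximum of $\kappa$, $\partial_\tau\kappa=0$ yields $u=0$. A direct computation shows that $u$ satisfies a linear parabolic equation of the form $\partial_\tau u=\kappa^2 u_{\theta\theta}+4\kappa\kappa_\theta u_\theta+(2\kappa_\theta^2+2\kappa u-\kappa^2)u$, so the strong maximum principle forces $u\equiv 0$. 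Solving $\kappa_{\theta\theta}+\kappa=0$ subject to $\kappa>0$ and $\max\kappa=1$ gives $\kappa(\theta)=\cos(\theta-\theta_0)$. After rotation we take $\theta_0=0$; the resulting curvature profile $\kappa=\cos\theta$ is that of the grim reaper, and integration together with an appropriate translation yields the explicit form $x=-\log\cos y+\tau$.

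The main obstacle I anticipate is the rigorous application of Hamilton's Harnack estimate on the reflected, complete but noncompact limit curve, and the subsequent invocation of the strong maximum principle for $u$ without tangential boundary contributions from infinity. If this proves technically delicate, an alternative is to appeal directly to the classification of convex eternal planar CSF solutions (Daskalopoulos--Hamilton--Sesum, or an Angenent-type dichotomy between translators and stationary lines): the condition $\tilde\kappa_\infty=1$ somewhere rules out stationary lines, the curvature bound and completeness rule out shrinking self-similar solutions, and only the grim reaper remains.
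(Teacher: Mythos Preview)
Your proposal is correct and follows essentially the same route as the paper: after reflecting across ${}^\infty\Sigma$ when a boundary is present, the limit is a complete, strictly convex eternal solution of CSF with bounded curvature attaining its supremum, hence the grim reaper. The paper simply invokes Hamilton's rigidity theorem \cite[Theorem~1.3]{Hamilton} for this last step, whereas you sketch its proof via the differential Harnack inequality and the strong maximum principle for $u=\kappa_{\theta\theta}+\kappa$; your stated alternative of citing a classification result is exactly what the paper does.
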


\begin{proof}
  By Proposition~\ref{limitflow}, the limit flow of the rescaled flow is an eternal solution of the curve shortening flow. The limit curves are complete, the flow has bounded and positive curvature, and the maximum value of the curvature is attained at least at one point. By \cite[Theorem~1.3]{Hamilton}, 
  the limit flow must be a translating solution and the only translating solution in the case of curves is the ``grim reaper'', see for example \cite[Example 3.2]{RitoreSinestrari}.
\end{proof}

\begin{remark}
In Theorem~\ref{Linftyabschaetzung}, we proved existence of such constants $c_1$ and $c_2$ as required in the previous corollary. Therefore, we can apply the previous corollary to the situation of these theorems. But the conditions there probably do not prevent the flow to develop a singularity. Therefore, in order to exclude also type II singularities, we have to assume stronger conditions. In Section~\ref{geometricproperties}, we get more information about the flow, namely, when the curves stay embedded. We use this in our considerations of the type~II case.
\end{remark}

\begin{theorem}  \label{typeIIinnen} 
  Let $c:[a,b]\times[0,T)\to \Rzwei $ be a solution of the area preserving curve shortening problem with Neumann free boundary conditions, where the initial curve satisfies the following four conditions: 
  The curve $c_0$ has positive curvature, it has no self-intersection, it is contained in the outer domain created by the convex support curve $\Sigma$ and it satisfies $L(c_0)<\frac{1}{2\sigmax}$. Then the flow cannot develop a singularity of type II in the inside of the flow in finite time. This means that if there is a singularity of type II at $T<\infty$, then the limiting flow in Proposition~\ref{limitflow} has a boundary, i.e.~the rescaled and reparametrized flow converges to ``half a grim reaper''.
\end{theorem}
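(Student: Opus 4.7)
I would prove this by contradiction: assume the blowup point $x_0$ lies in $\Rzwei\setminus\Sigma$, so that $d\defi \dist(x_0,\Sigma)>0$. The hypothesis $L_0<\tfrac{1}{2\sigmax}$ forces $L_0<\Sig d$, so together with (i)--(iii) both Theorem~\ref{Linftyabschaetzung} and Proposition~\ref{Laengenachuntenabsch} apply, giving $|\bar\kappa(t)|\leq c_2<\infty$ and $L(c_t)\geq c_1>0$. Thus the hypotheses of Proposition~\ref{limitflow} are met, and along a subsequence after reparametrization the Hamilton-rescaled flow converges to a limit $\gainfty$ which, by the Corollary immediately following Proposition~\ref{limitflow}, must be the grim reaper. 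Moreover, since $x_0\notin\Sigma$, every $y\in\tildeSigma$ (with $\tildeSigma=Q_j(\Sigma-x_0)$) satisfies $|y|\geq Q_j d\to\infty$, so the argument of Proposition~\ref{ParabolicGrenzuebergangII}(\textit{ii}) forces $\gainfty$ to have no boundary. Hence $\gainfty$ is a \emph{full} grim reaper, which is a translating (not shrinking) soliton.

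The contradiction is then obtained by the monotonicity strategy of Theorem~\ref{nottypeI} adapted to the Hamilton rescaling. Apply the localized monotonicity formula (Proposition~\ref{monocurveII}) with $\varphi=\phir$ as in Definition~\ref{localize}, for $\lambda\in(0,\lambda_0]$ small enough that $\spt\varphi(\cdot,t)\subset\Rzwei\setminus\Sigma$. Lemma~\ref{dsleq0} yields $(\tfrac{\de}{\de t}-\partial_s^2)\varphi\leq 0$; the boundary terms vanish because $c(a,t),c(b,t)\in\Sigma$ lies outside $\spt\varphi$; and the $|\bar\kappa|$-bound keeps $f$ pinched between two positive constants. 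Hence $f\int\varphi\,\rho_{x_0,T}\,\de s$ is monotonically non-increasing and bounded below, so its limit as $t\to T$ exists. Rescaling via Hamilton's scheme and passing to the limit in the rescaled integral---via a Stone-type global convergence result analogous to the argument around \eqref{Konvergenz} in the proof of Theorem~\ref{nottypeI}---the limiting integral on $\gainfty$ should be shown to be \emph{independent of the rescaled time} $\tau$. Huisken's monotonicity formula applied to the limit (which is pure curve shortening flow, because $\bar\kappa_j$ rescales down: $|\bar\kappa_j|/Q_j\to 0$) then converts this $\tau$-independence into $\tilde\kappa_\infty=\langle\tilde x^\infty,\tilde\nu^\infty\rangle/(2\tau)$, forcing $\gainfty$ to be a homothetically shrinking soliton---contradicting that it is the translating grim reaper.

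The main obstacle is the rescaling step. Hamilton's rescaling is centered at $(c(p_j,t_j),t_j)$ rather than at $(x_0,T)$, so unlike in Theorem~\ref{nottypeI} the kernel $\rho_{x_0,T}$ does not transform directly into $\rho_{0,0}$ under rescaling. One must carefully track the diverging quantity $T_j\defi Q_j^2(T-t_j)\to\infty$ and the offset $Q_j(c(p_j,t_j)-x_0)$ to identify the correct limiting heat kernel on $\gainfty$ and justify both the limit-passage and the resulting $\tau$-independence; this is where the bulk of the technical work lies.
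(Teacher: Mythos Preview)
Your approach is fundamentally different from the paper's, and the obstacle you flag at the end is not merely technical---it is fatal.

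The paper's argument is purely geometric and does not touch monotonicity at all. Under $L_0<\tfrac{1}{2\sigmax}$, Theorem~\ref{convexDom} applies: for each $t$, the curve $c_t$ together with the chord from $c(b,t)$ to $c(a,t)$ bounds a convex domain $D_t$, and since $c_t$ lies outside the convex region enclosed by $\Sigma$ one has $A_0\le A(D_t)$. If the type~II limit were a \emph{full} grim reaper, then at rescaled time $\tau=0$ one picks three points (near the tip and far out on each arm) where the tangent directions are close to $-e_2$, $e_1$, $-e_1$. After blowing back down, the corresponding tangent lines of $c(\cdot,t_j)$, together with the length bound $L(c_{t_j})\le L_0$, trap the convex domain $D_{t_j}$ in an almost-trapezoid of height $\sim\pi/Q_j$ and length $\le L_0$. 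Hence $A_0\le A(D_{t_j})\le C\,L_0/Q_j\to 0$, a contradiction.

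Your monotonicity approach cannot be made to work. In the type~I argument (Theorem~\ref{nottypeI}), the parabolic rescaling is centered at $(x_0,T)$, and this is exactly what makes $\int_{c_t}\varphi\,\rho_{x_0,T}\,\de s$ transform into $\int_{\tilde c_j^\tau}\tilde\varphi\,\rho_{0,0}\,\de\tilde s$; the limit as $t\nearrow T$ then equals the limit as $j\to\infty$ \emph{for every} fixed $\tau$, whence $\tau$-independence. Under Hamilton's rescaling (centered at $(c(p_j,t_j),t_j)$), the transformed kernel has singular time $T_j=Q_j^2(T-t_j)$ and center offset $a_j=Q_j(c(p_j,t_j)-x_0)$. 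Since the singularity is type~II, $T_j\to\infty$ by Lemma~\ref{lemmaconvZUS}\ref{conv1}, so the prefactor $1/\sqrt{4\pi(T_j-\tau)}\to 0$ and the kernel degenerates; there is no limiting Gaussian density on $\gainfty$ and hence nothing to feed into Huisken's formula. This is precisely why the monotonicity machinery is tailored to type~I: the blowup rate matches the heat-kernel scaling. The grim reaper, being a translator rather than a shrinker, simply cannot be excluded by self-similarity.

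A secondary issue: your framing via a fixed blowup point $x_0\in\Rzwei\setminus\Sigma$ conflates two different constructions. The Hamilton sequence $(p_j,t_j)$ is chosen by maximizing $|\kappa|^2(T-\tfrac1j-t)$, not $|\kappa|(\cdot,t_j)$, so the points $c(p_j,t_j)$ need not converge to the $x_0$ of Definition~\ref{Parabolicblowuppoint}. The correct dichotomy for ``interior vs.\ boundary'' type~II singularity is whether the rescaled boundary points escape to infinity, which is what the paper uses.
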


\begin{proof}
 As $\Sigma$ is a convex Jordan curve it encloses a convex domain $G_\Sigma$. The smallest osculating circle of $\Sigma$ has radius $\Sig r\defi\frac{1}{\sigmax}$, it touches $\Sigma$ in the point, where $\sigmax$ is attained and it is inside of $G_\Sigma$. 
 Since $\Sig d\defi \min\{|x-y|: x,y \in\Sigma,\Sig\vec\tau(x)=-\Sig\vec\tau(y)\}$ is the smallest distance of two parallel lines that touch $\Sigma$ we have $\Sig r < \Sig d$, and, of course, $\frac{1}{2}\Sig r < \Sig d$. It follows that the conditions of Theorem~\ref{convexDom} are satisfied and thus the curves $\ct$ together with the line segment connecting $c(b,t)$ and $c(a,t)$ trace out a convex domain $D_t$ for every $t\in\nut$. 
 Since $\Sigma$ is convex and $\ct$ sits on the outside of $\Sigma$, we cleary have $A_t\defi A(c_t,\tilde\gamma_t)=A_0\leq A ( D_t),$ where $A(D_t)$ denotes the area of $D_t$.\\ 
 We now assume that we have a singularity of type II in the inside, i.e.~the blowup procedure yields a limit flow $\gainfty$ without boundary, which is the ``grim reaper''. We consider the situation $\tau=0$ and omit the notation of $\tau$ from now on. After rotation we are in the situation that the two asymptotic lines of $\gainfty$ are the lines in direction of $e_1$ with height $-\frac{\pi}{2}$ and $\frac{\pi}{2}$. We choose points $p^1,p^2\in K\subset\subset\tilde I$ such that the tangents of $\gainfty$ at that points are almost $-e_1$ and $e_1$, i.e.~for $\epsilon>0$ arbitrary small we get 
 \begin{align*}
  \tilde\tau_\infty(p^1)&=(\cos \varphi^1,\sin\varphi^1) \text{ with } \varphi^1=\pi+\epsilon \text{ and analogously }\\
  \tilde\tau_\infty(p^2)&=(\cos \varphi^2,\sin\varphi^2)  \text{ with } \varphi^2=-\epsilon.
 \end{align*}
 Locally around $0\in K\subset\subset\tilde I$ the curves $\gaj$ look like $\gainfty$ for big $j$. In particular, there is $j_0(\epsilon)\in\N$ such that 
 \begin{align*}
  \tau_j(0) = (\cos \varphi_j,\sin \varphi_j) \text{ with } \varphi_j\in (\tfrac{3}{2}\pi-\epsilon,\tfrac{3}{2}\pi + \epsilon) \text{ for all } j\geq j_0,
 \end{align*}
i.e.~the tangent in $0$ is near $-e_2$. Furthermore, we choose $j_1(\epsilon)\geq j_0$ such that for all $j\geq j_1$
\begin{align*}
 \tau_j(p^1)&=(\cos \varphi^1_j,\sin \varphi^1_j)  \text{ with } \varphi^1_j\in (\pi,\pi + 2\epsilon),\\
 \tau_j(p^2)&=(\cos \varphi^2_j,\sin \varphi^2_j)\text{ with } \varphi^2_j\in (-2\epsilon,0),
\end{align*}
i.e.~the tangents are near $e_1$ and $-e_1$ at the points. We also have $0<\gaj^2(p^1)\leq \frac{\pi}{2} + \epsilon$ and $0>\gaj^2(p^2)\geq -\frac{\pi}{2} - \epsilon$ for $j\geq j_2\geq j_1$. We now consider the blowdown of the curves $\gaj$ for $j\geq j_2$, that means we consider
 \begin{align*}
  \gaj\mapsto \frac{1}{Q_j}\gaj  + c(p_j,t_j)
		= c(\psi_j,t_j).
 \end{align*}
 \begin{figure}
	  \begin{center}
 \scalebox{0.8}{ \begin{tikzpicture}
 \draw (-1,0) to (12,0);
 \draw (0.2,2) to (-0.4,-2.5);
  \draw (-0.5,1) to (11.5,1.3);
  \draw (-0.5,-1.1) to (11.5,-1.5);
  \draw (11,2) to (11,-2);
     \draw[out=60,in=-90] (10.8,-0.4) to (11,0.4);
         \draw[out=90,in=-3] (11,0.4) to (10,1.00);
       \draw[out=177,in=0] (10,1.00) to (2,1.05);
     \draw[out=180,in=83] (2,1.05) to (-0.051,0);
   \draw[out=263,in=179] (-0.051,0) to (2,-1.175);
      \draw[out=-1,in=180]  (2,-1.175) to (9,-1.3);
            \draw  (10.8,-0.4) to (9,-1.3);
	    
                    
                   \fill (10.8,-0.4) circle (2pt);
                \coordinate[label = -30: ${c(a,t_j)}$] (C) at (10.8,-0.4);
                  \fill (1.8,1.05) circle (2pt);
            \coordinate[label = 30: ${c(\psi_j(p^1),t_j)}$] (D) at (1.3,1.05);
         \fill (-0.054,0) circle (2pt);
        \coordinate[label = -80: ${c(\psi_j(0),t_j)}$] (B) at (-2,0);
        \fill (2,-1.175) circle (2pt);
     \coordinate[label =-30: ${c(\psi_j(p^2),t_j)}$] (D) at (1.5,-1.175);
                  \fill (9,-1.3) circle (2pt);
              \coordinate[label = -30:  ${c(b,t_j)}$] (H) at (9,-1.3);
            \coordinate[label =30: ${D_{t_j}}$] (D) at (3,-1);
              \coordinate[label =below: ${T_j}$] (D) at (11.5,-1.5);
              \draw[thick,decorate,decoration={brace,amplitude=5pt}]
(-2,-1.2) -- (-2,1.2);
 \coordinate[label = left: ${\sim\frac{\pi}{ Q_j}} $] (G) at (-2.2,0);
                                     
 \end{tikzpicture}
 }
 
	  \end{center}
	   \caption{The almost-trapezoid $T_j$ in the proof of Theorem~\ref{typeIIinnen}} \label{Bild13} 
\end{figure}
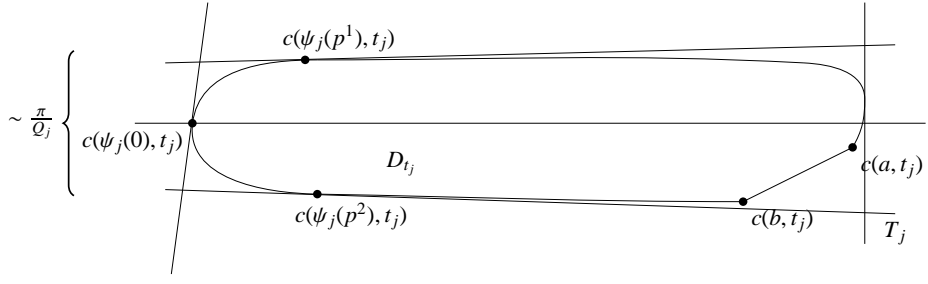
As we only rescaled and translated the curves $\gaj$, the properties of the tangents (they are near $-e_2,e_1,-e_1$ at some points) are still the same. The only difference is that we have $0<\gaj^2(p^1)\leq \frac{\frac{\pi}{2} + \epsilon}{Q_j}$ and $0>\gaj^2(p^2)\geq \frac{-\frac{\pi}{2} - \epsilon}{Q_j}$ by rescaling. We now use the fact that a convex domain always lies in the half space on the side of the tangent line, into which the inner normal is pointing. We use this here to ``frame'' the domain $D_{t_j}$. With the three tangent lines at $p^1,0,p^2$ we get an unbounded almost-trapezoid $\tilde T_j$ (still open on the right) with $D_{t_j}\subset \tilde T_j$. We cut the almost-trapezoid at the point, where $\max c^1(\psi_j,t_j)$ is attained and get an almost-trapezoid $T_j$ as in Figure~\ref{Bild13} (if $\tau(\psi_j(0),t_j)=-e_2$, it is a trapezoid). Because of $L(c(\psi_j,t_j))\leq L_0$ we get that the length of the trapezoid (in direction of $e_1$) is bounded by $L_0$. The area of $T_j$ is almost the area of the rectangle which has length $L_0$ and height $\frac{\pi}{Q_j}$. A simple estimate would be $A\left(T_{j}\right)\leq 2 L_0 \frac{2\pi}{Q_j}$ for $j\geq j_2(\epsilon)$, $\epsilon$ small. We then have for $j\geq j_3\geq j_2$ the contradiction
$$A_0\leq A\left(D_{t_j}\right)\leq A\left(T_{j}\right)\leq 4L_0 \frac{\pi}{Q_j} <A_0.$$
\end{proof}

%
%

\begin{theorem} \label{typeIIrandneu}
  Let $c:[a,b]\times[0,T)\to \Rzwei $ be a solution of the area preserving curve shortening problem with Neumann free boundary conditions, where the initial curve $c_0$ satisfies the following four conditions: 
  \begin{itemize}
   \item The curve $c_0$ has positive curvature, $\kappa_0>0$ on $[a,b]$,
   \item it has no self-intersection
   \item and it is contained in the outer domain created by the convex support curve $\Sigma$.
  \end{itemize}
  Consider the quotient $\frac{A_0}{L_0^2}\defi c_I$. There exists a constant $C=C(c_I,\Sigma)$ such that: If $L_0<C$ then the flow cannot develop a singularity in finite time, i.e.~$T=\infty$. 
  It suffices to choose $C(\Sigma,c_I)=\frac{4}{5 \sigmax}\arcsin\left( c_I\right)$ but this choice need not be optimal.
\end{theorem}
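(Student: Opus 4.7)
The plan is to rule out all three ways a singularity could arise at a finite time: type I, type II with an interior blow-up point, and type II with a boundary blow-up point.

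\textbf{Preliminary reductions.} By the isoperimetric inequality applied to the simple closed curve $c_0 - \tilde\gamma_0$, whose length is at most $2L_0$ since $L(\tilde\gamma_0) \leq L_0$ by the proof of Theorem~\ref{mitdSig}, one has $c_I = A_0/L_0^2 \leq 1/\pi$, and $\arcsin(1/\pi) < 5/8$. Hence the hypothesis $L_0 < \frac{4}{5\sigmax}\arcsin(c_I)$ implies $L_0 < \frac{1}{2\sigmax}$; and since $\frac{1}{\sigmax} < \Sig d$ (see the beginning of the proof of Theorem~\ref{typeIIinnen}), it also implies $L_0 < \Sig d$. Thus conditions (A)--(D) of Section~\ref{geometricproperties} are all satisfied, and the hypotheses of Theorems~\ref{Linftyabschaetzung}, \ref{noself3}, \ref{convexDom} and \ref{typeIIinnen} hold automatically.

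\textbf{Steps 1 and 2.} Theorem~\ref{Linftyabschaetzung} yields $|\bar\kappa(t)| \leq c_2 := \frac{(L_0 + 2\diam\Sigma)\pi}{2A_0}$ and Proposition~\ref{Laengenachuntenabsch} yields $L(c_t) \geq c_1 := \frac{4A_0}{L_0 + 2\diam\Sigma} > 0$, so Theorem~\ref{nottypeI} precludes a type~I singularity. Since $L_0 < \frac{1}{2\sigmax}$ as well, Theorem~\ref{typeIIinnen} excludes a type~II singularity whose Hamilton-rescaled blow-up is a full grim reaper (the interior case).

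\textbf{Step 3: boundary type II.} This is the main new step. Suppose for contradiction that $T<\infty$ and the blow-up of Proposition~\ref{limitflow} yields half a grim reaper attached to a rescaled line $^\infty\Sigma$, with boundary blow-up point (WLOG) $c(a,t_j)$. The tangent of the half grim reaper sweeps through $\pi/2$, from perpendicular to $^\infty\Sigma$ at the boundary to parallel to $^\infty\Sigma$ at the asymptote. For any fixed small $\epsilon>0$ and $j$ large, local smooth convergence provides a point $c(s_j,t_j)$ at physical distance $O(1/Q_j)$ from $c(a,t_j)$ whose tangent lies within angle $\epsilon$ of $\Sig\vec\tau(c(a,t_j))$. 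Following the area-trapping strategy of Theorem~\ref{typeIIinnen}, the tangent lines at $c(a,t_j)$, $c(s_j,t_j)$ and $c(b,t_j)$, together with the chord $[c(a,t_j),c(b,t_j)]$, trap the convex domain $D_{t_j}$ supplied by Theorem~\ref{convexDom} inside a thin region. The graph representation of $\Sigma$ near $c(a,t_j)$ used in the proof of Theorem~\ref{noself3}---which is the source of the factor $\frac{5}{4}$---shows that this chord makes angle at most $\frac{5}{4}L_0\sigmax$ with $\Sig\vec\tau(c(a,t_j))$, and an elementary planar computation then yields
\begin{align*}
 A_0 \leq A(D_{t_j}) \leq L_0^2 \sin\!\left(\tfrac{5}{4}L_0\sigmax\right) + O(1/Q_j).
\end{align*}
Sending $j\to\infty$ gives $A_0 \leq L_0^2 \sin(\frac{5}{4}L_0\sigmax)$, equivalent to $L_0 \geq \frac{4}{5\sigmax}\arcsin(c_I)$, contradicting the hypothesis.

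The hard part will be the geometric bookkeeping in Step~3: carefully combining the pointwise asymptotics of the half grim reaper near the boundary, the global convexity of $D_{t_j}$ (which lets us replace $c_{t_j}$ by its tangent lines), and the graph control of $\Sigma$ to justify the explicit bound $A(D_{t_j}) \leq L_0^2 \sin(\frac{5}{4}L_0\sigmax) + O(1/Q_j)$. The factor $\frac{5}{4}$ is inherited from the graph estimate of Theorem~\ref{noself3}, and the $\arcsin$ arises by inverting the area--angle relation for the trapping region.
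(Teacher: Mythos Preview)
Your proposal is correct and follows essentially the same route as the paper: the preliminary reduction to $L_0<\tfrac{1}{2\sigmax}$ (the paper cites a sharper half-plane isoperimetric constant giving $c_I\le\tfrac{1}{2\pi}$, but your $c_I\le\tfrac{1}{\pi}$ suffices), then Theorem~\ref{nottypeI} and Theorem~\ref{typeIIinnen} for Steps~1--2, and for Step~3 the same area-trapping argument using the grim-reaper asymptote, convexity of $D_{t_j}$, the Neumann tangent at the far endpoint, and the $\tfrac{5}{4}$-graph estimate on the chord angle. The only place where the paper is more explicit than your ``elementary planar computation'' is that it splits $D_{t_j}$ along $T_{c(b,t_j)}\Sigma$ into a strip of area $O(1/Q_j)$ and an explicit triangle $R_j$ with angles $\beta_j$ and $\alpha_j+\tfrac{\pi}{2}$, whose area it bounds by $L_0^2\sin\beta_j\le L_0^2\sin(\tfrac{5}{4}L_0\sigmax)$ after using $\alpha_j+\beta_j\le\tfrac{\pi}{4}$; this is precisely the content of your trapping step in the mirror case.
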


\begin{remark}
\begin{enumerate}
 \item  There is an isoperimetric inequality for the case that the initial curve is outside of the convex domain $G\subset \Rzwei$ traced out by $\Sigma$ and under the condition that $c_0(a)\not = c_0(b)$. One defines as in \cite[Chapter 4]{Andre} a multiplicity function $i$ on the union of the disc-type domains traced out by the closed curve $c_0-\tildega_0$. Using the results in \cite{Choe} it can be shown that if $i\equiv 0$ on $G$ (what is true because $c_0(a)\not = c_0(b)$) then $i\in BV(\Rzwei)$ and \begin{align*}
c_{\mathbb H}\,|A(c_0,\tildega_0)|\leq L(c_0)^2,
\end{align*}
where $c_{\mathbb H}= 2\pi$ is the isoperimetric constant of the half-circle at a straigth line. This implies $\frac{A_0}{L_0^2}\leq \frac{1}{2\pi}$. Since $\frac{4}{5}\arcsin\left(\frac{1}{2\pi}\right)<\frac{1}{2}$, the condition $L(c_0)< \frac{4}{5 \sigmax}\arcsin\left( c_I\right)$ implies $L_0< \frac{1}{2\sigmax}$, the condition in Theorem~\ref{typeIIinnen}.
 \item As the length $L_0$ appears in both sides of the inequality $L_0< \frac{4}{5 \sigmax}\arcsin\left( \frac{A_0}{L_0^2}\right)$ it makes sense to ask if such a condition is satisfiable. But the quotient $\frac{A_0}{L_0^2}$ has no unit, it descibes the ``shape'' of the domain $G_0$ that is traced out by $c_0$ and a part of $\Sigma$ and that satisfies $A(G_0)=A_0$. The condition $L_0< \frac{4}{5 \sigmax}\arcsin\left( \frac{A_0}{L_0^2}\right)$ then means that the curve $c_0$ has to be short compared to the term $\frac{1}{\sigmax}$ in an appropriate way according to the shape of $G_0$.
\end{enumerate}
\end{remark}

\begin{proof}
 The case of a finite type I singularity is ruled out by combining Proposition~\ref{Laengenachuntenabsch}, Theorem~\ref{Linftyabschaetzung} and Theorem~\ref{nottypeI} (notice $L_0< C(c_I,\Sigma)\Rightarrow L_0<\frac{1}{2\sigmax}\Rightarrow L_0<\Sig d$ with the remark above).
 By Theorem~\ref{typeIIinnen}, a singularity of type II must form at the boundary, i.e.~the reflected limit flow is the ``grim reaper'' which implies that original limit flow of Proposition~\ref{limitflow} $\gainfty$ is ``half the grim reaper'' by symmetry, i.e.~the curves given by $x=-\log\cos y + \tau$ on $(-\frac{\pi}{2},0]$ (if the singularity forms near $b$) or on $[0,-\frac{\pi}{2})$ (if the singularity forms near $a$). We treat the case that $\tilde I =(-\infty,\tilde b]$, where $\tilde b\geq 0$, which means that the singularity shows up near $b$. We have $\tilde\kappa_\infty(\tilde b,\tau)=1$ by symmetry.
 
  We want to use a similar argument as in the proof of Theorem~\ref{typeIIinnen}. But this only works up to a certain point. We get one almost vertical and one almost horizontal tangent line to bound the area of the convex domain which comes up in Theorem~\ref{convexDom}. The bound $L\left(\ct\right)\leq L_0$ gives the second vertical line. One almost horizontal line is still missing. But if the curves are short enough with respect to $c_I$, we can also estimate the rest of the area of the convex domain.
  
 After rotation we are in the situation that $\nix^\infty\Sigma=\R\times \{0\}$ and the asymptotic line to half the grim reaper is the line in direction of $e_1$ with height $\frac{\pi}{2}$. We consider $\tau=0$ and omit the notation of $\tau$ from now on. For $\epsilon>0$ we choose $p_1\in \tilde I$ such that 
 \begin{align*}
  \tilde\tau_\infty(p_1) = (\cos\varphi_1,\sin\varphi_1) \text{ with } \varphi_1\in (\pi,\pi + \epsilon).
 \end{align*}
 By local smooth convergence of $\gaj$ (the rescaled and parametrized curves as in Proposition~\ref{limitflow}) to $\gainfty$ we then find $j_0(\epsilon)\in\N$ such that 
 \begin{align*}
  \tau_j(p_1) = (\cos \varphi_j,\sin\varphi_j) \text{ with } \varphi_j\in (\pi,\pi + 2\epsilon) \text{ for all } j\geq j_0.
 \end{align*}
We also have 
\begin{align*}
 \tau_j(\tilde b) =  (\cos \varphi^0_j,\sin\varphi^0_j)\text{ with } \varphi^0_j\in (\tfrac{3}{2}\pi - \epsilon,\tfrac{3}{2}\pi + \epsilon)\text{ for all } j\geq j_1\geq j_0
 \end{align*}
 because $\tilde\tau_\infty(\tilde b)=-e_2$. 
%
\begin{figure}
	  \begin{center}
  \scalebox{0.8}{\begin{tikzpicture}
 \draw (-1,0.02) to (12,-0.35);
 \draw (0.05,2) to (-0.15,-1.5);
   \draw (-0.054,0) to (10,-3);
    \draw (12,0) to (10,-3);
     \draw (-0.5,1) to (11.5,1.3);
    \fill[gray!30] (11.755,-0.352)-- (10,-3) -- (-0.054,0);
 \draw[out=100,in=-35] (11,-5) to (10,-3);
 \draw[out=55,in=-90] (10,-3) to (10.6,-1);
\draw[out=90,in=-7] (10.6,-1) to (8,0.6);
    \draw[out=173,in=0] (8,0.6) to (2,1.05) ;
     \draw[out=180,in=89] (2,1.05) to (-0.051,0);
          \draw[out=-1,in=145] (-0.051,0) to (10,-3);
       \draw[out=15,in=179] (-2,-0.3) to (-0.051,0);
       \coordinate[label = below: ${\Sigma}$] (F) at (-2,-0.3);
                     \coordinate[label = right: ${T_{c(b,t_j)}\Sigma}$] (E) at (12.1,-0.4);
                   \fill (10,-3) circle (2pt);
                \coordinate[label = right: ${c(a,t_j)}$] (C) at (10,-3);
                \coordinate[label = above: ${\alpha_j+\frac{\pi}{2}}$] (G) at (9.6,-2.6);
                  \fill (1.8,1.05) circle (2pt);
            \coordinate[label = 30: ${c(\psi_j(p^1),t_j)}$] (D) at (1.3,1.05);
         \fill (-0.054,0) circle (2pt);
        \coordinate[label = -80: ${c(b,t_j)}$] (B) at (-1.5,0.6);
          \coordinate[label=right: ${\beta_j}$] (G) at (2,-0.4);
            \coordinate[label =30: ${D_{t_j}}$] (D) at (3,0.2);
             \draw[thick,decorate,decoration={brace,amplitude=5pt}]
(-1.8,0) -- (-1.8,1.1);
 \coordinate[label = left: ${\sim\frac{\pi}{2 Q_j}} $] (G) at (-2.1,0.55);
 \end{tikzpicture}
 }
 
	  \end{center}
	  \caption{The situation in the proof of Theorem~\ref{typeIIrandneu}}
	  \label{Bild15} 
\end{figure}
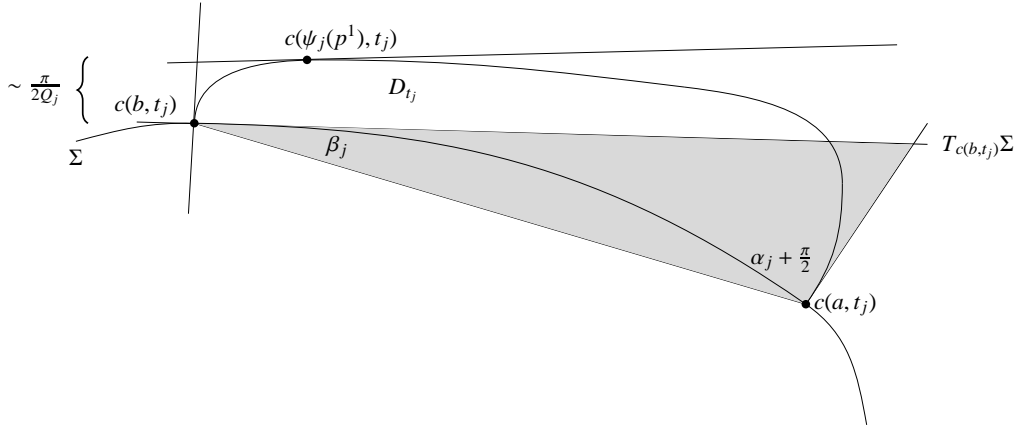
 Since $I_j=[a_j,b_j]\to(-\infty,\tilde b]$ and $$\gainfty(\tilde b) \in  \, \nix^\infty\Sigma, \ \ \ \langle \tilde\nu_\infty(\tilde b), ^{ ^\infty\Sigma}\vec\nu(\gainfty(\tilde b))\rangle = 0$$ we also have $\tau_j(b_j) =  (\cos \varphi^2_j,\sin\varphi^2_j)$ with $\varphi^2_j\in (\frac{3}{2}\pi - 2\epsilon,\frac{3}{2}\pi + 2\epsilon)$ for all $j\geq j_2\geq j_1$. 
 We consider the blowdown
 \begin{align*}
    \gaj\mapsto \frac{1}{Q_j}\gaj  + c(p_j,t_j)
		= c(\psi_j,t_j).
 \end{align*}
The directions of the tangents have not changed, only the height of the tangent line in $c(\psi_j(p^1),t_j)$ over the tangent line of $\Sigma$ in $c(b,t_j)$ is now approximately $\frac{\pi}{2 Q_j}$, see Figure~\ref{Bild15}.
 As you can see there, the domain $D_{t_j}$, which is the convex domain traced out by $c(\psi_j,t_j)$ and the line segment from $c(b,t_j)$ to $c(a,t_j)$, is divided by $T_{c(b,t_j)}\Sigma$ into two parts. The ``upper'' part becomes small, when $j$ is big because of the height of the asymptotic line. But the part ``under'' $T_{c(b,t_j)}\Sigma$ (within the gray triangle) could still be big. To bound the area of this part we introduce the following angles:
\begin{align*}
 \alpha_j&\defi \winkel\left(\Sig\vec\tau(c(a,t_j)),\frac{c(b,t_j)-c(a,t_j)}{|c(b,t_j)-c(a,t_j)|}\right)\geq 0,\\
 \beta_j&\defi  \winkel\left(\frac{c(b,t_j)-c(a,t_j)}{|c(b,t_j)-c(a,t_j)|},\Sig\vec\tau(c(b,t_j))\right)\geq 0.
\end{align*}
It follows (see also the proof of Theorem~\ref{noself3}) that
\begin{align}\label{re22} \begin{split}
 \alpha_j + \beta_j &= \winkel\left(\Sig\vec\tau(c(a,t_j)),\Sig\vec\tau(c(b,t_j))  \right) \\
 &=\int_{a(t_j)}^{b(t_j)}\Sig\tilde \kappa \de s_{\ftilde}\leq(b(t_j)-a(t_j)) \Sig\kappa_{\text{max}}\leq \frac{5}{4}L_0\sigmax,
 \end{split}
\end{align}
where we used $L_0\leq \frac{1}{2\sigmax}$ for the last inequality. Now, we want to get the angles $\alpha_j,\beta_j$ so small, that ``the rest'' of the domain $D_{t_j}$ has small area: Consider the triangle $R_j$ which is given by the line segment from $c(b,t_j)$
 and $c(a,t_j)$ and by the angles $\beta_j$ in $c(b,t_j)$ and $\alpha_j + \frac{\pi}{2}$ in $c(a,t_j)$, see Figure \ref{Bild15}.
We get by a simple calculation (using $|c(b,t_j)-c(a,t_j)|\leq L_0$)
\begin{align*}
 A\left(R_j\right)& \leq \frac{1}{2}\left(L_0 \cos \beta_j + L_0 \sin\beta_j\tan(\alpha_j + \beta_j)\right)L_0\sin\beta_j \\ 
  &=  \frac{1}{2}L_0^2\sin\beta_j \left(\cos \beta_j +  \sin\beta_j\tan(\alpha_j + \beta_j)\right). 
\end{align*}
We define $T_j\defi D_{t_j}\setminus R_j$ and note $A\left(T_j\right)\leq \frac{2\pi}{Q_j}L_0$ for $j\geq j_3\geq j_2$, where we used that the ``length'' of $T_j$ cannot be bigger then $L_0$. The estimate $L_0<\frac{1}{2\sigmax}<\frac{\pi}{5\sigmax}$ and (\ref{re22}) imply $ \alpha_j + \beta_j\leq \frac{\pi}{4},$
which yields $\tan\left(\alpha_j + \beta_j\right)\leq 1$. Therefore, we have for $j\geq j_3$
\begin{align} \label{re29}
 \begin{split} A_0&\leq A\left(D_{t_j}\right)\leq A\left(T_j\right) + A\left(R_j\right)\\
  &\leq \frac{2\pi}{Q_j}L_0 + \frac{1}{2}L_0^2\sin\beta_j \left(\cos \beta_j +  \sin\beta_j\tan(\alpha_j + \beta_j)\right)\\
    &\leq \frac{2\pi}{Q_j}L_0  + L_0^2\sin\beta_j.  \end{split}
\end{align}
By assumption and by the monotonicity of $\arcsin$ on $[0,\frac{1}{2\pi}]$ 
there is an $\epsilon_0>0$ such that $L_0\leq \frac{4}{5 \sigmax} \arcsin\left(c_I-\epsilon_0\right)$. Inequality (\ref{re22}) implies that $\sin\beta_j\leq \sin\left(\frac{5}{4}\sigmax L_0\right)\leq c_I-\epsilon_0.$
By definition of $c_I$ and with (\ref{re29}), we then have
\begin{align*}
 A_0\leq \frac{2\pi}{Q_j}L_0  + A_0 - L_0^2\epsilon_0.
\end{align*}
For $j\geq j_4(\epsilon_0,L_0)\geq j_3$ it follows that $\frac{2\pi}{Q_j}< \epsilon_0L_0$ because of $Q_j\to \infty$, which yields the contradiction $A_0<A_0$.
\end{proof}

\section{Integral estimates and subconvergence} \label{S6}
In Theorem~\ref{typeIIrandneu} we found conditions under which the maximal time of extistence of our flow is infinity. 
Unfortunately, this does not automatically imply that the curvature is uniformly bounded (in $C^0$) in space and in $[0,\infty)$ because one could imagine a ``singularity at infinity'', that is $\max_{[a,b]}|\kappa(\cdot,t)|\to\infty$ as $t\to\infty$. 
In order to prove a convergence result we want to use the gradient estimates of the graph representation of Stahl \cite[Chapter~7]{StahlDiss}. But there, the condition was a uniform bound on the curvature in space and time, which we do not have a priori. 
We show integral estimates to overcome this problem.\\
In this section, we use the following notation: Let $f:I\to\Rzwei$ be a smooth, regular curve and $w:I\to\R$ a measurable function, then $\|w\|_p\defi \left(\int_f |w|^p\de s_f\right)^{\frac{1}{p}}$ denotes the $L^p$-norm of $w$ on $f$.
%
%

\begin{lemma}[Gagliardo-Nirenberg interpolation inequalities I]\label{gagli1a}
 Let $f:I\to\Rzwei$ be a smooth, regular curve with finite length $L$ and  $v:I\to\R$ a smooth function with $\int v\de s=0$, then there is a constant $c$ such that
  \begin{align*}
  \|\ps^n v\|_{p}\leq c\|\ps^m v\|_2^\sigma\|v\|_2^{1-\sigma},
 \end{align*}
 where $n \in\{0,\dots,m-1\}$ and $\sigma=\frac{n+\frac{1}{2}-\frac{1}{p}}{m}$ and $c=c(n,m,p,L)$.
\end{lemma}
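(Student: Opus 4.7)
My strategy is the standard one for the Gagliardo--Nirenberg inequality on an interval of finite length, adapted to the mean-zero hypothesis. First I would treat the base case $m=1$, $n=0$, $p=\infty$. Since $v$ is continuous on $I$ and $\int v\,\de s=0$, there exists $s_0\in I$ with $v(s_0)=0$. For any $s\in I$ the fundamental theorem then gives $v(s)^2=2\int_{s_0}^s v\,\ps v\,\de s'$, and Cauchy--Schwarz yields $\|v\|_\infty^2\le 2\|v\|_2\|\ps v\|_2$.

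Next, for $2\le p<\infty$ I would combine this with the elementary inequality $\|v\|_p^p\le\|v\|_\infty^{p-2}\|v\|_2^2$ to obtain
\begin{align*}
\|v\|_p\le c\,\|v\|_2^{\frac12+\frac1p}\|\ps v\|_2^{\frac12-\frac1p},
\end{align*}
which is exactly the claim for $m=1$ and $n=0$ (with $\sigma=\tfrac12-\tfrac1p$). For the general case I would induct on $m$. The induction rests on two ingredients. The first is the purely $L^2$-interpolation
\begin{align*}
\|\ps^k v\|_2\le c\,\|\ps^m v\|_2^{k/m}\|v\|_2^{1-k/m} \qquad (0\le k\le m),
\end{align*}
which follows from iterated integration by parts of the identity $\int(\ps^k v)^2\de s=-\int \ps^{k-1}v\cdot\ps^{k+1}v\,\de s+[\ps^{k-1}v\cdot\ps^k v]_{\partial I}$ together with Cauchy--Schwarz and a standard log-convexity argument. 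The second ingredient is the base case applied to the mean-zero function $\ps^n v-\overline{\ps^n v}$, where $\overline{\ps^n v}\defi\tfrac1L\int\ps^n v\,\de s$ satisfies $|\overline{\ps^n v}|\le L^{-1/2}\|\ps^n v\|_2$ and can therefore be reabsorbed via the $L^2$-interpolation.

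The main obstacle is that for $n\ge 1$ the derivative $\ps^n v$ need not have vanishing mean, so the base case cannot be applied directly; moreover, the repeated integration by parts on the bounded interval $I$ produces boundary contributions $[\ps^{k-1}v\cdot\ps^k v]_{\partial I}$ that must be absorbed. Both difficulties are handled by the same averaging trick: subtract off the mean, apply the base case to the mean-zero residual, and estimate the mean by the $L^2$-interpolation. Tracking the constants $c=c(n,m,p,L)$ and verifying that the final exponent comes out to be exactly $\sigma=(n+\tfrac12-\tfrac1p)/m$ is a routine but tedious bookkeeping exercise, and the dependence on $L$ enters only through the Poincar\'e-type constant $L^{-1/2}$ used to bound the averages.
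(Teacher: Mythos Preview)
The paper does not actually prove this lemma; it simply cites Aubin's textbook (Theorem~3.70). Your direct argument follows the classical route---base case via the fundamental theorem of calculus and Cauchy--Schwarz, then $L^2$-interpolation by log-convexity, then induction---and is correct in outline.

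One imprecision: you write that the boundary contributions $[\ps^{k-1}v\cdot\ps^k v]_{\partial I}$ arising from integration by parts are handled ``by the same averaging trick.'' That is not quite right. The averaging trick (subtracting the mean) resolves the first obstacle you name, that $\ps^n v$ need not have zero mean, so that the base case can be applied to $\ps^n v-\overline{\ps^n v}$. The boundary terms, however, require a separate argument: one bounds the pointwise values $|\ps^{j}v(p)|$ by $\|\ps^{j}v\|_\infty$, invokes the $L^\infty$ base case for $\ps^{j}v-\overline{\ps^{j}v}$ together with $|\overline{\ps^{j}v}|\le L^{-1/2}\|\ps^{j}v\|_2$, and then absorbs the resulting products via Young's inequality into the main terms (picking up the dependence on $L$). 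This is routine and closely related to your averaging step, but it is a distinct piece of the argument and should be flagged as such rather than conflated with the mean-subtraction. Alternatively, one avoids the boundary terms entirely by a bounded extension operator to $\R$ and then running the log-convexity argument there; this is the approach taken in most textbook proofs, including Aubin's.
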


\begin{proof}
See \cite[Theorem 3.70]{Aubin}. 
\end{proof}

\begin{corollary}[Gagliardo-Nirenberg interpolation inequalities II]\label{gagli2}
 Let $f:I\to\Rzwei$ be a smooth, regular curve with finite length $L$ and  $v:I\to\R$ a smooth function, then there are constants $c$ and $\tilde c$ such that
  \begin{align*}
  \|\ps^n v\|_{p}\leq c\|\ps^m v\|_2^\sigma\|v\|_2^{1-\sigma} + \frac{\tilde c}{L^{m\sigma}} \|v\|_2,
 \end{align*}
 where $n \in\{0,\dots,m-1\}$ and $\sigma=\frac{n+\frac{1}{2}-\frac{1}{p}}{m}$. The constants $c$ and $\tilde c$ only depend  on $n,m,p$ and $L$.
\end{corollary}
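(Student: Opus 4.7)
The approach is to reduce to Lemma~\ref{gagli1a} by subtracting the mean. I would set $\bar v\defi \frac{1}{L}\int_f v\,\de s_f$ and apply the previous lemma to $w\defi v-\bar v$, which satisfies $\int_f w\,\de s_f=0$ by construction.

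First, by Cauchy--Schwarz, $|\bar v|\leq L^{-1/2}\|v\|_2$, so $\|\bar v\|_2=|\bar v|L^{1/2}\leq \|v\|_2$ and consequently $\|w\|_2\leq \|v\|_2+\|\bar v\|_2\leq 2\|v\|_2$. For $n\geq 1$ the constant $\bar v$ is annihilated by $\ps$, hence $\ps^n v=\ps^n w$ and $\ps^m v=\ps^m w$, and Lemma~\ref{gagli1a} applied to $w$ directly gives
$$\|\ps^n v\|_p=\|\ps^n w\|_p\leq c\,\|\ps^m w\|_2^\sigma\|w\|_2^{1-\sigma}\leq c\,2^{1-\sigma}\|\ps^m v\|_2^\sigma\|v\|_2^{1-\sigma},$$
so in this range one may take $\tilde c=0$ and only the first summand is needed.

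For $n=0$ I would split by the triangle inequality, $\|v\|_p\leq \|w\|_p+\|\bar v\|_p$. The term $\|w\|_p$ is controlled by Lemma~\ref{gagli1a} applied to $w$ exactly as in the previous paragraph, using $\ps^m w=\ps^m v$ and $\|w\|_2\leq 2\|v\|_2$. For the remainder, $\|\bar v\|_p=|\bar v|\,L^{1/p}\leq L^{1/p-1/2}\|v\|_2$, and the relation $\sigma=\tfrac{1/2-1/p}{m}$ (i.e.\ the $n=0$ case of the exponent) yields $1/p-1/2=-m\sigma$, which is exactly the power of $L$ appearing in the second summand of the claim.

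No real obstacle arises here; the only arithmetic to verify is the identification $1/p-1/2=-m\sigma$ in the $n=0$ case, which matches the form $L^{-m\sigma}\|v\|_2$ of the correction. The second summand is simply the price paid for dropping the mean-zero hypothesis of Lemma~\ref{gagli1a}, and writing the estimate uniformly in $n$ (even though it is slack when $n\geq 1$) is convenient for the integral estimates carried out later in the section.
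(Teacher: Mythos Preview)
Your proof is correct and follows exactly the approach indicated in the paper: subtract the mean $\bar v=\tfrac{1}{L}\int_f v\,\de s_f$ and apply Lemma~\ref{gagli1a} to $w=v-\bar v$. You have simply spelled out the details (including the observation that the correction term is only needed when $n=0$) that the paper's one-line proof leaves implicit.
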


\begin{proof}
 Consider the function $\tilde v\defi v-\frac{\int v\de s}{L}$ and use Lemma~\ref{gagli1a}. 
\end{proof}

%

\begin{lemma} \label{intk2}
 Let $c:[a,b]\times[0,T)\to \Rzwei $ ($T\leq\infty$) be a solution of (\ref{flow}).
 Then we have for $t>0$
 \begin{align*}
  \dt \int(\kappa-\bar\kappa)^2\de s= 2 &\left[(\kappa-\bar\kappa)\ps\kappa\right]^b_a-2\int(\ps\kappa)^2\de s \\
	&+ \int(\kappa-\bar\kappa)^4\de s + 3\bar\kappa  \int(\kappa-\bar\kappa)^3\de s + 2\bar\kappa^2 \int(\kappa-\bar\kappa)^2\de s
 \end{align*}

\end{lemma}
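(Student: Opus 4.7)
The plan is a direct computation. I set $u := \kappa-\bar\kappa$ and note $\int u\,\de s\equiv 0$ by definition of $\bar\kappa$. Using the product rule,
\[
\dt\int u^2\de s \;=\; \int 2u\,\partial_t u\,\de s \;+\; \int u^2\,\partial_t(\de s).
\]
For the second summand I substitute $\partial_t(\de s)=-\kappa u\,\de s$ from (\ref{evolution1}) and expand $\kappa=u+\bar\kappa$, yielding $-\int u^4\de s - \bar\kappa\int u^3\de s$.

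For the first summand, I write $\partial_t u=\partial_t\kappa-\dt\bar\kappa$. The evolution equation (\ref{evolution5}) gives $\partial_t\kappa=\partial_s^2\kappa+\kappa^2 u=\partial_s^2 u+\kappa^2 u$ (since $\bar\kappa$ is spatially constant). The term involving $\dt\bar\kappa$ contributes $-2\dt\bar\kappa\int u\,\de s=0$, so I never need to compute $\dt\bar\kappa$ itself — this is the only slightly subtle point in the calculation, and it relies on the very definition $\int(\kappa-\bar\kappa)\de s=0$. Integration by parts on the interval $[a,b]$ gives
\[
\int 2u\,\partial_s^2 u\,\de s \;=\; \bigl[2u\,\partial_s u\bigr]_a^b \;-\; 2\int(\partial_s u)^2\,\de s \;=\; \bigl[2(\kappa-\bar\kappa)\partial_s\kappa\bigr]_a^b - 2\int(\partial_s\kappa)^2\,\de s,
\]
using $\partial_s u=\partial_s\kappa$. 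The remaining bulk term is $\int 2\kappa^2 u^2\de s$; expanding $\kappa^2=u^2+2u\bar\kappa+\bar\kappa^2$ produces $2\int u^4+4\bar\kappa\int u^3+2\bar\kappa^2\int u^2$.

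Summing the three contributions and combining with the $-\int u^4 - \bar\kappa\int u^3$ from $\partial_t(\de s)$ gives the coefficients $+1$ on $\int u^4$, $+3\bar\kappa$ on $\int u^3$, and $+2\bar\kappa^2$ on $\int u^2$, reproducing exactly the claimed identity. There is no real obstacle; the main thing to get right is the algebraic bookkeeping in the expansion of $\kappa^2 u^2$ versus $\kappa u^3$, and the observation that the $\dt\bar\kappa$-term drops out thanks to $\int u\,\de s=0$.
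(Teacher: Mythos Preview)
Your proof is correct and follows essentially the same approach as the paper: both use the evolution equations (\ref{evolution1}) and (\ref{evolution5}), integrate by parts, observe that the $\dt\bar\kappa$-term vanishes because $\int(\kappa-\bar\kappa)\de s=0$, and then expand the remaining terms $2\int\kappa^2(\kappa-\bar\kappa)^2\de s-\int\kappa(\kappa-\bar\kappa)^3\de s$ in powers of $\kappa-\bar\kappa$. The only difference is cosmetic bookkeeping in that final expansion.
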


\begin{proof}
We use the evolution equations from Lemma~\ref{evolution} and integration by parts and get
\begin{align*}
 \dt \int(\kappa-\bar\kappa)^2\de s &= 2\int(\kappa-\bar\kappa)\pt\diff\de s - \int\kappa \diff^3\de s\\
&=2\int\diff\left(\ps^2\kappa + \kappa^2\diff\right)\de s-2\pt\bar\kappa\int\diff\de s -  \int\kappa \diff^3\de s\\
&=2\left[(\kappa-\bar\kappa)\ps\kappa\right]^b_a-2\int(\ps\kappa)^2\de s + 2\int\kappa^2\diff^2\de s- \int\kappa \diff^3\de s.
\end{align*}
Now, we write
\begin{align*}
 \int\kappa^2\diff^2\de s&= \int\kappa^2\diff^2\de s -\bar\kappa \int\kappa\diff^2\de s +\bar\kappa \int\kappa\diff^2\de s\\
 &= \int\diff^4\de s + \bar\kappa\int\diff^3\de s + \bar\kappa \int \diff^3\de s + \bar\kappa^2\int\diff^2\de s,
\end{align*}
and, with the same trick,
\begin{align*}
 -\int\kappa\diff^3\de s = - \int \diff^4 \de s - \bar\kappa \int\diff^3\de s.
\end{align*}
Together with the calculation above we get the result.
\end{proof}

\begin{corollary} \label{cor4}
 Let $c:[a,b]\times[0,T)\to \Rzwei $ be a solution of (\ref{flow})
 with $|\bar\kappa(t)|\leq c_2<\infty$ uniformly in $t$. Then there are constants $C_1,C_2,C_3$, depending only on $c_2$ and $L_0$ such that for every $t>0$ we have
    \begin{align*}
     \dt\int(\kappa-\bar\kappa)^2\de s\leq C_1\left(\int(\kappa-\bar\kappa)^2\de s\right)^{3} + C_2 \left(\int(\kappa-\bar\kappa)^2\de s\right)^{\frac{5}{3}} + C_3 \int(\kappa-\bar\kappa)^2\de s
    \end{align*}

\end{corollary}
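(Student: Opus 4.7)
Starting from the identity in Lemma~\ref{intk2}, the plan is to handle the three non-sign-definite quantities on the right hand side (the boundary term, the $L^4$ term and the $L^3$ term in $\kappa-\bar\kappa$) so that they are either absorbed into the good term $-2\int(\ps\kappa)^2\de s$ or dominated by the right hand side of the claimed differential inequality.

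First I would observe that the boundary term is non-positive, hence harmless. Indeed, Lemma~\ref{lemmastahl5} gives $\ps\kappa(a,t)=(\kappa(a,t)-\bar\kappa(t))\,\Sig\kappa$ and $\ps\kappa(b,t)=-(\kappa(b,t)-\bar\kappa(t))\,\Sig\kappa$, so
\begin{align*}
2\bigl[(\kappa-\bar\kappa)\ps\kappa\bigr]_a^b = -2\,\Sig\kappa\bigl(\kappa(a,t)-\bar\kappa(t)\bigr)^2 - 2\,\Sig\kappa\bigl(\kappa(b,t)-\bar\kappa(t)\bigr)^2 \leq 0,
\end{align*}
using $\Sig\kappa\geq 0$ from convexity of the support curve. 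This piece can therefore be dropped.

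Next I apply the Gagliardo--Nirenberg inequality of Lemma~\ref{gagli1a} to the mean-zero function $v\defi \kappa-\bar\kappa$ (note $\int v\de s=0$ and $\ps v=\ps\kappa$). With $m=1$, $n=0$ the cases $p=4$ and $p=3$ yield, after raising to the appropriate power,
\begin{align*}
\int v^4\de s \leq c\,\|\ps\kappa\|_2\|v\|_2^3, \qquad \int |v|^3\de s \leq c\,\|\ps\kappa\|_2^{1/2}\|v\|_2^{5/2},
\end{align*}
where the constants are controlled by $L(\ct)\leq L_0$ (shortening, Lemma~\ref{shortening}). Writing $A=\|v\|_2^2$ and $B=\|\ps\kappa\|_2^2$, Young's inequality in the form $a^{1/2}b \leq \mu a + C_\mu b^2$ and $a^{1/4}b^{3/4}\leq \mu a + C_\mu b$ gives
\begin{align*}
\int v^4\de s \leq \mu B + C_\mu A^3, \qquad \int |v|^3\de s \leq \mu B + C_\mu A^{5/3},
\end{align*}
for any $\mu>0$. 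Finally I use $|\bar\kappa|\leq c_2$ to bound $|3\bar\kappa\int v^3\de s|\leq 3c_2\int|v|^3\de s$ and $2\bar\kappa^2\int v^2\de s\leq 2c_2^2 A$.

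Substituting everything into Lemma~\ref{intk2} gives
\begin{align*}
\dt A \leq -2B + \mu B + C_\mu A^3 + 3c_2(\mu B + C_\mu A^{5/3}) + 2c_2^2 A.
\end{align*}
Choosing $\mu$ so small (depending on $c_2$) that $-2+\mu+3c_2\mu\leq 0$, the $B$ terms disappear and the claimed estimate follows with constants $C_1,C_2,C_3$ depending only on $c_2$ and $L_0$ (the latter entering through the Gagliardo--Nirenberg constant). The only step that requires any care is the Young's inequality bookkeeping to ensure all $\|\ps\kappa\|_2^2$ contributions are absorbed into the single available dissipation term $-2B$; the boundary contribution is essentially free thanks to the favorable sign in Lemma~\ref{lemmastahl5} combined with convexity of $\Sigma$.
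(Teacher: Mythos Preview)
Your proof is correct and follows essentially the same route as the paper: drop the boundary term using Lemma~\ref{lemmastahl5} and the convexity of $\Sigma$, then estimate $\|\kappa-\bar\kappa\|_4^4$ and $\|\kappa-\bar\kappa\|_3^3$ via the Gagliardo--Nirenberg inequality of Lemma~\ref{gagli1a} combined with Young's inequality so as to absorb the resulting $\|\ps\kappa\|_2^2$ contributions into the dissipation term $-2\int(\ps\kappa)^2\de s$. If anything you are slightly more careful than the paper about the bookkeeping, explicitly choosing the Young parameter $\mu$ small depending on $c_2$ so that $(1+3c_2)\mu\leq 2$, whereas the paper simply writes the Young constant as $\tfrac{1}{2}$ and leaves this dependence implicit.
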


\begin{proof}
 Due to Lemma~\ref{lemmastahl5} we have for $t>0$
 \begin{align*}
  \left[(\kappa-\bar\kappa)\ps\kappa\right]^b_a = -(\kappa(b)-\bar\kappa)^2 \ \Sig\kappa\circ\fsig\circ c(b)  -(\kappa(a)-\bar\kappa)^2\ \Sig\kappa\circ\fsig\circ c(a) \leq 0.
 \end{align*}
We only have to handle the integrals $\int(\kappa-\bar\kappa)^4\de s $ and $\int(\kappa-\bar\kappa)^3\de s $ from the previous lemma. By Lemma~\ref{gagli1a} and by the Young inequality 
we have
\begin{align*}
 \|\kappa-\bar\kappa\|^4_4 &\leq c\|\ps\kappa\|_2\|\kappa-\bar\kappa\|_2^3
  \leq\frac{1}{2}\|\ps\kappa\|_2^2 + c\|\kappa-\bar\kappa\|_2^6 \\
  \|\kappa-\bar\kappa\|^3_3&\leq c\|\ps\kappa\|_2^\frac{1}{2}\|\kappa-\bar\kappa\|_2^{\frac{5}{2}}
  \leq\frac{1}{2}\|\ps\kappa\|_2^2 + c\|\kappa-\bar\kappa\|_2^{\frac{2\cdot 5}{3}} ,
\end{align*}
which implies the result with constants also depending on $L_0$.
\end{proof}

\begin{corollary} \label{intto0}
 Let $c:[a,b]\times[0,\infty)\to \Rzwei $ be a solution of (\ref{flow}) with
 $|\bar\kappa|\leq c_2 <\infty$, then 
 \begin{align*}
  \int(\kappa-\bar\kappa)^2\de s\to 0 \ \  (t\to\infty).
 \end{align*}

\end{corollary}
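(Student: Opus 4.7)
The plan is to combine two ingredients — the time-integrability of $f(t) \defi \int(\kappa-\bar\kappa)^2\de s$, which falls out for free from the length-monotonicity of the flow, and the differential inequality from Corollary~\ref{cor4} — to rule out persistent spikes in $f$.

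The first step is to extract $L^1$-integrability of $f$ on $[0,\infty)$. Lemma~\ref{shortening} gives $\frac{\de}{\de t}L(\ct) = -f(t)$, so integrating on $[0,t]$ yields
$$\int_0^t f(\sigma)\de\sigma = L_0 - L(\ct) \leq L_0.$$
Since $L(\ct) \geq 0$ is non-increasing, the limit $L_\infty\defi \lim_{t\to\infty} L(\ct)$ exists, and therefore $\int_0^\infty f \de t = L_0 - L_\infty < \infty$. In particular $\liminf_{t\to\infty} f(t) = 0$.

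The remaining step is a standard ``no persistent-spike'' argument. I would assume for contradiction that there exist $\epsilon>0$ and $t_k\to\infty$ with $f(t_k)\geq\epsilon$. Using $\liminf f = 0$, after passing to a subsequence one can interlace times $s_k < t_k < s_{k+1}$ with $f(s_k)\leq\epsilon/2$. By continuity of $f$, there is a largest $\sigma_k \in [s_k,t_k)$ with $f(\sigma_k)=\epsilon/2$, and then $f\geq\epsilon/2$ on $[\sigma_k,t_k]$. Writing $G(x)\defi C_1 x^3 + C_2 x^{5/3} + C_3 x$, Corollary~\ref{cor4} gives $f'\leq G(f)$. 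Comparing $f$ on $[\sigma_k,t_k]$ to the solution of the autonomous ODE $g'=G(g)$, $g(\sigma_k)=\epsilon/2$, forces $t_k - \sigma_k \geq \tau_0$ for a deterministic $\tau_0=\tau_0(\epsilon,C_1,C_2,C_3)>0$. Hence
$$\int_{\sigma_k}^{t_k} f(t)\de t \geq \frac{\epsilon}{2}\tau_0,$$
and summing over the pairwise disjoint intervals $[\sigma_k,t_k]$ (guaranteed by the interlacing $t_k < s_{k+1} \leq \sigma_{k+1}$) contradicts $\int_0^\infty f\de t < \infty$.

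There is no deep obstacle here; the only mild point to verify is that the growth time $\tau_0$ is independent of $k$, which is immediate because $G$ is autonomous and strictly positive on $(0,\infty)$, and because the constants $C_1,C_2,C_3$ of Corollary~\ref{cor4} depend only on $c_2$ and $L_0$, both of which are time-independent.
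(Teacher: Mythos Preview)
Your proof is correct and follows essentially the same strategy as the paper's: both combine the $L^1$-integrability of $f$ (from length monotonicity) with the differential inequality of Corollary~\ref{cor4} to show that any spike of $f$ above $\epsilon$ forces $f\geq\epsilon/2$ on an interval of uniform positive length, contradicting $\int_0^\infty f\,\de t<\infty$. The only cosmetic difference is that the paper integrates the inequality backward from the spike times $t_j$ (so no interlacing with a $\liminf$ sequence is needed), while you argue forward via ODE comparison.
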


\begin{proof}
The proof works as in \cite[Section~8]{Mantepaper}:
 We argue by contradiction. We assume there is a sequence $t_j\to\infty$ and a $\delta>0$ such that $\int\diff^2 \de s|_{t=t_j}>\delta>0$ for all $j\in\N$. By Corollary~\ref{cor4}, we get $ \dt \int\diff^2\de s\leq \left(1 + \int\diff^2\de s\right)^3$
with a constant only depending on $c_2$ and $L_0$. With $f(t)\defi \int\diff^2\de s $ we have
\begin{align*}
 -\dt\left(\frac{1}{(1 + f(t))^2}\right) \leq C.
\end{align*}
We integrate from $t<t_j$ to $t_j$ and get $  \frac{1}{(1+ f(t))^2}< C(t_j-t) +  \frac{1}{(1+ \delta)^2}$
and thus
\begin{align*}
 (1+f(t))^2\geq \frac{1}{C(t_j-t) +  \frac{1}{(1+ \delta)^2}}.
\end{align*}
Then we find an $\epsilon>0$ small and not depending on $j$ such that for all $t\in [\tj-\epsilon,\tj]$ 
\begin{align}\label{wid}
 \int\diff^2\de s = f(t)\geq \frac{\delta}{2}.
\end{align}
But on the other hand we have that $\dt L(\ct) = -\int\diff^2\de s$
and therefore $ \int_0^\infty\int\diff^2\de s\de t\leq L_0,$
which contradicts (\ref{wid}).
\end{proof}

\begin{lemma}
 Let $c:[a,b]\times[0,T)\to \Rzwei $ ($T\leq\infty$) be a solution of (\ref{flow}).
 For a fix $\tau>0$ we define $\mu\defi t-\tau$. Then we have for $t>0$
 \begin{align*}
  \dt\left(\mu\int(\ps\kappa)^2 \de s\right)= \int(\ps\kappa)^2\de s &+ 2\mu\left[\ps\kappa\,\ps^2\kappa\right]_a^b - 2\mu \int (\ps^2\kappa)^2\de s\\
 & + 5\mu\int(\ps\kappa)^2\kappa\diff\de s + 2\mu\int\kappa^2(\ps\kappa)^2\de s.
 \end{align*}
\end{lemma}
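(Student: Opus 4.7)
The identity is a direct computation using the evolution equations collected in Lemma~\ref{evolution}, so there is essentially no structural obstacle; the only thing to watch is the bookkeeping of coefficients. First I would apply the product rule in time to split
\begin{align*}
\dt\left(\mu\int(\ps\kappa)^2\de s\right) = \int(\ps\kappa)^2\de s + \mu\,\dt\int(\ps\kappa)^2\de s,
\end{align*}
using $\dt\mu = 1$. This isolates the $\int(\ps\kappa)^2\de s$ term on the right, and reduces everything to computing the evolution of $\int(\ps\kappa)^2\de s$.

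For the second term, I would differentiate under the integral sign and use $\pt(\de s) = -\kappa\diff\de s$ from \eqref{evolution1} to get
\begin{align*}
\dt\int(\ps\kappa)^2\de s = 2\int \ps\kappa\,\pt(\ps\kappa)\de s - \int(\ps\kappa)^2\kappa\diff\de s.
\end{align*}
The commutator relation \eqref{evolution2} gives $\pt\ps\kappa = \ps\pt\kappa + \kappa\diff\,\ps\kappa$, and substituting $\pt\kappa = \ps^2\kappa + \kappa^2\diff$ from \eqref{evolution5} and differentiating (using $\ps\bar\kappa = 0$, hence $\ps\diff = \ps\kappa$) yields
\begin{align*}
\pt\ps\kappa = \ps^3\kappa + 2\kappa\,\diff\,\ps\kappa + \kappa^2\,\ps\kappa + \kappa\,\diff\,\ps\kappa = \ps^3\kappa + 3\kappa\,\diff\,\ps\kappa + \kappa^2\,\ps\kappa.
\end{align*}

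Plugging this in and integrating $2\int\ps\kappa\,\ps^3\kappa\de s$ by parts produces the boundary term $2[\ps\kappa\,\ps^2\kappa]_a^b$ and $-2\int(\ps^2\kappa)^2\de s$. Combining with $6\int\kappa\,\diff\,(\ps\kappa)^2\de s + 2\int\kappa^2(\ps\kappa)^2\de s$ from the remaining interior terms, and subtracting the $\int(\ps\kappa)^2\kappa\diff\de s$ contribution coming from $\pt(\de s)$, the coefficient of $\int(\ps\kappa)^2\kappa\diff\de s$ collapses to $6-1 = 5$. Multiplying the resulting expression for $\dt\int(\ps\kappa)^2\de s$ by $\mu$ and adding the $\int(\ps\kappa)^2\de s$ term from the product rule gives exactly the claimed formula. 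The only real ``obstacle'' is ensuring the $+5$ coefficient is obtained correctly, which comes from the interplay of the factor $3$ in the commutator computation of $\pt\ps\kappa$ (multiplied by $2$ from $2\int\ps\kappa\cdot(\cdots)\de s$) minus the factor $1$ from $\pt(\de s)$.
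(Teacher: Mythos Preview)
Your proposal is correct and follows essentially the same approach as the paper: product rule in $\mu$, evolution of $\de s$, commutator \eqref{evolution2}, substitution of \eqref{evolution5}, and a single integration by parts on $2\int\ps\kappa\,\ps^3\kappa\de s$. The paper organizes the intermediate algebra slightly differently (it expands $\ps(\kappa^2\diff)$ after using the commutator rather than computing $\pt\ps\kappa$ fully first), but the arithmetic leading to the coefficient $5$ is identical.
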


\begin{proof}
 With the evolution equation from Lemma~\ref{evolution} we get (note $\dt\mu =1$)
 \begin{align*}
  \dt &\left(\mu\int(\ps\kappa)^2 \de s\right)= \int(\ps\kappa)^2\de s + 2\mu\int\ps\kappa\,\pt\ps\kappa\de s - \mu\int(\ps\kappa)^2\kappa\diff\de s\\
  &= \int(\ps\kappa)^2\de s + 2\mu\int\ps\kappa\,(\ps\pt\kappa + \kappa\diff\ps\kappa)\de s   - \mu\int(\ps\kappa)^2\kappa\diff\de s\\
  &= \int(\ps\kappa)^2\de s + 2\mu\int\ps\kappa\,\ps(\ps^2\kappa + \kappa^2\diff)\de s \\
      &\qquad+ 2\mu \int(\ps\kappa)^2\kappa\diff\de s - \mu\int(\ps\kappa)^2\kappa\diff\de s\\
      &= \int(\ps\kappa)^2\de s + 2\mu\int\ps\kappa\,\ps^3\kappa \de s + 2\mu\int \ps\kappa\, (2\kappa\ps\kappa\,\diff + \kappa^2\ps\kappa)\de s \\
      &\qquad+ 2\mu \int(\ps\kappa)^2\kappa\diff\de s - \mu\int(\ps\kappa)^2\kappa\diff\de s\\
      &= \int(\ps\kappa)^2\de s + 2\mu\int\ps\kappa\,\ps^3\kappa \de s + 5\mu\int (\ps\kappa)^2 \kappa\diff \de s 
       + 2\mu\int(\ps\kappa)^2\kappa^2\de s.
 \end{align*}
 Integration by parts for the second integral yields the result.
\end{proof}
In the next step, we want to have a similar formula as above also for the time-derivative of the $L^2-$norm of the higher space-derivatives. It is useful to introduce some notation, which we learned in \cite{Mantepaper}.  

\begin{notations}
 For $j,\sigma\in\N_0$ we denote by $p_\sigma(\ps^j\kappa)$ a polynomial in $\kappa,\dots,\ps^j\kappa$ with constant coefficients such that every monomial that is contained in $p_\sigma(\ps^j\kappa)$ is of the form 
 \begin{align*}
  c\prod_{l=0}^{j}\left(\ps^l\kappa\right)^{\alpha_l}  \ \ \ \text{ with } \sum_{l=0}^j\alpha_l(l+1)=\sigma, \ \ \ \ \alpha_l\in\N_0.
 \end{align*}
 We use the same notation, if somewhere in the zero-order term arises $\diff$ instead of $\kappa$. That means $\kappa\diff^2\ps^2\kappa=p_6(\ps^2\kappa)$ for example.\\
 We also use this notation for the time derivative, but we want to respect the parabolicity of our equations. Let $\sigma, j, h\in\N_0$. We denote by $p_\sigma(\pt^j(\kappa-\bar\kappa),\ps^h\kappa)$ a polynomial in $\diff,\dots,\pt^j\diff$ and $(\kappa-\bar\kappa),\ps\kappa,\dots,\ps^h\kappa$ with constant coefficients such that every monomial that is contained in $p_\sigma(\pt^j\diff,\ps^h\kappa)$ is of the form 
 \begin{align*}
  c\prod_{l=0}^{j}\left(\pt^l\diff\right)^{\alpha_l} \cdot \prod_{l=0}^{h}\left(\ps^l\diff\right)^{\beta_l} \ \ \ \text{ with } \sum_{l=0}^j\alpha_l(2l+1) + \sum_{l=0}^h \beta_l(l+1)=\sigma, \ \ \ \ \alpha_l\in\N_0.
 \end{align*}
 This is consistent with the notation above because $p_\sigma(\pt^0\diff,\ps^h\kappa)= p_\sigma(\ps^h\kappa)$. In general, one can prove
 \begin{align*} 
   \pt^l p_\alpha (\pt^j\diff,\ps^h\kappa) = p_{\alpha + 2l}(\pt^{j + l}\diff,\ps^{h + 2l}\kappa).
 \end{align*}

\end{notations}

\begin{lemma}\label{tausch}
 With the previous notation we have for $m\in\N_0$
 \begin{align*}
  \pt\ps^m\kappa = \ps^{m+2}\kappa + p_{m+3}(\ps^m\kappa).
 \end{align*}
\end{lemma}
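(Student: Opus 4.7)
The statement is a purely algebraic consequence of two facts that are already on the table: the curvature evolution equation
\[
\pt\kappa = \ps^2\kappa + \kappa^2(\kappa-\bar\kappa)
\]
from Lemma~\ref{evolution}, and the commutator identity
\[
[\pt,\ps] = \kappa(\kappa-\bar\kappa)\,\ps,
\]
also from Lemma~\ref{evolution}. I would prove the lemma by induction on $m$, with the base case $m=0$ handled directly by the evolution equation: the zero-order term $\kappa^2(\kappa-\bar\kappa)$ has weight $1+1+1=3$, so it belongs to $p_3(\kappa)=p_{m+3}(\ps^m\kappa)$.

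For the inductive step, assume $\pt\ps^{m-1}\kappa = \ps^{m+1}\kappa + p_{m+2}(\ps^{m-1}\kappa)$. Applying $\ps$ and commuting with $\pt$ gives
\[
\pt\ps^m\kappa = \ps\pt(\ps^{m-1}\kappa) + \kappa(\kappa-\bar\kappa)\ps^m\kappa
= \ps^{m+2}\kappa + \ps\bigl(p_{m+2}(\ps^{m-1}\kappa)\bigr) + \kappa(\kappa-\bar\kappa)\ps^m\kappa.
\]
The last term has weight $1+1+(m+1)=m+3$ and the highest appearing derivative is $\ps^m\kappa$, so it is of type $p_{m+3}(\ps^m\kappa)$. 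It therefore only remains to verify that $\ps$ applied to a $p_{m+2}(\ps^{m-1}\kappa)$-polynomial is again a $p_{m+3}(\ps^m\kappa)$-polynomial.

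This last point is a straightforward weight check: differentiating a monomial $c\prod_{l=0}^{m-1}(\ps^l\kappa)^{\alpha_l}$ with $\sum\alpha_l(l+1)=m+2$ by $\ps$ produces, via the product rule, a sum of monomials in which one factor $\ps^l\kappa$ is replaced by $\ps^{l+1}\kappa$. The replacement changes the total weight by $-(l+1)+(l+2)=+1$, so the new weight is $m+3$, and the highest derivative that can appear is $\ps^m\kappa$. Hence $\ps\bigl(p_{m+2}(\ps^{m-1}\kappa)\bigr)=p_{m+3}(\ps^m\kappa)$, which completes the induction.

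The argument is essentially bookkeeping; the only ``obstacle'' is making sure the weight convention (each $\ps^l\kappa$ contributing $l+1$) is compatible with both the $\kappa^2(\kappa-\bar\kappa)$ term from the base case and the extra $\kappa(\kappa-\bar\kappa)\ps^m\kappa$ produced by the commutator, which is exactly why the convention is set up this way.
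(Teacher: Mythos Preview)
Your proof is correct and is essentially identical to the paper's own argument: both do the base case $m=0$ directly from the evolution equation $\pt\kappa=\ps^2\kappa+\kappa^2(\kappa-\bar\kappa)$, and both carry out the induction by applying the commutator $[\pt,\ps]=\kappa(\kappa-\bar\kappa)\ps$ and then verifying the weight rule $\ps\, p_\sigma(\ps^j\kappa)=p_{\sigma+1}(\ps^{j+1}\kappa)$. The only cosmetic difference is that you step from $m-1$ to $m$ while the paper steps from $m$ to $m+1$.
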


\begin{proof}
 For $m=0$ we have $\pt\kappa = \ps^2\kappa + \kappa^2\diff = \ps^2\kappa + p_3(\kappa)$. For $m\geq 1$ we proof the claim by induction. We have by the rule for exchanging $\pt$ and $\ps$
 \begin{align*}
  \pt\ps^{m+1}\kappa&= \pt\ps\ps^m\kappa = \ps\pt\ps^m\kappa + \kappa\diff\ps^{m+1}\kappa\\
    &= \ps\left(\ps^{m+2}\kappa + p_{m+3}(\ps^m\kappa)\right) + p_{m+4}(\ps^{m+1}\kappa)\\
    &= \ps^{m+3}\kappa + p_{m+4}(\ps^{m+1}\kappa),
 \end{align*}
which is the claim for $m+1$. In the second line we used the induction hypotheses and in the third line we used $\ps p_\sigma(\ps^j\kappa)=p_{\sigma +1}(\ps^{j+1}\kappa)$, which is true because by differentiating we lower the order $\alpha_l$ by one and increase $\alpha_{l+1}$ by one. This has to be done for every $l\in \{0,\dots,j\}$. In the sum we always get $\sigma + 1$. 
\end{proof}

\begin{lemma} \label{abl}
 Let $c:[a,b]\times[0,T)\to \Rzwei $ ($T\leq\infty$) be a solution of (\ref{flow}).
 For a fix $\tau>0$ we define $\mu\defi t-\tau$. Then we have for $t>0$ and $m\in\N_0$
 \begin{align*}
  \dt\left(\frac{\mu^m}{m!}\int(\ps^m\kappa)^2 \de s\right)= \frac{\mu^{m-1}}{(m-1)!}\int(\ps^m\kappa)^2\de s &+ \frac{2\mu^m}{m!}\left[\ps^m\kappa\,\ps^{m+1}\kappa\right]_a^b -  \frac{2\mu^m}{m!} \int (\ps^{m+1}\kappa)^2\de s\\
 & + \mu^m\int p_{2m+ 4}\left(\ps^m\kappa\right)\de s.
 \end{align*}
\end{lemma}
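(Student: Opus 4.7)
The plan is to mimic the argument of the previous two lemmas (the cases $m=0$ and $m=1$) but treat arbitrary $m$ in a single pass, using Lemma~\ref{tausch} to keep the bookkeeping tight. First I would apply the ordinary product rule to pull the time derivative past $\mu^m/m!$:
\begin{align*}
 \dt\left(\frac{\mu^m}{m!}\int(\ps^m\kappa)^2 \de s\right) = \frac{\mu^{m-1}}{(m-1)!}\int(\ps^m\kappa)^2 \de s + \frac{\mu^m}{m!}\dt\int(\ps^m\kappa)^2 \de s,
\end{align*}
which immediately produces the first term in the claim. It remains to analyse $\dt\int(\ps^m\kappa)^2\de s$.

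Next I would use the evolution equation \eqref{evolution1}, $\pt(\de s)=-\kappa\diff\de s$, together with
\begin{align*}
 \dt\int(\ps^m\kappa)^2\de s = 2\int \ps^m\kappa\,\pt\ps^m\kappa\de s - \int(\ps^m\kappa)^2\kappa\diff\de s.
\end{align*}
Lemma~\ref{tausch} gives $\pt\ps^m\kappa = \ps^{m+2}\kappa + p_{m+3}(\ps^m\kappa)$, so
\begin{align*}
 2\int \ps^m\kappa\,\pt\ps^m\kappa\de s = 2\int\ps^m\kappa\,\ps^{m+2}\kappa\de s + 2\int \ps^m\kappa\, p_{m+3}(\ps^m\kappa)\de s.
\end{align*}
Integration by parts on the first integral yields the boundary term $2\bigl[\ps^m\kappa\,\ps^{m+1}\kappa\bigr]_a^b$ and the ``good'' term $-2\int(\ps^{m+1}\kappa)^2\de s$. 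Multiplying everything by $\mu^m/m!$ produces the second and third terms of the claim.

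The remaining job is purely bookkeeping: I have to check that
\begin{align*}
 2\int \ps^m\kappa\, p_{m+3}(\ps^m\kappa)\de s - \int(\ps^m\kappa)^2\kappa\diff\de s
\end{align*}
is of the form $\int p_{2m+4}(\ps^m\kappa)\de s$. Since $\ps^m\kappa$ carries weight $m+1$ in the grading from the $\notations$ block, $(\ps^m\kappa)^2$ has weight $2m+2$, and multiplication by $\kappa\diff$ of weight $2$ lands in weight $2m+4$. Similarly the product $\ps^m\kappa\cdot p_{m+3}(\ps^m\kappa)$ has weight $(m+1)+(m+3)=2m+4$. So both terms belong to $p_{2m+4}(\ps^m\kappa)$, which gives the last summand of the claim.

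I do not expect any real obstacle: the key ingredients (Lemma~\ref{evolution}, Lemma~\ref{tausch}, and one integration by parts) are all in place, and the only subtlety is the weight count in the polynomial notation $p_\sigma$, which is essentially inspection. The only thing to be careful about is not forgetting the factor $\kappa\diff$ coming from $\pt(\de s)$, since it contributes a priori lower-order-looking term $(\ps^m\kappa)^2\kappa\diff$ that nevertheless has the right weight $2m+4$ to be absorbed into $p_{2m+4}(\ps^m\kappa)$.
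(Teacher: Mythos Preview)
Your proposal is correct and follows essentially the same route as the paper: product rule on $\mu^m/m!$, the evolution equation \eqref{evolution1} for $\pt(\de s)$, Lemma~\ref{tausch} for $\pt\ps^m\kappa$, one integration by parts, and then the weight count to absorb the remaining terms into $p_{2m+4}(\ps^m\kappa)$. Your explicit verification of the weights is slightly more detailed than the paper's, which simply writes the terms as $p_{2m+4}(\ps^m\kappa)$ without comment.
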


\begin{proof}
We use the previous lemma  and integration by parts and get
\begin{align*}
 \dt\left(\frac{\mu^m}{m!}\int(\ps^m\kappa)^2 \de s\right)&=\frac{\mu^{m-1}}{(m-1)!}\int(\ps^m\kappa)^2\de s  + 2\frac{\mu^m}{m!} \int\ps^m\kappa\,\pt\ps^m\kappa\de s \\
 &\hspace*{6cm}- \frac{\mu^m}{m!}\int (\ps^m\kappa)^2\kappa\diff\de s\\
 &= \frac{\mu^{m-1}}{(m-1)!}\int(\ps^m\kappa)^2\de s  + 2\frac{\mu^m}{m!} \int\ps^m\kappa \,\ps^{m + 2}\kappa\de s  \\
  &\qquad + 2\frac{\mu^m}{m!} \int\ps^m\kappa\, p_{m+3}(\ps^m\kappa)\de s + \mu^m\int p_{2m+4 }(\ps^m\kappa)\de s\\
  &= \frac{\mu^{m-1}}{(m-1)!}\int(\ps^m\kappa)^2\de s + \frac{2\mu^m}{m!}\left[\ps^m\kappa\,\ps^{m+1}\kappa\right]_a^b -  \frac{2\mu^m}{m!} \int (\ps^{m+1}\kappa)^2\de s\\
 & \qquad + \mu^m\int p_{2m+ 4}\left(\ps^m\kappa\right)\de s.
\end{align*}
\end{proof}

We want to get a bound on $\|\ps^l\kappa\|_2$ at least for $l=1$. When we differentiate $\|\ps^l\kappa\|_2$ in $t$ we get boundary terms via integration by parts. Precisely, terms like $\ps^{l +1}\kappa(a,t)$ and $\ps^{l +1}\kappa(b,t)$ occur on the right hand side. We saw in Lemma~\ref{lemmastahl5} that we can express $\ps\kappa$ at the boundary in terms of $\diff$ and $\Sig\kappa$: 
\begin{align}\begin{split} \label{cro8}
 \partial_s \kappa (a,t) &= \left(\kappa(a,t) - \bar\kappa(t)\right) \nix^\Sigma\kappa\left(\nix^\Sigma f^{-1}(c(a,t))\right)\quad \mbox{ and }\\
 \partial_s \kappa (b,t) &= -\left(\kappa(b,t) - \bar\kappa(t)\right) \nix^\Sigma\kappa\left(\nix^\Sigma f^{-1}(c(b,t))\right).\end{split}                                                                                                                               
\end{align}
This also works for $l$ bigger: We can express the $(l+1)$-th derivative of $\kappa$ at the boundary by the $l$-th derivative at the boundary times something like $\ps^j\Sig\kappa$, which is uniformly bounded because $\Sigma$ is smooth and fixed. Unfortunately, this only works for $l$ even. We use the Gagliardo-Nirenberg interpolation inequalities and the Young inequality to absorb the $l$-the derivative of $\kappa$ at the boundary. We do the calculations for $l=2$. At first, we need a formula for $\ps^3\kappa$ at the boundary.

\begin{lemma}\label{rand2}
  Let $c:[a,b]\times[0,T)\to \Rzwei $ ($T\leq\infty$) be a solution of (\ref{flow}).
  For any $t\in(0,T)$ we have at the point $(a,t)$
  \begin{align*}
   \ps^3\kappa = (\ps^2\kappa - 3\kappa\diff^2 - \pt\bar\kappa)\left(\Sig\kappa \circ\fsig^{-1}\circ c\right) + \diff^2\left(\ps\Sig\kappa\circ\fsig^{-1}\circ c\right),  
  \end{align*}
and at the point $(b,t)$
  \begin{align*}
 \ps^3\kappa = - (\ps^2\kappa - 3\kappa\diff^2 - \pt\bar\kappa)\left(\Sig\kappa \circ\fsig^{-1}\circ c\right) + \diff^2\left(\ps\Sig\kappa\circ\fsig^{-1}\circ c\right).
  \end{align*}
\end{lemma}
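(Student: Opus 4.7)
The strategy is to differentiate in $t$ the boundary identities from Lemma~\ref{lemmastahl5}, compare with a second evaluation of $\pt\ps\kappa$ obtained via the commutator (\ref{evolution2}), and then solve for $\ps^3\kappa$ at the boundary. I will carry out the computation at $p=a$; the corresponding argument at $p=b$ is entirely analogous, with sign changes dictated by the opposite boundary orientation.

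First, combining (\ref{evolution2}) with the curvature evolution (\ref{evolution5}) and using that $\bar\kappa$ depends only on $t$, one computes on all of $[a,b]\times(0,T)$
\begin{align*}
\pt\ps\kappa \;=\; \ps\pt\kappa + \kappa\diff\,\ps\kappa \;=\; \ps\bigl(\ps^2\kappa + \kappa^2\diff\bigr) + \kappa\diff\,\ps\kappa \;=\; \ps^3\kappa + 3\kappa\diff\,\ps\kappa + \kappa^2\,\ps\kappa.
\end{align*}
On the other hand, the boundary identifications made in the proof of Lemma~\ref{lemmastahl5} yield $\tau(a,t) = -\Sig\vec\nu\circ c$, hence $\nu(a,t) = J\tau(a,t) = \Sig\vec\tau\circ c$, so the flow equation reads $\pt c(a,t) = \diff(a,t)\,\Sig\vec\tau\circ c(a,t)$, a vector tangent to $\Sigma$. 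Since the directional derivative of $\Sig\kappa\circ\fsig^{-1}$ along $\Sigma$ in the direction of $\Sig\vec\tau$ equals $\ps\Sig\kappa\circ\fsig^{-1}$, the chain rule gives
\begin{align*}
\pt\bigl(\Sig\kappa\circ\fsig^{-1}\circ c(a,t)\bigr) \;=\; \diff(a,t)\,\bigl(\ps\Sig\kappa\bigr)\circ\fsig^{-1}\circ c(a,t).
\end{align*}
Differentiating in $t$ the first identity of Lemma~\ref{lemmastahl5} and substituting (\ref{evolution5}) for $\pt\kappa$ therefore produces
\begin{align*}
\pt\ps\kappa(a,t) \;=\; \bigl(\ps^2\kappa + \kappa^2\diff - \pt\bar\kappa\bigr)\,\Sig\kappa\circ\fsig^{-1}\circ c \;+\; \diff^{\,2}\,\bigl(\ps\Sig\kappa\bigr)\circ\fsig^{-1}\circ c.
\end{align*}

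Equating the two expressions for $\pt\ps\kappa(a,t)$ and using once more $\ps\kappa(a,t)=\diff\cdot\Sig\kappa\circ\fsig^{-1}\circ c$ from Lemma~\ref{lemmastahl5} to rewrite the $\kappa\diff\ps\kappa$ and $\kappa^2\ps\kappa$ contributions, the $\kappa^2\diff\,\Sig\kappa$ terms cancel and $3\kappa\diff\,\ps\kappa$ turns into $3\kappa\diff^{\,2}\,\Sig\kappa$; solving for $\ps^3\kappa(a,t)$ then gives exactly the stated formula at $a$. For $p=b$ one instead has $\nu(b,t) = -\Sig\vec\tau\circ c$, so $\pt c(b,t) = -\diff\cdot\Sig\vec\tau\circ c$, and one starts from the second identity of Lemma~\ref{lemmastahl5}, which already carries a minus sign; carrying the identical computation through shows that the two minus signs combine into a single overall sign on the $\Sig\kappa$-term while the $\ps\Sig\kappa$-term keeps its sign, yielding the second formula. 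The only delicate point is keeping track of the correct identifications $\nu(a,t)=\Sig\vec\tau\circ c$ versus $\nu(b,t)=-\Sig\vec\tau\circ c$; beyond that the argument is purely algebraic bookkeeping.
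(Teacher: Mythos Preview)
Your proof is correct and follows essentially the same approach as the paper: differentiate the boundary identity of Lemma~\ref{lemmastahl5} in $t$, compute $\pt\ps\kappa$ separately via the commutator (\ref{evolution2}) and the evolution (\ref{evolution5}), equate, and substitute $\ps\kappa=\pm\diff\cdot\Sig\kappa$ once more to simplify. The only cosmetic difference is that the paper obtains $\pt\bigl(\Sig\kappa\circ\fsig^{-1}\circ c\bigr)$ via the lift $a(t)$ of Lemma~\ref{lift} and the computation $\partial_t a(t)=\diff$, whereas you use the identification $\nu(a,t)=\Sig\vec\tau\circ c$ directly; both routes give the same chain-rule factor and the subsequent algebra is identical.
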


\begin{proof}
We differentiate (\ref{cro8}) in $t$. Let $\ftilde$ be the periodic extension of $\fsig$. 
Since $c(a,t)=\ftilde(a(t))$ $\forall t\in [0,T)$ (Lemma~\ref{lift}) we have $\partial_t c(a,t)=(\partial_p\ftilde) (a(t))\partial_t a(t)$ and thus
\begin{align*}
 \langle \partial_t c(a,t),\partial_p\ftilde (a(t))\rangle=\partial_t a(t)|\partial_p\ftilde(a(t))|^2 = \partial_t a(t),
\end{align*}
where we used that $\ftilde$ is parametrized by arclength. We have that $\partial_p\ftilde(a(t))=  \Sig\vec\tau\,\left(\ftilde(a(t))\right) = \Sig\vec\tau(c(a,t)) $.
We now use the equation $\partial_t c = (\kappa -\bar\kappa)\nu$ for the boundary points
\begin{align*}
 \partial_t a(t)= \langle \partial_t c(a,t),\Sig\vec\tau(c(a,t))\rangle = \left(\kappa(a,t) - \bar\kappa(t)\right) \langle \nu(a,t),\Sig\vec\tau(c(a,t))\rangle= \kappa(a,t)-\bar\kappa(t).
\end{align*}
We hence get
\begin{align}\label{dt2}
 \dt \Sig\kappa\circ\fsig^{-1}\circ c(a,t) = (\kappa(a,t)-\bar\kappa(t))\, \ps\Sig\kappa\circ\fsig^{-1}\circ c (a,t).
\end{align}
We now differentiate (\ref{cro8}), use (\ref{dt2}), the evolution equation for $\kappa$ (equation (\ref{evolution5})), the rule for changing $\ps$ and $\pt$ (equation (\ref{evolution2})) and get at the point $(a,t)$
  \begin{align*}
   \ps^3\kappa = &(\ps^2\kappa + \kappa^2\diff -\pt\bar\kappa)\left(\Sig\kappa \circ\fsig^{-1}\circ c\right)\\
   &+ \diff^2\left(\ps\Sig\kappa\circ\fsig^{-1}\circ c\right)  - 3\kappa\diff\,\ps\kappa - \kappa^2\ps\kappa,
  \end{align*}
and at the point $(b,t)$
  \begin{align*}
   \ps^3\kappa = &-(\ps^2\kappa + \kappa^2\diff - \pt\bar\kappa)\left(\Sig\kappa \circ\fsig^{-1}\circ c\right) \\
   &+ \diff^2\left(\ps\Sig\kappa\circ\fsig^{-1}\circ c\right)  - 3\kappa\diff\,\ps\kappa - \kappa^2\ps\kappa.
  \end{align*}
  Formula (\ref{cro8}) yields the result.
  \end{proof}

  \begin{lemma}\label{lem1}
   Let $c:[a,b]\times[0,T)\to \Rzwei $ ($T\leq\infty$) be a solution of (\ref{flow}).
   Then we have for $t\in(0,T)$ and with $\mu\defi t-\tau$, $\tau$ fixed,
   \begin{align*}
    \dt\left(\int\diff^2 + \mu(\ps\kappa)^2 +  \frac{\mu^2}{2}(\ps^2\kappa)^2\de s\right) \leq &-\int (\ps\kappa)^2 + \mu (\ps^2\kappa)^2 + \mu^2 (\ps^3\kappa)^2\de s\\
    &  +  4\mu\|\ps\kappa\|_\infty\|\ps^2\kappa\|_\infty + 2\mu^2\|\ps^2\kappa\|_\infty\|\ps^3\kappa\|_\infty \\
    & + \int p_4(\kappa) + \mu p_6(\ps\kappa) + \mu^2 p_8(\ps^2\kappa)\de s.
   \end{align*}

  \end{lemma}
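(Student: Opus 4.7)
The plan is to add the three evolution identities already at our disposal: Lemma~\ref{intk2} at level $m=0$ for $\int\diff^2\de s$, and Lemma~\ref{abl} at levels $m=1$ and $m=2$ for $\mu\int(\ps\kappa)^2\de s$ and $\tfrac{\mu^2}{2}\int(\ps^2\kappa)^2\de s$. The point is that the identities telescope: the ``forcing'' term $\tfrac{\mu^{m-1}}{(m-1)!}\int(\ps^m\kappa)^2\de s$, arising from the time derivative hitting the weight $\mu^m/m!$, combines with the good dissipation $-\tfrac{2\mu^{m-1}}{(m-1)!}\int(\ps^m\kappa)^2\de s$ produced by the level $m-1$ identity (after integration by parts) to leave exactly one negative copy of $\mu^{m-1}/(m-1)!\,\|\ps^m\kappa\|_2^2$. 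Carrying this out for $m=1,2,3$ (where the $m=3$ dissipation comes from level $m=2$ and has no matching forcing term above it) leaves precisely $-\int(\ps\kappa)^2 + \mu(\ps^2\kappa)^2 + \mu^2(\ps^3\kappa)^2\de s$.

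After this cancellation the only remaining contributions are three boundary terms and three integral polynomial terms. The first boundary term, $2[\diff\,\ps\kappa]_a^b$, is non-positive by Lemma~\ref{lemmastahl5}: substituting the formulas $\ps\kappa(a,t)=\diff(a,t)\,\Sig\kappa$ and $\ps\kappa(b,t)=-\diff(b,t)\,\Sig\kappa$ gives $2[\diff\ps\kappa]_a^b=-2\,\Sig\kappa\,(\diff(a)^2+\diff(b)^2)\leq 0$ since $\Sig\kappa\ge 0$ by convexity of $\Sigma$, so it may be dropped. The remaining boundary contributions $2\mu[\ps\kappa\,\ps^2\kappa]_a^b$ and $\mu^2[\ps^2\kappa\,\ps^3\kappa]_a^b$ are estimated at both endpoints by the trivial bound $|f(a)g(a)|+|f(b)g(b)|\leq 2\|f\|_\infty\|g\|_\infty$, producing the $4\mu\|\ps\kappa\|_\infty\|\ps^2\kappa\|_\infty$ and $2\mu^2\|\ps^2\kappa\|_\infty\|\ps^3\kappa\|_\infty$ terms in the claim.

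The polynomial terms from Lemmas~\ref{abl} ($m=1,2$) are already written in the form $\mu\int p_6(\ps\kappa)\de s$ and $\mu^2\int p_8(\ps^2\kappa)\de s$. For the $m=0$ level we have to verify that the right-hand side of Lemma~\ref{intk2}, namely $\int\diff^4 + 3\bar\kappa\diff^3 + 2\bar\kappa^2\diff^2\de s$, falls under $\int p_4(\kappa)\de s$. This is straightforward via the identity $\bar\kappa=\kappa-\diff$, which rewrites $\bar\kappa\diff^3$ and $\bar\kappa^2\diff^2$ as polynomials of total weight $4$ in $\kappa$ and $\diff$, matching the convention for $p_\sigma$ laid out in the Notations. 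The only moderately delicate step is the bookkeeping of the cancellation described above; once that is carried out there is no real obstacle, since everything else is algebraic manipulation, the boundary sign from Lemma~\ref{lemmastahl5}, and the pointwise estimate $|f(a)g(a)|+|f(b)g(b)|\leq 2\|f\|_\infty\|g\|_\infty$.
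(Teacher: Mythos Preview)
Your proof is correct and follows essentially the same approach as the paper: sum the evolution identities at levels $m=0,1,2$ so that the forcing term $\tfrac{\mu^{m-1}}{(m-1)!}\|\ps^m\kappa\|_2^2$ at each level is absorbed by the dissipation $-\tfrac{2\mu^{m-1}}{(m-1)!}\|\ps^m\kappa\|_2^2$ from the level below, drop the boundary term $2[\diff\,\ps\kappa]_a^b\leq 0$ via Lemma~\ref{lemmastahl5}, and bound the remaining boundary terms by sup norms. The paper's proof is terser (it simply cites Lemma~\ref{abl} for the combined identity and then notes $[\diff\,\ps\kappa]_a^b\leq 0$), while you spell out the telescoping and verify explicitly that the zero-order terms from Lemma~\ref{intk2} fit the $p_4(\kappa)$ convention; these are harmless elaborations of the same argument.
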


  \begin{proof}
   By  Lemma~\ref{abl}, we get
     \begin{align*}
    \dt&\left(\int\diff^2 + \mu(\ps\kappa)^2 +  \frac{\mu^2}{2}(\ps^2\kappa)^2\de s\right) \\
    &=-\int (\ps\kappa)^2 + \mu (\ps^2\kappa)^2 + \mu^2 (\ps^3\kappa)^2\de s
     + 2\left[\diff\ps\kappa\right]_a^b + 2\mu\left[\ps\kappa\ps^2\kappa\right]_a^b \\ &+ \mu^2\left[\ps^2\kappa\ps^3\kappa\right]_a^b  + \int p_4(\kappa) + \mu p_6(\ps\kappa) + \mu^2 p_8(\ps^2\kappa)\de s.
   \end{align*}
   As in Corollary~\ref{cor4} we have $\left[\diff\ps\kappa\right]_a^b\leq 0$.
  \end{proof}
  
  \begin{remark}
   We note here, that the idea to differentiate the term $$\int\diff^2 + \mu(\ps\kappa)^2 +  \frac{\mu^2}{2}(\ps^2\kappa)^2\de s$$ in $t$ and use the Gagliardo-Nirenberg interpolation inequalities comes from \cite{Mantepaper}.
  \end{remark}

  \begin{lemma} \label{restint}
    Let $c:[a,b]\times[0,T)\to \Rzwei $ ($T\leq\infty$) be a solution of (\ref{flow})
    with uniform bounds $|\bar\kappa(t)|\leq c_2<\infty$ and $L(\ct)\geq c_1 >0$ for all $t\in[0,T)$. Then we have for $j\in\N_0$: For every $\epsilon>0$ there are constants $C_1,C_2$ only depending on $\epsilon$, $j$, $L_0$, $c_1$ and $c_2$ such that for all $t\in(0,T)$ we have
    \begin{align*}
     \int p_{2j + 4}(\ps^j\kappa)\de s\leq \epsilon \int|\ps^{j + 1}\kappa|^2 \de s + C_1\left(\int\diff^2\de s\right)^{2j + 3} + C_2.
    \end{align*}

  \end{lemma}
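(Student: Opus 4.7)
\medskip

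\noindent\textbf{Proof plan.} By linearity of the integral, it suffices to bound a single monomial $m = c \prod_{l=0}^{j}(\ps^l\kappa)^{\alpha_l}$ with $\sum_{l=0}^{j}\alpha_l(l+1) = 2j+4$. Since $\bar\kappa$ is constant in $s$, we have $\ps^l\kappa = \ps^l\diff$ for $l\geq 1$, and for $l=0$ we write $\kappa = \diff + \bar\kappa$ and expand: using $|\bar\kappa|\leq c_2$, each resulting term is $(\int|\cdot|\,ds)$ of a monomial in $\ps^l\diff$ of weight at most $2j+4$, with constant depending on $c_2$. (The strictly lower-weight terms will be handled at the end together with the other lower-order pieces.) Thus we may assume from the outset a monomial $\prod(\ps^l\diff)^{\alpha_l}$ of weight exactly $2j+4$.

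Next, choose exponents $q_0,\dots,q_j\in(1,\infty)$ (taking any $l$ with $\alpha_l = 0$ out of the product) satisfying $\sum_l \alpha_l/q_l = 1$, and apply H\"older's inequality to get
\begin{equation*}
\int \prod_{l=0}^{j}|\ps^l\diff|^{\alpha_l}\,ds \;\leq\; \prod_{l=0}^{j}\|\ps^l\diff\|_{q_l}^{\alpha_l}.
\end{equation*}
To each factor I apply Corollary~\ref{gagli2} with $m = j+1$, giving
\begin{equation*}
\|\ps^l\diff\|_{q_l} \;\leq\; C\,\|\ps^{j+1}\kappa\|_2^{\sigma_l}\|\diff\|_2^{1-\sigma_l} + C\,L^{-(j+1)\sigma_l}\|\diff\|_2, \quad \sigma_l = \tfrac{l+\frac12-\frac{1}{q_l}}{j+1},
\end{equation*}
valid because $L(\ct)\geq c_1 > 0$ makes the GN-constant uniform. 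Multiplying out and using the trivial inequality $(x+y)^\alpha \leq 2^\alpha(x^\alpha+y^\alpha)$ on each factor, the dominant contribution is a term of the form $C\,\|\ps^{j+1}\kappa\|_2^{P}\|\diff\|_2^{Q}$ with $P = \sum_l\alpha_l\sigma_l$ and $Q = \sum_l\alpha_l - P$; the remaining contributions are bounded by $C\,\|\diff\|_2^{\sum_l\alpha_l}$ times constants depending on $c_1$.

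Using $\sum_l \alpha_l/q_l = 1$ and the weight identity $\sum_l\alpha_l(l+1) = 2j+4$, a direct calculation gives
\begin{equation*}
P = \frac{1}{j+1}\Bigl(2j + 3 - \tfrac12\textstyle\sum_l\alpha_l\Bigr), \qquad 2-P = \frac{\frac12\sum_l\alpha_l - 1}{j+1}, \qquad \frac{Q}{2-P} = 2j+3.
\end{equation*}
The crucial structural observation is that $\sum_l\alpha_l \geq 3$: $\sum\alpha_l = 1$ forces $l = 2j+3 > j$ and $\sum\alpha_l = 2$ forces either $l = j+1$ or $l_1+l_2 = 2j+2$, both impossible since $l,l_1,l_2\leq j$. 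Hence $P \leq (2j + 3/2)/(j+1) < 2$, and Young's inequality applied to $\|\ps^{j+1}\kappa\|_2^{P}\|\diff\|_2^{Q}$ (in the form $a^P b^Q \leq \epsilon\, a^2 + C_\epsilon\, b^{2Q/(2-P)}$) yields precisely $\epsilon\,\|\ps^{j+1}\kappa\|_2^2 + C_\epsilon\,(\int\diff^2\,ds)^{2j+3}$. Finally, the residual terms $\|\diff\|_2^{k}$ with $k\in\{0,1,\dots,2(2j+3)\}$ are all dominated by $C_2 + (\int\diff^2\,ds)^{2j+3}$ via the elementary bound $x^k \leq 1 + x^{2(2j+3)}$, completing the proof.

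\medskip

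\noindent The only nontrivial step is the combinatorial lemma $\sum_l\alpha_l \geq 3$, without which the $\|\ps^{j+1}\kappa\|_2^{2}$ term could not be absorbed; everything else is bookkeeping of the Gagliardo--Nirenberg exponents.
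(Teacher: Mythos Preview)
Your proof is correct and follows exactly the approach the paper uses (which in turn is the standard Mantegazza argument, cf.\ the reference \cite{Mantepaper} in the paper): reduce to monomials, replace $\kappa$ by $\diff+\bar\kappa$ using $|\bar\kappa|\leq c_2$, apply H\"older and Gagliardo--Nirenberg, then absorb via Young after the key count $\sum_l\alpha_l\geq 3$. One small simplification: since $\int\diff\,\de s=0$, you may invoke Lemma~\ref{gagli1a} directly rather than Corollary~\ref{gagli2}, which eliminates the residual $L^{-(j+1)\sigma_l}\|\diff\|_2$ terms entirely.
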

  \begin{proof}
   The proof works as in \cite[After Proposition~3.11]{Mantepaper}, we only have to substitute the zero order term $\kappa$ by $\kappa-\bar\kappa$. The Gagliardo-Nirenberg interpolation inequalities are used, thus the constants depend on the bounds on $L(\ct)$. Since we possibly transform an integral involving $\kappa^i$ as a factor of the integrand into an integral with $\diff^k$ as an factor in the integrand, we use the bound on $|\bar\kappa|$. 
  \end{proof}

  \begin{lemma} \label{lemboundary}
    Let $c:[a,b]\times[0,\infty)\to \Rzwei $ be a solution of 
    with $|\bar\kappa(t)|\leq c_2 <\infty$ and $L(\ct)\geq c_1 >0$ for all $t\in[0,\infty)$. Then for every $\epsilon>0$ there are constants $C_1, C_2$ only depending on $\epsilon$, $L_0$, $c_1$, $c_2$, $\|\Sig\kappa\|_{\infty}$ and  $\|\kappa-\bar\kappa\|_2$ (which is uniformly bounded cf. Corollary~\ref{intto0}) such that for all $t\in(0,\infty)$ we have
    \begin{align}\label{claim1}
    \mu \left|\left[\ps\kappa\ps^2\kappa\right]_a^b\right|\leq \epsilon \mu^2\|\ps^3\kappa\|_2^2 + \epsilon \mu \|\ps^2\kappa\|_2^2 + C_1 \mu + C_2.
    \end{align}
    Furthermore, there is a constant $C_3$ depending on $\epsilon$, $L_0$, $c_1$, $c_2$, $\|\kappa-\bar\kappa\|_2$ and $\|\Sig\kappa\|_{\infty} + \|\ps\Sig\kappa\|_{\infty}$ such that
    \begin{align}\label{claim2}
    \mu^2 \left|\left[\ps^2\kappa\ps^3\kappa\right]_a^b\right|\leq \epsilon \mu^2\|\ps^3\kappa\|_2^2 +  C_3 \mu^2,
    \end{align}
    where $\mu=t-\tau$, $\tau$ fixed.
  \end{lemma}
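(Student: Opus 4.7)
\noindent\textit{Plan.} The strategy is to convert each boundary value of $\ps^k\kappa$ for $k=1,3$ into controlled data via the boundary identities of Lemmas~\ref{lemmastahl5} and~\ref{rand2}, then to bound sup-norms of $\kappa-\bar\kappa$ and of $\ps^2\kappa$ by $L^2$ norms of higher derivatives through the Gagliardo--Nirenberg interpolation inequalities (Lemma~\ref{gagli1a}, applicable since $\kappa-\bar\kappa$ is mean zero, and Corollary~\ref{gagli2}, both with constants depending on the uniform lower bound $L\geq c_1$), and finally to redistribute factors of $\mu$ by multi-factor Young inequalities whose exponents are tuned to the precise right-hand sides.

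For~(\ref{claim1}), Lemma~\ref{lemmastahl5} gives $|\ps\kappa(p,t)|\leq\|\Sig\kappa\|_\infty|\kappa(p,t)-\bar\kappa|$ at $p\in\{a,b\}$, hence
\[
\mu\,\bigl|[\ps\kappa\,\ps^2\kappa]_a^b\bigr|\leq C\,\mu\,\|\kappa-\bar\kappa\|_\infty\,\|\ps^2\kappa\|_\infty.
\]
The interpolation inequalities yield $\|\kappa-\bar\kappa\|_\infty\leq C\|\ps\kappa\|_2^{1/2}\|\kappa-\bar\kappa\|_2^{1/2}$ and $\|\ps\kappa\|_2\leq C\|\ps^2\kappa\|_2^{1/2}\|\kappa-\bar\kappa\|_2^{1/2}$, so $\|\kappa-\bar\kappa\|_\infty\leq C_\star\|\ps^2\kappa\|_2^{1/4}$ with $C_\star$ absorbing $\|\kappa-\bar\kappa\|_2$; Corollary~\ref{gagli2} gives $\|\ps^2\kappa\|_\infty\leq C(\|\ps^3\kappa\|_2^{1/2}\|\ps^2\kappa\|_2^{1/2}+\|\ps^2\kappa\|_2)$. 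Collecting, the right-hand side is controlled by $C_\star\mu\bigl(\|\ps^2\kappa\|_2^{3/4}\|\ps^3\kappa\|_2^{1/2}+\|\ps^2\kappa\|_2^{5/4}\bigr)$. A three-factor Young inequality with exponents $(8/3,4,8/3)$ and $\mu$-weights $(\mu^{3/8},\mu^{1/2},\mu^{1/8})$ gives
\[
\mu\,\|\ps^2\kappa\|_2^{3/4}\|\ps^3\kappa\|_2^{1/2}\leq\tfrac{3}{8}\mu\|\ps^2\kappa\|_2^2+\tfrac{1}{4}\mu^2\|\ps^3\kappa\|_2^2+\tfrac{3}{8}\mu^{1/3},
\]
which after rescaling becomes $\epsilon\mu\|\ps^2\kappa\|_2^2+\epsilon\mu^2\|\ps^3\kappa\|_2^2+C_\epsilon\mu^{1/3}$, and a two-factor Young with exponents $(8/5,8/3)$ gives $\mu\|\ps^2\kappa\|_2^{5/4}\leq\epsilon\mu\|\ps^2\kappa\|_2^2+C_\epsilon\mu$. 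Since $\mu^{1/3}\leq 1+\mu$, this proves~(\ref{claim1}).

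For~(\ref{claim2}), Young's inequality reduces $\mu^2|\ps^2\kappa\,\ps^3\kappa|$ at each boundary point to $\mu^2|\ps^2\kappa|^2+\mu^2|\ps^3\kappa|^2$ up to constants. The trace bound $|\ps^2\kappa(p,t)|^2\leq C(\|\ps^2\kappa\|_2\|\ps^3\kappa\|_2+\|\ps^2\kappa\|_2^2)$ from Corollary~\ref{gagli2}, together with the interpolation $\|\ps^2\kappa\|_2^2\leq C\|\ps^3\kappa\|_2^{4/3}\|\kappa-\bar\kappa\|_2^{2/3}+C\|\kappa-\bar\kappa\|_2^2$ (Corollary~\ref{gagli2} with $n=2,\,m=3$) and the pointwise Young $x^{4/3}\leq\epsilon x^2+C_\epsilon$, converts $\mu^2|\ps^2\kappa(p,t)|^2$ into $\epsilon\mu^2\|\ps^3\kappa\|_2^2+C_\epsilon\mu^2$. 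For $|\ps^3\kappa(p,t)|$, Lemma~\ref{rand2} decomposes it into $\|\Sig\kappa\|_\infty|\ps^2\kappa(p,t)|$ plus lower-order contributions involving $|\kappa|\,|\kappa-\bar\kappa|^2$, $|\pt\bar\kappa|$ and $|\kappa-\bar\kappa|^2\|\ps\Sig\kappa\|_\infty$. A direct computation from the evolution equations~(\ref{evolution1}) and~(\ref{evolution5}) and Lemma~\ref{lemmastahl5} gives $\pt\bar\kappa=\tfrac{[\ps\kappa]_a^b}{L}+\tfrac{\bar\kappa}{L}\int\diff^2\de s$, hence $|\pt\bar\kappa|\leq C(\|\kappa-\bar\kappa\|_\infty+\|\kappa-\bar\kappa\|_2^2)$; moreover $\|\kappa-\bar\kappa\|_\infty^4\leq C\|\ps\kappa\|_2^2\leq C\|\ps^2\kappa\|_2$ by the chain used above. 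Multiplied by $\mu^2$ and fed through the same interpolation-and-Young step, all lower-order contributions are absorbed into $\epsilon\mu^2\|\ps^3\kappa\|_2^2+C_\epsilon\mu^2$.

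The principal difficulty is to avoid producing any $\|\ps^2\kappa\|_2^2$ term whose $\mu$-weight falls short of what the right-hand side permits, namely $\mu$ in~(\ref{claim1}) and $\mu^2$ in~(\ref{claim2}). This forces the three-factor Young inequality above in the first case, and the reduction $\mu^2\|\ps^2\kappa\|_2^2\leq\epsilon\mu^2\|\ps^3\kappa\|_2^2+C_\epsilon\mu^2$ via the interpolation $\|\ps^2\kappa\|_2^2\leq C\|\ps^3\kappa\|_2^{4/3}+C$ in the second.
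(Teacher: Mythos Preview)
Your argument is correct and follows the same overall strategy as the paper: invoke the boundary identities (Lemma~\ref{lemmastahl5} for $\ps\kappa$ and Lemma~\ref{rand2} for $\ps^3\kappa$), control sup-norms by Gagliardo--Nirenberg, and close with Young's inequality. The difference is purely in the choice of interpolation. You chain short-range inequalities ($m=1$ and $m=2$) to obtain $\|\kappa-\bar\kappa\|_\infty\leq C\|\ps^2\kappa\|_2^{1/4}$ and then need a carefully tuned three-factor Young inequality for~(\ref{claim1}), followed in~(\ref{claim2}) by the extra reduction $\|\ps^2\kappa\|_2^2\leq C\|\ps^3\kappa\|_2^{4/3}+C$. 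The paper instead interpolates everything directly against $\|\ps^3\kappa\|_2$ with $m=3$ in Lemma~\ref{gagli1a}, obtaining $\|\kappa-\bar\kappa\|_\infty\leq c\|\ps^3\kappa\|_2^{1/6}$, $\|\ps\kappa\|_\infty\leq c\|\ps^3\kappa\|_2^{1/2}$, $\|\ps^2\kappa\|_\infty\leq c\|\ps^3\kappa\|_2^{5/6}$; this produces only powers of $\|\ps^3\kappa\|_2$ strictly below $2$ and allows ordinary two-factor Young throughout, so the bookkeeping is lighter. Both routes reach the same conclusion with constants depending on the same data.
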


  \begin{proof}
   We use $\|\kappa-\bar\kappa\|_2\leq C$ and $\|\kappa\|_2\leq C$. By Lemma~\ref{gagli1a} and Corollary~\ref{gagli2}, we get $  \|\kappa-\bar\kappa\|_\infty\leq c\|\ps^3\kappa\|_2^\frac{1}{6}$ and $ \|\ps^2\kappa\|_\infty \leq c\|\ps^3\kappa\|_2^\frac{1}{2}\|\ps^2\kappa\|_2^{\frac{1}{2}}  + c\|\ps^2\kappa\|_2$.
These two inequalities imply together with (\ref{cro8}) and the young inequality 
\begin{align*}
  \mu \left|\left[\ps\kappa\ps^2\kappa\right]_a^b\right|& \leq c\mu\|\kappa-\bar\kappa\|_\infty\|\ps^2\kappa\|_\infty
  \leq  c\mu\left(\|\ps^3\kappa\|_2^{\frac{2}{3}} \|\ps^2\kappa\|_2^{\frac{1}{2}}  + \|\ps^3\kappa\|_2^{\frac{1}{6}} \|\ps^2\kappa\|_2 \right)\\
  &= c\left( \mu^{\frac{2}{3} + \frac{1}{3}}\|\ps^3\kappa\|_2^{\frac{2}{3}} \|\ps^2\kappa\|_2^{\frac{1}{2}}  + \mu^{\frac{1}{6} + \frac{5}{6}}\|\ps^3\kappa\|_2^{\frac{1}{6}} \|\ps^2\kappa\|_2 \right)\\
  &\leq 2 \epsilon \mu^2 \|\ps^3\kappa\|_2^{2} + c(\epsilon)\mu^{\frac{1}{2}}\|\ps^2\kappa\|_2^{\frac{3}{4}} +  c(\epsilon)\mu^{\frac{10}{11}}\|\ps^2\kappa\|_2^{\frac{12}{11}} \\
  &\leq 2\epsilon \mu^2 \|\ps^3\kappa\|_2^{2} + 2\epsilon \mu \|\ps^2\kappa\|_2^{2} + c(\epsilon)\left(\mu^{\frac{1}{8}\cdot\frac{8}{5} }  + \mu^{\frac{4}{11}\cdot\frac{11}{5} }\right)\\
  &\leq 2\epsilon \mu^2 \|\ps^3\kappa\|_2^{2} + 2\epsilon \mu \|\ps^2\kappa\|_2^{2} + c(\epsilon)\mu + c(\epsilon),
\end{align*}
and this is (\ref{claim1}). For (\ref{claim2}), we show $\left|\left[\ps^2\kappa\ps^3\kappa\right]_a^b\right|\leq \epsilon \|\ps^3\kappa\|_2^2 +  C_1. $
Due to Lemma~\ref{rand2}, we have
\begin{align}\begin{split}
 \left|\left[\ps^2\kappa\ps^3\kappa\right]_a^b\right|&\leq 2\|\ps^2\kappa\|_\infty\|\ps^3\kappa\|_\infty\\
 &\leq \tilde c\, \|\ps^2\kappa\|_\infty\Big(\|\ps^2\kappa\|_\infty + \|\kappa\diff^2\|_\infty + |\dt\bar\kappa| + \|\kappa-\bar\kappa\|_\infty^2\Big),
\end{split}\label{re27}\end{align}
where $\tilde c$ also depends on $\|\Sig\kappa\|_{\infty} + \|\ps\Sig\kappa\|_{\infty}$.
We use Lemma~\ref{gagli1a} and Corollary~\ref{gagli2} and get
\begin{align*}
 \|\kappa-\bar\kappa\|_\infty&\leq c\|\ps^3\kappa\|_2^\frac{1}{6}, \qquad \|\kappa\|_\infty\leq c\|\ps^3\kappa\|_2^\frac{1}{6} + c,\\
 \|\ps\kappa\|_\infty&\leq c\|\ps^3\kappa\|_2^\frac{1}{2}, \qquad \|\ps^2\kappa\|_\infty\leq c\|\ps^3\kappa\|_2^\frac{5}{6}.
\end{align*}
We compute 
\begin{align*}
 \begin{split}
 \frac{\de}{\de t} \bar\kappa &= \frac{1}{L(c_t)} \frac{\de}{\de t} \int_a^b\kappa\de s - \frac{1}{L(c_t)^2} \frac{\de}{\de t} L(c_t) \int_a^b\kappa\de s\\
    & =  \frac{1}{L(c_t)} \int_a^b \partial_t\kappa \de s + \frac{1}{L(c_t)}\int_a^b \kappa \partial_t\left(\de s\right)  + \frac{\bar\kappa}{L(c_t)}\int_a^b (\kappa - \bar\kappa)^2\de s\\
     & =  \frac{1}{L(c_t)} \int_a^b \partial^2_s \kappa \de s  + \frac{\bar\kappa}{L(c_t)}\int_a^b (\kappa - \bar\kappa)^2\de s\\
     &= \frac{1}{L(\ct)}\left[\ps\kappa\right]_a^b + \frac{\bar\kappa}{L(\ct)}\intab\diff^2\de s.
\end{split} 
\end{align*}
 This implies $\left|\dt\bar\kappa\right|\leq c\|\ps\kappa\|_\infty + c\leq c\|\ps^3\kappa\|_2^\frac{1}{2} + c. $
We combine (\ref{re27}) and the interpolation inequalities above and get
\begin{align*}
 \left|\left[\ps^2\kappa\ps^3\kappa\right]_a^b\right|\leq c_\Sigma \left(\|\ps^3\kappa\|_2^{\frac{5}{3}} + \|\ps^3\kappa\|_2^{\frac{5}{6}} + \|\ps^3\kappa\|_2^{\frac{4}{3}} + \|\ps^3\kappa\|_2^{\frac{7}{6}}\right).
\end{align*}
All the exponents are smaller then $2$, so by the Young inequality, we get the result.
  \end{proof}

  \begin{lemma}\label{dtL2}
   Let $c:[a,b]\times[0,\infty)\to \Rzwei $ be a solution of (\ref{flow}) with
   $|\bar\kappa(t)|\leq c_2<\infty$ and $L(\ct)\geq  c_1 >0$ for all $t\in[0,\infty)$.  Then there is a constant $C>0$ such that for all $t\in(0,\infty)$ we have
   \begin{align*}
     \dt\left(\int\diff^2 + \mu(\ps\kappa)^2 +  \frac{\mu^2}{2}(\ps^2\kappa)^2\de s\right) \leq C(\mu^2 + \mu + 1)
   \end{align*}
where $\mu=t-\tau$, $\tau$ fixed. The constant depends on $c_1$, $c_2$, $L_0$, $\|\kappa-\bar\kappa\|_2$ and $\|\Sig\kappa\|_{\infty} + \|\ps\Sig\kappa\|_{\infty}$.
  \end{lemma}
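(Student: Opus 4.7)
The plan is to combine the three preceding technical lemmas. Starting from the time-derivative identity of Lemma~\ref{lem1}, namely
\begin{align*}
\dt&\left(\int\diff^2 + \mu(\ps\kappa)^2 + \tfrac{\mu^2}{2}(\ps^2\kappa)^2\de s\right) \\
&\leq -\|\ps\kappa\|_2^2 - \mu\|\ps^2\kappa\|_2^2 - \mu^2\|\ps^3\kappa\|_2^2 \\
&\quad + 2\mu\bigl|\bigl[\ps\kappa\,\ps^2\kappa\bigr]_a^b\bigr| + \mu^2\bigl|\bigl[\ps^2\kappa\,\ps^3\kappa\bigr]_a^b\bigr| \\
&\quad + \int p_4(\kappa)\de s + \mu\int p_6(\ps\kappa)\de s + \mu^2\int p_8(\ps^2\kappa)\de s,
\end{align*}
I would use Lemma~\ref{restint} (applied in turn with $j=0,1,2$) to absorb the three polynomial integrals and Lemma~\ref{lemboundary} to absorb the two boundary terms. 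The uniform bound on $\|\kappa-\bar\kappa\|_2$ supplied by Corollary~\ref{intto0}, together with the standing hypotheses $|\bar\kappa|\leq c_2$ and $L(\ct)\geq c_1$, guarantees that all constants produced by those two lemmas depend only on the admissible data.

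Each absorption step produces, for a fixed small $\epsilon>0$, an error of the form $\epsilon\|\ps^{j+1}\kappa\|_2^2$ weighted by the appropriate power of $\mu$, plus a lower-order remainder of order $C(\mu^2+\mu+1)$. Collecting these, the three polynomial integrals are bounded by $\epsilon\bigl(\|\ps\kappa\|_2^2+\mu\|\ps^2\kappa\|_2^2+\mu^2\|\ps^3\kappa\|_2^2\bigr)+C(\mu^2+\mu+1)$, while Lemma~\ref{lemboundary} bounds the two boundary contributions by $\epsilon\bigl(2\mu\|\ps^2\kappa\|_2^2+3\mu^2\|\ps^3\kappa\|_2^2\bigr)+C(\mu^2+\mu+1)$.

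Finally I would fix $\epsilon$ so small (for instance $\epsilon<1/4$) that the total coefficient in front of each of $\|\ps\kappa\|_2^2$, $\mu\|\ps^2\kappa\|_2^2$ and $\mu^2\|\ps^3\kappa\|_2^2$ stays strictly less than $1$. The three negative terms inherited from Lemma~\ref{lem1} then absorb all the $\epsilon$-scale positive contributions, leaving only $C(\mu^2+\mu+1)$, which is the claimed inequality.

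There is no real obstacle beyond careful bookkeeping, since Lemmas~\ref{restint} and \ref{lemboundary} already carry the genuine analytic load: Gagliardo-Nirenberg interpolation combined with the uniform $L^2$-bound on $\diff$ for the polynomial integrals, and the boundary identities~(\ref{cro8}) and Lemma~\ref{rand2} (expressing $\ps^3\kappa$ at the endpoints through $\Sig\kappa$, $\ps\Sig\kappa$ and $\pt\bar\kappa$) for the endpoint terms. The only mildly delicate point is the matching of $\mu$-weights, so that one single $\epsilon$ works uniformly in $\mu\geq 0$; this is exactly the reason the test quantity was set up with the weights $1$, $\mu$, $\mu^2/2$.
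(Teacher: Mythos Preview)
Your proposal is correct and follows exactly the paper's approach: the paper's proof consists of the single sentence ``Combine Lemma~\ref{lem1}, Lemma~\ref{restint} and Lemma~\ref{lemboundary},'' and you have spelled out precisely this combination, including the correct bookkeeping of the $\mu$-weights and the choice of a single small $\epsilon$ to absorb all error terms into the negative gradient integrals.
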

  
  \begin{proof}
   Combine Lemma~\ref{lem1}, Lemma~\ref{restint} and Lemma~\ref{lemboundary}. 
  \end{proof}

  \begin{corollary} \label{kappaabsch}
    Let $c:[a,b]\times[0,\infty)\to \Rzwei $ be a solution of (\ref{flow}) with 
    $|\bar\kappa(t)|\leq c_2<\infty$ and $L(\ct)\geq  c_1 >0$ for all $t\in[0,\infty)$. Then there is a constant $C>0$ depending on $c_1$,$c_2$, $L_0$, $\|\kappa-\bar\kappa\|_2$ and $\|\Sig\kappa\|_{\infty} + \|\ps\Sig\kappa\|_{\infty}$ such that for all $t\in[1,\infty)$ we have
    \begin{align*}
     \|\kappa\|_\infty + \|\ps\kappa\|_\infty\leq C.
    \end{align*}

  \end{corollary}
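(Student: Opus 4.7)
The plan is to integrate the differential inequality from Lemma~\ref{dtL2} over a unit time interval ending at $t$, then convert the resulting $L^2$-bound into the desired $L^\infty$-bound via the Gagliardo-Nirenberg inequalities. The point of the auxiliary factors $\mu$ and $\mu^2/2$ in the test quantity is precisely to make the initial data at $t=\tau$ involve only $\int(\kappa-\bar\kappa)^2\de s$, which is uniformly bounded by Corollary~\ref{intto0}.

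Concretely, fix $t\geq 1$ and set $\tau\defi t-1$, so that $\mu=s-\tau\in[0,1]$ as $s$ runs from $\tau$ to $t$. Lemma~\ref{dtL2} yields
\begin{align*}
\int\diff^2 + \mu(\ps\kappa)^2 +  \tfrac{\mu^2}{2}(\ps^2\kappa)^2\de s \, \bigg|_{s=t}
\leq \int\diff^2\de s\,\bigg|_{s=\tau} + \int_\tau^t C(\mu^2+\mu+1)\,\de s,
\end{align*}
and both terms on the right are bounded uniformly in $\tau\geq 0$: the first by Corollary~\ref{intto0} (which gives a uniform bound on $\|\kappa-\bar\kappa\|_2$), the second trivially since $\mu\in[0,1]$. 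Evaluating at $s=t$ (where $\mu=1$) gives
\begin{align*}
\|\kappa-\bar\kappa\|_2^2 + \|\ps\kappa\|_2^2 + \tfrac{1}{2}\|\ps^2\kappa\|_2^2 \leq \tilde C \qquad \forall\, t\in[1,\infty),
\end{align*}
with $\tilde C$ depending only on $c_1,c_2,L_0,\|\kappa-\bar\kappa\|_2$ and the $C^1$-norm of $\Sig\kappa$.

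Now one converts these uniform $L^2$-bounds into $L^\infty$-bounds. Since $L(c_t)\leq L_0$ and $|\bar\kappa|\leq c_2$, we have $\|\kappa\|_2\leq \|\kappa-\bar\kappa\|_2 + c_2\sqrt{L_0}\leq C$. Applying Corollary~\ref{gagli2} with $m=1$, $n=0$, $p=\infty$ gives
\begin{align*}
\|\kappa\|_\infty &\leq c\,\|\ps\kappa\|_2^{1/2}\|\kappa\|_2^{1/2} + \tfrac{\tilde c}{\sqrt{L(c_t)}}\|\kappa\|_2, \\
\|\ps\kappa\|_\infty &\leq c\,\|\ps^2\kappa\|_2^{1/2}\|\ps\kappa\|_2^{1/2} + \tfrac{\tilde c}{\sqrt{L(c_t)}}\|\ps\kappa\|_2,
\end{align*}
and both right-hand sides are uniformly bounded by the preceding $L^2$-estimate together with the lower bound $L(c_t)\geq c_1$. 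This yields the claim.

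I do not expect real obstacles here; the essential work has been done in Lemmas~\ref{lem1}, \ref{restint}, \ref{lemboundary} and \ref{dtL2}. The only subtlety is ensuring that the initial value of the test quantity at time $\tau$ is controlled uniformly in $\tau$, which is why the $\mu$-weights (vanishing at $\mu=0$) were introduced in the first place and why only $\|\kappa-\bar\kappa\|_2$ (and not, e.g., $\|\ps\kappa\|_2$) enters the constant.
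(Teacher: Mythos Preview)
Your proof is correct and follows essentially the same approach as the paper's: integrate the differential inequality of Lemma~\ref{dtL2} from a starting time where the $\mu$-weights vanish (so only $\|\kappa-\bar\kappa\|_2^2$ contributes), extract uniform $L^2$-bounds on $\ps\kappa$ and $\ps^2\kappa$ at the final time, and then interpolate. Your choice $\tau=t-1$ is in fact slightly more direct than the paper's formulation via a sequence $t_l\to\infty$ and intervals $[t_l+\delta,t_l+2\delta]$, but the argument is the same.
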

  
  \begin{proof}
 The proof is analogous to that of \cite[Proposition~4.8]{Magni}:
   Consider any sequence $t_l\to\infty$ and define $\mu\defi t-t_l$. With the previous lemma, we get 
 \begin{align*}
   \dt\left(\int\diff^2 + \mu(\ps\kappa)^2 +  \frac{\mu^2}{2}(\ps^2\kappa)^2\de s\right) \leq C(\mu^2 + \mu + 1).
 \end{align*}
Integration from $t_l$ untill $t\in(t_l,t_l + 2\delta]$ ($\delta>0$ arbitrary) yields
\begin{align*}
 \int\diff^2 + \mu (\ps\kappa)^2  +  \frac{\mu^2}{2}(\ps^2\kappa)^2\de s\leq \int\diff^2\de s\big|_{t=t_l} + c \left(\delta^{3} + \delta^2 + \delta \right)\leq c \left(\delta^{3} + \delta^2 + \delta + 1 \right).
\end{align*}
  Considering $t \in [t_l + \delta, t_l+2\delta]$, we have
  \begin{align*}
  \int\diff^2 + \delta (\ps\kappa)^2 + \frac{\delta^2}{2}(\ps^2\kappa)^2\de s &\leq 
   \int\diff^2 + \mu (\ps\kappa)^2  + \frac{\mu^2}{2}(\ps^2\kappa)^2\de s \\&\leq  c \left(\delta^{3} + \delta^2  + \delta + 1\right).
  \end{align*}
It follows for $t \in [t_l + \delta, t_l+2\delta]$ that
\begin{align*}
\|\kappa-\bar\kappa\|_{2}+\|\ps\kappa\|_{2}+ \|\ps^2\kappa\|_{2}\leq c(\delta).
\end{align*}
The constant $\delta$ was arbitrary and the estimate above holds for any sequence $t_l\to\infty$. So we choose $t_l$ and $\delta$ such that for every $t\in[1,\infty)$ we have $t\in[t_l + \delta,t_l + 2\delta]$ for an $l\in\N$. For $t\in[1,\infty]$, it follows that
\begin{align*}
 \|\kappa-\bar\kappa\|_{2}+\|\ps\kappa\|_{2}+ \|\ps^2\kappa\|_{2}\leq c.
\end{align*}
The integral $\int\kappa^2\de s$ is bounded because of Corollary~\ref{intto0} and the bounds on $\bar\kappa$. We get the result by interpolation.
  \end{proof}

\begin{theorem}
 Let $c:[a,b]\times[0,\infty)\to \Rzwei $ be a solution of the area preserving curve shortening problem with Neumann free boundary conditions with
 \begin{align*}
  0<\bar c\leq|\bar\kappa(t)|\leq c_2<\infty \ \ \ \text{ and }\ \ \ L(\ct)\geq  c_1 >0  \ \  \ \text{ for all } t\in[0,\infty).
  \end{align*}
Then for every sequence $t_j\to\infty$ the curves $c(\cdot,\tj)$ sub\-con\-verge (after reparametrization) smooth\-ly to a (possibly multiply covered) arc of a circle. The two boundary points of this arc are in $\Sigma$ and the arc is perpendicular to $\Sigma$ at these points. The arc ``starts'' into the outer region with respect to $\Sigma$, and it meets $\Sigma$ from the outside at the ``endpoint''.
\end{theorem}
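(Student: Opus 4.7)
The plan is to combine the $L^2$ decay $\int\diff^2\de s\to 0$ from Corollary~\ref{intto0} with uniform smoothness bounds obtained by iterating the integral estimates of Section~\ref{S6}, and then extract a smooth subsequential limit via Arzela-Ascoli whose curvature is forced to be a positive constant.

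First I would upgrade Corollary~\ref{kappaabsch} from a bound on $\|\kappa\|_\infty+\|\ps\kappa\|_\infty$ to uniform bounds $\|\ps^m\kappa\|_\infty\le C_m$ on $\ab\times[1,\infty)$ for every $m\in\N_0$, proceeding by induction on $m$. The three ingredients are: (i) Lemma~\ref{abl}, which controls $\dt\bigl(\tfrac{\mu^m}{m!}\int(\ps^m\kappa)^2\de s\bigr)$ up to the boundary term $\tfrac{2\mu^m}{m!}[\ps^m\kappa\,\ps^{m+1}\kappa]_a^b$ and a polynomial term $\int p_{2m+4}(\ps^m\kappa)\de s$; (ii) Lemma~\ref{restint}, which absorbs the polynomial term into $\epsilon\int|\ps^{m+1}\kappa|^2\de s$ plus a power of $\|\diff\|_2$ via Gagliardo-Nirenberg; (iii) iterated versions of Lemma~\ref{rand2}, which express the odd-order boundary values $\ps^{2k+1}\kappa(a,t),\,\ps^{2k+1}\kappa(b,t)$ in terms of lower-order spatial derivatives of $\kappa$, time derivatives of $\bar\kappa$, and derivatives of $\Sig\kappa\circ\fsig^{-1}\circ c$, obtained by differentiating the Neumann identity of Lemma~\ref{lemmastahl5} repeatedly in $t$ using the flow equation. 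The boundary term is then absorbed by the same Young-inequality trick as in Lemma~\ref{lemboundary}, and the telescoping argument of Corollary~\ref{kappaabsch} (with $\mu=t-t_\ell$ on short windows $[t_\ell+\delta,t_\ell+2\delta]$) gives the claimed uniform bound on every derivative.

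Given these bounds, I reparametrize each $c(\cdot,t_j)$ by arclength, obtaining smooth immersions $\hat c_j:[0,L(c_{t_j})]\to\Rzwei$ with all derivatives uniformly bounded. By Lemma~\ref{cinD} their images lie in the compact set $D=\{x:\dist(x,\Sigma)\le L_0/2\}$; since $L(c_{t_j})\in[c_1,L_0]$ and $\bar\kappa(t_j)\in[\bar c,c_2]$, passing to a subsequence I may assume $L(c_{t_j})\to L_\infty\in[c_1,L_0]$ and $\bar\kappa(t_j)\to\bar\kappa_\infty\in[\bar c,c_2]$. Arzela-Ascoli then yields a further subsequence converging in $C^\infty([0,L_\infty],\Rzwei)$ to a smooth arclength-parametrized immersion $c_\infty$. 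Corollary~\ref{intto0} combined with the smooth convergence forces $\int_{c_\infty}(\kappa_\infty-\bar\kappa_\infty)^2\de s=0$, hence $\kappa_\infty\equiv\bar\kappa_\infty>0$, so $c_\infty$ parametrizes an arc of the circle of radius $1/\bar\kappa_\infty$ (possibly wrapping around it several times, since no upper bound on $\int\kappa\de s$ has been assumed). Finally, the boundary conditions of (\ref{flow}) pass to the limit: $c_\infty(0),c_\infty(L_\infty)\in\Sigma$ since $\Sigma$ is closed, and the oriented angles $\winkel(\tau,\Sig\vec\tau)=+\tfrac{\pi}{2}$ at the left endpoint and $-\tfrac{\pi}{2}$ at the right endpoint are preserved under smooth convergence, giving perpendicular intersection; the sign of these angles also encodes that $c_\infty$ leaves $\Sigma$ into the outer region and returns to $\Sigma$ from the outside.

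The main obstacle is step one, namely propagating the uniform smoothness bound from $m=1$ to arbitrary $m$. The difficulty is entirely at the boundary: the polynomial interior term is handled by Lemma~\ref{restint} exactly as before, but the expressions for $\ps^{2k+1}\kappa$ at $a$ and $b$ grow in complexity with $k$, involving iterated time derivatives of $\bar\kappa$ (which are controlled by the formula $\dt\bar\kappa=\tfrac{1}{L}[\ps\kappa]_a^b+\tfrac{\bar\kappa}{L}\int\diff^2\de s$ and its higher-order analogues) and mixed derivatives of $\Sig\kappa$, all of which must be re-expressed in terms of spatial derivatives of $\kappa$ and then absorbed via Gagliardo-Nirenberg with sufficiently small coefficient. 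Once this bookkeeping is done carefully, the Arzela-Ascoli extraction and the characterization of the limit are essentially immediate.
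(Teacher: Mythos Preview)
Your proposal is correct and follows the route the paper explicitly acknowledges as an alternative in the remark immediately after the theorem: you iterate the integral estimates of Section~\ref{S6} to get $\|\ps^m\kappa\|_\infty\le C_m$ for all $m$ (this is precisely Lemma~\ref{lem2} in the appendix), and then run Arzela--Ascoli on arclength reparametrizations. The paper's own proof takes a shorter path: it only uses Corollary~\ref{kappaabsch} to get a uniform $C^0$ bound on $\kappa$, and then invokes the machinery already developed in Proposition~\ref{ParabolicGrenzuebergangII}---namely Stahl's gradient estimates on the local graph representation, which under a uniform curvature bound yield $C^{k+\alpha,(k+\alpha)/2}$ control on the graph function and hence bounds on all $|\pp^m\gamma_l|$ on short time windows. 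Your approach has the virtue of being self-contained within the integral-estimate framework and avoids appealing to the graph-representation theory; the paper's approach is quicker because that theory has already been set up for the blowup analysis. The identification of the limit (via $\int(\kappa_\infty-\bar\kappa_\infty)^2\de s=0$ from Corollary~\ref{intto0}) and the passage of the boundary conditions to the limit are the same in both proofs.
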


\begin{proof}
For any sequence $\tau_l\to\infty$, reparametrize the curves $c(\cdot,\tau_l)$ by constant speed: Define $L_l\defi L(c_{\tau_l})$ and note $L_l\in[c_1,L_0]$. Define $\varphi_l(p)\defi\frac{1}{L_l}\int_a^p|c'(\cdot,\tau_l)|$
and denote the inverse function by $\psi_l:[0,1]\to[a,b]$. Define
\begin{align*}
 \gamma_l\defi c(\psi_l,\cdot):[0,1]\times [0,\infty)\to\Rzwei.
\end{align*}
Then we have $|\gamma_l'|= L_l$ at the times $\tau_l$. By Corollary~\ref{kappaabsch} we have $$\sup_{(p,t)\in[0,1]\times[1,\infty)}|\kappa_l(p,t)|\leq C$$ for a constant not depending on $l$. As in Proposition~\ref{ParabolicGrenzuebergangII} 
we get a constant $c$ depending on $m,\Sigma,C, c_1, L_0$ and $\delta$ such that $$|\pp^m\gamma_l|\leq c \text{ on }[0,1]\times [\tau_l,\tau_l + \delta].$$It is easy to find $\tau_l\to\infty$ and $\delta>0$ such that $\bigcup_{l\in\N}[\tau_l,\tau_l + \delta) = [1,\infty)$. It follows that 
\begin{align}\label{re28}
 |\pp^m\gamma_l|\leq c \text{ on }[0,1]\times [1,\infty).
\end{align}
For every sequence $t_l\to\infty$ we consider $\gamma_l(\cdot,t_l)$. By inequality (\ref{re28}) and the theorem of Arzela-Ascoli we have subconvergence to a smooth curve $\gamma_\infty:[0,1]\to\Rzwei$ in every $C^{m}$ on $[0,1]$, $m\in\N_0$. 
This implies
\begin{align*}
   \lim_{l\to\infty}\bar\kappa(t_l) = \lim_{l\to\infty}\bar\kappa(\gamma_l) = \lim_{l\to\infty}\frac{\int\kappa_{\gamma_l}\de s_{\gamma_l}}{\int\de s_{\gamma_l}}=\bar\kappa(\gamma_\infty)\in[\bar c,c_2].
 \end{align*}
It follows that
\begin{align*}
 \lim_{l\to\infty}\int(\kappa_{\gamma_l}-\bar\kappa(\gamma_\infty))^2\de s_{\gamma_l} = \lim_{l\to\infty}\int(\kappa_{\gamma_l}-\bar\kappa(\gamma_l))^2\de s_{\gamma_l} = 0,
\end{align*}
which implies $\kappa_{\gamma_\infty}\equiv \bar\kappa(\gamma_\infty)>0$. Thus, $\gamma_\infty$ is an arc of a circle. Since $\Sigma$ is closed, the points $\gamma_\infty(0)= \lim_{l\to\infty}c(a,t_l)$ and $\gamma_\infty(1)= \lim_{l\to\infty}c(b,t_l)$ lie on $\Sigma$. The facts about the contact angles follow from continuity.
%
\end{proof}

\begin{remark}
 In the proof of the previous theorem, one could have used the estimates on the higher derivatives of $\kappa$ as in Lemma~\ref{lem2} instead of the estimates that came from the graph representation.
\end{remark}

%
%

 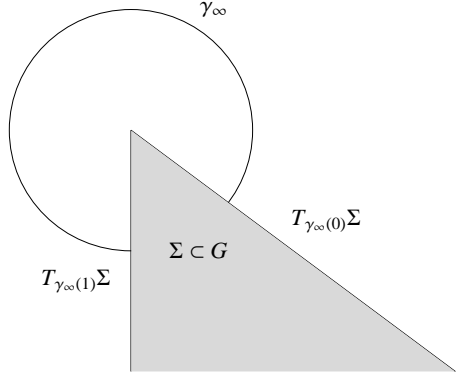
\begin{figure}
	  \begin{center}
\scalebox{0.8}{ \begin{tikzpicture}
 
 \draw (0,0) circle (2cm);
 \draw (-0.00,-4) to (-0.00,0); 
  \draw (5.333,-4) to (0,0);
\fill[gray!30] (0.005,-4)-- (0.005,-0.005) -- (5.33,-4);
 \coordinate[label = right: ${\Sigma\subset G}$] (C) at (0.5,-2);
  \coordinate[label = right: ${T_{\gamma_\infty(0)}\Sigma}$] (D) at (2.5,-1.5);
    \coordinate[label = right: ${\gamma_\infty}$] (E) at (1,2);
      \coordinate[label = left: ${T_{\gamma_\infty(1)}\Sigma}$] (F) at (-0.2,-2.5);

 \end{tikzpicture}}
 
	  \end{center}
	   \caption{The situation in the limit: The domain $G$ (gray), where the support curve is contained.} \label{Bild17} 
\end{figure}

{\it Proof of Theorem~\ref{mainthm}.}
Theorem~\ref{typeIIrandneu} gives $T=\infty$. The bounds on $\bar\kappa$ and $L(\ct)$ can be found in  Proposition~\ref{Laengenachuntenabsch} and Theorem~\ref{Linftyabschaetzung}. By Theorem~\ref{mitdSig}, we have $\int\kappa\de s\in[\pi,2\pi)$ for all $t\in[0,\infty)$. It follows that $\int\kappa_\infty\de s_{\gamma_\infty}\in[\pi,2\pi]$. But $\int\kappa_\infty\de s_{\gamma_\infty}=2\pi$ would imply that $\gamma_\infty$ is a full circle. 
This is a contradiction to the boundary behaviour of $\gamma_\infty$. Hence, $\gamma_\infty$ is embedded.\\
 It is easy to see that $\gamma_\infty$ is outside of the domain created by $\Sigma$: The tangent $T_{\gamma_\infty(0)}\Sigma$ is perpendicular to the tangent line of $\gamma_\infty(0)$. We even know 
 $$\winkel\left(\tau_{\gamma_\infty}(0),\Sig\vec\tau(\gamma_\infty(0))\right)=\frac{\pi}{2} \text{ and } \winkel\left(\tau_{\gamma_\infty}(1),\Sig\vec\tau(\gamma_\infty(1))\right)=-\frac{\pi}{2}.$$
 Since a smooth convex domain always lies on the side of the half space created by the tangent line that goes into the direction of the inner normal, we get a domain $G$, where $\Sigma$ must be contained. In Figure \ref{Bild17}, this domain is colored in gray. Since we are in the special situation that $\gamma_\infty$ is a part of a circle, the two generating lines of this domain (namely $T_{\gamma_\infty(0)}\Sigma$ and $T_{\gamma_\infty(1)}\Sigma$) meet in the center of the circle. But this implies that there cannot be another intersection point of $\gamma_\infty$ with $\Sigma$ except the two boundary points.


\begin{appendix}
\section{Further calculations} \label{App}

\begin{lemma}[due to Stone \cite{Stone}]  \label{Parabolicstone}
  Let $c:[a,b]\times[0,T)\to \Rzwei $ be a solution of the area preserving curve shortening problem with Neumann free boundary conditions with singularity at $T<\infty$,
where $x_0\in\Sigma$ is a blowup point in the support curve. 
Assume further $|\bar\kappa|\leq c_2<\infty.$
After parabolic rescaling as in Definition~\ref{Parabolicrescaling} and with the notation $\tildec^\tau\defi \tilde c_j(\cdot,\tau)$ we have the following properties:
 \begin{enumerate} 
  \item For any $\tau\in(-\infty,0)$ there is a $j_0=j_0(\tau,T) \in \mathbb N$ and a constant \\$\hat C_3 = \hat C_3 (\tau,L(c_0), c_2,T)$ such that for all $j\geq j_0$ \label{Paraboliclemmaa}
	\begin{align*}
	 \int_{\tildec^{\tau}} e^{-\frac{|x|}{-2\tau}}\de \tilde s_\tau\leq \hat C_3.
	\end{align*}
  \item For any $\tau\in(-\infty,0)$ and $\epsilon>0$ there is a $j_0=j_0(\tau,T) \in \mathbb N$ and a radius \\$R_0=R_0(\tau,\epsilon,L(c_0), c_2,T)$ such that for all $j\geq j_0$ \label{Paraboliclemmab}
	\begin{align*}
	  \int_{\tildec^{\tau}\cap \,\left(B_{R_0}(0)\right)^c} e^{-\frac{|x|^2}{-4\tau}}\de \tilde s_\tau\leq  \epsilon.
	\end{align*}
 \end{enumerate}
\end{lemma}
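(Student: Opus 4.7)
The strategy follows Stone~\cite{Stone}, combining the global monotonicity formula (Proposition~\ref{monocurve}) with the localized version (Proposition~\ref{monocurveII}). The plan is to first establish a global Huisken-type bound on the rescaled integral and a uniform local length bound on balls, and then deduce both conclusions via dyadic decomposition.

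For the global bound, I would integrate the monotonicity inequality of Proposition~\ref{monocurve} in time from $0$ to $t\in[0,T)$ to obtain
\[
f(t)\int_{c_t}\rho_{x_0,T}(\cdot,t)\,\mbox{\,d}s_t \;\leq\; \int_{c_0}\rho_{x_0,T}(\cdot,0)\,\mbox{\,d}s_0 \;\leq\; \frac{L(c_0)}{\sqrt{4\pi T}}.
\]
Since $|\bar\kappa(t)|\leq c_2$ and $t<T$, one has $f(t)\geq e^{-c_2^{2}T/2}=:c_3>0$, so the left-hand side is bounded uniformly by $K:=L(c_0)/(c_3\sqrt{4\pi T})$. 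A direct calculation shows $\rho_{x_0,T}(x,t)\,\mbox{\,d}s_t = \rho_{0,0}(\tilde x,\tau)\,\mbox{\,d}\tilde s_\tau$ under $\tilde x = Q_j(x-x_0)$, $\tau=Q_j^{2}(t-T)$, producing
\[
\int_{\tilde c^{\tau}} e^{-|\tilde x|^{2}/(-4\tau)}\,\mbox{\,d}\tilde s_\tau \;\leq\; K\sqrt{-4\pi\tau}
\]
uniformly in $j$. Next, for any $y_0\in\mathbb{R}^{2}$ and $R>0$ I would apply Proposition~\ref{monocurveII} to the original flow with a Brakke-type cutoff as in Definition~\ref{localize} centered at $x_0 + y_0/Q_j$ with radius $\lambda/Q_j$; Lemma~\ref{dsleq0} kills the $(\partial_t-\partial_s^{2})\varphi$ contribution, and after rescaling and using that $\tilde\Sigma_j^{x_0}$ has curvature $O(1/Q_j)$ and converges smoothly on compacta to a line through the origin, one obtains a local length bound
\[
L\bigl(\tilde c^{\tau}\cap B_R(y_0)\bigr)\;\leq\; C(R,\tau),
\]
uniform in $y_0$ and $j$ (for $j$ large), and growing at most polynomially in $R$.

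Given these two uniform estimates, both conclusions follow by dyadic decomposition. For (ii), write $B_{R_0}^{c}=\bigsqcup_{k\geq 0}A_k$ with $A_k=\{2^{k}R_0\leq|\tilde x|<2^{k+1}R_0\}$; on $A_k$ the pointwise bound $e^{-|\tilde x|^{2}/(-4\tau)}\leq e^{-4^{k}R_0^{2}/(-4\tau)}$, multiplied by the polynomial local length bound, gives a super-geometrically decaying series in $k$ which tends to $0$ as $R_0\to\infty$. For (i), decompose into $B_1\cup\bigsqcup_{k\geq 1}\{2^{k-1}\leq|\tilde x|<2^{k}\}$ and use $e^{-|\tilde x|/(-2\tau)}\leq e^{-2^{k-1}/(-2\tau)}$ on each annulus; the exponential decay in $k$ dominates any polynomial growth of the length bound, so the series sums to a finite constant $\hat C_3(\tau)$ independent of $j$.

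The main obstacle will be producing the uniform local length bound near the origin, where $\tilde\Sigma_j^{x_0}$ passes through $0$ and the cutoff cannot be chosen disjoint from it. The saving observation is that $\tilde\Sigma_j^{x_0}$ flattens out as $j\to\infty$ (curvature of order $1/Q_j$), so the boundary terms appearing in Proposition~\ref{monocurveII} (weighted by ${}^\Sigma\kappa$) become negligible in the limit; one may either argue directly as in the proof of Proposition~\ref{proper}, or reflect the rescaled flow across the limiting line ${}^{\infty}\Sigma$ to reduce the situation to a boundary-less one before passing to the limit. Away from the origin, for $j$ large the cutoff support is automatically disjoint from $\tilde\Sigma_j^{x_0}$, and the standard Brakke-Huisken argument applies without modification.
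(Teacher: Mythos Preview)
Your strategy departs from the paper's and, as written, has a genuine gap in step two (the uniform-in-$y_0$, uniform-in-$j$ local length bound). Applying Proposition~\ref{monocurveII} with a Brakke cutoff centred at $x_0+y_0/Q_j$ of radius $\lambda/Q_j$ only yields monotonicity on the original time window $[T-(\lambda/Q_j)^2,T)$; the initial value at $t=T-(\lambda/Q_j)^2$ is bounded by a constant times $Q_j L_0/\lambda$, which blows up as $j\to\infty$. So the localized formula does \emph{not} give $L(\tilde c^{\tau}\cap B_R(y_0))\leq C(R,\tau)$ uniformly in $j$. Falling back on the global formula of Proposition~\ref{monocurve} is only available for centres on $\tilde\Sigma_j^{x_0}$; for $y_0$ at distance $d$ from $\tilde\Sigma_j^{x_0}$ this gives a length bound of order $e^{(R+d)^2/(-4\tau)}$, and since $d$ can be as large as $|y_0|$, the exponential growth in $|y_0|$ exactly cancels the Gaussian decay in your dyadic sum. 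In Stone's original (boundaryless) setting this problem does not arise because Huisken's monotonicity holds at \emph{every} centre $y$ with initial value controlled by $|M_0|$; with Neumann boundary that is precisely what fails.

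The paper's route sidesteps this entirely: instead of assembling local length bounds, it proves a single differential inequality for the linear-exponent weight $g(x,\tau)=\frac{1}{\sqrt{-4\pi\tau}}\,e^{-|x|/(-2\tau)}$ directly. Using Buckland's expansion formula one computes
\[
\frac{\de}{\de\tau}\Bigl(\tilde f_j\int_{\tilde c^{\tau}}g\,\de\tilde s_\tau\Bigr)\;\leq\;\frac{1}{2\tau^{2}}\,\tilde f_j\int_{\tilde c^{\tau}}g\Bigl(\tfrac12-\tau-|x|\Bigr)\de\tilde s_\tau,
\]
where the boundary term $\int_{\partial\tilde M_\tau} g\,\langle x/(-2\tau|x|),\mu\rangle\,\de\tilde\sigma_\tau$ is $\leq 0$ by convexity of $\Sigma$ and $x_0\in\Sigma$ (exactly as in Proposition~\ref{monocurve}). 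From this inequality Stone's original integration argument gives~(i), and~(ii) follows from~(i) by the pointwise comparison $e^{-|x|^{2}/(-4\tau)}\leq e^{-R_0^{2}/(-8\tau)}\,e^{-|x|/(-2\tau)}$ on $\{|x|\geq R_0\}$ for $R_0$ large. The point is that for the linear-exponent weight (unlike for a translated Gaussian) the boundary contribution still carries the right sign, so no localisation or reflection is needed.
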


\begin{proof}
   The proof is almost the same as the original proof of Stone. It only has to be adapted to our flow equation. Indeed, the crucial part of the proof is the inequality 
    \begin{align*}
     \frac{\de}{\de \tau}\left( \tilde f_j \int_{\tildec^{\tau}}g\de \tilde s_\tau\right)\leq \frac{1}{2\tau^2}\tilde f_j  \int_{\tildec^{\tau}}g \left(\frac{1}{2}-\tau-|x|\right)\de \tilde s_\tau.
    \end{align*}
     with $g(x,\tau)\defi \frac{1}{\sqrt{-4\pi \tau}} e^{-\frac{|x|}{-2\tau}} $. 
     Buckland's expansion formula (\cite[Proposition~2.3]{Buckland}) can be used to derive this inequality (since this lemma can be done for arbitrary dimension, we use the notation for surfaces):  
    For functions $f,g:U\times [\tau_1,\tau_2)\to \R$ sufficiently smooth, where $\tilde M_{\tau}\subset U\subset \Rnplus \ \forall \tau\in[\tau_1,\tau_2)$, and $g>0$ we have
    \begin{align*}
     \frac{\de}{\de \tau} \int_{\tilde M_\tau} f g \de \tilde\mu_\tau = & - \int_{\tilde M_\tau} f\big| \vec H- \frac{D^{\perp}g}{g}\big|^2 g \de \tilde \mu_\tau\\
	   & + \int_{\tilde M_\tau} f \left[ Q(g) + \langle Dg,\frac{\partial x}{\partial \tau} - \vec H\rangle + \left(|\vec H|^2 + \Div_{\tilde M_\tau} \frac{\partial x}{\partial \tau}\right) g \right]\de \tilde\mu_\tau\\
	  & + \int_{\tilde M_\tau} g \left(\frac{\de}{\de \tau} - \Delta_{ \tilde M_\tau}\right) f\de \tilde \mu_\tau\\
	  & + \int_{\partial\tilde M_\tau} \left( g \langle D f,\mu\rangle - f\langle Dg,\mu\rangle \right)  \de\tilde \sigma_\tau,
    \end{align*}
    where $\mu$ is the outer unit conormal to $\partial\tilde M_{\tau}$ and $Q(g)= \frac{\partial g}{\partial \tau} + \Div_{\tilde M_\tau} Dg + \frac{|D^\perp g|^2}{g}$. We use this formula for $f = \tilde f_j$, $g = \frac{1}{\sqrt{-4\pi \tau}} e^{-\frac{|x|}{-2\tau}}$ and $\tilde M_\tau =  \tildec^{\tau}$. 
    In this situation we have $\frac{\partial x}{\partial \tau} = -(\kappa-\bar\kappa)\hat\nu$ and $\Div_{\tilde M_\tau}\frac{\partial x}{\partial \tau} = \langle (\kappa-\bar\kappa) \hat\nu,-\kappa\hat\nu\rangle = -\kappa(\kappa-\bar\kappa)$.\footnote{Since the calculations can also be done for the case $n\geq 2$ with codimension one, the ``outer unit normal`` $\hat\nu = -\nu$ were used. This is just a matter of notation.} 
    We omit the index $j$ in the calculations, which yield
    \begin{align*}
     Dg &= g \frac{x}{2\tau|x|} \\
     \frac{D^\perp g}{g}&= \frac{x^\perp}{2\tau|x|}\\
    \frac{|D^\perp g|^2}{g}& = g \frac{|x^\perp|^2}{4\tau^2|x|^2} \\
    \frac{\partial g}{\partial \tau} &= g\left(-\frac{1}{2\tau} - \frac{|x|}{2\tau^2}\right)\\
    \Div_{\tilde M_\tau} Dg &= g \left(- \frac{|x^\top|^2}{2\tau|x|^3} + \frac{|x^\top|^2}{4\tau^2|x|^2} + \frac{1}{2\tau|x|}\right)\\
     Q(g) &= g\left( \frac{1}{4\tau^2} -\frac{1}{2\tau} -   \frac{|x^\top|^2}{2\tau|x|^3} + \frac{1}{2\tau|x|}-\frac{|x|}{2\tau^2}\right)\\
      \langle Dg,\frac{\partial x}{\partial \tau} - \vec H\rangle &= g\bar\kappa \frac{\langle x,\hat\nu\rangle}{2\tau|x|} \\
      |\vec H|^2 + \Div_{\tilde M_\tau}\frac{\partial x}{\partial \tau} &= \bar\kappa\kappa\\
        \big| \vec H -  \frac{D^\perp g}{g}\big|^2 &= \big|\kappa+\frac{\langle x,\hat\nu\rangle}{2\tau|x|}\big|^2.
    \end{align*}
    Therefore, we get
    \begin{align*}
      \frac{\de}{\de \tau} \left(\tilde f \int_{\tilde M_\tau}  g\de \tilde s_\tau \right) = & - \tilde  f \int_{\tilde M_\tau}\big| \kappa  +  \frac{\langle x,\hat\nu\rangle}{2\tau|x|}\big|^2 g \de\tilde s_\tau\\
	   & + \tilde f \int_{\tilde M_\tau} g \left[ \frac{1}{4\tau^2} -\frac{1}{2\tau} -   \frac{|x^\top|^2}{2\tau|x|^3} + \frac{1}{2\tau|x|}-\frac{|x|}{2\tau^2} + \bar\kappa \frac{\langle x,\hat\nu\rangle}{2\tau|x|} + \bar\kappa\kappa  \right]\de \tilde s_\tau\\
	  & + \int_{\tilde M_\tau} g \frac{\de}{\de \tau}  \tilde f \de \tilde s_\tau\\
	  & + \tilde f \int_{\partial\tilde M_\tau} g \langle \frac{x}{-2\tau|x|}    ,\mu\rangle  \de\tilde \sigma_\tau,
    \end{align*}
    where integration over $\partial \tilde M_\tau =\{\tildec^\tau(\tilde a),\tildec^\tau(\tilde b)\}$ of $\psi$ with respect to $\de\tilde\sigma_\tau$  means $\psi(\tilde b)-\psi(\tilde a)$.
    By convexity of $\Sigma$ we get $ \int_{\partial\tilde M_\tau} g \langle \frac{x}{-2\tau|x|}    ,\mu\rangle  \de \tilde\sigma_\tau\leq 0$ as in the proof of Proposition~\ref{monocurve}. We use $-a^2 + \bar\kappa a =   - \frac{1}{2}a^2 - \frac{1}{2}(a-\bar\kappa)^2 + \frac{1}{2}\bar\kappa^2$ to get
    \begin{align*}
      \frac{\de}{\de \tau} &\left(\tilde f \int_{\tilde M_\tau}  g\de \tilde \mu_\tau \right)  \leq  \tilde  f \int_{\tilde M_\tau}g \left[ - \frac{1}{2} \big|\kappa-\frac{\langle x,\hat\nu\rangle}{2\tau|x|}\big|^2 - \frac{1}{2}  \big|(\kappa-\bar\kappa)-\frac{\langle x,\hat\nu\rangle}{2\tau|x|}\big|^2  + \frac{1}{2}\bar\kappa^2  \right. \\
      & \hspace*{3.5cm}\left.  +\frac{1}{4\tau^2} -\frac{1}{2\tau} -   \frac{|x^\top|^2}{2\tau|x|^3} + \frac{1}{2\tau|x|}-\frac{|x|}{2\tau^2}   \right]      \de\tilde \mu_\tau + \int_{\tilde M_\tau} g \left(-\frac{1}{2} \bar\kappa^2 \tilde f\right)\de \tilde\mu_\tau\\
   & \leq \tilde  f \int_{\tilde M_\tau}g \left[ - \frac{1}{2} \big|\kappa-\frac{\langle x,\hat\nu\rangle}{2\tau|x|}\big|^2 - \frac{1}{2}  \big|(\kappa-\bar\kappa)-\frac{\langle x,\hat\nu\rangle}{2\tau|x|}\big|^2  +  \frac{1}{4\tau^2}   -\frac{1}{2\tau}  -\frac{|x|}{2\tau^2} \right]      \de\tilde \mu_\tau\\
    & \leq\frac{1}{2\tau^2}\tilde  f \int_{\tilde M_\tau}g \left( \frac{1}{2}-\tau    -|x|\right)\de\tilde \mu_\tau
    \end{align*}
    using $\frac{|x^\top|^2}{-2\tau|x|^3} \leq \frac{1}{-2\tau|x|}$. 
\end{proof}

\begin{lemma} \label{lem2}
Let $c:\ab\times[0,\infty)\to\Rzwei$ be a solution of the area preserving curve shortening flow with Neumann free boundary conditions.
 Under the conditions $L(c_t)\geq c_1>0$ and $|\bar\kappa(t)|\leq c_2<\infty$ $\forall t\in[0,\infty)$ there is a constant $c= c(m, c_1, c_2, L_0, \|\kappa-\bar\kappa\|_2,\|\Sig \kappa\|_\infty,...,\|\ps^{\frac{m}{2}}\Sig\kappa\|_\infty)$ such that 
\begin{align*}
\|\kappa\|_{\infty}+\|\ps\kappa\|_{\infty}+ \dots + \|\ps^{m-1}\kappa\|_{\infty}\leq c
\end{align*}
\end{lemma}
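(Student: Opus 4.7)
The plan is to mimic the strategy used in Corollary~\ref{kappaabsch} but applied to the higher-order quantity
\[
F_m(t) \defi \int \sum_{k=0}^{m} \frac{\mu^k}{k!}(\ps^k \kappa)^2 \de s,
\]
with $\mu = t-\tau$ for a fixed reference time $\tau$. Differentiating term by term with Lemma~\ref{abl} produces, for each $k$, a leading negative term $-\tfrac{2\mu^k}{k!}\int(\ps^{k+1}\kappa)^2\de s$, a lower-order weight term $\tfrac{\mu^{k-1}}{(k-1)!}\int(\ps^k\kappa)^2\de s$ that telescopes against the $(k-1)$-st contribution, an integral of the form $\mu^k \int p_{2k+4}(\ps^k\kappa)\de s$, and a boundary term $\tfrac{2\mu^k}{k!}[\ps^k\kappa\,\ps^{k+1}\kappa]_a^b$.

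First I would dispose of the polynomial interior terms: Lemma~\ref{restint} (whose proof is independent of $m$ by the same interpolation argument) lets me absorb $\mu^k \int p_{2k+4}(\ps^k\kappa)\de s$ into a small fraction of $\mu^k\int(\ps^{k+1}\kappa)^2 \de s$ plus constants controlled by $\|\kappa-\bar\kappa\|_2$ (bounded uniformly by Corollary~\ref{intto0}), $c_1$, $c_2$ and $L_0$. Next I would handle the boundary terms. For $k$ even, the derivative $\ps^{k+1}\kappa$ at $\{a,b\}$ is of odd order and can be expressed, by repeatedly differentiating the identity from Lemma~\ref{lemmastahl5} in $t$ and invoking the evolution equation (exactly as in Lemma~\ref{rand2} for $k=2$), as $\pm \Sig\kappa\circ\fsig^{-1}\circ c \cdot\ps^k\kappa$ plus a polynomial in $\diff,\dots,\ps^{k-1}\kappa$ and derivatives of $\Sig\kappa$ up to order at most $k/2$. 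Hence the boundary term reduces to an expression involving only $L^\infty$-norms of derivatives of $\kappa$ of order $\le k$, bounded via Gagliardo--Nirenberg (Lemma~\ref{gagli1a} and Corollary~\ref{gagli2}) by $\epsilon\mu^k\|\ps^{k+1}\kappa\|_2^2$ plus a constant depending on $\|\ps^l\,\Sig\kappa\|_\infty$ for $l\le k/2$. For $k$ odd I would instead integrate by parts once more inside the boundary term or use $[\ps^k\kappa\,\ps^{k+1}\kappa]_a^b = \tfrac12 \ps[(\ps^k\kappa)^2]\big|_a^b$ combined with another interpolation to reduce to the even case; equivalently, one can simply interpolate $\|\ps^{k+1}\kappa\|_\infty$ against $\|\ps^{k+2}\kappa\|_2$ and $\|\ps^k\kappa\|_\infty$ against lower quantities, and absorb into the next-order leading term (this is why the weights $\mu^k/k!$ make the telescoping work).

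Choosing $\epsilon$ small enough to absorb all interpolation errors, the differential inequality reduces to
\[
\dt F_m(t) \le C\bigl(1 + \mu + \mu^2 + \dots + \mu^{2m}\bigr),
\]
where $C$ depends precisely on the data listed in the statement. Integrating from $t_\ell$ to $t\in (t_\ell, t_\ell+2\delta]$ and using the uniform bound on $\|\kappa-\bar\kappa\|_2=F_0^{1/2}$ shows $F_m(t)\le C(\delta,\dots)$ on $[t_\ell+\delta,t_\ell+2\delta]$; covering $[1,\infty)$ by such intervals yields uniform bounds on $\|\ps^k\kappa\|_2$ for all $k\le m$. A final Gagliardo--Nirenberg interpolation promotes these to the desired $L^\infty$ bounds on $\kappa,\ps\kappa,\dots,\ps^{m-1}\kappa$, using the $L^\infty$-bound on $\bar\kappa$ to pass from $\|\kappa-\bar\kappa\|_p$ to $\|\kappa\|_p$.

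The main obstacle will be the bookkeeping for the boundary terms: one must verify the inductive formula expressing $\ps^{2j+1}\kappa\big|_{\{a,b\}}$ purely in terms of $\ps^i\kappa\big|_{\{a,b\}}$ for $i\le 2j$ and derivatives of $\Sig\kappa$, and then ensure that the orders of $\Sig\kappa$-derivatives appearing never exceed $m/2$. Apart from that, the argument is a mechanical extension of the $m=2$ case already established in Lemmas~\ref{lem1}--\ref{lemboundary} and Corollary~\ref{kappaabsch}.
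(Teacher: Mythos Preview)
Your proposal is correct and follows essentially the same route as the paper: differentiate the weighted sum $\sum_{k=0}^m \frac{\mu^k}{k!}\|\ps^k\kappa\|_2^2$, absorb the polynomial remainder terms via Lemma~\ref{restint}, control the boundary contributions by the inductive formula for odd-order derivatives of $\kappa$ at $\{a,b\}$ combined with Gagliardo--Nirenberg, and finish as in Corollary~\ref{kappaabsch}.

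One simplification you are missing concerns the odd-$k$ boundary terms. You propose to treat $[\ps^k\kappa\,\ps^{k+1}\kappa]_a^b$ for $k$ odd by further interpolation against $\|\ps^{k+2}\kappa\|_2$, which can be made to work but is awkward. The paper's observation is cleaner: in every product $\ps^k\kappa\cdot\ps^{k+1}\kappa$ exactly one factor has odd order, and it is that factor (not necessarily $\ps^{k+1}\kappa$) to which the boundary reduction formula applies. So for $k$ odd you simply apply the formula to $\ps^k\kappa$ itself and proceed exactly as in the even case; no separate argument is needed. This is also why restricting to $m$ even suffices and why only derivatives of $\Sig\kappa$ up to order $m/2$ enter. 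Your final differential inequality should then read $\le C(1+\mu+\dots+\mu^m)$ rather than up to $\mu^{2m}$, though this discrepancy is harmless for the conclusion.
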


\begin{proof}
 The proof is based on the approach of Lemma~\ref{dtL2} for higher derivatives. Due to Lemma~\ref{abl} and \ref{restint}, we have for $m = 2k$, $k\in\N_0$,
 \begin{align}\begin{split}\label{re30}
  \dt&\left(\int\diff^2 + \mu (\ps\kappa)^2 + \frac{\mu^2}{2}(\ps^2\kappa)^2 + \dots + \frac{\mu^m}{m!}(\ps^m\kappa)^2\de s\right)\\
  &\leq -\frac{1}{2}\int (\ps\kappa)^2 + \mu (\ps^2\kappa)^2 + \dots + \frac{\mu^m}{m!}(\ps^{m+1}\kappa)^2\de s\\
  &+2\mu\left[\ps\kappa\ps^2\kappa\right]_a^b + \dots + 2\frac{\mu^m}{m!}\left[\ps^m\kappa\ps^{m+1}\kappa\right]_a^b +  c\left(\mu^m + \mu^{m-1} + \dots + \mu + 1\right),\end{split}
 \end{align}
so that the only problem is to get an estimate of the boundary terms. Since $m +1$ is odd, we have a formula for $\ps^{m+1}\kappa$ at the boundary in terms of $\ps^m\kappa$ at the boundary and in terms of $\ps^l\Sig\kappa$. Namely, one can proof by induction (as in Lemma~\ref{rand2}): We have at the point $(a,t)$ for $n$ odd 
  \begin{align*}
   \ps^n\kappa=\pt^{\frac{n-1}{2}}&\diff\, \Sig\kappa\circ\fsig^{-1}\circ c\\
   &+ p_{n-1}\left(\pt^{\frac{n-1}{2}-1}(\kappa-\bar\kappa)\right)\, \ps\Sig\kappa\circ\fsig^{-1}\circ c\\
   &+ p_{n-2}\left(\pt^{\frac{n-1}{2}-2}(\kappa-\bar\kappa)\right)\, \ps^2\Sig\kappa\circ\fsig^{-1}\circ c\\
   &+ \dots\\
   & + p_{\frac{n-1}{2} + 2}\left(\pt\diff\right)\, \ps^{\frac{n-1}{2}-1}\Sig\kappa\circ\fsig^{-1}\circ c\\
    & + \diff^{\frac{n-1}{2} +1} \,  \ps^{\frac{n-1}{2}}\Sig\kappa\circ\fsig^{-1}\circ c\\
   & + p_{n+1}(\ps^{n-2}\kappa,\pt^{\frac{n-1}{2}-1}\diff)
  \end{align*}
  and at the point $(b,t)$
    \begin{align*}
   \ps^n\kappa=-\pt^{\frac{n-1}{2}}&\diff\, \Sig\kappa\circ\fsig^{-1}\circ c\\
   &+ p_{n-1}\left(\pt^{\frac{n-1}{2}-1}(\kappa-\bar\kappa)\right)\, \ps\Sig\kappa\circ\fsig^{-1}\circ c\\
   &+ p_{n-2}\left(\pt^{\frac{n-1}{2}-2}(\kappa-\bar\kappa)\right)\, \ps^2\Sig\kappa\circ\fsig^{-1}\circ c\\
   &+ \dots\\
   & + p_{\frac{n-1}{2} + 2}\left(\pt\diff\right)\, \ps^{\frac{n-1}{2}-1}\Sig\kappa\circ\fsig^{-1}\circ c\\
    & +(-1)^{\frac{n-1}{2} + 1} \diff^{\frac{n-1}{2} +1}\,  \ps^{\frac{n-1}{2}}\Sig\kappa\circ\fsig^{-1}\circ c\\
   & + p_{n+1}(\ps^{n-2}\kappa,\pt^{\frac{n-1}{2}-1}\diff).
  \end{align*}  
When we use these formulas for $n=m+1$ and $\pt^{\frac{m}{2}}\kappa = p_{m+1}\left(\pt^{\frac{m}{2}-1}\bar\kappa,\ps^m\kappa\right)$ then we see that the highest order of the derivatives of $\kappa$ is the order $m$.\\
Now, one can use the Gagliardo-Nirenberg inequalities as in Lemma~\ref{lemboundary} to estimate the boundary terms in (\ref{re30}) by ''good`` small terms (which can be absorbed) and by $c(\mu^m + \mu^{m-1} + \dots + \mu + 1)$. That implies 
\begin{align*}
  \dt\left(\int\diff^2 \hspace*{-0.1cm}+ \mu (\ps\kappa)^2\hspace*{-0.1cm} + \frac{\mu^2}{2}(\ps^2\kappa)^2 + \dots + \frac{\mu^m}{m!}(\ps^m\kappa)^2\de s\right)\leq c\left(\mu^m + \mu^{m-1} + \dots + \mu + 1\right).
 \end{align*}
 Then the proof of Corollary~\ref{kappaabsch} can be used to get for even $m$ and for all $t\in[1,\infty)$
\begin{align*}
\|\kappa\|_{\infty}+\|\ps\kappa\|_{\infty}+ \dots + \|\ps^{m-1}\kappa\|_{\infty}\leq c(m).
\end{align*}
\end{proof}

\end{appendix}


\bibliographystyle{plain}
\bibliography{Lit.bib}
\vspace*{2cm}
\begin{center}

\texttt{Elena M\"ader-Baumdicker\\
 Karlsruhe Insitute of Technology (KIT)\\
 Department of Mathematics\\
Kaiserstr. 89-93, D-76133 Karlsruhe, Germany\\
Email: elena.maeder-baumdicker$@$kit.edu}
 \end{center}
 
\end{document}